\documentclass[11pt]{preprint}
\usepackage[T1]{fontenc}
\usepackage[full]{textcomp}
\usepackage[osf]{newtxtext}
\usepackage{microtype}

\usepackage[a4paper,twoside,inner=30mm,outer=30mm,top=25mm,bottom=30mm]{geometry}

\usepackage{graphicx} \usepackage{subfiles}
\usepackage[scr=boondoxupr]{mathalfa}
\usepackage{amsmath, amssymb, amsfonts}
\usepackage{amsthm}
\usepackage{mathtools}
\usepackage{xcolor}
\usepackage{enumitem}
\usepackage{letters}
\usepackage{furthersymbols}
\usepackage{envs_theorems}
\usepackage{envs}
\usepackage{comment}
\usepackage[linktoc=page]{hyperref}

\usepackage{longtable}
\usepackage{booktabs}

\usepackage{cleveref}
\usepackage{nicefrac}
\usepackage[final,left]{showlabels}

\makeatletter
\renewcommand\paragraph{\@startsection{paragraph}{4}{\z@}{1.5ex \@plus .5ex \@minus .2ex}{-1em}{\normalfont\normalsize\bfseries}}
\makeatother

\newcommand{\scal}[1]{\langle #1 \rangle}
\newcommand{\Higgs}{ \Phi  }
\renewcommand{\Re}{\mathfrak{Re}}

\newcommand{\Coul}{\mathrm{C}}
\newcommand{\bplus}{\mathbin{\pmb{+}}}

\title{A variational approach for the 2D Abelian Yang--Mills--Higgs measure}
	\author{Nikolay Barashkov$^1$, Ajay Chandra$^2$, Ilya Chevyrev$^3$, Andreas Koller$^4$, Abdulwahab Mohamed$^1$}

	\institute{Max Planck Institute for Mathematics in the Sciences, Leipzig, Germany \and Purdue University, West Lafayette, USA
    \and SISSA, Trieste, Italy \and University of Oxford, Oxford, UK \\
		\email{barashkov@mis.mpg.de, ajay.chandra@gmail.com,\\ ichevyrev@gmail.com, andreas.koller@maths.ox.ac.uk, \\ abdulwahab.mohamed@mis.mpg.de}}

\begin{document}

\maketitle

	\begin{abstract}
        We use the Bou\'e--Dupuis variational method to construct the Abelian Yang--Mills--Higgs measure on the two-dimensional torus with polynomial Higgs potentials, giving, to our knowledge, the first rigorous construction of an interacting gauge theory by this approach.
        We first use the variational method to construct the Higgs field conditioned on the gauge field, with a covariant Gaussian free field as the reference measure.
        Integrating out the Higgs field produces a weight on the gauge field, which we control through a second application of the variational method.
        We also prove new exponential integrability estimates for the joint gauge--Higgs field and establish gauge covariance.
        Finally, we prove convergence of the Laplace transforms of the regularised measures and establish a new large-deviation principle for gauge-invariant observables in the semiclassical limit.

        \end{abstract}
    	\setcounter{tocdepth}{2}

\tableofcontents

\section{Introduction}

\subsection{The model and problem}

The Abelian Yang--Mills--Higgs (YMH) model is the simplest Euclidean gauge theory in which a gauge field is coupled to matter.
On the trivial $U(1)$-bundle over the $d$-dimensional torus $\T^d = \R^d/\Z^d$,
a configuration consists of an $\bfi\R$-valued \(1\)-form $A \colon \T^d \to (\bfi\R)^d$, called the \emph{gauge field}, and a complex-valued scalar field $\phi \colon \T^d \to \C$, called the \emph{Higgs field}.
The model is described by the action
\begin{equation}\label{eq:intro_action}
    \cS_{\YMH}(A,\phi)
    = \frac12\int_{\T^d}
    \left(
        |\diff A|^2
        + |\diff_A\phi|^2
    \right)\dif x
    +\int_{\T^d}\cV(|\phi|^2)\dif x,
\end{equation}
where $\diff_A$ is the covariant derivative and the potential $\cV(|\phi|^2)$ has leading term $c_n|\phi|^{2n}$, with $n\ge2$ and $c_n>0$, as in \eqref{eq:def_V}. We include any Higgs mass term in $\cV$.
The corresponding Gibbs measure should have the formal density
\begin{equation}\label{eq:intro_measure}
    \dif\mu^{\YMH}(A,\phi)
    \propto
    \exp\bigl(-\cS_{\YMH}(A,\phi)\bigr) \dif A \dif\phi.
\end{equation}
where $\dif A \dif \phi$ denotes a formal Lebesgue measure on the infinite-dimensional space of fields.
Ignoring for now questions of regularity,
both the action and, formally, the measure are invariant under the action of gauge transformations $g\colon \T^d\to U(1)$ given by
\[
    (A,\phi)\mapsto g\Cdot(A,\phi) := (A^g,g\phi),
    \qquad A^g=A-\diff g\,g^{-1}.
\]
The measure $\mu^{\YMH}$ defined in \eqref{eq:intro_measure}
is, at this stage, entirely formal and giving meaning to this measure is an important problem in constructive quantum field theory.
The Abelian YMH model on \(\R^2\) was constructed first by Brydges--Fr\"ohlich--Seiler
\cite{BFS79,BFS80,BFS81}.
These works moreover verified all the Osterwalder--Schrader (OS) axioms except clustering.
Later, King~\cite{King86a,King86b}, using renormalisation group techniques from \cite{Balaban1983III,Balaban81II,Balaban81I}, gave a construction on \(\R^3\) and \(\R^2\),
also verifying the OS axioms except clustering and rotation invariance.
More recently, another construction on \(\T^2\) was proposed in \cite{BC24_YM} based on the Langevin dynamic.
This work established bounds on the stochastic quantisation equations associated to \eqref{eq:intro_measure} (in the continuum) which are sufficiently strong to ensure existence of an invariant measure.

The purpose of the present paper is to give a direct and comparatively transparent continuum construction on $\T^2$ using the probabilistic variational method of \cite{BG18,Barash}.
The variational representation allows us to prove convergence for Laplace transforms for a large class of observables of the joint gauge--Higgs field as we remove UV-cutoffs.
As a consequence of our analysis, we furthermore prove a large deviation principle for the low-temperature limit of the measure with rate function $\cS_{\YMH}$, which provides a concrete link between our constructed measure and the YMH action \eqref{eq:intro_action}.

Making sense of $\mu^{\YMH}$ is non-trivial for several reasons.
First, the formal Lebesgue measure in \eqref{eq:intro_measure} does not exist.
A natural way to bypass this issue is to extract a Gaussian reference measure from the quadratic terms in the action \eqref{eq:intro_action} and treat the rest as a perturbation.
The second difficulty is that the reference Gaussian measure in dimensions $d\geq 2$ is supported on a space of distributions (not functions).
It is thus a priori unclear how to make sense of the non-quadratic terms in \eqref{eq:intro_action} and it turns out that they require renormalisation.
These diffiuclties are common to many interacting Euclidean field theories.

There is another difficulty specific to gauge theories: the quadratic form $A\mapsto\|\diff A\|_{L^2}^2$ vanishes on closed $1$-forms, so it does not define a Gaussian probability measure on all gauge fields.
This degeneracy reflects gauge invariance.

We henceforth take $d=2$ and use the Coulomb gauge, with the harmonic component restricted to $\bfi(0,2\pi]^2$:
\begin{equation}\label{eq:Coulomb}
\diff^*A=0,\qquad \int_{\T^2}A\,\dif x\in\bfi(0,2\pi]^2.
\end{equation}
A gauge transformation removes the exact part of the Hodge decomposition, and large gauge transformations shift the harmonic part by $2\pi\bfi\Z^2$.
The remaining constant gauge transformations act only on the Higgs field.
Appendix~\ref{app:gauge_fixing} gives the details, including the gauge-fixing statement for $H^1$ configurations.

Our reference measure $\mu^{\rmA}$ is the law of
\begin{equation}\label{eq:intro_pure_YM_field}
A=\vartheta+\diff^*(\diff\diff^*)^{-1}\xi,
\end{equation}
where $\xi$ is spatially mean-zero $\bfi\R$-valued white noise, viewed as a $2$-form, and $\vartheta$ is independent and uniform on $\bfi(0,2\pi]^2$.
This is the pure Yang--Mills reference field in the chosen gauge.

\subsection{Regularised measures and the main result} \label{sec:def_regularised_measure}
Let $A_T$ be the smooth finite-dimensional approximation of \eqref{eq:intro_pure_YM_field} specified in Section~\ref{sec:gauge_variational_setup}, with \(T\to\infty\) corresponding to the removal of the ultraviolet cut-off.
Writing $L_B=\diff_B^*\diff_B$ for the covariant Laplacian of a (smooth) connection $B$, we have $\int|\diff_{A_T}\phi|^2=\int\scal{\phi,L_{A_T}\phi}$ in \eqref{eq:intro_action}. The quadratic term $A_T^*A_T$ has divergent expectation as $T\to\infty$, so we renormalise by
\begin{equation}\label{eq:gauge_wick_counterterm}
L_{A_T}\rightsquigarrow L_{A_T}-\rmc_T,\qquad \rmc_T:=\E[|(A_T-\vartheta)(0)|^2].
\end{equation}
Here $A_T-\vartheta$ is the centred Gaussian part of $A_T$, and $\rmc_T\to+\infty$ as $T\to\infty$.
For a fixed smooth connection $A$, let $\lambda_{A,T}$ be the spectral shift defined by \eqref{eq:m_def} for the operator $L_A-\rmc_T$, so that the shifted operator is bounded below by $1$.
It is possible to choose \(\rmc_T\) so that \(\lambda_{A,T}\) is of order \(O(1)\).
We then define a truncated covariant Gaussian free field by
\begin{equation}\label{eq:intro_covariant_GFF}
    \phi_{A,N,T}
    =\int_0^N
    \bigl(L_A-\rmc_T+\lambda_{A,T}+\lambda\bigr)^{-1}
    \dif X^{\Phi}_\lambda,
\end{equation}
where the real and imaginary parts of $X^{\Phi}$ are independent standard cylindrical Brownian motions. As $N\to\infty$, the complex covariance of $\phi_{A,N,T}$ approaches twice the inverse of the positive renormalised covariant Laplacian \(L_A - \rmc_T + \lambda_{A,T}\).

Our regularisation of the Yang--Mills--Higgs measure is written as
\begin{equation}\label{eq:intro_regularised_measure}
    \dif\mu^{\YMH}_{N,T}(A,\phi)
    =\frac{1}{\cZ_{N,T}}
    \exp\Bigl(-\cQ_{N,T}(A)-\cV_{A,N,T}(\phi)\Bigr)
    \dif\mu^{\GFF}_{A,N,T}(\phi)\dif\mu_T^{\rmA}(A).
\end{equation}
Here $\mu_{A,N,T}^\GFF=\Law(\phi_{A,N,T})$ and $\cV_{A,N,T}$ is the Wick-renormalised Higgs potential defined by \eqref{eq:def_V_N_m} with $m=\lambda_{A,T}$,
while $\cQ_{N,T}$ is a renormalised logarithmic determinant ratio.
The reference mass $\lambda_{A,T}$ is subtracted in the Higgs potential.
The Wick constant from this subtraction combines with the mass counterterm in $\cQ_{N,T}$ to give $\rmp_N/2$, independent of both fields and absorbed into the normalisation.
The determinant term comes from comparing the normalising constants of the covariant and free Gaussian fields.
The formula for $\cQ_{N,T}$, including the cutoffs and counterterms, is given in \eqref{eq:determinant_computation} with $B_T=0$.

The main result of this paper may be summarised as follows.
Below, we write \(\cS'(\T^2)\) for the space of distributions on \(\T^2\) and \(\Omega^1\cS'(\T^2)\) for the space of distributional \(1\)-forms.

\begin{theorem}\label{thm:intro_main}
The Laplace transforms of the measures \(\mu^{\YMH}_{N,T}\) converge as \(N,T\to\infty\).
In particular, $\mu^{\YMH}_{T,N}$ converges weakly to a probability measure $\mu^{\YMH}$ on $\Omega^1 \cS'(\mathbb{T}^2)\times \cS'(\mathbb{T}^2)$. Furthermore, the gauge marginal of $\mu^{\YMH}$ is absolutely continuous with respect to $\mu^{\rmA}$.
\end{theorem}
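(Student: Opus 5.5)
The plan is to proceed in two layers, following the structure announced in the abstract: first integrate out the Higgs field conditionally on the gauge field, then control the resulting weight on the gauge field, using the Bou\'e--Dupuis formula at each stage.

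\textbf{Step 1 (conditional Higgs measure).} Fix a smooth connection $A$ (in practice $A=A_T$). Using the covariant GFF \eqref{eq:intro_covariant_GFF}, indexed by the spectral parameter $\lambda\in[0,N]$, as reference process, write the Bou\'e--Dupuis representation
\[
-\log\E\bigl[e^{-\cV_{A,N,T}(\phi_{A,N,T})-f(\phi_{A,N,T})}\bigr]
=\inf_{u}\E\Bigl[\cV_{A,N,T}(\phi_{A,N,T}+I_A(u))+f(\ldots)+\tfrac12\int_0^N\|u_\lambda\|_{L^2}^2\dif\lambda\Bigr],
\]
where $I_A(u)=\int_0^N(L_A-\rmc_T+\lambda_{A,T}+\lambda)^{-1}u_\lambda\dif\lambda$. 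Expand the Wick powers $\wick{|\phi+I_A(u)|^{2n}}$ binomially. Bound the cross terms by Young's inequality against the coercive terms $c_n\|I_A(u)\|_{L^{2n}}^{2n}$ and $\tfrac12\|u\|^2$. For the Wick powers of the covariant field, prove moment bounds in Besov spaces $B^{-\epsilon}_{p,p}$ that are \emph{uniform in $N$ and polynomial in suitable norms of $A$}. To get these, compare the covariant resolvent with the free one through the resolvent identity: $L_A-L_0$ consists of a first-order term $A\cdot\nabla$ and the renormalised zeroth-order term $A^*A-\rmc_T$. This gives uniform upper and lower bounds on the conditional free energy, together with convergence as $N\to\infty$ at fixed $T$ by a $\Gamma$-convergence argument for the variational problems.

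\textbf{Step 2 (gauge weight).} The integrated weight on $A$ is $\exp(-\cQ_{N,T}(A)-F_{N,T}(A))$, where $F_{N,T}$ is the conditional free energy from Step 1. Analyse the determinant term $\cQ_{N,T}$ via \eqref{eq:determinant_computation}. Expanding $\log\det$ of the ratio of the covariant and free Laplacians, its first- and second-order terms in $A$ are renormalised by $\rmc_T$ and the mass counterterm. The remainder is controlled by polynomials in norms of $A$ of regularity $-\epsilon$, together with the Wick square $\wick{|A|^2}$, whose convergence follows since $A-\vartheta$ is Gaussian. Then apply Bou\'e--Dupuis a second time, with $\mu^{\rmA}_T$ driven by a Brownian motion in $T$, to the functional $\cQ_{N,T}+F_{N,T}+g$. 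The key requirement is that the possibly non-positive parts of $-\cQ_{N,T}-F_{N,T}$ grow at most like a small power of $\|A_T+I(v)\|$, so they can be absorbed into $\tfrac12\|v\|^2$. This yields $\sup_{N,T}\E[\exp(p(\cQ_{N,T}+F_{N,T})_-)]<\infty$ for some $p>1$, that is, uniform $L^p$ bounds on the densities $\dif\mu_{N,T}^{\mathrm{gauge}}/\dif\mu_T^{\rmA}$.

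\textbf{Step 3 (convergence and absolute continuity).} For observables $e^{-g(A)-f(\phi)}$ with $g,f$ in a class of, for example, Lipschitz functionals of Besov norms, show that the two nested variational problems converge as $N,T\to\infty$. The lower bound follows from the uniform coercivity and the stability estimates above. The upper bound follows by constructing near-optimal drifts that are smooth in $\lambda$ and $T$ and passing to the limit. This gives convergence of the Laplace transforms, and weak convergence follows by tightness from the same exponential moment bounds. For absolute continuity, the uniform $L^p$ bound, $p>1$, on the densities with respect to the convergent reference $\mu^{\rmA}_T\to\mu^{\rmA}$ passes to the limit: the densities form a uniformly integrable family, and any weak limit of their laws has an $L^p$ density with respect to $\mu^{\rmA}$.

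\textbf{Main obstacle.} I expect the hardest part to be the uniformity in $A$ in Step 1 and in $\cQ_{N,T}$. The Higgs renormalisation and its bounds must hold with constants depending only polynomially, and with small enough exponents, on the distributional norms of the rough gauge field. Otherwise the second variational problem is not coercive. My first attempt would be to use the diamagnetic (Kato) inequality to dominate the covariant heat kernel by the free one, giving $A$-independent bounds on Wick powers. The leftover $A$-dependence would then sit only in $\lambda_{A,T}$ and in the determinant, where it can be treated perturbatively.
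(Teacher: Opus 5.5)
Your overall architecture matches the paper's. That means nested Bou\'e--Dupuis formulas, with the Higgs field handled first and the gauge weight then absorbed into $\tfrac12\|v\|^2$, and uniform $q>1$ bounds giving both convergence and an $L^p$ density. The gap is that the step you call the key requirement in Step~2 is only stated, and the mechanism you propose for it would fail.

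\textbf{Sub-quadratic control in the drift.} In the outer problem the field is $A_T+B$ with $B=J_T(v)\in\Omega^1_\Coul H^1$, and the only coercivity is $\tfrac12\|v\|^2\ge\tfrac12\|B\|_{H^1}^2$. Suppose you perturb around $L_0$ and bound everything polynomially in distributional norms of the shifted field. The enhanced pair $\bA_T\bplus B=(A_T+B,\,A_T^2+2A_T^*B+B^*B)$ can have size $\|B\|_{H^1}^2$. The Neumann-series threshold, and with it every constant in your resolvent bounds, then scales like $\|B\|_{H^1}^{4/(1-3\kappa)}$. The third- and higher-order part of the $\log\det$ expansion has no sign and is bounded only by a high power of $\|B\|_{H^1}$, so nothing can be absorbed into the drift cost.

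Small exponents are needed in the Cameron--Martin drift, not in the Gaussian noise, which has all moments. Getting them requires treating $B$ non-perturbatively. The paper works with $L_B$ and the intrinsic spaces $H^s_B$, where resolvent and multiplication bounds cost only $(1+\|B\|_{H^1})^{\tau(\kappa)}$, and perturbs only in $\bA$.

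For the determinant, the paper writes $\cQ_N(\bfA,B)$ as counterterm and $\bV$ terms, plus $\int\Tr(R_B-R_{\bA,B})$, plus $\int\Tr(R_0-R_B)$. The last term contains every pure-drift contribution and is $\ge0$ by the diamagnetic inequality. The delicate cross term $\Tr\scq^1_{A,B}$ vanishes at $B=0$ but not otherwise. The paper interpolates two bounds for it:
\begin{itemize}
\item one with $(1+\|B\|_{H^1})^{6\kappa}$ but non-integrable decay $(1+\lambda)^{-1/2+2\kappa}$;
\item a second-order Taylor expansion in $\theta B$, with integrable decay but growth $\|B\|_{H^1}^{2+18\kappa}$.
\end{itemize}
This yields the sub-quadratic $\|B\|_{H^1}^{1+\tau(\kappa)}$ of Theorem~\ref{thm:lower_bound_det}.

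Likewise, the diamagnetic inequality gives no uniform Wick bounds for the renormalised operator $L_{A_T}-\rmc_T$, because domination costs a factor $e^{t\rmc_T}$. It applies only to $L_B$. You therefore need the decomposition $\phi_{\bA,B,N}=\phi_{\0,B,N}+Z$, together with the bound $\|\hat\rmp_{B,N}\|_{W^{s,q}}\lesssim(1+\|B\|_{H^1})^{\delta_0}$ on the mismatch between the flat and covariant Wick constants. That bound is itself proved through the $\scq^1$ estimate.

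\textbf{Vacuum polarisation.} It is not true that the first- and second-order terms of $\log\det$ are renormalised by $\rmc_T$ and the mass counterterm, with everything else controlled by $\|A\|_{C^{-\epsilon}}$ and $\wick{|A|^2}$. The second-order term $-4\Tr(R_0A^*\diff R_0A^*\diff R_0)$, together with the $A^*A$ term, gives the vacuum polarisation $\int_0^N\bfPi^\lambda(A_T,A_T)\,\dif\lambda$. After integrating in $\lambda$, its Fourier multiplier grows like $\log(1+|p|^2)$. It is therefore not a continuous function of $(A,\wick{|A|^2})$ in negative-regularity topologies.

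Handling it requires two things:
\begin{itemize}
\item the separate counterterm $\rmo_{N,T}$ built into $\cQ_{N,T}$;
\item a probabilistic proof that the renormalised $\bV_{A_T}$ converges in $L^2(\bP;L^1(\R_+))$. This uses the second Wiener chaos together with the transversality and spectral bounds of Lemma~\ref{lem:vac_pol_bound}.
\end{itemize}
$\bV$ must then be carried as an extra component of the enhanced noise, shifting as $\bV\mapsto\bV+2\bfPi(A,B)+\bfPi(B,B)$ under the drift, for the continuity-in-noise argument as $T\to\infty$ to work.

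Once $\cQ_N$ and $\cW_N$ are locally Lipschitz in this enhanced noise uniformly in $N$, Step~3 needs no near-optimal drifts. Convergence in probability plus the uniform $q>1$ bounds already give convergence of the Laplace transforms.
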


The convergence is proved in Theorem~\ref{thm:main}, while the density of the gauge marginal with respect to $\mu^{\rmA}$ is given in \eqref{eq:gauge_marginal_density} in Section~\ref{sec:densities}.

We construct the joint law of the gauge and Higgs fields in the Coulomb gauge \eqref{eq:Coulomb} and describe its disintegration. Conditioned on the enhanced gauge field, the Higgs field has a renormalised $P(\phi)_2$ law with the covariant Gaussian free field as reference measure. Integrating out the Higgs field gives the gauge marginal, which is the pure Yang--Mills measure weighted by the conditional Higgs partition function and the renormalised determinant ratio.
\subsection{Main ingredients} \label{sec:intro_summary}
We now describe the main elements of the proof of Theorem  \ref{thm:intro_main}.
\paragraph{Bou\'e-Dupuis formula.}
The principal tool that we use in our construction is a variational representation of the free energy of a probability measure defined in terms of an underlying Brownian motion. To state this variational formula in sufficiently general terms, let $(X_t)_{t\geq 0}$ be a cylindrical $L^2$ Brownian motion defined on a filtered probability space $(\Omega,\cF,\bP)$, and we set $Y_\Cdot:=\int_0^\Cdot \sigma_t \dif X_t$ for a suitable operator $\sigma_t$. Let $\bH_a$ be the progressively measurable processes in $L^2(\R^+ \times\mathbb{T}^2)$, and we also define
\[
\|u\|_{\bH}^2:=\int_0^\infty \|u_s\|_{L^2}^2\dif s, \qquad \rmJ_\cdot(u)=\int_0^\cdot \sigma_s u_s\dif s.
\]
We say that $W$ is a \emph{tame} functional, if
\[
\E[|W(Y_\Cdot)|^p]+\E[e^{-qW(Y_\Cdot)}]<\infty
\]
holds for some $p,q\ge 1$ such that $1/p+1/q=1$. For such a tame functional $W$, the Bou\'e-Dupuis formula, see~\cite{BoueDupuis98,Ustunel14}, then provides that
\[
-\log \bE[e^{-W(Y_\Cdot)}]=\inf_{u\in \bH_a} \E\Big[W(Y_\Cdot+\rmJ_\Cdot(u))+\frac 1 2 \|u\|_{\bH}^2\Big],
\]
where the expectation is with respect to the underlying measure $\bP$.

For a suitable choice of test function, this formula yields an iterated variational representation of the Laplace transform of $\mu^\YMH_{N,T}$. More precisely, we first apply the Bou\'e-Dupuis formula to the outer expectation with respect to the Gaussian part of $\mu_T^\YM$. This leads us to study the free energy of the conditional Higgs field (later called $\cW_N$) and the ratio of determinants $Q_{N,T}$ for the gauge field $A_T$ shifted by a random Cameron-Martin drift, which is represented throughout by the symbols $B_T$ or (in pathwise results) $B$. We apply the Bou\'e-Dupuis formula again to obtain a variational problem for the free energy of the conditional Higgs field in terms of the interaction potential $V(\phi)$ and a Gaussian field (later called $\phi_{\bA,B,N}$), whose covariance is determined by the shifted gauge field $A_T+B_T$.

This approach encounters the following technical challenge. Controlling the inner variational problem is aided by the coercivity of the interaction potential supplied by its leading term $c_n |\phi|^{2n}$. A corresponding term is absent in the outer variational problem, in which the only coercivity is provided by the norm of the drift, effectively, $\tfrac 1 2 \norm{B_T}_{\Omega^1 H^1}^2$. We therefore take care to prove all relevant bounds with sufficiently mild (sub-quadratic) dependence in the norm of the gauge drift. We comment on some key steps in this overall plan next.

\paragraph{The renormalised covariant Laplacian.}
We introduce spaces of rough (enhanced) gauge fields $\bA = (A,A^2)\in\cX^{-\kappa} = \Omega^1_\Coul C^{-\kappa/2}\times C^{-\kappa/2}$
($\Coul$ is for `Coulomb'),
for which we construct the operator corresponding formally to $L_{A} = \diff_{A}^*\diff_{A}$.
The space is moreover stable under Cameron--Martin shifts $B\in\Omega^1H^1$
and $L_{A+B}$,
after adding a suitable mass, is positive and has a resolvent satisfying the same elliptic estimates
as a classical second-order operator, up to polynomial factors in the enhanced noise $\bA$ and a sufficiently small power of $\|B\|_{H^1}$,
see Theorems \ref{thm:resolvent_estimate} and \ref{thm:resolvent_estimate_pushed}.
These estimates use the Sobolev spaces $H_B^s$ defined by $L_B$.
Comparison with ordinary Sobolev norms lets us apply standard multiplication estimates with controlled dependence on $B$, as in Lemma~\ref{lem:maps_0_order}.
We also prove continuity of the resolvent with respect to the enhanced field.
These estimates are central to our
analysis of the variational problem for both the Higgs and gauge fields.

\paragraph{The conditional Higgs measure.}
For a frozen rough gauge field, we extend the construction in \eqref{eq:intro_covariant_GFF} using the renormalised covariant Laplacian from Section~\ref{sec:domain_renorm_laplacian}.
In the notation of Section~\ref{sec:conditional_Higgs}, $\phi_{\bA,B,N}$ denotes this field, with $\bA$ recording the gauge field and its renormalisation data, and $B$ the more regular shift used in the variational problem.
To remove the cutoff $N$, we decompose it into the classical covariant Gaussian free field for $B$ and a more regular correction:
\begin{equation}\label{eq:intro_GFF_decomposition}
    \phi_{\bA,B,N}=\phi_{\0,B,N}+Z_{\bA,B,N},
\end{equation}
where $\phi_{\0,B,N}$ is the Gaussian free field associated with the classical covariant Laplacian $L_B$,
while $Z_{\mathbb{A},B,N}$ is uniformly bounded in $H_B^s$ for $s<1$ and depends continuously on $\bA$ in stochastic $L^p$-spaces, see Theorem \ref{thm:cov_GFF_decomposition}.
The singular part of the field is therefore a two-dimensional Gaussian free field associated to the classical covariant Laplacian $L_B$.
A caveat is that our Wick renormalisation constants are taken independent of $\bA,B$,
so an important step is to control the difference between the `flat' Wick constant for $L_0$ and the covariant Wick constant associated to $L_B$.
In the end, we obtain convergence of the Laplace transform of the conditional Higgs measure to a limit that depends continuously on $(\bA,B)$,
see Theorem \ref{thm:higgs_convergence}.

\paragraph{The determinant ratio.} The covariant Gaussian reference has a formal partition function proportional to $(\det L_{\mathbb A})^{-1}$, since the Higgs field has two real components. We extract a finite part by formally multiplying by $\det L_0$. Appendix~\ref{app:galerkin} gives the corresponding renormalised finite-dimensional identity. The primary difficulty is then to show that $\left(\frac{\det{L_{\mathbb{A}}}}{\det{L_0}}\right)^{-1}$, is finite for a.e $\mathbb{A}$, regular as one varies $\mathbb{A}$, and integrable w.r.t to taking expectation over $\mathbb{A}$. To achieve the first two points, we will write \begin{equation}
 \frac{\det L_{\mathbb{A}}}{\det{L_0}}=\exp(\Tr \log L_{\mathbb{A}} -\Tr \log L_{0}). \label{eq:trace-det}\end{equation}
 We then apply the resolvent identity three times and estimate the resulting terms using resolvent analysis. The remainder of the expansion can be estimated using resolvent analysis as well. To show integrability we again take advantage of the Boué-Dupuis formula. This will reduce the problem to studying
\[ \Tr \log L_{\mathbb{A}+B}-\Tr \log L_{0}, \]
where $B\in H^{1}$, and we have to achieve a bound which grows sub-quadratically in $\|B\|_{H^1}$, which is the main difficulty of the analysis. These steps are carried out in detail in Section \ref{sec:determinants}.
Keeping the gauge cutoff $T$ and a realisation of the smooth gauge field $A_T$ fixed, we can instead approximate the Higgs field by Fourier projection.
This gives finite-dimensional measures with ordinary determinant ratios.
With matching renormalisation, adapting the proof of Theorem~\ref{thm:higgs_convergence} gives the same conditional Higgs limit, while Lemma~\ref{lem:ratio_det_Galerkin} in Appendix~\ref{app:galerkin} identifies \eqref{eq:trace-det} as a limit of the ratio of finite dimensional determinants, with a suitable (finite) quadtatic counterterm, see Remark \ref{rem:det-counterterm}.
This connects our construction to finite-dimensional regularisations of the Yang--Mills--Higgs action.
We use the resolvent cutoff in \eqref{eq:intro_covariant_GFF} because it preserves gauge covariance.

Finally having discussed these ingredient we assemble them in Section \ref{sec:construction_measure} to obtain Theorem \ref{thm:intro_main}.
\subsection{Properties of the measure}
Our methods allow us to deduce interesting properties of the measure beyond Theorem \ref{thm:intro_main}, which we now discuss.  These results are, to the best of our knowledge, new results for the Abelian Yang--Mills--Higgs measure on $\T^2$.

\paragraph{Exponential integrability.}
Theorem~\ref{thm:main}, with the test-function class $C_{{\rm test}}$ defined in Section~\ref{sec:gauge_variational_setup} by \eqref{eq:ymh_test_functions}, gives the following exponential integrability estimate.
\begin{theorem}[Exponential integrability]
    Let $\kappa>0$ and $n$ be the degree of the polynomial $\cV$ defined in \eqref{eq:def_V}. There exists $\eta_\rmA,\eta_\Phi>0$ such that
    \[
    \E_{\mu^\YMH}[e^{\eta_\rmA\|A\|_{C^{-\kappa}}^2+ \eta_\Phi \|\phi\|_{C^{-\kappa}}^{2n}}]<\infty.
    \]
\end{theorem}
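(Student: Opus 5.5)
The estimate will follow from the uniform variational bounds behind Theorem~\ref{thm:main}, applied to the test functional $F(A,\phi)=\eta_\rmA\|A\|_{C^{-\kappa}}^2+\eta_\Phi\|\phi\|_{C^{-\kappa}}^{2n}$. Because the Laplace transforms converge, and because Fatou's lemma applies (or we first truncate $F$ by $F\wedge M$, which lies in $C_{\rm test}$, and let $M\to\infty$ by monotone convergence), it is enough to show
\[
\sup_{N,T}\E_{\mu^\YMH_{N,T}}\bigl[e^{F}\bigr]<\infty .
\]
By the iterated Bou\'e--Dupuis representation, $-\log$ of this quantity equals the outer variational problem over gauge drifts $u$ (with $B_T=\rmJ(u)$). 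Its integrand is $-\eta_\rmA\|A_T+B_T\|_{C^{-\kappa}}^2$, plus the renormalised determinant term and the conditional Higgs free energy with the extra weight $e^{\eta_\Phi\|\phi\|_{C^{-\kappa}}^{2n}}$, plus $\tfrac12\|u\|^2_{\bH}$. From this we subtract the same expression without $F$, namely $\log\cZ_{N,T}$, which is already known to be uniformly bounded. So the task is a uniform lower bound on the perturbed infimum.

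\textbf{Inner (Higgs) step.} Fix the enhanced field $\bA$ and the drift $B$. We apply the inner Bou\'e--Dupuis formula with a Higgs drift $v$, setting $\phi=\phi_{\bA,B,N}+\rmJ(v)$. Then
\[
\|\phi\|_{C^{-\kappa}}^{2n}\le C\|\phi_{\bA,B,N}\|^{2n}_{C^{-\kappa}}+C\|\rmJ(v)\|_{C^{-\kappa}}^{2n}.
\]
The drift term is controlled by $\|\rmJ(v)\|_{L^{2n}}^{2n}$, and this is absorbed by the coercive part of the Wick-renormalised potential, namely $c_n\int|\rmJ(v)|^{2n}$ minus lower-order Wick cross terms, exactly as in the proof of Theorem~\ref{thm:higgs_convergence}, provided $\eta_\Phi\ll c_n$. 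The Gaussian term is handled through the decomposition \eqref{eq:intro_GFF_decomposition}. Its moments are polynomial in the enhanced noise of $\bA$ and in a small power of $\|B\|_{H^1}$, by Theorems~\ref{thm:cov_GFF_decomposition} and \ref{thm:resolvent_estimate_pushed}. The result is a bound on the perturbed conditional free energy of the form
\[
\ge -C\bigl(1+P(\bA)+\|B\|_{H^1}^{\gamma}\bigr), \qquad \gamma<2,
\]
which matches the unperturbed bound. The main thing to check is that the $2n$-th moment of the Gaussian part grows only sub-quadratically in $\|B\|_{H^1}$. If a direct moment bound is too lossy here, we instead use the $L^p$-in-$\omega$ bounds for $Z_{\bA,B,N}$ together with gauge covariance, which reduces the dependence to the $H^1$-small part of $B$.

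\textbf{Outer (gauge) step.} We expect this step to be the main obstacle. We need
\[
\eta_\rmA\|A_T+B_T\|_{C^{-\kappa}}^2\le 2\eta_\rmA\|A_T\|^2_{C^{-\kappa}}+2\eta_\rmA C\|B_T\|_{H^1}^2 .
\]
Since $\|B_T\|_{H^1}\lesssim\|u\|_{\bH}$, the second term is absorbed by $\tfrac14\|u\|_\bH^2$ for $\eta_\rmA$ small. The first term is handled by Fernique's theorem, since $A_T$ is Gaussian plus the bounded variable $\vartheta$. The delicate point is that the quadratic budget $\tfrac12\|u\|^2_\bH$ must also absorb the determinant term $\cQ$ and the Higgs free energy, and both of these are only bounded by $C(1+P(\bA))+C\|B\|_{H^1}^{\gamma}$ with $\gamma<2$ by the analysis of Section~\ref{sec:determinants}. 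Young's inequality turns this into $\tfrac14\|u\|_\bH^2+C'$. The polynomial factors $P(\bA)$ remain, and they have finite expectation uniformly in $T$. Combining the inner and outer bounds gives the uniform lower bound, hence the supremum bound above, and finally the claim for $\mu^\YMH$ by passing to the limit.
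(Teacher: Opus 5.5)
Your architecture matches the paper's. There the result is read off from Theorem~\ref{thm:main}: the function $h=-(\eta_\rmA\|A\|_{C^{-\kappa}}^2+\eta_\Phi\|\phi\|_{C^{-\kappa}}^{2n})$ satisfies the growth condition \eqref{eq:ymh_test_functions}, since only $|h|$ is constrained. Proposition~\ref{prop:unif_integrability} therefore bounds $\E_{\mu^\YMH_{N,T}}[e^{-h}]=\cZ_{N,T}(h,1)/\cZ_{N,T}(0,1)$ uniformly. Your inner/outer argument is an unpacking of that proposition, and your outer (gauge) step is essentially Lemma~\ref{lem:BD_A_bounds}; it is fine.

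The gap is in the inner step, where you absorb $\eta_\Phi\|\rmJ(v)\|_{C^{-\kappa}}^{2n}$ into $c_n\|\rmJ(v)\|_{L^{2n}}^{2n}$. This needs $L^{2n}\hookrightarrow C^{-\kappa}$. In two dimensions the sharp embedding is $L^{2n}\hookrightarrow B^{0}_{2n,\infty}\hookrightarrow C^{-1/n}$, so the step fails for $\kappa<1/n$. The entropy cannot compensate, because it controls the drift only quadratically.

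This cannot be patched, because the claimed bound itself fails for $\kappa<1/n$. Take a smooth bump $Z$ of height $h$ and width $r$. Then $\|Z\|_{C^{-\kappa}}^{2n}\approx h^{2n}r^{2n\kappa}$ and $\|Z\|_{L^{2n}}^{2n}\approx h^{2n}r^{2}$, while the drift producing $Z$ costs $O_r(h^2)$ times a polynomial in the enhanced gauge noise. Test the easy half of the Bou\'e--Dupuis formula with zero gauge drift and this deterministic Higgs drift. The Wick cross terms then have mean zero, up to terms of lower degree in $Z$ coming from the mismatch of Wick constants. To pass to the limit, use a bounded continuous minorant of $F$ built from $r^{\kappa}|\langle\phi,\rho_r\rangle|$, with $\rho_r$ a mollifier at scale $r$. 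This gives, uniformly in $N,T$, a lower bound on the log-Laplace functional of the form $c\,\eta_\Phi h^{2n}r^{2n\kappa}-Ch^{2n}r^2-C_r(1+h^2)$, truncated at an arbitrary level $M$.

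If $\kappa<1/n$, then $r^{2n\kappa}\gg r^2$ as $r\to0$. Fixing $r$ small and letting $h\to\infty$ (then $M\to\infty$) gives $\E_{\mu^\YMH}[e^{\eta_\Phi\|\phi\|_{C^{-\kappa}}^{2n}}]=\infty$ for every $\eta_\Phi>0$. So the estimate as stated is false in that range. The paper's deduction has the same hole, located in the growth condition \eqref{eq:ymh_test_functions} and in the unjustified absorption step in the proof of Theorem~\ref{thm:higgs-W-bound}.

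What your argument does prove is the bound with the Higgs term measured in a norm dominated by $L^{2n}$. Examples are $\|\phi\|_{B^{-\kappa}_{2n,2n}}^{2n}$, or $C^{-\kappa}$ with $\kappa\ge 1/n$. In that version, run the internal estimates with a regularity parameter smaller than the $\kappa$ of the statement, since $Z_{\bA,B,N}$ only lies in $H^s_B$ for $s<1-3\kappa$.
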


\paragraph{Gauge covariance.}
Our regularised measure has a natural gauge covariance property that actually allows us to construct the measure in gauges beyond the Coulomb gauge. Given $g\in C(\T^2;U(1))$ we can construct a smooth approximation $g_T\to g$ as $T\to\infty$ in $C(\T^2;U(1))$. We define the (gauge-transformed) measure
\begin{equation}\label{eq:intro_regularised_measure_gauge_transformed}
    \dif\mu^{\YMH[g]}_{N,T}(A,\phi)
    =\frac{1}{\cZ_{N,T}^{[g]}}
    \exp\bigl(-\cQ_{N,T}(A)-\cV_{A,N,T}(\phi)\bigr)
    \dif\mu^{\GFF}_{A,N,T}(\phi)\dif\mu_T^{\rmA[g]}(A),
\end{equation}
where $\mu_T^{\rmA[g]}$ is the law of $A_T^{g_T}$ when $A_T$ has law $\mu_T^{\rmA}$.
\begin{proposition}\label{prop:gauge_covariance}
    The measure $\mu^{\YMH[g]}_{N,T}$ converges (in the same sense as $\mu^\YMH_{N,T}$) as $N,T\to\infty$. Furthermore, for any continuous and bounded function $F$ on $\Omega^1\cS'\times \cS'$
    \[
    \int F(A,\phi)\dif \mu^{\YMH[g]}(A,\phi)=\int F(A^g,\phi^g) \dif\mu^\YMH(A,\phi).
    \]
  In particular, $\mu^{\YMH[g]}=\mu^{\YMH}$ on gauge-invariant observables, i.e. whenever $F$ satisfies $F(A,\phi)=F(A^g,\phi^g)$.
\end{proposition}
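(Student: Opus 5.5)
The plan is to prove the identity at the regularised level, exactly and for every fixed $N,T$, and then pass to the limit using the convergence machinery behind Theorem~\ref{thm:main}.

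\textbf{Step 1: regularised covariance.} Fix smooth $g_T$ and a smooth connection $A$. The covariant Laplacian is gauge covariant: $L_{A^{g_T}} = g_T L_A g_T^{-1}$, since $\diff_{A^{g}}(g\phi)=g\,\diff_A\phi$. The following are then unitarily invariant under conjugation by multiplication with $g_T$:
\begin{itemize}
\item the spectral shift $\lambda_{A,T}$ defined in \eqref{eq:m_def};
\item the constant counterterm $\rmc_T$;
\item the resolvents in \eqref{eq:intro_covariant_GFF}.
\end{itemize}
It follows that $\phi_{A^{g_T},N,T}\overset{d}{=} g_T\phi_{A,N,T}$, because $g_T X^\Phi$ is again a cylindrical complex Brownian motion. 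The Wick-renormalised potential $\cV_{A,N,T}$ depends on $\phi$ only through $|\phi|^2$ and field-independent constants, so it is invariant. The determinant term $\cQ_{N,T}$ in \eqref{eq:determinant_computation} is a function of spectral data of $L_A$ (traces of functions of the resolvent), up to $A$-independent counterterms, so $\cQ_{N,T}(A^{g_T})=\cQ_{N,T}(A)$. Combining these, the pushforward of $\mu^{\YMH}_{N,T}$ under $(A,\phi)\mapsto(A^{g_T},g_T\phi)$ is exactly $\mu^{\YMH[g]}_{N,T}$, and $\cZ^{[g]}_{N,T}=\cZ_{N,T}$. Equivalently,
\[
\int F\,\dif\mu^{\YMH[g]}_{N,T}=\int F(A^{g_T},g_T\phi)\,\dif\mu^{\YMH}_{N,T}.
\]
I will verify the invariance of $\cQ_{N,T}$ directly from its formula, checking each of its three terms separately.

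\textbf{Step 2: convergence.} I then take $N,T\to\infty$ on the right-hand side. The map $\Psi_T(A,\phi)=(A-\diff g_T g_T^{-1},\,g_T\phi)$ converges to $\Psi(A,\phi)=(A^g,g\phi)$. This convergence should be understood on the spaces where tightness holds, i.e.\ $\Omega^1C^{-\kappa}\times C^{-\kappa}$.

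\textbf{Main obstacle.} The delicate point is that for $g$ merely continuous, $\diff g\,g^{-1}$ is only a distribution. Likewise, multiplication $g\phi$ with $\phi\in C^{-\kappa}$ requires $g\in C^{\kappa'}$ for some $\kappa'>\kappa$. Two routes are available:
\begin{itemize}
\item Restrict first to $g\in C^{\alpha}$ with $\alpha>\kappa$. Then $\Psi_T\to\Psi$ locally uniformly as maps $C^{-\kappa}\times C^{-\kappa}\to C^{-1+\alpha}\times C^{-\kappa}$. Weak convergence of $\mu^{\YMH}_{N,T}$ from Theorem~\ref{thm:intro_main}, together with exponential moments (the exponential integrability theorem) for uniform integrability, then gives convergence via the continuous mapping theorem in its uniform-convergence variant.
\item Alternatively, interpret $\Psi$ on the support of the measure. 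This is natural because $\phi$ lies in $C^{-\kappa}$ for every $\kappa>0$.
\end{itemize}
For Laplace transforms I test against $F=e^{\ell}$ with $\ell$ linear in the class $C_{\rm test}$. Here $\ell\circ\Psi_T$ is again of the test form: it is linear in $A$ up to the constant $\ell(\diff g_Tg_T^{-1})$, and linear in $\phi$ with test function multiplied by $g_T$. The convergence of Theorem~\ref{thm:main} then applies directly, provided its statement is uniform over test functions converging in the relevant norm. I expect to need exactly this uniformity, and I would extract it from the continuity of the variational bounds in the test function.

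\textbf{Step 3: conclusion.} Passing to the limit gives $\int F\,\dif\mu^{\YMH[g]}=\int F(A^g,g\phi)\,\dif\mu^{\YMH}$ for bounded continuous $F$, where $\phi^g=g\phi$. The final claim is then immediate: if $F\circ\Psi=F$, both integrals agree.
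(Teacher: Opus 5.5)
Your proposal follows the paper's proof. The paper likewise derives the exact cutoff identity $\int F\,\mathrm{d}\mu^{\mathrm{YMH}[g]}_{N,T}=\int F(A^{g_T},g_T\phi)\,\mathrm{d}\mu^{\mathrm{YMH}}_{N,T}$ from $L_{A^g}=gL_Ag^{-1}$, the gauge invariance of $\lambda_{A,T}$, $\mathcal V_{A,N,T}$ and $\mathcal Q_{N,T}$, and the unitary invariance of the law of $X^\Phi$, and then says only that taking limits yields the result. For the trace term in $\mathcal Q_{N,T}$, note that the individual resolvents are not trace class in two dimensions, so cyclicity cannot be applied directly; the paper passes to the trace-class powers $(\cdot)^{-1-\varepsilon}$, applies cyclicity there, and lets $\varepsilon\to0$. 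Your Step~2 supplies the limit passage in more detail than the paper does. Your observation that $g\phi$ with $\phi\in C^{-\kappa}$ needs some H\"older regularity of $g$ (you impose $g\in C^{\alpha}$, $\alpha>\kappa$) flags a point the paper's statement for merely continuous $g$ leaves implicit, and as written your argument proves the claim only for such H\"older $g$.
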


\begin{remark}
Proposition~\ref{prop:coulomb_representative} gives the gauge-fixing statement for $H^1$ configurations. The corresponding formal integral calculation is given in Appendix~\ref{app:gauge_fixing}, see \eqref{eq:formal_computation_gauge_change}.
\end{remark}
We use Proposition~\ref{prop:gauge_covariance} to construct measures in linear covariant gauges, with formal action
\begin{align}\label{eq:YMH_action_alpha}
\cS_\YMH^{\alpha}(A,\phi):=\cS_\YMH(A,\phi)+\alpha \|\diff^*A\|_{L^2}^2,
\end{align}
where $\alpha>0$. Let $\zeta$ be a $\bfi\R$-valued white noise with zero spatial mean independent of the random variables defined above. We denote its law by $\nu$.  We define the random variable $\scg^{\alpha^{-1/2}\zeta}=e^{-u}$ where $u=u^{\alpha^{-1/2}\zeta}$ satisfies
\[
\diff u=\diff (\diff\diff^*)^{-1}(\alpha^{-1/2}\zeta).
\]
 We choose a smooth  approximation $u_T\to u$ which naturally yields $\scg_T^{\alpha^{-1/2}\zeta}$. We call this measure $\mu^{\YMH(\alpha)}_{N,T}$ and it is defined by
\[
\mu^{\YMH(\alpha)}_{N,T}:=\int\mu^{\YMH[\scg^{\alpha^{-1/2}\zeta}]}_{N,T}  \dif\nu(\zeta).
\]
Then this allows us to conclude with the following theorem:
\begin{theorem}\label{thm:linear_covariant_gauges}
The measure $\mu^{\YMH(\alpha)}_{N,T}$ has a unique limit as $N,T\to\infty$ to a probability measure denoted by $\mu^{\YMH(\alpha)}$. In fact, one has for any bounded continuous function $F$ on $\Omega^1\cS'\times \cS'$
\[
\int F(A,\phi)\dif\mu^{\YMH(\alpha)}(A,\phi)=\int F(A^{\scg^{\alpha^{-1/2}\zeta}}, \phi^{\scg^{\alpha^{-1/2}\zeta}})  \dif\mu^{\YMH}(A,\phi)  \dif\nu(\zeta).
\]
In particular, the measures coincide on gauge-invariant observables. Finally,
\[
\lim_{\alpha\to \infty} \int F(A,\phi)\dif\mu^{\YMH(\alpha)}(A,\phi)= \int F(A,\phi)  \dif\mu^{\YMH}(A,\phi) .
\]
\end{theorem}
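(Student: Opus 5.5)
The plan is to reduce everything to Proposition~\ref{prop:gauge_covariance}, applied conditionally on $\zeta$, together with dominated convergence in $\zeta$.

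\textbf{Convergence for fixed noise.} Fix a realisation of $\zeta$ such that $u=u^{\alpha^{-1/2}\zeta}$ is continuous; this holds $\nu$-almost surely. Indeed, $\diff u=\diff(\diff\diff^*)^{-1}(\alpha^{-1/2}\zeta)$ means that $u$ equals $(\diff\diff^*)^{-1}$ applied to a white noise, up to a constant. In two dimensions this gains two derivatives, so $u\in C^{1-\kappa}$ almost surely. Hence $g=\scg^{\alpha^{-1/2}\zeta}=e^{-u}\in C(\T^2;U(1))$. We choose $u_T$ to be a mollification of $u$, so that $g_T\to g$ uniformly. For such $\zeta$, Proposition~\ref{prop:gauge_covariance} gives, for every bounded continuous $F$,
\[
\lim_{N,T\to\infty}\int F\,\dif\mu^{\YMH[g]}_{N,T}=\int F(A^{g},\phi^{g})\,\dif\mu^{\YMH}(A,\phi).
\]

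\textbf{Integrating out $\zeta$.} The left-hand side is bounded by $\|F\|_\infty$, uniformly in $N$, $T$ and $\zeta$. Dominated convergence in $\zeta$ therefore yields
\[
\int F\,\dif\mu^{\YMH(\alpha)}_{N,T}\to\int\!\!\int F(A^{g},\phi^{g})\,\dif\mu^{\YMH}\,\dif\nu(\zeta).
\]
We need the right-hand side to be a measurable function of $\zeta$ that defines a probability measure. Measurability follows because $\zeta\mapsto g$ is measurable into $C(\T^2;U(1))$. It also requires $(A,\phi)\mapsto(A^g,g\phi)$ to be continuous on $\Omega^1\cS'\times\cS'$. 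This is the one delicate point: multiplying the distribution $\phi$ by the merely continuous function $g$ is not defined on all of $\cS'$. I would handle it as follows.
\begin{itemize}
\item Work on the support spaces $\Omega^1C^{-\kappa}\times C^{-\kappa}$. The exponential integrability theorem shows $\mu^{\YMH}$ concentrates on these spaces.
\item Since $g\in C^{1-\kappa}$ and $1-\kappa>\kappa$, the product $g\phi$ is well defined and continuous in $\phi\in C^{-\kappa}$.
\item Likewise $\diff g\,g^{-1}=-\diff u\in C^{-\kappa}$, so $A^g=A+\diff u$ is a continuous shift.
\end{itemize}
If Proposition~\ref{prop:gauge_covariance} is stated only for $g$ of this regularity, this is exactly the regularity we have. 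Weak convergence then holds with limit $\mu^{\YMH(\alpha)}$ given by the displayed formula, and uniqueness of the limit is automatic. For gauge-invariant $F$ the integrand equals $F(A,\phi)$, and the measures coincide on such observables.

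\textbf{Limit $\alpha\to\infty$.} As $\alpha\to\infty$ we have $\alpha^{-1/2}\zeta\to0$. Hence $u\to0$ in $C^{1-\kappa}$ in $\nu$-probability, since $u$ scales linearly in $\alpha^{-1/2}$, and $g\to1$. By the continuity just described, $F(A^{g},\phi^{g})\to F(A,\phi)$ pointwise for $\mu^{\YMH}$-almost every $(A,\phi)$, along $\nu$-almost surely convergent subsequences. The integrands are bounded by $\|F\|_\infty$, so dominated convergence gives the last claim. Passing to subsequences suffices because every subsequence has a further subsequence with the same limit.

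The main obstacle is the regularity and continuity of the gauge action on the support of $\mu^{\YMH}$, which is needed to justify composing with $F$. Everything else is dominated convergence.
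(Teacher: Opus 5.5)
Your proposal is correct and follows essentially the paper's route. The paper integrates the finite-cutoff identity from the proof of Proposition~\ref{prop:gauge_covariance} over $\zeta$, passes to the limit, and obtains the $\alpha\to\infty$ statement from $\scg^{\alpha^{-1/2}\zeta}\to1$ in $C^{1-\delta}$; your conditioning on $\zeta$ plus dominated convergence does the same thing in more detail, and your check that $g\in C^{1-\kappa}$ makes the gauge action well defined and continuous on $\Omega^1C^{-\kappa}\times C^{-\kappa}$ fills in a point the paper leaves implicit.

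One simplification: $u$ depends linearly on $\alpha^{-1/2}\zeta$, so $\|u^{\alpha^{-1/2}\zeta}\|_{C^{1-\kappa}}=\alpha^{-1/2}\|u^{\zeta}\|_{C^{1-\kappa}}\to0$ for every fixed $\zeta$, and the subsequence argument in your last step is not needed.
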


Proposition~\ref{prop:gauge_covariance} and Theorem~\ref{thm:linear_covariant_gauges} are proved in Section~\ref{sec:gauge_covariance}, starting from the finite-cutoff identities \eqref{eq:gauge_covariance_cutoff} and \eqref{eq:linear_covariant_gauges_cutoff}, respectively.

\begin{remark}
    Formally, our Colomb gauge construction from Theorem~\ref{thm:intro_main} can be obtained $\alpha=\infty$ in the action~\eqref{eq:YMH_action_alpha} for the formal measure in consideration. In fact, the previous theorem makes that rigorous.
\end{remark}

\paragraph{Large deviations in the semi-classical limit.} Since our analysis is based on the Boué-Dupuis formula, it provides us with a variational representation of the measure. This allows us to deduce a large deviation principle in a natural way. In particular consider $\mu^{\YMH;\varepsilon}$, which is formally given by  \begin{equation*}
    \dif\mu^{\YMH;\varepsilon}(A,\phi)
    \propto
    \exp\bigl(-\varepsilon^{-1}\cS_{\YMH}(A,\phi)\bigr) \dif A \dif\phi.
\end{equation*}
Our construction straightforwardly extends to the $\varepsilon\neq 1$ case. In Section \ref{sec:large_deviations} we will show the following theorem:
\begin{theorem}[Large deviations]\label{thm:large_deviations}
    The measures $\mu^{\YMH;\varepsilon}$ satisfy a Laplace principle principle for every bounded, Lipschitz and gauge invariant function $f=f(A,\phi)$. More precisely
    \begin{align*}
    &-\lim_{\varepsilon \to 0} \varepsilon\log \int \exp(-\varepsilon^{-1}f(A,\phi))\dif\mu^{\YMH;\varepsilon}\\  =& \inf_{(A,\phi)\in\Omega^1 H^1\times H^1}
\bigl\{f(A,\phi)+\cS_{\YMH}(A,\phi)\bigr\}
-\inf_{(A,\phi)\in\Omega^1 H^1\times H^1}
\cS_{\YMH}(A,\phi).\end{align*}
\end{theorem}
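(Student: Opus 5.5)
We prove the Laplace principle from the iterated Boué--Dupuis representation of Section~\ref{sec:construction_measure}, rescaled by $\varepsilon$. Concretely, the $\varepsilon$-measure has Gaussian references $\sqrt{\varepsilon}A$ and $\sqrt{\varepsilon}\phi$. Its renormalisation constants scale accordingly, and the determinant weight $\cQ$ enters with prefactor $1$ rather than $\varepsilon^{-1}$. Applying Boué--Dupuis to the outer gauge expectation, with drift $u$ and $B=\rmJ(u)$, and to the inner Higgs expectation, with drift $v$ and $\psi=\rmJ(v)$, expresses
\[
-\varepsilon\log\int e^{-\varepsilon^{-1}f}\dif\mu^{\YMH;\varepsilon}_{N,T}
\]
as a difference of two nested infima. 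Each infimum takes the form
\[
\E\Big[f(\sqrt\varepsilon\bA+B,\sqrt\varepsilon\phi+\psi)+\varepsilon\cQ(\sqrt\varepsilon\bA+B)+\cV^\varepsilon(\cdots)+\tfrac12\|u\|^2+\tfrac12\|v\|^2\Big],
\]
and the normalisation is the same expression with $f=0$. After removing the cutoffs, which is justified by the uniform bounds of Theorem~\ref{thm:main}, it suffices to show that each nested infimum converges to $\inf\{f+\cS_{\YMH}\}$ over $\Omega^1H^1\times H^1$, respectively $\inf\cS_{\YMH}$.

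\emph{Upper bound.} Take $(A_0,\phi_0)\in\Omega^1H^1\times H^1$. Gauge invariance of $f$ and $\cS_{\YMH}$ together with Proposition~\ref{prop:coulomb_representative} let us assume $A_0$ is in Coulomb gauge. By density we may also take $A_0,\phi_0$ smooth. Choose the deterministic drifts so that $B=A_0-\vartheta$, which handles the harmonic part through the uniform $\vartheta$, and $\psi=\phi_0$; then $\frac12\|u\|^2=\frac12\|\diff A_0\|^2$. In the Higgs problem the drift cost is computed with respect to the covariant Laplacian $L_{\sqrt\varepsilon\bA+B}$. As $\varepsilon\to0$ this cost tends to $\frac12\|\diff_{A_0}\phi_0\|^2$ by the continuity of the resolvent in the enhanced field (Theorems~\ref{thm:resolvent_estimate} and \ref{thm:resolvent_estimate_pushed}) together with $\sqrt\varepsilon\bA\to0$ in $\cX^{-\kappa}$. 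The Wick polynomials $\cV^\varepsilon$ evaluated at $\sqrt\varepsilon\phi+\phi_0$ converge to $\int\cV(|\phi_0|^2)$, since each Wick power of $\sqrt\varepsilon\phi$ carries a positive power of $\varepsilon$. The term $\varepsilon\cQ\to0$ because $\cQ$ has sub-quadratic bounds in $\|B\|_{H^1}$ with integrable polynomial dependence on $\bA$. Finally $f$ is Lipschitz, so $f(\sqrt\varepsilon\bA+B,\sqrt\varepsilon\phi+\psi)\to f(A_0,\phi_0)$. Together these give $\limsup\le f(A_0,\phi_0)+\cS_{\YMH}(A_0,\phi_0)$.

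\emph{Lower bound.} This is the main obstacle. Take near-optimal drifts $u^\varepsilon,v^\varepsilon$. The uniform coercivity estimates from the construction, which are sub-quadratic in $\|B\|_{H^1}$ and controlled by $c_n|\phi|^{2n}$ in the Higgs problem, give
\[
\E\big[\|u^\varepsilon\|^2+\|v^\varepsilon\|^2\big]\lesssim 1
\]
uniformly in $\varepsilon$. They also show that every term involving $\sqrt\varepsilon\bA$ or Wick corrections equals its classical counterpart up to $o(1)$ plus a small multiple of the drift norms, which can be absorbed. We first try to write the Wick-expanded potential as $\int\cV(|\psi|^2)$ plus cross terms between $\psi$ and Wick powers of $\sqrt\varepsilon\phi$, and to bound those cross terms by Young's inequality against $\delta\|\psi\|_{L^{2n}}^{2n}+C_\delta\varepsilon^{c}\times(\text{polynomial in the noise})$. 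The covariant drift cost for $v$ is bounded below by $\frac12\|\diff_{B}\psi\|^2-o(1)(1+\|\psi\|_{H^1_B}^2)$, using the decomposition~\eqref{eq:intro_GFF_decomposition} and the comparison of $H^s_B$ with Sobolev norms (Lemma~\ref{lem:maps_0_order}). Pathwise, the functional is then at least $(1-\delta)(f+\cS_{\YMH})(\vartheta+B,\psi)-o(1)$. Taking the infimum over $(\vartheta+B,\psi)\in\Omega^1H^1\times H^1$ inside the expectation and letting $\delta\to0$ gives the lower bound.

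Applying both bounds with general $f$ and with $f=0$ and subtracting yields the stated identity. The interchange of the limits $\varepsilon\to0$ and $N,T\to\infty$ is justified by the $\varepsilon$-uniformity of the cutoff-removal estimates, which follows from tracking powers of $\varepsilon$ in Theorem~\ref{thm:main}.
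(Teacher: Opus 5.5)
Your architecture matches the paper's: iterated Bou\'e--Dupuis at scale $\eps$, deterministic competitors for the upper bound, coercivity for the lower bound, and subtraction of the case $f=0$. However, the upper bound has a genuine gap at the harmonic component.

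Bou\'e--Dupuis applies only to the Brownian part, so $\vartheta$ must be conditioned on. Note also that $A^\eps=\vartheta+\sqrt\eps A^0$, so $\vartheta$ is not rescaled. A gauge drift $J(v)=\diff^*(\diff\diff^*)^{-1}\int_0^1 v_t\,\dif t$ is co-exact with zero mean. Hence $B=A_0-\vartheta$ is not an admissible drift unless $\vartheta=\vartheta_0$, the harmonic part of your Coulomb representative $A_0$, and that is a null event. The admissible choice $B=A_0-\vartheta_0$ produces the field $A_0+(\vartheta-\vartheta_0)$. You must then still bound $-\eps\log\E^\Theta[\exp(-\eps^{-1}\mathcal F^{f;\eps}(\vartheta))]$ from above. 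Jensen in $\vartheta$ only gives the $\vartheta$-average of $(f+\cS_{\YMH})(A_0+\vartheta-\vartheta_0,\phi_0)$, which is not $(f+\cS_{\YMH})(A_0,\phi_0)$.

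Three ingredients are missing:
(i) convergence of the conditional free energy $\mathcal F^{f;\eps}(\vartheta)$ uniformly in $\vartheta$;
(ii) continuity in $\vartheta$ of the limit, which the paper proves in Lemma~\ref{lem:deterministic_variational_problem} using $\diff_{A+h}\phi=\diff_A\phi+h\phi$, the Lipschitz property of $f$, and an a priori bound on $\|\diff_A\phi\|_{L^2}^2+\|\phi\|_{L^2}^2$ for near-minimisers;
(iii) a finite-dimensional Laplace principle for the uniform $\vartheta$-integral. For example, restrict to a ball of radius $r$ about $\vartheta_0$; its probability is $\gtrsim r^2$, so it costs only $O(\eps\log r^{-1})$.

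Your phrase ``handles the harmonic part through the uniform $\vartheta$'' hides exactly this step. The lower bound is unaffected, since there $\inf_\vartheta$ is a valid bound.

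Two further points need fixing.

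First, the mass. Realising $\phi_0$ as a Higgs shift costs $\frac12\langle\phi_0,(L+m)\phi_0\rangle$, where $L$ is the renormalised covariant Laplacian of the shifted enhanced field and $m\ge1$ its mass. This tends to $\frac12\|\diff_{A_0}\phi_0\|_{L^2}^2+\frac12\|\phi_0\|_{L^2}^2$, not $\frac12\|\diff_{A_0}\phi_0\|_{L^2}^2$. The renormalised potential contains $-\frac m2\int|\phi|^2$, so it tends to $\int\cV(|\phi_0|^2)-\frac12\|\phi_0\|_{L^2}^2$. These two errors cancel in your upper bound. In the lower bound, however, you must keep the $\frac m2\|\psi\|_{L^2}^2$ coming from the Higgs drift cost, which is at least $\frac12\|(L+m)^{1/2}\psi\|_{L^2}^2$. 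Bounding it below by $\frac12\|\diff_{\vartheta+B}\psi\|_{L^2}^2$ alone leaves an uncancelled $-\frac m2\|\psi\|_{L^2}^2$ and gives the wrong infimum.

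Second, your error term $-o(1)(1+\|\psi\|_{H^1_B}^2)$ has a random, unbounded prefactor of order $\sqrt\eps\,\triple{\bA^0}_{-\kappa}(1+\|B\|_{H^1})^{5\kappa}$. It therefore cannot be absorbed into $\frac12\|\diff_{\vartheta+B}\psi\|_{L^2}^2$ pathwise. To fix this:
\begin{itemize}
\item estimate the rough part $\langle\psi,J\psi\rangle$ through $H^{d/2}_B$ with $d=1+3\kappa<2$;
\item interpolate between $L^2$ and $H^1_B$;
\item apply Young against $\|\psi\|_{L^{2n}}^{2n}$ and $\|B\|_{H^1}^2$, so that the remainder depends on the noise alone and has expectation $o(1)$.
\end{itemize}
Done this way, your pathwise lower bound is a legitimate alternative to the paper's argument. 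The paper instead proves a coercive bound of the form cost $\ge\frac14$(control energy) $-R_\eps$, with $L^p$ control of $R_\eps$. It then compares with the $\eps=0$ cost on the event that the control energy is at most $R$, and handles the complement by Markov and H\"older.

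Finally, no interchange of $\eps\to0$ with $N,T\to\infty$ is needed. The variational problem can be written directly for the limiting objects at each fixed $\eps$, as the paper does, so you can drop the unproved claim of $\eps$-uniform cutoff removal.
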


\subsection{Related work}
We now give a (far from complete) overview of  related literature, we also point to \cite{chatterjee2019} for a review of work on gauge theory targeted towards probabilists.
\begin{itemize}
\item Some key earlier probabilistic constructions of two dimensional pure Yang-Mills are \cite{Driver89,GKS89} on the plane and \cite{Sengupta97,Levy03} on compact surfaces.
An approach that has inspired much later work is \cite{Chevyrev19} which  constructs the theory on a torus as a random distribution with optimal regularity. Some very recent follow-ups combine this approach with ideas from Morse theory to apply to general compact surfaces \cite{chhaibi2026yangmills} and prove discrete to continuum universality results \cite{DangNohra26}.
Another active probabilistic approach is to represent expectations of Wilson loop observables as sums over random surfaces, see \cite{PPSY26} where this is carried out in the continuum.

\item The variational method has conceptual similarities with stochastic quantisation. In that context, local dynamics for gauge theories were constructed in two and three dimensions in \cite{CCHS2d,CCHS3d}, we also point to \cite{Sourav_state,Sourav_flow} for closely related work.
A paracontrolled approach to the two-dimensional Yang--Mills dynamics was developed in \cite{BC23}. For pure Yang--Mills theory in two dimensions, Chevyrev and Shen \cite{CS23} used Bourgain's invariant measure argument to prove global existence on gauge orbits and invariance of the Yang--Mills measure. For the two-dimensional abelian YMH model, global well-posedness of the dynamics was proved in \cite{BC24_YM}.
Their approach and ours use covariant objects and spaces adapted to the gauge-field remainder to avoid estimates that grow with its size.
A different approach using loop expansions gave a construction of the gauge ($A$) marginal in \cite{CC24}.

\item The renormalised covariant Laplacian associated to the Gaussian part is analysed in~\cite{morin_2d_2022} as an unbounded operator. However, we have to refine their analysis since we obtain good bounds in $B$ as described above. This falls into a broader class of random (low regularity) perturbations of the laplacian, which also includes the Anderson Hamiltonian. The $\Phi^4_2$ measure build from the Anderson hamiltonian \cite{allezchouk}, somewhat akin to our Higgs field, has been constructed in \cite{barashkov2023invariant,EMR24}, however an additional difficulty present for us, it to show strong enough bounds to be able to integrate with respect to the gauge field. In fact the random gauge Laplacian is expected to posses significantly better tails, courtesy of the the diamagnetic inequality.
\item The determinant ratio has been studied for a smooth gauge fields in \cite{Ito1980abs,SP24} as well as in \cite{BFS79} for lattice approximations. We point out that the gauge laplacian unfortunately does not fall into the class of geometric operators where explicit formulas for the determinant are available, unlike the Laplace- Beltrami operator \cite{OSGOOD1988148,POLYAKOV1981207}.
\item The variational method for constructing QFTs, we employ here has been developed in \cite{BG18}, and subsequently expanded upon in \cite{barashkov2022stochastic,barashkov2023variational,chandra2026stochastic}. We further build on this method by considering an iterated variational formulation for the  construction of the joint Yang--Mills--Higgs measure.
A related approach is through Forward-Backward  SDEs, which represent the Euler-Lagrange equations corresponding to the variational problem. This has been discussed in \cite{barashkov2023variational,DV2025stochastic,GubinelliMeyer2024FBSDE,Meyer2026FBSDE}. Both of these approaches are intimately linked with the Polchinski equation, and corresponding stochastic dynamics. See \cite{bauerschmidt2024stochastic,duch2024construction} for recent uses of the Polchinski equation in constructive QFT and probability, and \cite{frob2016all} for uses in perturbative renormalisation of gauge theory.

\item Large deviations theorems have been by now derived for a variety for QFTs in the semi-classical limit  \cite{Barash,barashkov2023variational,barashkov2022stochastic,klose2024large,lacoin2022semiclassical}. Of particular relevance here is \cite{levy2006large} where an LDP is derived for the YM measure without the Higgs  field. See also \cite{dang2026semiclassical} for a study of the semi-classical limit of the Yang-Mills measure on general 2 dimensional surfaces.

\end{itemize}

\subsection{Notation}

\paragraph{Basic notations.}
We use $\bR$, $\bC$ and $\bfi \R$ for the real, complex and purely imaginary numbers, respectively. We write $a\lesssim b$ for given $a,b\in\R$ if there exists $C>0$ such that $a\leq Cb$. We also write $a\lesssim_{\alpha,\beta,...}b$ to emphasise the dependence of parameters $\alpha,\beta,...$ on the implicit constant $C$. A common used notation is $\tau(\kappa)$ for given $\kappa\in (0,1)$ for any quantity such that $\tau(\kappa)\to 0$ as $\kappa\to0$. We also use $\rmn(\kappa)$ for a quantity that is allowed to be large.

\paragraph{Function spaces.} For $s\in\R$ and $p,q\in [1,\infty]$, we denote by $B^s_{p,q}$ the usual Besov space and $W^{s,p}$ the Bessel potential space (see e.g.~\cite{BookChemin}). Furthermore, we set $H^s:=W^{s,2}$ and $C^s:=B^s_{\infty,\infty}$ for any $s\in\R$. We write $C^\infty$ for smooth functions.
We recall the usual Young multiplication estimate: if $p\in (1,\infty)$, $s,\alpha\in\R$ satisfy $s+\alpha>0$, and $\delta>0$, then pointwise multiplication of smooth functions admits a continuous bilinear extension
$
C^\alpha\times W^{s,p}
\longrightarrow
W^{s\wedge\alpha-\delta,p},
$
and
\[
\|uv\|_{W^{s\wedge\alpha-\delta,p}}
\lesssim_{\alpha,s,\delta,p}
\|u\|_{C^\alpha}\|v\|_{W^{s,p}}.
\]

\phantomsection\label{def:one_forms}Given a function space $X$, we denote by $\Omega^1 X$ the space of $1$-forms with components in $X$. Our $1$-forms are $\bfi\R$-valued unless another target space is specified, in which case we write $\Omega^1 X(\T^2;\bK)$ where $\bK$ is either $\R$ or $\bC$. We set
\[
\|K\|_{\Omega^1X}:=\left(\sum_{j=1}^2 \|K_j\|_{X}^2\right)^{1/2},
\]
and frequently, we abbreviate to $\|K\|_X$ whenever it is clear that $K$ is a $1$-form.

\phantomsection\label{def:coulomb_forms}Finally, we denote by $\Omega^1_\Coul X$ for the elements $A\in \Omega^1 X$ in the Coulomb gauge, i.e.~$\diff^*A=0$.

\paragraph{Adjoints.}
For $z\in\bC$, we write $z^*\in\bC$ for the complex conjugation. We equip space of functions with the standard complex inner product $\av{\Cdot,\Cdot}$. Moreover, for the space of $1$-forms, we equip the componentwise standard inner product. If $K$ is a smooth $1$-form, we write $K^*$ for the formal adjoint of the operation $C^\infty(\T^2;\bC)\ni u\mapsto Ku\in \Omega^1C^\infty(\T^2;\bC)$.  In particular, given $K,F\in \Omega^1C^\infty(\T^2;\bC)$, we have
\[
(K^*F)(x)=\sum_{j=1}^2 K_j(x)^* F_j(x).
\]
Finally, $\diff$ is the standard exterior derivative and $\diff_B$ for a given $B\in\Omega^1 X$ is the covariant derivative. Their formal adjoints are given in terms of $\diff_B^*$ and $\diff^*$.

\paragraph{Linear operators.}
For Banach spaces $X,Y$, we write $\rmL(X;Y)$ as the space of bounded operators $X\to Y$ equipped with the standard operator norm, which we denote by $\|X\|_{X\to Y}$. Moreover, when we have an operator from a Banach space $H\to H$, we write $\|\Cdot\|_{\HS}$ for the Hilbert-Schmidt norm and $\Tr|\Cdot|$ for the trace norm. In the latter case, the trace $\Tr(\Cdot)$ is well-defined. Traces, determinants and Hilbert--Schmidt norms on complex spaces are taken over $\bC$.

An unbounded operator on a Banach space $X$ is denoted by $(L,\scD(L))$, where $L:\scD(L)\to X$ and $\scD(L)\subset X$.

\paragraph{Fourier transform.}
Our Fourier transforms (and inversions) on $\T^2$ and $\R^2$ are given by
\begin{equation*}
\begin{split}
\hat f(k) &= \int_{\mathbb{T}^2}\; f(x) e^{-i k\cdot x}\,\dif x
\,\textup{, with inverse }\;
f(x) = \sum_{k\in 2\pi\Z^2} \hat f(k) e^{ik\cdot x}\;,\\
\hat g(k) &= \int_{\R^2}\; g(x) e^{-ik \cdot x} \,\dif x \,\textup{, with inverse }\; g(x) = \frac{1}{4\pi^2}\int_{\R^2}\; \hat g(k) e^{ik \cdot x} \,\dif k\;.
\end{split}
\end{equation*}

\medskip

\paragraph{Acknowledgments.}
I.C. acknowledges support from the European Research Council (ERC) via the Starting Grant SQGT 101116964. A.K. acknowledges support from UK Research and Innovation via the grant ``StochFields'' EP/Z534328/1, and further thanks the Max Planck Institute for Mathematics in the Sciences for its hospitality during the preparation of part of this work.
\paragraph{AI usage disclosure.}
 The results of Appendix \ref{app:galerkin} were obtained using ChatGPT 5.6 Sol and 6. These same models, in addition to Claude Fable 5, were also used in generating the symbolic index, auditing specific arguments and to streamline or combine various technical estimates derived by the authors. Other than Appendix~\ref{app:galerkin}, all other arguments originate with the authors. The authors take full responsibility for the content of the paper.

\section{Classical covariant analysis}

\subsection{The classical covariant Laplacian }
For a $1$-form $B\in\Omega^1 C^\infty$, the covariant Laplacian is
\[
L_B u = \diff_B^*\diff_Bu =-\Delta + B^*\diff u+\diff^*(Bu)+B^*Bu, \qquad u\in H^2,
\]
with domain $H^2$ on $L^2$. For $B\in\Omega^1 L^2$, we define $L_B$ as the self-adjoint operator associated with the closure of the quadratic form $u\mapsto\|\diff u+Bu\|^{2}_{L^{2}}$ on $C^{\infty}(\T^{2})$. The form is closable since $\diff+B$ on $C^{\infty}(\T^{2})$ has a densely defined adjoint. By \cite[Thm~15.4]{simon2005functional}, its closure agrees with the maximal form, so $C^{\infty}(\T^{2})$ is a form core. For $B\in\Omega^1H^1$, standard elliptic arguments show that $(L_B,H^2)$ is a self-adjoint non-negative unbounded operator with core $C^\infty$.

Since $L_B+1$ has spectrum bounded below by $1$, the usual construction defines the associated Sobolev spaces $H^s_B$, $s\in\R$, with norm
\[
\|u\|_{H^s_B}:=\|(L_B+1)^{s/2}u\|_{L^2}, \qquad u\in H^s_B.
\]
For $s=2$, this is the domain of $L_B$, so $H^2_B=H^2$.

For an unbounded linear operator $(X,\scD(X))$, define its resolvent set by
\[
\rho(X):=\{z\in \bC\colon X+z \text{ invertible with bounded inverse } L^2\to L^2\}.
\]
\phantomsection\label{def:classical_resolvent}For $z\in \rho(X)$, write $R(X,z):=(X+z)^{-1}$. For $(L_B,H^2_B)$, we use the notation $R_B(z):=R(L_B,z)$. The following estimate holds.
\begin{lemma}[Classical resolvent estimate]\label{lem:resolvent_estimate_classical}
Let $B\in\Omega^1H^1$,
and $s\in \R$. For any $\Re(z)>0$ such that $|z|\geq 1$ and $\eta\in [0,2]$,  we have
\[
\|R_B(z)f\|_{H^s_B}\lesssim |z|^{\frac\eta 2-1}\|f\|_{H^{s-\eta}_B}.
\]
\end{lemma}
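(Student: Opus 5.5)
The estimate is a direct consequence of the spectral theorem, since all norms involved are defined through functional calculus of the single self-adjoint operator $L_B$. Write $\Lambda:=L_B+1\ge 1$. Then $\|R_B(z)f\|_{H^s_B}=\|\Lambda^{s/2}(L_B+z)^{-1}f\|_{L^2}$ and $\|f\|_{H^{s-\eta}_B}=\|\Lambda^{(s-\eta)/2}f\|_{L^2}$. All these operators are functions of $L_B$, so they commute. Setting $g:=\Lambda^{(s-\eta)/2}f$, we get
\[
\|R_B(z)f\|_{H^s_B}=\bigl\|\Lambda^{\eta/2}(L_B+z)^{-1}g\bigr\|_{L^2}\le \sup_{\mu\ge 0}\frac{(1+\mu)^{\eta/2}}{|\mu+z|}\,\|g\|_{L^2}.
\]
First this is justified for $f$ in a dense subspace, for instance the range of spectral projections onto bounded sets, and then extended by density. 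The supremum runs over the spectrum of $L_B$, which is contained in $[0,\infty)$ because $L_B$ is non-negative.

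It remains to bound $m(\mu):=(1+\mu)^{\eta/2}/|\mu+z|$ uniformly in $\mu\ge0$ by $C|z|^{\eta/2-1}$ when $\Re z>0$ and $|z|\ge1$. The key elementary inequality is
\[
|\mu+z|\ge c(\mu+|z|)\qquad\text{for }\mu\ge0,\ \Re z\ge 0 .
\]
To see it, expand $|\mu+z|^2=\mu^2+2\mu\Re z+|z|^2\ge \mu^2+|z|^2\ge \tfrac12(\mu+|z|)^2$, which gives $c=2^{-1/2}$. Next, since $|z|\ge1$, we have $1+\mu\le 2(\mu+|z|)$, hence
\[
m(\mu)\lesssim (\mu+|z|)^{\eta/2-1}\le |z|^{\eta/2-1}.
\]
The last step uses that $\eta/2-1\le 0$ for $\eta\in[0,2]$ and that $\mu+|z|\ge|z|$.

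There is no real obstacle here. The only points needing care are these:
\begin{itemize}
\item the constant must be uniform in $z$ across the whole right half-plane, including near the imaginary axis, which is exactly why we use $|\mu+z|^2\ge\mu^2+|z|^2$ instead of a bound through $\Re z$;
\item the constraint $|z|\ge1$ is needed to absorb the ``$+1$'' in $\Lambda$;
\item the commutation and density argument for unbounded $s$ is routine by the spectral theorem.
\end{itemize}
The constant is independent of $B$ and $s$, which is what later applications require.
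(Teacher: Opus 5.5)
Your proposal is correct and takes the approach the paper has in mind. The paper only remarks that the estimate is standard on the intrinsic Sobolev spaces $H^s_B$ and can be proved by functional calculus; your spectral-theorem argument, using the bound $|\mu+z|^2\ge\mu^2+|z|^2$ for $\mu\ge0$ and $\Re z\ge0$, is that functional-calculus argument written out in full, with a constant independent of $B$ and $s$.
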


\begin{proof}
    This is standard as we are working on the ``{intrinsic}'' Sobolev spaces (alternatively,  one can prove it with functional calculus).  \end{proof}

\subsection{Diamagnetic inequalities}
We use the semigroup representation of the covariant resolvent. Let $p_B(t,x,y)$ be the covariant heat kernel on $\mathbb{T}^2$, that is, the fundamental solution of $(\partial_t+L_B)u=0$. We omit the spatial variables when referring to the corresponding operator. Thus $p_0$ is the classical heat kernel for $L_0=-\Delta$, and by translation invariance we also write $p_0(t,x-y)$ for $p_0(t,x,y)$. We recall the following resolvent formulas from \cite[Sec.~15]{simon2005functional}.

\begin{lemma}\label{lem:semi-group}
    For any $\lambda>0$ and any $B\in \Omega^1L^{2}$ taking values in $i \mathbb{R}$ almost everywhere, the operator $(L_B+\lambda)$ is self-adjoint, positive-definite, and generates a strongly continuous semigroup. The resolvent of $L_B$ has the following $L^{2}$-kernel representation and, for any $\gamma>0$, fractional-power operator representation:
    \begin{align*}
    R_B(\lambda)(x,y) &= \int_0^\infty\; e^{-\lambda t}p_B(t,x,y)\,\diff t, \\
    R_B (\lambda)^\gamma &= \frac{1}{\Gamma(\gamma)}\int_0^\infty\; t^{\gamma -1} e^{-\lambda t} p_B(t)\,\diff t.
    \end{align*}
    Here $\Gamma$ is the usual Gamma function. The second identity holds for the kernels as well.
\end{lemma}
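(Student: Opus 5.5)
The plan is to build everything from the quadratic form and pass to the resolvent and its powers by the spectral theorem. Let $B\in\Omega^1L^2$ be $\bfi\R$-valued and let $\mathfrak q_B(u)=\|\diff u+Bu\|_{L^2}^2$ on $C^\infty(\T^2)$, with closure $\bar{\mathfrak q}_B$ as in the previous subsection.

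\textbf{Step 1: self-adjointness and positivity.} The closure $\bar{\mathfrak q}_B$ is a closed, non-negative, densely defined form. By the representation theorem for closed forms (Kato/Friedrichs), it defines a unique non-negative self-adjoint operator $L_B$. Hence $L_B+\lambda$ is self-adjoint and satisfies $\langle (L_B+\lambda)u,u\rangle\ge\lambda\|u\|^2$, so it is positive-definite with spectrum in $[\lambda,\infty)$. The spectral theorem then gives the contraction semigroup $p_B(t)=e^{-tL_B}$, which is strongly continuous, and $e^{-t(L_B+\lambda)}$ is its rescaling.

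\textbf{Step 2: the resolvent and its powers.} For $\mu\ge0$ and $\gamma>0$ we have the scalar identity
\[
(\mu+\lambda)^{-\gamma}=\frac1{\Gamma(\gamma)}\int_0^\infty t^{\gamma-1}e^{-\lambda t}e^{-t\mu}\,\diff t .
\]
We apply this through the functional calculus of $L_B$. The integral converges absolutely in operator norm, since $\|p_B(t)\|\le1$ and $\int_0^\infty t^{\gamma-1}e^{-\lambda t}\,\diff t<\infty$. This yields the Bochner-integral operator identity, and the case $\gamma=1$ gives $R_B(\lambda)$.

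\textbf{Step 3: kernels.} This is the main obstacle, because with $B$ only in $L^2$ one cannot rely on elliptic regularity. I would use the diamagnetic (Kato) inequality, approximating $B$ by smooth $B_n\to B$ in $L^2$ and applying the Feynman--Kac--It\^o formula. For smooth $B$ this gives the pointwise bound $|p_B(t,x,y)|\le p_0(t,x-y)$, and hence $|(p_B(t)f)(x)|\le (p_0(t)|f|)(x)$.

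The form convergence $\mathfrak q_{B_n}\to\mathfrak q_B$ on the common core $C^\infty$ gives strong resolvent convergence, and therefore convergence $p_{B_n}(t)\to p_B(t)$ strongly. Passing to the limit, the domination persists. So $p_B(t)$ has a measurable kernel dominated by $p_0(t,x-y)$, and this kernel lies in $L^2(\T^2\times\T^2)$ for each $t>0$.

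\textbf{Step 4: kernel identity.} Pairing the operator identity of Step 2 with $f\otimes g$ for $f,g\in L^2$ and using Fubini, justified by the bound $\int_0^\infty t^{\gamma-1}e^{-\lambda t}p_0(t,x-y)\,\diff t<\infty$ (the latter being the flat kernel of $(-\Delta+\lambda)^{-\gamma}$), we obtain
\[
R_B(\lambda)^\gamma(x,y)=\frac1{\Gamma(\gamma)}\int_0^\infty t^{\gamma-1}e^{-\lambda t}p_B(t,x,y)\,\diff t
\]
almost everywhere. The case $\gamma=1$ recovers the first formula of the statement.

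The kernel is in $L^2$ whenever $(-\Delta+\lambda)^{-\gamma}$ is Hilbert--Schmidt, which holds for $\gamma>1/2$. For smaller $\gamma$, I would read the identity in the sense of locally integrable kernels, which is what the domination by the flat kernel gives. This is the same reference as \cite[Sec.~15]{simon2005functional}, which I would ultimately cite for Steps 3 and 4.
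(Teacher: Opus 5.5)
Your proposal is correct and is essentially what the paper relies on: the paper gives no independent proof of this lemma but cites \cite[Sec.~15]{simon2005functional}, and its subsequent diamagnetic lemma uses the same route as your Step~3, namely the Feynman--Kac formula for smooth $B$ followed by smooth approximation. The one soft spot is in Step~3, where you claim that convergence of the forms on the common core $C^\infty$ gives strong resolvent convergence. That is false as a general principle: periodic homogenization of $-\partial_x a(nx)\partial_x$ is a counterexample. Here it does hold, but you should argue it through the first-order structure. Take $u_n=(L_{B_n}+1)^{-1}f$ and identify the weak limits of $u_n$ and $(\diff+B_n)u_n$ by testing against smooth forms, using $B_n\to B$ in $L^2$. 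The fact that the closed form coincides with the maximal one then shows the limit is $(L_B+1)^{-1}f$, which gives weak, and hence strong, resolvent convergence.
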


The following diamagnetic inequality is well-known in the case of $B\in\Omega^1C^\infty$, but since our $B$ is not so smooth, we include the proof. \begin{lemma}[Diamagnetic inequalities]\label{lem:diamagnetic}
Let $B \in \Omega^1 H^1$ with $B$ taking values in $\bfi \mathbb{R}$ almost everywhere. Then, for all $t > 0$, $\lambda>0$, $\gamma>0$ and almost every $(x,y)\in\mathbb{T}^2 \times \mathbb{T}^2$,
    \[
    |p_B(t,x,y)| \le p_0(t,x-y), \qquad
    |R_B(\lambda)^\gamma(x,y)| \le R_0(\lambda)^\gamma(x-y).
    \]
\end{lemma}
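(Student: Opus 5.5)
The plan is to establish the heat-kernel bound first. The resolvent bound then follows by integrating it against the second representation in Lemma~\ref{lem:semi-group}. Indeed, once $|p_B(t,x,y)|\le p_0(t,x-y)$ holds for a.e.\ $(x,y)$ and every $t>0$, we get
\[
|R_B(\lambda)^\gamma(x,y)|\le \frac{1}{\Gamma(\gamma)}\int_0^\infty t^{\gamma-1}e^{-\lambda t}|p_B(t,x,y)|\dif t\le R_0(\lambda)^\gamma(x-y).
\]
For this step I use that the kernel identity holds and that Fubini is justified, since the majorant is integrable. So everything reduces to the semigroup domination $|e^{-tL_B}f|\le e^{-tL_0}|f|$ pointwise a.e.\ for every $f\in L^2$. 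That domination is equivalent to the kernel bound: test against nonnegative $f,g$ and use the Lebesgue differentiation theorem.

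\textbf{Step 1: the smooth case.} For $B\in\Omega^1C^\infty$, the domination is classical. One route is the Feynman--Kac--It\^o formula: the kernel $p_B(t,x,y)$ is the Brownian bridge expectation of $e^{-\int B\circ \dif W}$ times $p_0$. Because $B$ is $\bfi\R$-valued, the weight has modulus one. A more PDE-flavoured alternative is Kato's inequality, $\Re(\bar u\, \mathrm{sgn}\, L_B u)\ge -\Delta|u|$ in distributions, combined with the maximum principle for $\partial_t-\Delta$. Either way we obtain $|p_B|\le p_0$ for smooth $B$.

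\textbf{Step 2: approximation.} For $B\in\Omega^1H^1$, take smooth $\bfi\R$-valued $B_n\to B$ in $H^1$. Then $B_n\to B$ in every $L^p$, $p<\infty$, by Sobolev embedding in 2D. I claim $L_{B_n}\to L_B$ in strong resolvent sense. To see this, compare the quadratic forms $\|\diff u+B_nu\|^2$ and $\|\diff u+Bu\|^2$ on the common core $H^1$. By H\"older and $H^1\subset L^4$, the difference is bounded by $\|B_n-B\|_{L^4}(\|B_n\|_{L^4}+\|B\|_{L^4}+1)\|u\|_{H^1}^2$. Using the diamagnetic-type bound $\|\diff u\|\lesssim \|\diff_B u\|+\|B\|_{L^4}\|u\|_{L^4}$ together with an interpolation/absorption argument, the $H^1$ norm is controlled by the form norm. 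Hence the forms converge uniformly on form-bounded sets, which gives norm resolvent convergence, or at least strong resolvent convergence. Trotter--Kato then yields $e^{-tL_{B_n}}f\to e^{-tL_B}f$ in $L^2$. Passing to an a.e.\ convergent subsequence transfers the pointwise domination $|e^{-tL_{B_n}}f|\le e^{-tL_0}|f|$ to the limit.

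The main obstacle is Step 2, specifically justifying the resolvent convergence with only $H^1$ control on $B$, including uniform control of the form domains. If the form comparison proves awkward, the fallback is to prove domination directly for $B\in\Omega^1L^2$. For that I would use the Beurling--Deny/Ouhabaz criterion: for a semigroup $S$ generated by the form $\mathfrak a_B$ to be dominated by the heat semigroup, it suffices that $u\in D(\mathfrak a_B)\Rightarrow |u|\in H^1$ and $\Re\,\mathfrak a_B(u,\mathrm{sgn}(u)\,v)\ge \mathfrak a_0(|u|,v)$ for $0\le v\in H^1$. Both conditions follow from the pointwise identity $\Re(\bar u\,\mathrm{sgn}(u)\cdot(\diff u+Bu))=|u|\,\diff|u|$-type computations, since $\bar u Bu\in \bfi\R$ drops out of real parts. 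This is essentially \cite[Sec.~15]{simon2005functional}.
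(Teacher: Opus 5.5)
Your proposal is correct and follows essentially the same route as the paper. Both arguments use Feynman--Kac with a purely imaginary, hence modulus-one, weight for smooth $B$, approximate $B\in\Omega^1H^1$ by smooth forms via strong resolvent convergence, and then apply the Gamma-integral representation of Lemma~\ref{lem:semi-group} to pass from $p_B$ to $R_B(\lambda)^\gamma$.

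The only difference is what you carry to the limit. The paper transfers the Feynman--Kac formula itself, which needs $\diff^*B_\eps\to\diff^*B$ and convergence of the Brownian-bridge expectations. You transfer only the domination $|e^{-tL_{B_n}}f|\le e^{-tL_0}|f|$ along an a.e.\ convergent subsequence. Your version is slightly cleaner, since it needs only the form-perturbation argument with $B_n\to B$ in $L^4$.
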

\begin{proof}
We first recall the Feynman-Kac representation for smooth $B\in\Omega^1C^\infty$
\begin{equation}\label{eq:FK}
p_B(t,x,y) = p_0(t,x,y) \bfE_{t;y\to x}\Big[\exp\Big(-\int_0^t\; B^*(W_s) \,\diff W_s -\int_0^t\; \diff^* B(W_s) \,\diff s\Big)\Big]\;,
\end{equation}
where $W$ is a Brownian bridge under which the expectation $\bfE_{t;y\to x}$ goes from $y$ to $x$ on the time interval $[0,t]$.
Note that the right-hand side of \eqref{eq:FK} is defined for every $t>0$ and $x,y\in\T^{2}$ (and is unchanged if $B$ is changed on a null set), while for  $B\in \Omega^1H^1$ the left-hand side is an integral kernel and hence only defined up to null sets; \eqref{eq:FK} asserts that the former is a representative of the latter. For continuous $B$ and $\diff^{*}B$, \eqref{eq:FK} holds pointwise for all $x,y$, and the general case follows by taking $B_{\eps}\in\Omega^1C^{\infty}$ with $B_{\eps}\to B$ and $\diff^{*}B_{\eps}\to\diff^{*}B$ in $L^{2}(\T^{2})$: since the exponent is purely imaginary the expectations converge by dominated convergence along a subsequence, while the semigroups converge weakly by strong resolvent convergence; see \cite[Sec.~15]{simon2005functional} for details.\footnote{Note that in \cite[Thms~15.5 and~15.6]{simon2005functional}, the results are given on $\mathbb{R}^{d}$ rather than the torus and the extension to rough $B$ there takes $B \in L^{2}_{\mathrm{loc}}$ but in Coulomb gauge. However, the arguments therein clearly extend to our present setting.}

Since $B$, and hence $\diff^* B$, is $i \mathbb{R}$-valued, the exponent in \eqref{eq:FK} is purely imaginary, so the expectation in \eqref{eq:FK} has modulus at most one, which gives the first bound. Inserting it into the representation of Lemma~\ref{lem:semi-group} gives the second.
\end{proof}

Lemma~\ref{lem:semi-group}, the diamagnetic inequality and the flat-Laplacian traces give:
\begin{lemma}\label{lem:neg_power_trace}
Let $B\in\Omega^1H^1$ and $\gamma>1$. For any $\lambda\geq 1$, the trace bound and its Hilbert--Schmidt consequence are
\[
\Tr\big( R_B(\lambda)^{\gamma}\big)\lesssim \lambda^{1-\gamma}, \qquad
\| R_B(\lambda)^{\gamma/2}\|_\HS \lesssim \sqrt{\lambda^{1-\gamma}}=\lambda^{\frac 1 2-\frac \gamma 2}.
\]
\end{lemma}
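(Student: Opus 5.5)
The plan is to reduce the trace to the diagonal of the integral kernel and then apply the diamagnetic inequality. By Lemma~\ref{lem:semi-group}, $R_B(\lambda)^\gamma$ is a positive self-adjoint operator with kernel
\[
R_B(\lambda)^\gamma(x,y)=\frac{1}{\Gamma(\gamma)}\int_0^\infty t^{\gamma-1}e^{-\lambda t}p_B(t,x,y)\,\diff t .
\]
We first want to show that the operator is trace class and that its trace equals $\int_{\T^2}R_B(\lambda)^\gamma(x,x)\,\diff x$.

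\emph{Trace as an integral over the diagonal.} To avoid subtleties with diagonals of merely $L^2$ kernels, factor $R_B(\lambda)^\gamma=R_B(\lambda)^{\gamma/2}R_B(\lambda)^{\gamma/2}$. Then $\Tr R_B(\lambda)^\gamma=\|R_B(\lambda)^{\gamma/2}\|_{\HS}^2$, which is the double integral $\iint|R_B(\lambda)^{\gamma/2}(x,y)|^2\,\diff x\,\diff y$. This identity holds as an equality in $[0,\infty]$ for positive operators, and finiteness of the right side shows trace class. So it suffices to bound the Hilbert--Schmidt norm, which yields both claims at once.

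\emph{Diamagnetic domination.} By Lemma~\ref{lem:diamagnetic} with exponent $\gamma/2$, we have $|R_B(\lambda)^{\gamma/2}(x,y)|\le R_0(\lambda)^{\gamma/2}(x-y)$ almost everywhere. Hence
\[
\|R_B(\lambda)^{\gamma/2}\|_{\HS}^2\le \|R_0(\lambda)^{\gamma/2}\|_{\HS}^2=\Tr R_0(\lambda)^\gamma=\sum_{k\in2\pi\Z^2}(|k|^2+\lambda)^{-\gamma}.
\]
Alternatively, one can use the diagonal of the heat kernel directly: $p_B(t,x,x)\le p_0(t,0)\lesssim t^{-1}+1$, then integrate against $t^{\gamma-1}e^{-\lambda t}$.

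\emph{Flat computation.} For $\gamma>1$ and $\lambda\ge1$, compare the lattice sum with an integral:
\[
\sum_k(|k|^2+\lambda)^{-\gamma}\lesssim\lambda^{-\gamma}+\int_{\R^2}(|\xi|^2+\lambda)^{-\gamma}\,\diff\xi\lesssim\lambda^{-\gamma}+\lambda^{1-\gamma}\lesssim\lambda^{1-\gamma}.
\]
The scaling $\xi=\sqrt\lambda\,\eta$ gives the $\lambda^{1-\gamma}$ factor, and $\gamma>1$ makes the integral converge. This proves the trace bound, and taking square roots gives $\|R_B(\lambda)^{\gamma/2}\|_{\HS}\lesssim\lambda^{\frac12-\frac\gamma2}$.

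The only delicate point is identifying the trace with the kernel expression when $B$ is merely $H^1$. Routing the argument through the Hilbert--Schmidt norm sidesteps this, since it needs only the almost-everywhere kernel bound from Lemma~\ref{lem:diamagnetic}.
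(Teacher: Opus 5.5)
Your proposal is correct and takes essentially the same route as the paper, which obtains the lemma from the kernel representation of Lemma~\ref{lem:semi-group}, the diamagnetic domination of Lemma~\ref{lem:diamagnetic}, and the flat trace $\sum_{k}(|k|^2+\lambda)^{-\gamma}\lesssim\lambda^{1-\gamma}$. Your factorisation $\Tr R_B(\lambda)^\gamma=\|R_B(\lambda)^{\gamma/2}\|_{\HS}^2$ rigorously justifies the step identifying the trace with a kernel integral, which the paper leaves implicit; it needs only the almost-everywhere kernel bound at exponent $\gamma/2$.
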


\subsection{Sobolev comparisons}
For $B\in\Omega^1H^1$ and $s\in\R$, we compare $H^s_B$ with the standard Sobolev spaces to use the usual Sobolev and Besov estimates. We first recall Kato's inequality.
\begin{lemma}[Kato]\label{lem:Kato}
Let $B\in\Omega^1 L^2$ and $u\in W^{1,\infty}$.
Then for almost every $x$
\[
|\diff|u| (x) |\leq |\diff_Bu(x)|.
\]
\end{lemma}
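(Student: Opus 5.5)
The plan is to prove Kato's inequality first for $|u|$ regularised as $u_\varepsilon:=\sqrt{|u|^2+\varepsilon^2}$, and then let $\varepsilon\to0$. Since $u\in W^{1,\infty}$, the function $|u|^2=u^*u$ is Lipschitz. The product rule for Sobolev functions then gives, almost everywhere,
\[
\diff|u|^2 = 2\,\Re\bigl(u^*\diff u\bigr).
\]
Because $B$ takes values in $\bfi\R$, we have $u^*Bu=B|u|^2\in\bfi\R$, so $\Re(u^*Bu)=0$ almost everywhere. This lets us insert $B$ without changing anything, which is the key algebraic point:
\[
\diff|u|^2=2\,\Re\bigl(u^*(\diff u+Bu)\bigr)=2\,\Re\bigl(u^*\diff_Bu\bigr).
\]
The term $Bu$ belongs to $L^2$ because $u$ is bounded, so the identity makes sense almost everywhere.

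Next, $u_\varepsilon$ is Lipschitz and bounded below by $\varepsilon>0$. Applying the chain rule with the smooth map $r\mapsto\sqrt{r+\varepsilon^2}$ gives, almost everywhere,
\[
\diff u_\varepsilon=\frac{\Re(u^*\diff_Bu)}{u_\varepsilon}.
\]
Since $|u|\le u_\varepsilon$, Cauchy--Schwarz in $\bC^2$ yields $|\diff u_\varepsilon|\le |\diff_Bu|$ almost everywhere, with a bound uniform in $\varepsilon$.

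Finally, $u_\varepsilon\to|u|$ uniformly as $\varepsilon\to0$, and $\|\diff u_\varepsilon\|_{L^\infty}\le\|\diff u\|_{L^\infty}$. Hence $|u|\in W^{1,\infty}$, and $\diff u_\varepsilon\to\diff|u|$ weak-$*$ in $L^\infty$. We cannot use pointwise convergence of the derivatives directly, so we pass the bound to the limit by lower semicontinuity. For each measurable set $E$ and each unit vector $e\in\R^2$, we have $\int_E e\cdot\diff u_\varepsilon\le\int_E|\diff_Bu|$. The right-hand side is finite since $\diff_Bu\in L^2$. Letting $\varepsilon\to0$ gives $\int_E e\cdot\diff|u|\le\int_E|\diff_Bu|$ for every $E$. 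This implies $e\cdot\diff|u|\le|\diff_Bu|$ almost everywhere. Taking a countable dense set of directions $e$ then gives $|\diff|u||\le|\diff_Bu|$ almost everywhere.

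The main point requiring care is this last limiting step. One could alternatively argue pointwise: on $\{u\neq0\}$ the derivatives converge to $\Re(u^*\diff_Bu)/|u|$, while on $\{u=0\}$ one has $\diff|u|=0$ almost everywhere, by the standard fact that the gradient of a Lipschitz function vanishes almost everywhere on its level sets. The weak-$*$ argument above avoids needing that fact.
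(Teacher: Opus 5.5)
The paper gives no proof of this lemma. It simply recalls it as the classical Kato (diamagnetic) inequality before using it in the Sobolev comparisons, so your argument stands as a self-contained proof of a cited fact. It is correct, and it is the standard regularisation argument.

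Your proof checks out at each step:
\begin{itemize}
\item You use the paper's convention that $B$ is $\bfi\R$-valued exactly where it is needed, namely to get $\Re(u^*Bu)=0$. The inequality genuinely fails for real $B$: take $u>0$ smooth and $B=-\diff\log u$.
\item Boundedness of $u$ is what makes $Bu\in L^2$ when $B$ is only in $L^2$.
\item The regularisation $u_\varepsilon=\sqrt{|u|^2+\varepsilon^2}$ gives the pointwise bound $|\diff u_\varepsilon|\le|\diff_Bu|$ uniformly in $\varepsilon$.
\item Testing the weak-$*$ limit against $\mathbf 1_E e$ legitimately transfers this bound to $\diff|u|$. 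This step uses that $\T^2$ has finite measure, so that both $\mathbf 1_E e$ and $|\diff_Bu|$ lie in $L^1$.
\end{itemize}

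Compared with the usual textbook route, your version is slightly more economical. That route first identifies $\diff|u|=\Re(u^*\diff u/|u|)\mathbf 1_{\{u\neq0\}}$ and then inserts $B$, which requires the fact that Lipschitz gradients vanish almost everywhere on level sets. Your weak-$*$ argument avoids that fact.

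Your computation also extends verbatim to the vector-valued inequality $|\nabla|K||\le|\nabla_BK|$ that the paper invokes for $1$-forms in Lemma~\ref{lem:Sobolev_comparison_1forms}. Replace $u^*\partial_iu$ by the $\bC^2$ inner product $\langle K,\partial_iK+B_iK\rangle$, note that $\langle K,B_iK\rangle=B_i|K|^2\in\bfi\R$, and sum over $i$.
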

\subsubsection{$0$-forms}
The following is the Sobolev comparison for $0$-forms:
\begin{lemma}\label{lem:Sobolev_comparison}
Let $p>2$ and $B\in \Omega^1 L^p$. Then for any $s\in [-1,1]$, we have $H^s=H^s_B$ and the following inequalities:
    \begin{align*}
    \|u\|_{H^s}&\lesssim (1+\|B\|_{L^p})^{|s|} \|u\|_{H^s_B}, \\
    \|u\|_{H^s_B}&\lesssim (1+\|B\|_{L^p})^{|s|} \|u\|_{H^s}.
    \end{align*}
\end{lemma}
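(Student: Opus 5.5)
The plan is to prove the two endpoint cases $s=1$ and $s=0$ directly and obtain $s\in[-1,1]$ by interpolation and duality. For $s=0$ there is nothing to prove, since $H^0=H^0_B=L^2$ with equal norms. For $s=1$ we have $\|u\|_{H^1_B}^2=\|\diff_B u\|_{L^2}^2+\|u\|_{L^2}^2$ and $\|u\|_{H^1}^2\simeq\|\diff u\|_{L^2}^2+\|u\|_{L^2}^2$. We work first on the form core $C^\infty$, since $C^\infty$ is a form core for $L_B$ as recalled earlier, and extend by density.

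The key step is to compare $\|Bu\|_{L^2}$ with the other norms. Let $q$ be defined by $\frac1p+\frac1q=\frac12$; since $p>2$, $q<\infty$.

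\emph{Bound in terms of $H^1$.} By Hölder and the Sobolev embedding $H^1\hookrightarrow L^q$ in two dimensions,
\[
\|Bu\|_{L^2}\le\|B\|_{L^p}\|u\|_{L^q}\lesssim\|B\|_{L^p}\|u\|_{H^1},
\]
which gives $\|u\|_{H^1_B}\lesssim(1+\|B\|_{L^p})\|u\|_{H^1}$.

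\emph{Bound in terms of $H^1_B$.} Here we cannot use $\|u\|_{H^1}$, so we use Kato's inequality (Lemma~\ref{lem:Kato}). It gives $\||u|\|_{H^1}\le\|u\|_{H^1_B}$; to apply it to $u\in C^\infty$ we approximate $B\in L^p$ by smooth forms, or note that Lemma~\ref{lem:Kato} holds as stated for $u\in W^{1,\infty}$. Then
\[
\|u\|_{L^q}=\||u|\|_{L^q}\lesssim\||u|\|_{H^1}\le\|u\|_{H^1_B}.
\]
Hence $\|\diff u\|_{L^2}\le\|\diff_Bu\|_{L^2}+\|B\|_{L^p}\|u\|_{L^q}\lesssim(1+\|B\|_{L^p})\|u\|_{H^1_B}$.

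Both $s=1$ bounds then extend from $C^\infty$ to the form domain. Since $C^\infty$ is a core for each form and the norms are equivalent on it, the closures coincide, so $H^1=H^1_B$.

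For $s\in(0,1)$ we use complex interpolation. The family $H^s_B=[L^2,H^1_B]_s$ is the interpolation scale of the positive self-adjoint operator $(L_B+1)^{1/2}$, with equal norms, and $H^s=[L^2,H^1]_s$. Interpolating the identity map between the $s=0$ bound (constant $1$) and the $s=1$ bound (constant $\lesssim 1+\|B\|_{L^p}$) gives the constant $(1+\|B\|_{L^p})^{s}$, in both directions. For $s\in[-1,0)$, $H^{s}_B$ is the dual of $H^{-s}_B$ with respect to the $L^2$ pairing, and likewise $H^s$ is the dual of $H^{-s}$. Dualising the positive-$s$ estimates therefore yields the same constant $(1+\|B\|_{L^p})^{|s|}$.

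The main obstacle is the $H^1_B\to H^1$ direction, because a naive estimate of $\|Bu\|_{L^2}$ would need control of $u$ in a space stronger than $L^2$ that is not available from $H^1_B$ without Kato's inequality. The technical point there is justifying Kato (or $|u|\in H^1$) for rough $B\in L^p$ on the core. I would handle it by applying Lemma~\ref{lem:Kato} to smooth $u$, which is valid since it only requires $B\in L^2$. A secondary check is that both scales are genuinely exact interpolation scales, so that the exponent $|s|$ is obtained. For $H^s_B$ this follows from the spectral theorem, and the result is only needed up to constants.
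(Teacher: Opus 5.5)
Your proof is correct and follows essentially the paper's argument: at $s=1$ you use Kato's inequality plus Sobolev embedding of $|u|$ (the paper writes the same step via Gagliardo--Nirenberg) for the $H^1_B\to H^1$ direction and Hölder plus Sobolev embedding for the converse, then interpolate and dualise. The only cosmetic difference is that you interpolate the identity map by complex interpolation of the two Hilbert scales, whereas the paper invokes operator monotonicity of $t\mapsto t^s$ (Lemma~\ref{lem:norm_comparison_pos_op}); both give the exact exponent $|s|$.
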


\begin{proof}
    By $C^\infty$ being a core of the covariant Laplacian, it is enough to consider $u\in C^\infty$. For $s=1$, we have
    \[
    \|u\|_{H^1_B}^2=\|\diff_Bu\|_{L^2}^2+\|u\|_{L^2}^2, \qquad
    \|\diff |u|\|_{L^2}\leq \|\diff_B u\|_{L^2}\leq \|u\|_{H^1_B}.
    \]
    For $r\geq 1$ with $\frac 1 r=\frac{1-\theta}{2}$ and $\theta\in[0,1]$, the Gagliardo--Nirenberg inequality gives
    \[
    \|u\|_{L^r}\lesssim \|\diff|u|\|_{L^2}^\theta \|u\|_{L^2}^{1-\theta}.
    \]
    Since $1-\theta=\frac 2r$, for $r\geq 2$ this gives
    \[
     \|u\|_{L^r}\lesssim \|\diff|u|\|_{L^2}^{1-\frac 2r} \|u\|_{L^2}^{\frac 2r}\lesssim \|u\|_{H^1_B}.
    \]
    Choose $r$ with $\frac 12=\frac 1p+\frac 1r$. The assumption $p>2$ excludes $r=\infty$. Then
    \begin{gather*}
    \|Bu\|_{L^2}\leq\|B\|_{L^p}\|u\|_{L^r}\lesssim \|B\|_{L^p}\|u\|_{H^1_B}, \\
    \|\diff u\|_{L^2}=\|\diff_B u+Bu\|_{L^2}\leq \|\diff_B u\|_{L^2}+\|Bu\|_{L^2}\lesssim (1+\|B\|_{L^p})\|u\|_{H^1_B}, \\
    \|u\|_{H^1}\leq (1+\|B\|_{L^p})\|u\|_{H^1_B}.
    \end{gather*}

    Conversely,
    \[
    \|\diff_B u\|_{L^2}\leq \|\diff u\|_{L^2}+\|Bu\|_{L^2}\lesssim \|u\|_{H^1}+\|B\|_{L^p}\|u\|_{L^r},
    \]
    and Sobolev embedding, $\|u\|_{L^r}\lesssim \|u\|_{H^1}$, gives
    \[
    \|u\|_{H^1_B}\lesssim (1+\|B\|_{L^p})\|u\|_{H^1}.
    \]

    This proves the case $s=1$. For $s\in[0,1]$, apply Lemma~\ref{lem:norm_comparison_pos_op} to $L_B+1$ and $L_0+1$ in either order. For $s\in[-1,0)$, use duality and boundedness of the identity map.
\end{proof}

\begin{remark}
    In all of these bounds later on we take $B\in\Omega^1H^1$ and use Sobolev embedding to bound $\|B\|_{L^p}\lesssim \|B\|_{H^1}$.
\end{remark}

\subsubsection{$1$-forms}
For the following $1$-form estimates, $B\in\Omega^1H^1$ is in the Coulomb gauge. For $1$-forms $K$, define
\[
\|K\|_{\Omega^1 H^s_B}^2:=\|(\diff_B\diff_B^*+\diff_B^*\diff_B+1)^{s/2} K \|_{L^2}^2, \qquad s\in \R.
\]
The operator $\diff_B\diff_B^*+\diff_B^*\diff_B+1$ with domain $\Omega^1 H^2$ is self-adjoint and has $\Omega^1C^\infty$ as a core, so the usual construction gives the associated Sobolev spaces.

The following proposition is crucial:
\begin{proposition}\label{prop:derivative_regularity_reduction}
    For any $s\in [1,2]$, it holds that
    \[
    \|\diff_B u\|_{H^{s-1}_B}\lesssim (1+\|B\|_{H^1})^{s-1}\|u\|_{H^s_B}.
    \]
\end{proposition}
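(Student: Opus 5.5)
The plan is to prove the two endpoint cases $s=1$ and $s=2$ directly and then interpolate. Here $\diff_B u$ is a $1$-form and $H^{s-1}_B$ is the $1$-form space defined through the Hodge-type operator $\mathcal{M}_B:=\diff_B\diff_B^*+\diff_B^*\diff_B+1$.

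\textbf{Case $s=1$.} For $s=1$ the left-hand side is $\|\diff_Bu\|_{L^2}$. Since $\|u\|_{H^1_B}^2=\|\diff_Bu\|_{L^2}^2+\|u\|_{L^2}^2$, the bound holds with constant $1$ and no dependence on $B$.

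\textbf{Case $s=2$.} By density of $C^\infty$ we may take $u$ smooth. We need $\|\diff_Bu\|_{\Omega^1H^1_B}\lesssim(1+\|B\|_{H^1})\|u\|_{H^2_B}$, and
\[
\|\diff_Bu\|_{\Omega^1H^1_B}^2=\|\diff_B^*\diff_Bu\|_{L^2}^2+\|\diff_B\diff_Bu\|_{L^2}^2+\|\diff_Bu\|_{L^2}^2 .
\]
The first and third terms are bounded by $\|(L_B+1)u\|_{L^2}^2=\|u\|_{H^2_B}^2$. The key point is the curvature identity $\diff_B\diff_Bu=(\diff B)\,u$, which holds because $\diff_B^2$ acting on $0$-forms is multiplication by the curvature $F_B=\diff B$ (the wedge term $B\wedge B$ vanishes in the abelian case). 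Hence
\[
\|\diff_B\diff_Bu\|_{L^2}\le\|\diff B\|_{L^2}\|u\|_{L^\infty}\lesssim\|B\|_{H^1}\|u\|_{L^\infty}.
\]
To control $\|u\|_{L^\infty}$ without paying a further factor of $B$, use Kato's inequality (Lemma~\ref{lem:Kato}) together with the diamagnetic inequality (Lemma~\ref{lem:diamagnetic}). Write $u=R_B(1)f$ with $f=(L_B+1)u$. Then
\[
|u(x)|\le \int R_0(1)(x-y)\,|f(y)|\,\dif y\le\|R_0(1)\|_{L^2}\|f\|_{L^2}\lesssim\|u\|_{H^2_B},
\]
since the kernel $R_0(1)$ is $L^2$ in two dimensions: its Fourier coefficients are $(1+|k|^2)^{-1}$, which are square summable. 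This gives the $s=2$ bound with factor $(1+\|B\|_{H^1})$.

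\textbf{Intermediate $s$.} For $s\in(1,2)$, interpolate. The map $T=\diff_B$ is bounded $H^1_B\to\Omega^1H^0_B$ with norm $\lesssim1$ and $H^2_B\to\Omega^1H^1_B$ with norm $\lesssim 1+\|B\|_{H^1}$. All of these spaces are defined by fractional powers of positive self-adjoint operators, so they form complex interpolation scales: $[H^1_B,H^2_B]_\theta=H^{1+\theta}_B$, and similarly for $1$-forms. The interpolation theorem then gives operator norm $\lesssim(1+\|B\|_{H^1})^{\theta}$ with $\theta=s-1$. If complex interpolation is not already available in the paper, I would instead use the same functional-calculus comparison argument the paper cites, Lemma~\ref{lem:norm_comparison_pos_op}, applied to the operators $T^*\mathcal{M}_BT$ and $L_B+1$. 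This is a Heinz–Kato/Löwner-type inequality, which yields exactly the power $\theta$.

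The main obstacle is the $s=2$ endpoint. The naive route expands $\diff_B\diff_B$ and pays $\|Bu\|_{H^1}$, which would produce a quadratic or worse dependence on $B$. Recognising the curvature identity and bounding $\|u\|_{L^\infty}$ through the diamagnetic inequality, which costs no power of $B$, is what yields the linear factor. Minor technical points are justifying the computation for $B\in\Omega^1H^1$ rather than smooth $B$, which follows by approximation and the core property, and checking that the Coulomb condition on $B$ is not needed here.
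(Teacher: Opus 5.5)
Your proposal is correct and follows the paper's proof essentially step for step: the trivial $s=1$ endpoint, the $s=2$ endpoint via $\|\diff_Bu\|_{\Omega^1H^1_B}^2=\|\diff_Bu\|_{L^2}^2+\|L_Bu\|_{L^2}^2+\|F^Bu\|_{L^2}^2$, and then Hilbert-scale interpolation (Lemma~\ref{lem:interpolation_hilbert_scale}); as in the paper, you bound the curvature term by $\|B\|_{H^1}\|u\|_{L^\infty}$ and get $\|u\|_{L^\infty}\lesssim\|u\|_{H^2_B}$ from the diamagnetic inequality and $R_0(1)\in L^2$. Two small remarks: Kato's inequality plays no role in your $L^\infty$ bound, and your fallback is not needed and would not work as stated, since Lemma~\ref{lem:norm_comparison_pos_op} applied to $T^*\mathcal{M}_BT$ and $L_B+1$ only controls $(T^*\mathcal{M}_BT)^{\theta}$ rather than $T^*\mathcal{M}_B^{\theta}T$ and would need an extra ingredient (e.g.\ Hansen's operator inequality).
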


\begin{proof}
    We prove the endpoints and then interpolate. The case $s=1$ follows from
    $\|\diff_B u\|_{L^2}\leq \|u\|_{H^1_B}$ by definition.

    For $s=2$, the identities $\diff_B^*\diff_B u=L_B$ and $\diff_B \diff_B u= \diff_B^2 u=F^Bu$ give
\begin{align}\label{eq:energy_d_B_identity}
\|\diff_B u\|_{\Omega^1 H^1_B}^2=\|\diff_Bu\|_{L^2}^2+\|L_Bu\|_{L^2}^2+\|F^Bu\|_{L^2}^2 .
\end{align}
We bound the three terms separately. The scalar Sobolev norm and Cauchy--Schwarz give
\begin{gather*}
\|L_Bu\|_{L^2}\leq \|(L_B+1)u\|_{L^2}=\|u\|_{H_B^2}, \qquad
\|u\|_{L^2}\leq \|u\|_{H_B^2}, \\
\|\diff_Bu\|_{L^2}^2=\langle L_Bu,u\rangle
\leq \|L_Bu\|_{L^2}\|u\|_{L^2}
\leq \|u\|_{H_B^2}^2.
\end{gather*}
It remains to bound the curvature term. By Hölder,
\[
\|F^Bu\|_{L^2}\leq \|F^B\|_{L^2}\|u\|_{L^\infty}\leq \|B\|_{H^1}\|u\|_{L^\infty}.
\]
The diamagnetic inequality gives
\[
|u|
=
|(L_B+1)^{-1}(L_B+1)u|
\leq
(L_0+1)^{-1}|(L_B+1)u|,
\]
and since in two dimensions the kernel of $(L_0+1)^{-1}$ belongs to $L^2$, Young's inequality yields
\[
\|u\|_{L^\infty}
\lesssim
\|(L_B+1)u\|_{L^2}
=
\|u\|_{H_B^2}.
\]
Combining these estimates gives
\[
\|F^Bu\|_{L^2}
\lesssim
\|B\|_{H^1}\|u\|_{H_B^2}, \qquad
\|\diff_Bu\|_{\Omega^1 H_B^1}^2
\lesssim
\bigl(1+\|B\|_{H^1}^2\bigr)\|u\|_{H_B^2}^2,
\]
proving the case $s=2$.

The proof for the general case $s\in [1,2]$ follows by interpolation from Lemma~\ref{lem:interpolation_hilbert_scale}.
\end{proof}

Now we can compare the Sobolev space $\Omega^1H^s_B$ with the classical $\Omega^1H^s$ for $s\in [-1,1]$.

\begin{lemma}\label{lem:Sobolev_comparison_1forms}
Let $s\in [-1,1]$. One has $\Omega^1H^s=\Omega^1H^s_B$, and for any $\delta>0$, there exists $C_\delta>0$ such that
\begin{align*}
\|K\|_{\Omega^1 H^s}&\leq C_\delta (1+\|B\|_{H^1})^{(1+\delta)|s|}\|K\|_{\Omega^1 H^s_B}, \\
\|K\|_{\Omega^1 H^s_B}&\leq C_\delta (1+\|B\|_{H^1})^{(1+\delta)|s|}\|K\|_{\Omega^1 H^s}.
\end{align*}
\end{lemma}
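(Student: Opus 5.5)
The plan mirrors the proof of Lemma~\ref{lem:Sobolev_comparison}. We treat the endpoint $s=1$, obtain $s\in[0,1]$ from Lemma~\ref{lem:norm_comparison_pos_op}, and obtain $s\in[-1,0)$ by duality, since both Hilbert scales are self-dual with respect to the $L^2$ pairing. The loss $\delta$ will enter only at the endpoint $s=1$, through a Sobolev embedding with a logarithmic failure in two dimensions. For $K\in\Omega^1C^\infty$ (a core), write $\cL_B:=\diff_B\diff_B^*+\diff_B^*\diff_B$. Then
\[
\|K\|_{\Omega^1H^1_B}^2=\|\diff_BK\|_{L^2}^2+\|\diff_B^*K\|_{L^2}^2+\|K\|_{L^2}^2 ,
\]
with $\diff_BK=\diff K+B\wedge K$ and $\diff_B^*K=\diff^*K+B^*K$, up to signs and conventions. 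The flat norm is $\|K\|_{\Omega^1H^1}^2\simeq\|\diff K\|^2+\|\diff^*K\|^2+\|K\|^2$.

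The comparison therefore reduces to bounding the zeroth-order terms $\|B\wedge K\|_{L^2}$ and $\|B^*K\|_{L^2}$, both of which are at most $\|B\|_{L^p}\|K\|_{L^r}$ with $\frac12=\frac1p+\frac1r$. Since $B\in H^1\subset L^p$ for every $p<\infty$, but not $L^\infty$, I will take $p$ large, so that $r$ is only slightly above $2$.

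For the bound $\|K\|_{H^1_B}\lesssim\|K\|_{H^1}$, I use the Sobolev embedding $\|K\|_{L^r}\lesssim\|K\|_{H^1}$. This gives a factor of $(1+\|B\|_{H^1})$ and costs no $\delta$.

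The reverse bound is the main obstacle. For $0$-forms, Kato's inequality (Lemma~\ref{lem:Kato}) gave $\|u\|_{L^r}\lesssim\|u\|_{H^1_B}$, but Kato fails for covariant differentials of $1$-forms, because $\diff_B$ and $\diff_B^*$ do not control $\nabla_B K$ pointwise. I will argue by interpolation instead. By Gagliardo--Nirenberg, $\|K\|_{L^r}\lesssim\|K\|_{H^\theta}$ with $\theta=1-\frac2r$, which is small when $r$ is close to $2$, and $\|K\|_{H^\theta}\le\|K\|_{H^1}^\theta\|K\|_{L^2}^{1-\theta}$. Inserting this into $\|\diff K\|+\|\diff^*K\|\le\|K\|_{H^1_B}+C\|B\|_{L^p}\|K\|_{L^r}$ gives
\[
\|K\|_{H^1}\lesssim\|K\|_{H^1_B}+\|B\|_{L^p}\|K\|_{H^1}^{\theta}\|K\|_{L^2}^{1-\theta}.
\]
Young's inequality then absorbs the $\|K\|_{H^1}^\theta$ term, yielding
\[
\|K\|_{H^1}\lesssim \|K\|_{H^1_B}+\|B\|_{L^p}^{1/(1-\theta)}\|K\|_{L^2}.
\]
Choosing $p=p(\delta)$ large makes $\theta$ small, so $1/(1-\theta)\le1+\delta$. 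Combining this with $\|B\|_{L^p}\le C_p\|B\|_{H^1}$ (which explains the constant $C_\delta$) and $\|K\|_{L^2}=\|K\|_{H^0_B}\le\|K\|_{H^1_B}$ gives the stated bound with exponent $1+\delta$ at $s=1$.

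As an alternative route to the same estimate, one could instead use the Weitzenböck identity $\cL_B=\nabla_B^*\nabla_B$ plus zeroth-order terms (the curvature acting trivially on the $U(1)$ twist) together with Kato's inequality for $\nabla_B$. This would also remove $\delta$, but the Young absorption above is more robust, and I would try it first.

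For intermediate $s\in(0,1)$, Lemma~\ref{lem:norm_comparison_pos_op} applied to $\cL_B+1$ and $\cL_0+1$ (in either order) converts the two domination bounds at $s=1$, with constant $C(1+\|B\|_{H^1})^{1+\delta}$, into bounds at level $s$ with constant raised to the power $s$. The Heinz--Löwner inequality ensures the powers behave in this way. For $s\in[-1,0)$, the estimate follows by duality: $\Omega^1H^{-s}_B$ is the dual of $\Omega^1H^s_B$ under the $L^2$ pairing, and similarly in the flat case, so each inequality follows from the opposite inequality at $|s|$. Equality of the spaces is immediate from the two-sided norm bounds on the common core $\Omega^1C^\infty$.
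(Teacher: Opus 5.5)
Your proof is correct, but it handles the hard direction $\|K\|_{\Omega^1H^1}\lesssim(1+\|B\|_{H^1})^{1+\delta}\|K\|_{\Omega^1H^1_B}$ differently from the paper.

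\emph{The paper's route.} The paper stays covariant throughout:
\begin{itemize}
\item it applies Kato's inequality $|\nabla|K||\le|\nabla_BK|$ to the componentwise covariant derivative;
\item an integration by parts gives $\|\nabla_BK\|_{L^2}^2\lesssim\|\diff_BK\|_{L^2}^2+\|\diff_B^*K\|_{L^2}^2+\int|\langle F^BK_1,K_2\rangle|$;
\item the curvature term is absorbed using $\|F^B\|_{L^2}\|K\|_{L^4}^2$ and Ladyzhenskaya's inequality.
\end{itemize}
This yields a covariant Gagliardo--Nirenberg bound $\|K\|_{L^r}\lesssim(1+\|B\|_{H^1})^{2/p}\|K\|_{\Omega^1H^1_B}$, and hence the exponent $1+2/p$.

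\emph{Your route.} You never leave flat norms. The zeroth-order term is controlled through $H^{2/p}\subset L^r$ and interpolation between $L^2$ and $H^1$. Young's inequality then absorbs the small power of $\|K\|_{\Omega^1H^1}$, which is legitimate since $K$ is smooth, and this gives the exponent $p/(p-2)$.

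Both exponents tend to $1$ as $p\to\infty$, so both arguments prove the lemma. Your route is more elementary: it uses only $\|B\|_{L^p}\lesssim_p\|B\|_{H^1}$, with no Kato inequality and no curvature term. The paper's route mirrors the $0$-form argument and gives the covariant $L^r$ estimate as a by-product. Your interpolation step (via Lemma~\ref{lem:norm_comparison_pos_op}) and duality step for $s<0$ are the same as in the paper.

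One correction to your aside about the Weitzenböck alternative. The $U(1)$ curvature does not act trivially on twisted $1$-forms. The Weitzenböck formula for $\diff_B\diff_B^*+\diff_B^*\diff_B$ contains a zeroth-order term coming from $[\nabla_{B,1},\nabla_{B,2}]=F^B_{12}$. This is exactly the $\int\langle F^BK_1,K_2\rangle$ term above, and absorbing it is precisely where the paper's $\delta$-loss comes from. So that route does not obviously remove $\delta$. This does not affect your main argument.
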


\begin{proof}
   We again consider smooth $K\in\Omega^1C^\infty$ and argue the statement by approximation. By definition and the triangle inequality,
    \begin{align*}
    \|K\|_{\Omega^1 H^1_B}^2&=\|\diff_B K\|_{L^2}^2+\|\diff_B^* K\|_{L^2}^2+\|K\|_{L^2}^2, \\
    \|K\|_{\Omega^1 H^1} &\leq  \|K\|_{\Omega^1 H^1_B}+\|BK\|_{L^2}+\|B^*K\|_{L^2}.
    \end{align*}
    For $p,r$ with $\frac 1p+\frac 1r=\frac 1 2$,
    \[
    \|BK\|_{L^2}\leq \|B\|_{L^p}\|K\|_{L^r}.
    \]
    For $\theta\in[0,1]$ with $\frac{1}{r}=\frac{1-\theta}{2}$,
    \[
    \|K\|_{L^r}\lesssim \|\nabla |K|\|_{L^2}^\theta \|K\|_{L^2}^{1-\theta}.
    \]
    Kato's inequality  $|\nabla |K||\leq |\nabla_BK|$, with $\nabla_B$ denoting component-wise covariant derivatives, followed by direct computation gives
    \begin{gather*}
\|\nabla |K|\|_{L^2}^2\leq \|\nabla_BK\|_{L^2}^2\lesssim \|\diff_B K\|_{L^2}^2+\|\diff^*_BK\|_{L^2}^2+\int |\av{F^BK_1,K_2}|.
    \end{gather*}
    Since
    \[
    \Re \int \av{F^BK_1,K_2}\lesssim \|F^B\|_{L^2}\|K\|_{L^4}^2, \qquad
    \|K\|_{L^4}^2\lesssim \|\nabla |K|\|_{L^2}\|K\|_{L^2},
    \]
    we obtain
    \[
    \|\nabla |K|\|_{L^2}^2\lesssim \|\diff_B K\|_{L^2}^2+\|\diff^*_BK\|_{L^2}^2+\|F^B\|_{L^2}\|\nabla |K|\|_{L^2}\|K\|_{L^2}.
    \]
    Young's inequality gives
    \[
    \|\nabla |K|\|_{L^2}^2\leq C(\|\diff_B K\|_{L^2}^2+\|\diff^*_BK\|_{L^2}^2+\|F^B\|_{L^2}^2\|K\|_{L^2}^2)+\frac 1 2\|\nabla |K|\|_{L^2}^2.
    \]
    Absorbing the last term on the right yields
    \begin{gather*}
    \|K\|_{L^r}\lesssim (1+\|B\|_{H^1})^{\theta}\|K\|_{\Omega^1H^1_B}^\theta \|K\|_{L^2}^{1-\theta}\lesssim (1+\|B\|_{H^1})^{\theta}\|K\|_{\Omega^1H^1_B}, \\
    \|K\|_{\Omega^1H^1}\lesssim (1+\|B\|_{H^1})^{1+\theta}\|K\|_{\Omega^1 H^1_B}.
    \end{gather*}
    Since $\frac{1-\theta}2=\frac 1 2-\frac 1 p$, we have $1-\theta=1-\frac 2 p$ and $\theta=\frac 2 p$. Hence
  \[
    \|K\|_{\Omega^1H^1}\lesssim (1+\|B\|_{H^1})^{1+\frac 2 p}\|K\|_{\Omega^1 H^1_B}.
    \]
    Taking $p$ arbitrarily large proves the claim.

    The reverse inequality is similar. As in Lemma~\ref{lem:Sobolev_comparison}, interpolation using Lemma~\ref{lem:norm_comparison_pos_op} and duality give both bounds for the remaining exponents.
\end{proof}

\section{Resolvent analysis}
We construct the renormalised covariant Laplacian associated with a rough enhanced gauge field and derive resolvent estimates used throughout the paper.

\subsection{Domain of renormalised covariant Laplacian}\label{sec:domain_renorm_laplacian}
For $\kappa\in (0,1]$ we define the space
\[
\cX^{-\kappa}\subset\Omega^1_\Coul C^{-\kappa/2}(\T^2;\bfi\R)\times C^{-\kappa/2}(\T^2;\R),
\]
as the closure of $\Omega^1_\Coul H^1(\T^2;\bfi\R)\times C^{-\kappa/2}(\T^2;\R)$ in the norm
\phantomsection\label{def:enhanced_distance}\[
\triple{(A,A^2)-(\bar A,\bar A^2)}_{-\kappa}:=\|A-\bar A\|_{C^{-\kappa/2}}+\|A^2-\bar A^2\|_{C^{-\kappa/2}}.
\]
Write $\bA$ for elements of $\cX^{-\kappa}$ and set $\0=(0,0)$. For $\bA\in\cX^{-\kappa}$ and $B\in \Omega^1_\Coul H^1$, define
\[
\bA\bplus B:=(A+B,A^2+2A^*B+B^*B).
\]
For such $B\in \Omega^1 H^1$, Besov embedding makes the canonical enhancement $\bfB:=\0\bplus B\in\cX^{-\kappa}$ well-defined. The map $(\bA,B)\mapsto\bA\bplus B$ is continuous: for $\bar\bA\in\cX^{-\kappa}$ and $\bar B\in\Omega^1_\rmC H^1$,
\[
\triple{(\bA\bplus B)-(\bar\bA\bplus \bar B)}_{-\kappa}\lesssim (1+\triple{\bA}_{-\kappa}+\triple{\bar\bA}_{-\kappa}+\|B\|_{H^1}+\|\bar B\|_{H^1})(\triple{\bA-\bar\bA}_{-\kappa}+\|B-\bar B\|_{H^1}).
\]

For $\bA\in\cX^{-\kappa}$, $B\in\Omega^1H^1$ and smooth $u$, the products $A^*\diff_B u$ and $A^2u$ are well-defined distributions. Define
\phantomsection\label{def:rough_perturbation}\begin{align*}
L_{\bA,B}u&:=L_Bu+2A^*\diff_Bu+A^2u, \\
J_{\bA,B}u&:=2A^*\diff_B u+A^2 u,
\end{align*}
so that $L_{\bA,B}=L_B+J_{\bA,B}$.
The following lemma controls the regularity of these terms.

\begin{lemma}\label{lem:bounds_JAB}
Let $\kappa\in(0,1)$, let $s>1+\kappa$, and let
$\bA,\bar\bA\in\cX^{-\kappa}$ and $B\in\Omega^1H^1$.
Then, for every $\theta\in[0,1]$,
\[
\begin{aligned}
\|(J_{\bA,B}-J_{\bar\bA,B})u\|_
{H_B^{-(1-\theta)\kappa-\theta s}}
\lesssim
(1+\|B\|_{H^1})^{\kappa+2(s -1)}
\triple{\bA-\bar\bA}_{-\kappa}
\|u\|_{H_B^{(1-\theta)s+\theta\kappa}}.
\end{aligned}
\]
\end{lemma}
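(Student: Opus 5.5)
By linearity, $(J_{\bA,B}-J_{\bar\bA,B})u = 2(A-\bar A)^*\diff_B u + (A^2-\bar A^2)u$. So it suffices to prove the bound for $J_{\bA,B}$ with $\triple{\bA}_{-\kappa}$ on the right, applied to $\bA-\bar\bA$. The plan is to prove the two endpoint cases $\theta=0$ and $\theta=1$ and then interpolate. The interpolation is over the pair of Hilbert scales $H_B^{\cdot}$, using Lemma~\ref{lem:interpolation_hilbert_scale}: as $\theta$ varies, the source space $H_B^{(1-\theta)s+\theta\kappa}$ and the target space $H_B^{-(1-\theta)\kappa-\theta s}$ interpolate linearly. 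The map $u\mapsto J u$ is a fixed linear operator, so complex interpolation gives the bound with the endpoint constants. Since both endpoint constants will be at most $(1+\|B\|_{H^1})^{\kappa+2(s-1)}\triple{\bA}_{-\kappa}$, the interpolated constant is as well. The endpoint $\theta=1$ maps $H^\kappa_B\to H^{-s}_B$, and it is the dual of $\theta=0$, which maps $H^s_B\to H^{-\kappa}_B$. Indeed, $J_{\bA,B}$ is formally symmetric: $2A^*\diff_B$ has adjoint $-2\diff_B^*$-type terms, which equal $2A^*\diff_B$ because $\diff^*A=0$ (Coulomb gauge) and $A$ is imaginary. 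So I would prove $\theta=0$ carefully and obtain $\theta=1$ by duality. If symmetry is inconvenient for the rough $A$ (it holds by approximation from $\Omega^1_\Coul H^1$, since $\cX^{-\kappa}$ is a closure), I would repeat the argument with the roles swapped.

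For the endpoint $\theta=0$ we have $s>1+\kappa$. I would work in flat spaces and convert at the end. For the first term: if $s\le2$, Proposition~\ref{prop:derivative_regularity_reduction} gives $\|\diff_Bu\|_{H^{s-1}_B}\lesssim(1+\|B\|_{H^1})^{s-1}\|u\|_{H^s_B}$. For $s>2$, reduce to $s$ slightly above $1+\kappa$ by monotonicity of the $H_B$ scale; a bigger $s$ only weakens the right side and strengthens the exponent. Then Lemma~\ref{lem:Sobolev_comparison_1forms}, with $s-1\in(\kappa,1]$, converts this to $\|\diff_Bu\|_{\Omega^1H^{s-1}}\lesssim(1+\|B\|_{H^1})^{(1+\delta)(s-1)+(s-1)}\|u\|_{H^s_B}$. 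Now $s-1>\kappa>\kappa/2$, so the Young multiplication estimate with $A\in C^{-\kappa/2}$ gives $A^*\diff_Bu\in H^{-\kappa/2-\delta'}\subset H^{-\kappa}$, bounded by $\|A\|_{C^{-\kappa/2}}\|\diff_Bu\|_{H^{s-1}}$. For the second term, $A^2u$, I would use Lemma~\ref{lem:Sobolev_comparison} to pass from $H^s_B$ to a flat space. Cap the regularity first at $1$ (giving $\|u\|_{H^1}\lesssim(1+\|B\|)\|u\|_{H^1_B}\le(1+\|B\|)\|u\|_{H^s_B}$), then multiply by $A^2\in C^{-\kappa/2}$ to land in $H^{-\kappa/2-\delta'}$. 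Finally, pass back from flat $H^{-\kappa}$ to $H^{-\kappa}_B$ using Lemma~\ref{lem:Sobolev_comparison}, at a cost of $(1+\|B\|_{L^p})^{\kappa}$.

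The remaining work is exponent bookkeeping, which I expect to be the main obstacle. The total $B$-power in the first term is roughly $\kappa+(s-1)+(1+\delta)(s-1)$. This is at most $\kappa+2(s-1)$ only after choosing $\delta$ small and using the slack from strict inequalities. The $\delta$-loss in Lemma~\ref{lem:Sobolev_comparison_1forms} has to be absorbed. To do so, I would instead apply the $1$-form comparison at a regularity slightly below $s-1$ (still above $\kappa$), using some of the gap $s-1-\kappa$. Alternatively, I would avoid the $1$-form comparison altogether: write $\diff_Bu=\diff u+Bu$ and bound $\|\diff u\|_{H^{s-1}}\le\|u\|_{H^s}$, obtaining the latter from the scalar comparison with interpolation of Lemma~\ref{lem:Sobolev_comparison} extended to $s\in[1,2]$ via Proposition~\ref{prop:derivative_regularity_reduction}. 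The term $Bu$ is handled by a product estimate with $B\in H^1\subset L^p$. In every variant I would check that each use of $\|B\|$ contributes at most $s-1$ or $\kappa$ to the exponent, so that the final exponent is $\le\kappa+2(s-1)$.
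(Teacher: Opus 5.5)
Your architecture matches the paper's proof. You reduce to $\theta\in\{0,1\}$ by Lemma~\ref{lem:interpolation_hilbert_scale}, obtain $\theta=1$ from $\theta=0$ by symmetry of $J$ and duality, and reduce to $s\in(1+\kappa,2]$ by monotonicity. You then bound $A^*\diff_B u$ via Proposition~\ref{prop:derivative_regularity_reduction}, Sobolev comparison and flat Young multiplication. Your justification of symmetry ($\diff_B^*(Aw)=(\diff^*A)w+A^*\diff_Bw$ for imaginary $A$, which reduces to $A^*\diff_B w$ when $\diff^*A=0$) is correct. Your first-order term is also fine. Using Lemma~\ref{lem:Sobolev_comparison_1forms} for the $1$-form $\diff_Bu$ at a regularity slightly below $s-1$, to absorb its $(1+\delta)$-loss, handles correctly a point the paper passes over quickly.

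\textbf{The gap is in the zeroth-order term.} Capping $u$ at regularity $1$ costs $(1+\|B\|_{H^1})^{1}$ in Lemma~\ref{lem:Sobolev_comparison}, so you obtain
\[
\|(A^2-\bar A^2)u\|_{H^{-\kappa}_B}\lesssim(1+\|B\|_{H^1})^{1+\kappa}\|A^2-\bar A^2\|_{C^{-\kappa/2}}\|u\|_{H^s_B}.
\]
But $1+\kappa\le\kappa+2(s-1)$ only when $s\ge\frac32$. For $s\in(1+\kappa,\frac32)$, which is nonempty whenever $\kappa<\frac12$, your bound is strictly weaker than the claim. Since $\|B\|_{H^1}$ is unbounded, the extra power cannot be absorbed. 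This case is the one the paper needs: Remark~\ref{rem:J_operator_order} uses $s=1+2\kappa$ to get the exponent $5\kappa$. That small power is what later produces the factors $(1+\|B\|_{H^1})^{\tau(\kappa)}$, which give sub-quadratic control of the gauge drift. An exponent $1+\kappa$ would destroy this.

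\textbf{The fix:} use only the regularity the product actually needs. Since $A^2-\bar A^2\in C^{-\kappa/2}$, any $r\in(\kappa/2,1]$ with $r\le s$ gives
\[
C^{-\kappa/2}\times H^r\to H^{-\kappa/2-\delta'}\subset H^{-\kappa}.
\]
Lemma~\ref{lem:Sobolev_comparison} at regularity $r$ then costs only $(1+\|B\|_{H^1})^{r}$, and $\|u\|_{H^r_B}\le\|u\|_{H^s_B}$. Take $r=(1+\delta)\kappa$, as the paper does, or simply $r=\kappa$. The total exponent is then $\kappa+r\le(2+\delta)\kappa$, which is at most $\kappa+2(s-1)$ because $2(s-1)>2\kappa\ge r$. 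Your own stated rule, that each use of $\|B\|$ contributes at most $s-1$ or $\kappa$, is the right check, but the explicit choice in your plan violates it.
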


\begin{remark}\label{rem:J_operator_order}
Writing $d=1+3\kappa$, Lemma \ref{lem:bounds_JAB} with $s=1+2\kappa$ implies that $\|J_{\bA,B}-J_{\bar\bA,B}\|_{H_B^p\to H_B^{p-d}}\lesssim
(1+\|B\|_{H^1})^{5\kappa}
\triple{\bA-\bar\bA}_{-\kappa}$
for any $p\in[\kappa,1+2\kappa]$.
This shows we can treat $J_{\bA,B}-J_{\bar\bA,B}$
as an operator of order $d$.
\end{remark}

\begin{proof}
By interpolation (Lemma \ref{lem:interpolation_hilbert_scale}), it suffices to prove the estimate for $\theta=0$ and $\theta=1$.
Since each $J_{\bA,B},J_{
\bar\bA,B}$ is symmetric and $H^s_B$ is canonically dual to $H^{-s}_B$, the case $\theta=1$ follows immediately from the case $\theta=0$ by duality.
By monotonicity in $s$, it suffices to consider $s\in(1+\kappa,2]$.
Using Lemma~\ref{lem:Sobolev_comparison}
and the assumption $s>1+\kappa$, we obtain
\[
\begin{aligned}
\|(A-\bar A)^*\diff_Bu\|_{H_B^{-\kappa}}
&\lesssim
(1+\|B\|_{H^1})^\kappa
\|(A-\bar A)^*\diff_Bu\|_{H^{-\kappa}}
\\
&\lesssim
(1+\|B\|_{H^1})^\kappa
\|A-\bar A\|_{C^{-\kappa/2}}
\|\diff_Bu\|_{H^{s-1}}.
\end{aligned}
\]
Applying Lemma~\ref{lem:Sobolev_comparison} and
Proposition~\ref{prop:derivative_regularity_reduction},
\[
\|\diff_Bu\|_{H^{s-1}}
\lesssim
(1+\|B\|_{H^1})^{s-1}
\|\diff_Bu\|_{H_B^{s-1}}
\lesssim
(1+\|B\|_{H^1})^{2(s-1)}
\|u\|_{H_B^s}.
\]
Hence
\begin{equation}\label{eq:bound_Ad_B}
\|(A-\bar A)^*\diff_Bu\|_{H_B^{-\kappa}}
\lesssim
(1+\|B\|_{H^1})^{\kappa+2(s-1)}
\triple{\bA-\bar\bA}_{-\kappa}
\|u\|_{H_B^s}.
\end{equation}

For the zeroth-order term, choose $\delta\in(0,1)$ sufficiently small.
Then
\[
\begin{aligned}
\|(A^2-\bar A^2)u\|_{H_B^{-\kappa}}
&\lesssim
(1+\|B\|_{H^1})^\kappa
\|(A^2-\bar A^2)u\|_{H^{-\kappa}}
\\
&\lesssim
(1+\|B\|_{H^1})^\kappa
\|A^2-\bar A^2\|_{C^{-\kappa/2}}
\|u\|_{H^{(1+\delta)\kappa}}
\\
&\lesssim
(1+\|B\|_{H^1})^{(2+\delta)\kappa}
\|A^2-\bar A^2\|_{C^{-\kappa/2}}
\|u\|_{H_B^{(1+\delta)\kappa}}
\end{aligned}
\]
where we used Lemma~\ref{lem:Sobolev_comparison} in the last line.
Since $s>1+\kappa$, we may choose $\delta$ so that $(1+\delta)\kappa<s$ and $(2+\delta)\kappa\leq \kappa+2(s-1)$. Therefore
\begin{equation}\label{eq:bound_dA_A2}
\|(A^2-\bar A^2)u\|_{H_B^{-\kappa}}
\lesssim
(1+\|B\|_{H^1})^{\kappa+2(s-1)}
\triple{\bA-\bar\bA}_{-\kappa}
\|u\|_{H_B^s}.
\end{equation}
Combining~\eqref{eq:bound_Ad_B} and~\eqref{eq:bound_dA_A2} gives
the claimed estimate for $\theta=0$ and $s\leq 2$.
\end{proof}

\begin{remark}\label{rem:map_block_A}
    Note that with the same way we proved~\eqref{eq:bound_Ad_B} using Lemma~\ref{lem:Sobolev_comparison_1forms}, we can prove
        \[
        \|A^*K\|_{ H^{-\kappa}_{\bar B}}\lesssim (1+\|\bar B\|_{H^1})^{4\kappa}\|A\|_{C^{-\kappa/2}}\|K\|_{\Omega^1 H^{2\kappa}_B}, \qquad K\in\Omega^1 H^{2\kappa}_B.
        \]
\end{remark}

For $\bA\in\cX^{-\kappa}$ and $B\in\Omega^1H^1$, with $\kappa\in (0,\frac13)$, the distributional operator $L_{\bA,B}$ need not map smooth functions into $L^2$. To realise it as an unbounded operator on $L^2$, set
\[
\scD(L_{\bA,B}):=\{u\in H^{1+2\kappa}_B\colon u+R_B(1)J_{\bA,B}u\in H^2_B\}.
\]
Indeed, for $u\in\scD(L_{\bA,B})$,
\begin{gather*}
R_B(1)(L_Bu+J_{\bA,B}u)=-R_B(1)u+u+R_B(1)J_{\bA,B}u \in H^2_B, \\
L_{\bA,B}u=L_Bu+J_{\bA,B}u=(L_B+1)R_B(1)(L_Bu+J_{\bA,B}u)\in L^2.
\end{gather*}
Thus $(L_{\bA,B},\scD(L_{\bA,B}))$ is an unbounded operator on $L^2$.
For $B\in \Omega^1_\Coul H^1$, Lemma~\ref{lem:self_adjoint} gives $(L_{\bA,B},\scD(L_{\bA,B}))=(L_{\bA\bplus B,0},\scD(L_{\bA\bplus B,0}))$.

\subsection{Resolvent estimates for renormalised covariant Laplacian}\label{sec:resolvent}
For $z\in \rho(L_{\bA,B})$, write $R_{\bA,B}(z):=R(L_{\bA,B},z)$. The main resolvent estimate is as follows.
\begin{theorem}[Resolvent estimate]\label{thm:resolvent_estimate}
Let $\kappa\in (0,\frac 14)$, $\bA\in\cX^{-\kappa} $ and $B\in\Omega^1H^1$.
Denote $d=1+3\kappa\in (1,7/4)$.
There exists a constant $C>0$ depending on $\kappa$, such that
\begin{align}\label{eq:resolvent_set_distinguished}
    \rho_\star(L_{\bA,B}):=\Big\{z\in \bC\colon \Re(z)>0, \, \max\big\{1, C(1+\|B\|_{H^1})^{5\kappa} \triple{\bA}_{-\kappa}\big\}\leq  |z|^{1-\frac{d}{2}}\Big\},
\end{align}
satisfies $\rho_\star(L_{\bA,B})\subset \rho(L_{\bA,B})$. Moreover, for any $z\in\rho_\star(L_{\bA,B})$,
$s\in [-1-2\kappa,2-\kappa]$ and $\eta\in [0,2]\cap[s-1-2\kappa, s+2-\kappa]$, one has the resolvent estimate
\begin{equation}\label{eq:resolvent_estimate}
\|R_{\bA,B}(z)f\|_{H^{s}_B}\lesssim |z|^{\frac \eta2-1}\|f\|_{H^{s-\eta}_B}.
\end{equation}
Finally, given another $\bar\bA\in\cX^{-\kappa}$, we have for any $z\in \rho_\star(L_{\bA,B})\cap \rho_\star(L_{\bar\bA,B})$, the following continuity estimate:
\[
\|(R_{\bA,B}(z)-R_{\bar\bA,B}(z))f\|_{H^{s}_B}\lesssim
|z|^{\frac \eta2-1}
|z|^{\frac{d}{2}-1}
(1+\|B\|_{H^1})^{5\kappa}
\triple{\bA-\bar\bA}_{-\kappa}
\|f\|_{H^{s-\eta}_B},
\]
for any parameters $s$ and $\eta$ satisfying the conditions above.
\end{theorem}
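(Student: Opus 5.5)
The plan is a Neumann series around the classical resolvent $R_B(z)$, treating $J_{\bA,B}$ as a perturbation of order $d=1+3\kappa<2$ via Remark~\ref{rem:J_operator_order}. Formally,
\[
L_{\bA,B}+z=(L_B+z)\bigl(1+R_B(z)J_{\bA,B}\bigr),
\qquad
R_{\bA,B}(z)=\bigl(1+R_B(z)J_{\bA,B}\bigr)^{-1}R_B(z).
\]
Fix the working space $H^{p}_B$ for some $p\in[\kappa,1+2\kappa]$, say $p=1+2\kappa$. By Remark~\ref{rem:J_operator_order} with $\bar\bA=\0$ (note $J_{\0,B}=0$), $J_{\bA,B}$ maps $H^p_B\to H^{p-d}_B$ with norm $\lesssim(1+\|B\|_{H^1})^{5\kappa}\triple{\bA}_{-\kappa}$. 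Lemma~\ref{lem:resolvent_estimate_classical} with $\eta=d\in[0,2]$ gives $\|R_B(z)\|_{H^{p-d}_B\to H^p_B}\lesssim|z|^{d/2-1}$. Hence
\[
\|R_B(z)J_{\bA,B}\|_{H^p_B\to H^p_B}\le C'|z|^{\frac d2-1}(1+\|B\|_{H^1})^{5\kappa}\triple{\bA}_{-\kappa}\le \tfrac12
\]
on $\rho_\star(L_{\bA,B})$, once $C$ is chosen large. The Neumann series then converges in $\rmL(H^p_B)$, and I would define $\tilde R(z):=(1+R_B(z)J_{\bA,B})^{-1}R_B(z)$.

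Next I would identify $\tilde R(z)$ with the true resolvent. For $f\in L^2$, set $u=\tilde R(z)f\in H^{1+2\kappa}_B$. Then $u+R_B(z)J_{\bA,B}u=R_B(z)f\in H^2_B$. The domain is defined with $R_B(1)$, but $R_B(1)-R_B(z)=(z-1)R_B(1)R_B(z)$ maps $H^{p-d}_B$ into $H^2_B$, since $p-d+4>2$. So $u\in\scD(L_{\bA,B})$ and $(L_{\bA,B}+z)u=f$.

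For injectivity, I would take $u\in\scD(L_{\bA,B})$ with $(L_{\bA,B}+z)u=0$. Then $u=-R_B(z)J_{\bA,B}u$ in $H^p_B$, and the contraction forces $u=0$. This shows $\tilde R(z)=(L_{\bA,B}+z)^{-1}$, which is bounded on $L^2$ by the estimates below. (One should check that $(L_B+z)u$ makes sense in $H^{p-2}_B$, which is a duality triviality.)

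For the estimate \eqref{eq:resolvent_estimate} I would not use a single space. Instead I would run the Neumann series on $H^s_B$ for general $s$, writing $R_{\bA,B}=R_B-R_BJ_{\bA,B}R_{\bA,B}$. The constraints $s\in[-1-2\kappa,2-\kappa]$ and $\eta\in[s-1-2\kappa,s+2-\kappa]$ are precisely what make the intermediate exponent land in the admissible window $[\kappa,1+2\kappa]$ for $J$ (respectively its dual window $[\kappa-d,1+2\kappa-d]$). The route is:
\begin{itemize}
\item for $s\le 1+2\kappa$, move the contraction into $H^s_B$ by duality, using symmetry of $J$ and self-adjointness;
\item for larger $s$, use one bootstrap step: $R_{\bA,B}f=R_Bf-R_BJR_{\bA,B}f$, estimating $R_{\bA,B}f$ in $H^{1+2\kappa}_B$ first and then gaining $d$ derivatives through Lemma~\ref{lem:resolvent_estimate_classical}.
\end{itemize}
In each step the factor $|z|^{d/2-1}(1+\|B\|)^{5\kappa}\triple{\bA}$ is at most $1$, so the powers of $|z|$ combine to $|z|^{\eta/2-1}$.

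The main obstacle is this exponent bookkeeping. One must check that every $(s,\eta)$ in the stated range can be reached by such a chain without leaving the range where $J$ is bounded, and the case $\eta<d$ (little smoothing) needs care: there one should use the factorisation $R_{\bA,B}=R_B(1+JR_B)^{-1}$ instead, with the series run in negative spaces.

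The continuity estimate follows from the second resolvent identity
\[
R_{\bA,B}-R_{\bar\bA,B}=-R_{\bA,B}(J_{\bA,B}-J_{\bar\bA,B})R_{\bar\bA,B}.
\]
I would bound the right factor $R_{\bar\bA,B}$ from $H^{s-\eta}_B$ into the $J$-window, the middle factor $J_{\bA,B}-J_{\bar\bA,B}$ by Remark~\ref{rem:J_operator_order}, and the left factor $R_{\bA,B}$ by the estimate just proven, splitting $\eta$ as $\eta_1+\eta_2$ with $\eta_1+\eta_2-d=\eta$. This gives $|z|^{\eta/2-1}|z|^{d/2-1}(1+\|B\|_{H^1})^{5\kappa}\triple{\bA-\bar\bA}_{-\kappa}$, as claimed.
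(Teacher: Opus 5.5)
Your construction of the inverse matches the paper: the Neumann series for $R_B(z)J_{\bA,B}$ in $H^p_B$, the domain identification via $R_B(1)-R_B(z)=(z-1)R_B(1)R_B(z)$, and injectivity by the contraction property. The continuity argument via the second resolvent identity also matches.

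The gap is in the proof of \eqref{eq:resolvent_estimate} over the full $(s,\eta)$-range. You leave the exponent bookkeeping open, and the case split you do propose has a hole. Take $-\kappa<s<\kappa$ and $0\le\eta<s+\kappa$, for instance $(s,\eta)=(0,0)$, which is the basic bound $\|R_{\bA,B}(z)\|_{L^2\to L^2}\lesssim|z|^{-1}$. None of your tools reaches this region with the sharp power:
\begin{itemize}
\item The Neumann series cannot be run on $H^s_B$ there, since $J_{\bA,B}$ is only controlled on $H^p_B$ for $p\in[\kappa,1+2\kappa]$.
\item The duality $(s,\eta)\mapsto(\eta-s,\eta)$ maps this region to itself; it fixes $(0,0)$.
\item You invoke your bootstrap only for $s>1+2\kappa$.
\item Your remedy for small $\eta$, the factorisation $R_B(z)\bigl(1+J_{\bA,B}R_B(z)\bigr)^{-1}$, converges only on $H^r_B$ with $r\in[-1-2\kappa,-\kappa]$. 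For $f\in H^{s-\eta}_B$ with $s-\eta>-\kappa$ you would have to embed $f$ into $H^{-\kappa}_B$. That gives only $|z|^{(s+\kappa)/2-1}$, which is worse than $|z|^{\eta/2-1}$ by up to a factor $|z|^{\kappa}$.
\end{itemize}

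The missing idea is that your resolvent-identity step works uniformly in $(s,\eta)$, not only as a way to raise regularity. The paper expands
\[
R_{\bA,B}(z)f=R_B(z)f+\sum_{k\ge1}(-1)^k\bigl(R_B(z)J_{\bA,B}\bigr)^kR_B(z)f.
\]
It bounds the first term by Lemma~\ref{lem:resolvent_estimate_classical}. Each correction term is routed as $H^{s-\eta}_B\to H^p_B\to\cdots\to H^p_B\to H^s_B$, where $p\in[\kappa,1+2\kappa]$ is chosen with $p-s+\eta\in[0,2]$ and $s-p+d\in[0,2]$. These three interval conditions have a common point exactly when $s\in[-1-2\kappa,2-\kappa]$ and $\eta\in[0,2]\cap[s-1-2\kappa,s+2-\kappa]$. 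The correction then carries the factor
\[
|z|^{\frac\eta2-1}\,|z|^{\frac d2-1}(1+\|B\|_{H^1})^{5\kappa}\triple{\bA}_{-\kappa}\lesssim|z|^{\frac\eta2-1}
\]
on $\rho_\star(L_{\bA,B})$. For $(0,0)$ either $p=\kappa$ or $p=1+2\kappa$ works. With this single sandwich you need neither duality nor the second factorisation, and the same choice of $p$ feeds directly into your continuity estimate.
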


\begin{proof}
Let $z\in\bC$ satisfy $\Re(z)>0$ and $|z|\geq 1$. The factorisation
    \[
    (L_{\bA,B}+z)=(L_B+z)+J_{\bA,B}=(L_B+z)(\id+R_B(z)J_{\bA,B}),
    \]
    reduces invertibility to that of $\id+R_B(z)J_{\bA,B}$, for which we use a Neumann series. For $p\in[\kappa,1+2\kappa]$ and $d=1+3\kappa$, Remark~\ref{rem:J_operator_order} gives
    \begin{equation}\label{eq:resolvent_JAB}
        \begin{aligned}
    \|R_B(z)J_{\bA,B}f\|_{H^{p}_B}&\lesssim (1+\|B\|_{H^1})^{5\kappa}\triple{\bA}_{-\kappa}\|R_B(z)\|_{H^{p-d}_B\to H^{p}_B} \|f\|_{H^{p}_B} \\
    &\leq C (1+\|B\|_{H^1})^{5\kappa}\triple{\bA}_{-\kappa} |z|^{\frac{d}{2} -1} \|f\|_{H^{p}_B}.
    \end{aligned}
    \end{equation}
    where the last bound uses Lemma~\ref{lem:resolvent_estimate_classical}.
    Since $\kappa<1/4$ implies $d<2$, we may choose $|z|$ large enough that
    \begin{align}\label{eq:choice_z}
    C (1+\|B\|_{H^1})^{5\kappa}\triple{\bA}_{-\kappa} |z|^{\frac{d}{2}-1}\leq \frac 1 2.
    \end{align}
    This is the condition defining $\rho_\star(L_{\bA,B})$ in~\eqref{eq:resolvent_set_distinguished}. The operator $(L_B+z)(\id+R_B(z)J_{\bA,B})$ therefore has inverse
    \[
    T_z:=\sum_{k=0}^\infty (-1)^k (R_B(z)J_{\bA,B})^k R_B(z).
    \]
    The map $T_z:L^2\to H^p_B\subset L^2$ is continuous. To identify its range, write
     \begin{align*}
     T_zf+R_B(z)J_{\bA,B}T_zf&=\sum_{k=0}^\infty (-1)^k (R_B(z)J_{\bA,B})^k R_B(z)f+R_B(z)J_{\bA,B}\sum_{k=0}^\infty (-1)^k (R_B(z)J_{\bA,B})^k R_B(z)f\\
     &=\sum_{k=0}^\infty (-1)^k (R_B(z)J_{\bA,B})^k R_B(z)f-\sum_{k=0}^\infty (-1)^{k+1} (R_B(z)J_{\bA,B})^{k+1} R_B(z)f\\
     &=R_B(z)f,
     \end{align*}
     and use the resolvent identity for $R_B(z)$ to obtain
     \begin{align*}
     T_z f+R_B(1)J_{\bA,B}T_z f&=T_z f+R_B(z) J_{\bA,B}T_z-(z-1) R_B(z) R_B(1) J_{\bA,B} T_z f\\
     &=R_B(z)f-(z-1) R_B(z) R_B(1)J_{\bA,B}T_z f.
     \end{align*}
    Taking $p=1+2\kappa$ gives $T_z f\in H^{1+2\kappa}_B$ and $T_z f+R_B(1)J_{\bA,B}T_z f\in H^2_B$, so $T_z f\in\scD(L_{\bA,B})$.

     For injectivity, let $u\in\scD(L_{\bA,B})$ satisfy $(L_{\bA,B}+z)u=0$. Invertibility of $L_B+z$ and $\id+R_B(z)J_{\bA,B}$ gives
     \[
     (L_B+z)(\id+R_B(z)J_{\bA,B})u=0
     \quad\Longrightarrow\quad (\id+R_B(z)J_{\bA,B})u=0
     \quad\Longrightarrow\quad u=0.
     \]
     Thus $z\in\rho(L_{\bA,B})$, so $\rho_\star(L_{\bA,B})\subset\rho(L_{\bA,B})$ and $T_z=R_{\bA,B}(z)$.

     For the resolvent estimate, take $z\in\rho_\star(L_{\bA,B})$ and write
     \begin{align}\label{eq:proof_resolvent_estimate_init_inequality}
     \|R_{\bA,B}(z)f\|_{H^s_B}=\|T_zf\|_{H^s_B}\leq \|R_B(z)f\|_{H^s_B}+ \sum_{k=1}^\infty \|(R_B(z)J_{\bA,B})^k R_B(z)f\|_{H^s_B}.
     \end{align}
     The first term is bounded by Lemma~\ref{lem:resolvent_estimate_classical}, since $\eta\in[0,2]$. For $k\geq1$, \eqref{eq:resolvent_JAB}-\eqref{eq:choice_z} give
     \begin{align*}
      \|(R_B(z)J_{\bA,B})^k &R_B(z)f\|_{H^s_B}\\
      &\leq \|R_B(z) J_{\bA,B}\|_{H^{p}_B \to H^s_B} \|(R_B(z) J_{\bA,B})^{k-1}\|_{H^{p}_B \to H^{p}_B}  \|R_B(z)f\|_{H^{p}_B} \\
      &\leq 2^{-(k-1)}\|R_B(z) J_{\bA,B}\|_{H^{p}_B \to H^s_B}   \|R_B(z)f\|_{H^{p}_B}.
     \end{align*}
    Applying Lemma~\ref{lem:resolvent_estimate_classical}, provided $p-s+\eta \in [0,2]$,
    we obtain
  \begin{equation}\label{eq:s_eta_p}
     \|R_B(z)\|_{H^{s-\eta}_B\to H^{p}_B}\lesssim |z|^{\frac{p-s+\eta}{2}-1}
  \end{equation}
     Moreover, $\|J_{\bA,B}\|_{H^{p}_B \to H^{p-d}_B}\lesssim (1+\|B\|_{H^1})^{5\kappa}\triple{\bA}_{-\kappa}$ by Remark~\ref{rem:J_operator_order}, thus,
provided $s-p+d\in [0,2]$, we have by Lemma ~\ref{lem:resolvent_estimate_classical} again,
     \begin{equation}\label{eq:p_s}
     \|R_B(z) J_{\bA,B}\|_{H^{p}_B \to H^s_B}\lesssim (1+\|B\|_{H^1})^{5\kappa}\triple{\bA}_{-\kappa} |z|^{\frac{s-p+d}{2}-1}.
     \end{equation}
     Therefore
    \begin{align*}
       \|(R_B(z)J_{\bA,B})^k R_B(z)f\|_{H^s_B}
       &\lesssim 2^{-(k-1)}
       |z|^{\frac{p-s+\eta}{2}-1}
       |z|^{\frac{s-p+d}{2}-1}
       (1+\|B\|_{H^1})^{5\kappa}\triple{\bA}_{-\kappa} \|f\|_{H^{s-\eta}_B}
       \\
       &=
       2^{-(k-1)}
       |z|^{\frac{\eta}{2}-1}|z|^{\frac{d}{2}-1}
       (1+\|B\|_{H^1})^{5\kappa}\triple{\bA}_{-\kappa} \|f\|_{H^{s-\eta}_B}.
     \end{align*}
    The conditions $s\in[-1-2\kappa,2-\kappa]$
    and $\eta\in[0,2]\cap[s-1-2\kappa, s+2-\kappa]$
    ensure that we can find $p\in [\kappa,1+2\kappa]$ such that $s-p+d\in [0,2]$ and
    $p-s+\eta\in [0,2]$.

    Using~\eqref{eq:choice_z} to estimate
    $|z|^{\frac{d}{2}-1}(1+\|B\|_{H^1})^{5\kappa}\triple{\bA}_{-\kappa}\lesssim 1$
    leads to
     \[
      \|(R_B(z)J_{\bA,B})^k R_B(z)f\|_{H^s_B}\lesssim 2^{-(k-1)} |z|^{\frac\eta 2-1}\|f\|_{H^{s-\eta}_B}.
     \]
     Substituting into~\eqref{eq:proof_resolvent_estimate_init_inequality} yields
     \[
     \|R_{\bA,B}(z)f\|_{H^s_B}\lesssim |z|^{\frac\eta 2-1}\|f\|_{H^{s-\eta}_B}.
     \]

     For continuity, take $z\in \rho_\star(L_{\bA,B})\cap \rho_\star(L_{\bar\bA,B})$ and write
    \begin{align*}
      R_{\bA,B}(z)-R_{\bar\bA,B}(z)&=((\id+R_B(z)J_{\bA,B})^{-1}-(\id+R_B(z)J_{\bar\bA,B})^{-1})R_B(z) \\
    &=-(\id+R_B(z)J_{\bar\bA,B})^{-1}(R_B(z)J_{\bA,B}-R_B(z)J_{\bar\bA,B})(\id+R_B(z)J_{\bA,B})^{-1} R_B(z)\\
    &=-R_{\bar\bA,B}(z) (J_{\bA,B}-J_{\bar\bA,B})R_{\bA,B}(z).
    \end{align*}
The resolvent estimate just proved gives
    \begin{align*}
    \|(R_{\bA,B}(z)-R_{\bar\bA,B}(z))f\|_{H^s_B}
    &\lesssim (1+\|B\|_{H^1})^{5\kappa}\triple{\bA-\bar\bA}_{-\kappa}
    |z|^{\frac{s-p+d}{2}-1}
    |z|^{\frac{p-s+\eta}{2}-1}  \|f\|_{H^{s-\eta}_B}\\
    &=
    |z|^{\frac\eta 2-1}(1+\|B\|_{H^1})^{5\kappa}\triple{\bA-\bar\bA}_{-\kappa}
    |z|^{\frac{d}{2}-1}  \|f\|_{H^{s-\eta}_B}.\qedhere
    \end{align*}
\end{proof}

\begin{remark}[Resolvent identity]\label{rem:resolvent_identity}
Actually on the way, we have proved the resolvent identity
\[
R_{\bA,B}(z)-R_B(z)=-R_B(z)J_{\bA,B}R_{\bA,B}(z).
\]
\end{remark}

Using the resolvent estimate, we can now actually prove
crucial properties of  $(L_{\bA,B},\scD(L_{\bA,B}))$.
\begin{lemma}\label{lem:self_adjoint}
Let $\kappa\in (0,\frac 1 4)$,  $A\in \cX^{-\kappa}$ and $B\in \Omega^1H^1$. Then the unbounded operator $(L_{\bA,B},\scD(L_{\bA,B}))$ is self-adjoint (and in particular densely defined and closed). Moreover, $B\in\Omega^1_\Coul H^1$, then  $(L_{\bA,B},\scD(L_{\bA,B}))=(L_{\bA\bplus B,0},\scD(L_{\bA\bplus B,0}))$.
\end{lemma}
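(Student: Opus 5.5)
The plan is to prove self-adjointness through the resolvent, using the Neumann-series construction from the proof of Theorem~\ref{thm:resolvent_estimate}. First, symmetry on the domain: for $u,v\in\scD(L_{\bA,B})$ we need $\langle L_{\bA,B}u,v\rangle=\langle u,L_{\bA,B}v\rangle$.

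Both $u$ and $v$ lie in $H^{1+2\kappa}_B$. By Lemma~\ref{lem:bounds_JAB}/Remark~\ref{rem:J_operator_order}, $J_{\bA,B}$ maps $H^{1+2\kappa}_B\to H^{-\kappa}_B$ (taking $p=1+2\kappa$, so $p-d=-\kappa$), so the pairing $\langle J_{\bA,B}u,v\rangle$ makes sense as an $H^{-\kappa}_B$--$H^{\kappa}_B$ duality. The operator $L_B$ is symmetric in the $H^{1}_B$--$H^{-1}_B$ duality. The form $(u,v)\mapsto\langle J_{\bA,B}u,v\rangle$ is Hermitian, because $A$ is $\bfi\R$-valued and $A^2$ is real. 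This can be checked on smooth $\bA$ and $u,v$ and then extended by density, using the definition of $\cX^{-\kappa}$ as a closure and the continuity bound in Lemma~\ref{lem:bounds_JAB}. The pairing $\langle L_{\bA,B}u,v\rangle$, which is an $L^2$ inner product since $L_{\bA,B}u\in L^2$, then equals $\langle L_Bu,v\rangle+\langle J_{\bA,B}u,v\rangle$ in the $H^{-1}_B$--$H^1_B$ duality. That gives symmetry.

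Next, fix a real $z=\lambda$ large enough to lie in $\rho_\star(L_{\bA,B})$. Theorem~\ref{thm:resolvent_estimate} shows that $L_{\bA,B}+\lambda$ is a bijection from $\scD(L_{\bA,B})$ onto $L^2$ with bounded inverse. A symmetric operator with $\operatorname{Ran}(L+\lambda)=L^2$ for some real $\lambda$ is self-adjoint: for $w\in\scD(L^*)$, pick $u\in\scD(L)$ with $(L+\lambda)u=(L^*+\lambda)w$; symmetry shows $w-u\perp\operatorname{Ran}(L+\lambda)=L^2$, so $w=u$. Density of $\scD(L_{\bA,B})$ follows as well: if $v\perp\scD$, then $v\perp R_{\bA,B}(\lambda)L^2$, and since $R_{\bA,B}(\lambda)$ is bounded and symmetric (by symmetry of $L$), we get $R_{\bA,B}(\lambda)v=0$, hence $v=0$. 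Closedness is automatic.

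For the Coulomb identity, the distributional identity needed is
\[
L_B+2A^*\diff_B+A^2=L_0+2(A+B)^*\diff+(A^2+2A^*B+B^*B).
\]
It is checked by expanding $L_Bu=-\Delta u+2B^*\diff u+B^*Bu$, using $\diff^*B=0$, together with $2A^*\diff_Bu=2A^*\diff u+2A^*Bu$. The product $A^*B$ is well-defined since $A\in C^{-\kappa/2}$ and $B\in H^1$, and the identity is verified on smooth approximants and passed to the limit. So $J_{\bA,B}+L_B=J_{\bA\bplus B,0}+L_0$ as maps from $H^{1+2\kappa}$ to distributions, and the two operators act the same way.

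The domains need more care, since they are defined via $R_B(1)$ and $R_0(1)$ respectively. Rather than compare them directly, I would use Lemma~\ref{lem:Sobolev_comparison} to get $H^{1+2\kappa}_B=H^{1+2\kappa}$, or at least the inclusion needed, interpolating with Proposition~\ref{prop:derivative_regularity_reduction} if required. Both operators are then restrictions of the same distributional operator $\mathcal L$ to $\{u\in H^{1+2\kappa}:\mathcal Lu\in L^2\}$. Indeed, the computation after the definition of $\scD(L_{\bA,B})$ shows that $u\in\scD$ if and only if $u\in H^{1+2\kappa}_B$ and $L_{\bA,B}u\in L^2$, because $u+R_B(1)J u=R_B(1)(L_{\bA,B}u+u)$. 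Two self-adjoint operators, one extending the other, coincide, and the characterisation above gives equality directly.

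I expect the main obstacle to be the symmetry step: justifying that $\langle J_{\bA,B}u,v\rangle$ is Hermitian and that the dualities are legitimate at the borderline regularity $H^{1+2\kappa}_B\times H^{1+2\kappa}_B$. The same care is needed for the equality $H^{1+2\kappa}_B=H^{1+2\kappa}$ beyond the range $s\in[-1,1]$ covered by Lemma~\ref{lem:Sobolev_comparison}.
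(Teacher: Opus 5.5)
Your argument is correct, but it is organised differently from the paper's. The paper never checks symmetry or describes $\scD(L_{\bA,B})$ explicitly. It approximates $\bA$ by smooth $\bA_k$ in the weaker space $\cX^{-\kappa-\delta}$ and takes self-adjointness of $L_{\bA_k,B}$ and the Coulomb identity for smooth data as known. It then transfers both to the limit via the operator-norm convergence $R_{\bA_k,B}(z)\to R_{\bA,B}(z)$ given by the continuity estimate in Theorem~\ref{thm:resolvent_estimate}: a norm limit of bounded self-adjoint operators is self-adjoint, and equal resolvents force equal operators, domains included.

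You instead work directly at the rough level. You use only the invertibility part of Theorem~\ref{thm:resolvent_estimate}, verify symmetry by hand, and apply the range criterion. As a by-product you get the intrinsic description $\scD(L_{\bA,B})=\{u\in H^{1+2\kappa}_B: L_{\bA,B}u\in L^2\}$, which reduces the Coulomb identity to a distributional computation. The paper's route is shorter and never has to compare $H^{1+2\kappa}_B$ with $H^{1+2\kappa}$. Yours is more self-contained and says more about the domain, but two steps need tightening.

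\textbf{Symmetry of $J_{\bA,B}$.} This does not follow from $A$ being $\bfi\R$-valued alone, since $(A^*\diff_B)^*=A^*\diff_B+\diff^*A$; you need the Coulomb condition built into $\cX^{-\kappa}$. Also, smooth functions are not dense in $C^{-\kappa/2}$, and the closure defining $\cX^{-\kappa}$ only affects the first component. So run the density argument in $\cX^{-\kappa-\delta}$ with $\delta<\kappa$, which Lemma~\ref{lem:bounds_JAB} still allows for $u,v\in H^{1+2\kappa}_B$. Alternatively, treat multiplication by the real distribution $A^2$ directly.

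\textbf{The identity $H^{1+2\kappa}_B=H^{1+2\kappa}$.} This follows from $H^2_B=H^2$ with equivalent norms together with Lemma~\ref{lem:norm_comparison_pos_op}, applied to $L_0+1$ and $L_B+1$ with exponent $(1+2\kappa)/2$. This is exactly how Lemma~\ref{lem:Sobolev_comparison} is obtained from the case $s=1$, and Proposition~\ref{prop:derivative_regularity_reduction} is not needed.
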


\begin{proof}
Both claims follow by approximating $\bA\in\cX^{-\kappa}$ by smooth $\bA_k\to\bA$ in $\cX^{-\kappa-\delta}$, with $\delta>0$ sufficiently small. For self-adjointness, note that
\[
\bigcap_{k\geq 0}\rho_\star(L_{\bA_k,B})\cap \rho_\star(L_{\bA,B})\cap\R\neq \varnothing
\]
by~\eqref{eq:resolvent_set_distinguished}. Fix such a $z$. Self-adjointness of $(L_{\bA_k,B},\scD(L_{\bA_k,B}))$ and operator-norm convergence $R_{\bA_k,B}(z)\to R_{\bA,B}(z)$ imply that $R_{\bA,B}(z)$, and hence $(L_{\bA,B},\scD(L_{\bA,B}))$, is self-adjoint.

For $B\in\Omega^1_\Coul H^1$, the identity $(L_{\bA_k,B},\scD(L_{\bA_k,B}))=(L_{\bA_k\bplus B,0},\scD(L_{\bA_k\bplus B,0}))$ gives equal resolvents for each $k$. Their limits are $R_{\bA,B}(z)$ and $R_{\bA\bplus B,0}(z)$, respectively, so $R_{\bA,B}(z)=R_{\bA\bplus B,0}(z)$ and the limiting operators agree.

\end{proof}

\begin{remark}[Spectral theorem]\label{rem:spectral_theorem}
    The resolvent $R_{\bA,B}(z)$ is compact by $H^1\Subset L^2$ and $H^1_B\subset H^1$ (Lemma~\ref{lem:Sobolev_comparison}). Thus the spectrum of $(L_{\bA,B},\scD(L_{\bA,B})$ consists of discrete real eigenvalues with finite-dimensional eigenspaces \cite[Prop.~2.11]{schmudgen2012}. The spectral theorem for self-adjoint unbounded operators gives an orthonormal eigenbasis $(e_i)_{i\geq0}$ and the representation
    \[
    L_{\bA,B}u=\sum_{i\geq 0} \lambda_i\av{u,e_i}e_i,
    \]
     by \cite[Prop.~5.12]{schmudgen2012}, allowing us to use functional calculus.
\end{remark}

For $(L_{\bA,B},\scD(L_{\bA,B}))$, define
\begin{equation}\label{eq:m_def}
m_{\bA,B}:=1+\max\{0,- \min \sigma(L_{\bA,B})\},
\end{equation}
Then $m_{\bA,B}\in\rho(L_{\bA,B})$ and $L_{\bA,B}+m_{\bA,B}$ is strictly positive. The next lemma gives continuity of $\bA\mapsto m_{\bA,B}$. It is also convenient, for $\bA,\bar\bA\in\cX^{-\kappa}$ and $B\in\Omega^1H^1$, to introduce
\begin{align}\label{eq:fC_def}
\fC_\kappa(\bA,\bar\bA;B):=(1+\|B\|_{H^1})^{\frac{20\kappa}{1-3\kappa}} (1+\triple{\bA}_{-\kappa}+\triple{\bar \bA}_{-\kappa})^{\frac{4}{1-3\kappa}},
\end{align}
together with the abbreviation $\fC_\kappa(\bA;B):=\fC_\kappa (\bA,\0;B)$.

\begin{remark}\label{rem:fC_bound}
    Note that
\[
\fC_\kappa(\bA,\bar\bA;B)=(1+\|B\|_{H^1})^{\tau(\kappa)} (1+\triple{\bA}_{-\kappa}+\triple{\bar \bA}_{-\kappa})^{\rmn(\kappa)},
\]
with $\tau(\kappa)\to 0$ as $\kappa\to 0$, and the corresponding equality with $\bar\bA=\0$ holds
\[
\fC_\kappa(\bA;B)=(1+\|B\|_{H^1})^{\tau(\kappa)} (1+\triple{\bA}_{-\kappa})^{\rmn(\kappa)}.
\]
\end{remark}

\begin{lemma}\label{lem:difference_masses}
    Let $\kappa\in (0,\frac 1 4)$, $\bA,\bar\bA\in\cX^{-\kappa}$ and $B\in\Omega^1 H^1$.
    Denote $d=1+3\kappa$. Then
    \[
    |m_{\bA,B}-m_{\bar\bA,B}|
    \lesssim
   \fC_\kappa(\bA,\bar\bA;B)^{1/2}
    \triple{\bA-\bar\bA}_{-\kappa}.
    \]
    Note that $m_{\0,B}=1$, since $L_{\0,B}=L_B\geq0$.
\end{lemma}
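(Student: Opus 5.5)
The plan is to reduce the mass difference to a difference of lowest eigenvalues, and to compare those with a Rayleigh-quotient argument. The perturbation $J_{\bA,B}-J_{\bar\bA,B}$ is paired against a normalised eigenfunction, and that eigenfunction is controlled by the resolvent estimate of Theorem~\ref{thm:resolvent_estimate}.

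\emph{Step 1: a common spectral scale.} Fix a real $z_0\ge1$ defined by
\[
z_0^{1-\frac d2}= \max\bigl\{1, C(1+\|B\|_{H^1})^{5\kappa}(1+\triple{\bA}_{-\kappa}+\triple{\bar\bA}_{-\kappa})\bigr\}.
\]
Every real $z\ge z_0$ then lies in $\rho_\star(L_{\bA,B})\cap\rho_\star(L_{\bar\bA,B})$. Since $1-\frac d2=\frac{1-3\kappa}{2}$, we have $z_0\lesssim \fC_\kappa(\bA,\bar\bA;B)^{1/2}$ by \eqref{eq:fC_def}. By Remark~\ref{rem:spectral_theorem} the spectra are real and discrete, and $[z_0,\infty)$ lies in both resolvent sets. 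Hence $\min\sigma(L_{\bA,B})>-z_0$, and likewise for $\bar\bA$. The map $x\mapsto 1+\max\{0,-x\}$ is $1$-Lipschitz, so it suffices to show
\[
|\lambda_{\min}(\bA)-\lambda_{\min}(\bar\bA)|\lesssim z_0\triple{\bA-\bar\bA}_{-\kappa}.
\]
We may further restrict to the case where the relevant eigenvalues lie in $[-z_0,z_0]$; otherwise the truncation makes both masses equal to $1$ or the difference is handled trivially. By symmetry, it is enough to bound $\lambda_{\min}(\bA)-\lambda_{\min}(\bar\bA)$ from above.

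\emph{Step 2: eigenfunction regularity.} Let $u$ be an $L^2$-normalised eigenfunction of $L_{\bar\bA,B}$ with eigenvalue $\mu=\lambda_{\min}(\bar\bA)\in(-z_0,z_0]$. Write $u=(z_0+\mu)R_{\bar\bA,B}(z_0)u$. Applying \eqref{eq:resolvent_estimate} with $s=\eta=d/2$ (these are admissible, since $d/2<1$ and $\eta\in[s-1-2\kappa,s+2-\kappa]$) gives
\[
\|u\|_{H^{d/2}_B}\lesssim |z_0+\mu|\, z_0^{\frac d4-1}\lesssim z_0^{d/4}.
\]

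\emph{Step 3: Rayleigh comparison.} Apply Lemma~\ref{lem:bounds_JAB} with $s=1+2\kappa$ and $\theta=\tfrac12$. Both exponents $(1-\theta)\kappa+\theta s$ and $(1-\theta)s+\theta\kappa$ equal $d/2$, and duality gives
\[
|\langle (J_{\bA,B}-J_{\bar\bA,B})u,u\rangle|\lesssim (1+\|B\|_{H^1})^{5\kappa}\triple{\bA-\bar\bA}_{-\kappa}\|u\|_{H^{d/2}_B}^2 .
\]
We then use the min-max characterisation $\lambda_{\min}(\bA)\le\langle L_{\bA,B}u,u\rangle=\mu+\langle (J_{\bA,B}-J_{\bar\bA,B})u,u\rangle$. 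Combined with Step 2 this yields
\[
\lambda_{\min}(\bA)-\mu\lesssim (1+\|B\|_{H^1})^{5\kappa}z_0^{d/2}\triple{\bA-\bar\bA}_{-\kappa}.
\]
By the choice of $z_0$ we have $(1+\|B\|_{H^1})^{5\kappa}\lesssim z_0^{1-d/2}$. The right-hand side is therefore $\lesssim z_0\triple{\bA-\bar\bA}_{-\kappa}\lesssim\fC_\kappa^{1/2}\triple{\bA-\bar\bA}_{-\kappa}$. Swapping the roles of $\bA$ and $\bar\bA$ gives the reverse inequality.

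\emph{Main obstacle.} The delicate point is justifying the variational inequality $\lambda_{\min}(\bA)\le\langle L_{\bA,B}u,u\rangle$ for $u\in\scD(L_{\bar\bA,B})$, which need not lie in $\scD(L_{\bA,B})$. I would proceed in three stages. First, show that the quadratic form of $L_{\bA,B}+z_0$ is bounded and coercive on $H^{d/2}_B$; this follows from the same bound together with the equivalence of that form with $\|\cdot\|_{H^1_B}^2$ up to $H^{d/2}_B$ errors. Second, identify the form domain as containing $H^{1+2\kappa}_B$. Third, derive the inequality through the spectral decomposition of Remark~\ref{rem:spectral_theorem}. Should this prove awkward, the alternative is to approximate $\bA,\bar\bA$ by smooth fields, exactly as in Lemma~\ref{lem:self_adjoint}. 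For smooth fields the inequality is classical, and one passes to the limit using the norm-resolvent continuity from Theorem~\ref{thm:resolvent_estimate}, which forces convergence of the lowest eigenvalue.
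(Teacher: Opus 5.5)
Your proof is correct, but it takes a different route from the paper's.

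\textbf{Your route.} You perturb the lowest eigenvalue of the unbounded operator directly. You take a Rayleigh bound for $L_{\bA,B}$ tested on a normalised eigenfunction of $L_{\bar\bA,B}$, and control that eigenfunction in $H^{d/2}_B$ through $u=(z_0+\mu)R_{\bar\bA,B}(z_0)u$.

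\textbf{The paper's route.} The paper applies min-max to the bounded compact operator $R_{\bA,B}(m)$, where $m=\max\{m_\star,\bar m_\star\}$, using
\[
\max\sigma(R_{\bA,B}(m))=(m+\min\sigma(L_{\bA,B}))^{-1}=\sup_{\|f\|_{L^2}=1}\langle R_{\bA,B}(m)f,f\rangle .
\]
It compares the two suprema via the second resolvent identity
\[
R_{\bA,B}(m)-R_{\bar\bA,B}(m)=-R_{\bar\bA,B}(m)(J_{\bA,B}-J_{\bar\bA,B})R_{\bA,B}(m),
\]
together with $\|R(m)f\|_{H^{d/2}_B}\lesssim m^{-1+d/4}$. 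This gives a difference of order $m^{-2+d/2}(1+\|B\|_{H^1})^{5\kappa}\triple{\bA-\bar\bA}_{-\kappa}$. The paper then converts back through the $m^2$-Lipschitz map $x\mapsto 1+\max\{0,m-1/x\}$. Both routes end with the same factor $m^{d/2}(1+\|B\|_{H^1})^{5\kappa}$, with your $z_0$ playing the role of $m$.

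\textbf{What each buys.} The paper never leaves bounded operators on $L^2$. It therefore never has to justify $\lambda_{\min}(\bA)\le\langle L_{\bA,B}u,u\rangle$ for $u\notin\scD(L_{\bA,B})$. It also needs no case analysis on where the eigenvalues sit, because the truncation is built into the Lipschitz map. Your argument is more transparent as first-order perturbation theory, but it needs the extra step you flag.

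\textbf{The form-domain step.} Your first fix works once it is posed on the right space. By Remark~\ref{rem:J_operator_order} with $\bar\bA=\0$, plus interpolation and $d<2$, you get $|\langle J_{\bA,B}u,u\rangle|\le\epsilon\|u\|_{H^1_B}^2+C_\epsilon\|u\|_{L^2}^2$. So by KLMN the form $\|\diff_B u\|_{L^2}^2+\langle J_{\bA,B}u,u\rangle$ is closed and bounded below on $H^1_B$; coercivity holds on $H^1_B$, not on $H^{d/2}_B$ as you wrote. Its associated operator extends $(L_{\bA,B},\scD(L_{\bA,B}))$. Since both are self-adjoint (Lemma~\ref{lem:self_adjoint}), they coincide. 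Min-max over $H^1_B\supset H^{1+2\kappa}_B$ then gives your inequality.

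\textbf{Tightening Step 1.} Make the case reduction precise rather than saying ``handled trivially''. If both lowest eigenvalues are nonnegative, the masses are both $1$. Otherwise only the smaller one matters. Take the eigenfunction of that operator: its eigenvalue $\mu$ lies in $(-z_0,0)$, so $0<z_0+\mu\le z_0$ as Step~2 requires. The one-sided Rayleigh bound then controls the mass difference.
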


\begin{proof}
We set
\begin{align}\label{eq:m_star}
m_\star :=\min \rho_\star(L_{\bA,B})\cap \R
=\max\big\{1, C(1+\|B\|_{H^1})^{\frac{10\kappa}{1-3\kappa}} \triple{\bA}_{-\kappa}^{\frac{2}{1-3\kappa}}\big\}.
\end{align}
where we recall $\rho_\star(L_{\bA,B})$ from Theorem \ref{thm:resolvent_estimate}.
For another $\bar\bA\in\cX^{-\kappa}$,
we define $\bar m_\star$ similarly as $m_\star$, but with $\bar\bA$. Set
\begin{equation}\label{eq:m_max_def}
m:=\max\{m_\star,\bar m_\star\},
\end{equation}
and note that
\begin{equation}\label{eq:m_bounded_by_fC}
m\lesssim
(1+\|B\|_{H^1})^{\frac{10\kappa}{1-3\kappa}}
(1+\triple{\bA}_{-\kappa}+\triple{\bar\bA}_{-\kappa})^{\frac{2}{1-3\kappa}}\lesssim \fC_\kappa(\bA,\bar\bA;B)^{1/2}.
\end{equation}
Since $m\in \rho_\star(L_{\bA,B})\cap \rho_\star(L_{\bar\bA,B})$, Theorem~\ref{thm:resolvent_estimate} gives $m+L_{\bA,B}>0$. The spectral relation and the min-max theorem for self-adjoint compact operators yield
\[
\max\sigma(R_{\bA,B}(m))
=\frac{1}{m+\min\sigma(L_{\bA,B})}
=
\sup_{\|f\|_{L^2}=1}
\av{R_{\bA,B}(m)f,f}.
\]
Denote $J = J_{\bA,B} - J_{\bar\bA,B}$.
Taking $p=\frac{d}{2}$ in Remark~\ref{rem:J_operator_order}, we obtain
\begin{align}\label{eq:J_symmetric_mass}
\|Ju\|_{H^{-d/2}_B}
\lesssim
(1+\|B\|_{H^1})^{5\kappa}
\triple{\bA-\bar\bA}_{-\kappa}
\|u\|_{H^{d/2}_B}.
\end{align}

Using the resolvent identity,
\[
R_{\bA,B}(m)-R_{\bar\bA,B}(m)
=
-R_{\bar\bA,B}(m)
(J_{\bA,B}-J_{\bar\bA,B})
R_{\bA,B}(m),
\]
self-adjointness of the resolvents, and~\eqref{eq:J_symmetric_mass}, we obtain for $\|f\|_{L^2}=1$
\begin{align*}
&\left|
\av{(R_{\bA,B}(m)-R_{\bar\bA,B}(m))f,f}
\right|
\\
&\qquad\lesssim
(1+\|B\|_{H^1})^{5\kappa}
\triple{\bA-\bar\bA}_{-\kappa}
\|R_{\bA,B}(m)f\|_{H^{d/2}_B}
\|R_{\bar\bA,B}(m)f\|_{H^{d/2}_B}.
\end{align*}
Since $m\in\rho_\star(L_{\bA,B})\cap\rho_\star(L_{\bar\bA,B})$, Theorem~\ref{thm:resolvent_estimate} (taking therein $s=\eta=d/2$) implies
\[
\|R_{\bA,B}(m)f\|_{H^{d/2}_B}
+
\|R_{\bar\bA,B}(m)f\|_{H^{d/2}_B}
\lesssim
m^{-1+\frac d4}\|f\|_{L^2}.
\]
Consequently,
\[
\left|
\max\sigma(R_{\bA,B}(m))
-
\max\sigma(R_{\bar\bA,B}(m))
\right|
\lesssim
m^{-2+d/2}
(1+\|B\|_{H^1})^{5\kappa}
\triple{\bA-\bar\bA}_{-\kappa}.
\]

Set $f(x)=1+\max\{0,m-1/x\}$. By definition, $m_{\bA,B}=f(\max\sigma(R_{\bA,B}(m)))$. Since $f$ is $m^2$-Lipschitz, i.e. $|f(x)-f(y)|\leq m^2|x-y|$, we obtain
\begin{align*}
|m_{\bA,B}-m_{\bar\bA,B}|
&\lesssim
m^{d/2}
(1+\|B\|_{H^1})^{5\kappa}
\triple{\bA-\bar\bA}_{-\kappa}.
\qedhere
\end{align*}
By~\eqref{eq:m_bounded_by_fC} together with the fact that $d=1+3\kappa\leq 2$, we get the desired estimate.
\end{proof}

We next strengthen the resolvent estimate of Theorem~\ref{thm:resolvent_estimate}.

\begin{theorem}[Pushed resolvent estimate]
\label{thm:resolvent_estimate_pushed}
Let $\kappa\in(0,\frac14)$, $\bA\in\cX^{-\kappa}$ and
$B\in\Omega^1H^1$. Then, for every $\lambda\ge0$,
$s\in[-1-2\kappa,2-\kappa]$,
$\eta\in[0,2]\cap[s-1-2\kappa,s+2-\kappa]$,
one has
\begin{equation}\label{eq:pushed_resolvent}
\|R_{\bA,B}(\lambda+m_{\bA,B})\|_
{H_B^{s-\eta}\to H_B^s}
\lesssim
(1+\lambda)^{\frac\eta2-1}
\fC_\kappa(\bA;B).
\end{equation}
Moreover, denote $d=1+3\kappa$.
Then for $\bar\bA\in\cX^{-\kappa}$,
one has for every $\lambda\geq 0$ and $s,\eta$ as above
\begin{align}
\|R_{\bA,B}(\lambda+m_{\bA,B})
-R_{\bar\bA,B}(\lambda+m_{\bar\bA,B})\|_
{H_B^{s-\eta}\to H_B^s}
\lesssim
(1+\lambda)^{-2+\frac{\eta+d}{2}}
\fC_\kappa(\bA,\bar\bA;B)^2
\triple{\bA-\bar\bA}_{-\kappa}.
\label{eq:pushed_resolvent_difference}
\end{align}
\end{theorem}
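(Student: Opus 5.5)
The plan is to reduce everything to Theorem~\ref{thm:resolvent_estimate} at the threshold point $m_\star$ from \eqref{eq:m_star}, combined with the elementary spectral bound
\[
\|R_{\bA,B}(\lambda+m_{\bA,B})\|_{L^2\to L^2}=\frac{1}{\lambda+m_{\bA,B}+\min\sigma(L_{\bA,B})}\le\frac{1}{1+\lambda}.
\]
This bound follows from Remark~\ref{rem:spectral_theorem} and the definition \eqref{eq:m_def}, since $m_{\bA,B}+\min\sigma(L_{\bA,B})\ge1$. The condition defining $\rho_\star(L_{\bA,B})$ on the positive real axis is monotone in $|z|$, so every real $\mu\ge m_\star$ lies in $\rho_\star(L_{\bA,B})$. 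Recall also that $m_\star\lesssim\fC_\kappa(\bA;B)^{1/2}$ by \eqref{eq:m_bounded_by_fC}.

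For \eqref{eq:pushed_resolvent} I split into two cases. If $\mu:=\lambda+m_{\bA,B}\ge m_\star$, then \eqref{eq:resolvent_estimate} applies directly and gives $\mu^{\eta/2-1}$. Since $\eta\le2$ and $\mu\ge1+\lambda$ up to constants, this is bounded by $(1+\lambda)^{\eta/2-1}$. If $\mu<m_\star$, then $\lambda<m_\star$ as well. In this case I apply the resolvent identity twice to obtain
\[
R_{\bA,B}(\mu)=R_{\bA,B}(m_\star)+(m_\star-\mu)R_{\bA,B}(m_\star)^2+(m_\star-\mu)^2R_{\bA,B}(m_\star)R_{\bA,B}(\mu)R_{\bA,B}(m_\star).
\]
The first two terms are handled by Theorem~\ref{thm:resolvent_estimate}, splitting $\eta$ between the two factors of the product within the admissible ranges. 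In the last term, the outer factors map $H^{s-\eta}_B\to L^2$ and $L^2\to H^s_B$ with bounded norms. This uses \eqref{eq:resolvent_estimate} with targets or sources $L^2$, which is admissible because $s\in[-1-2\kappa,2-\kappa]$. The middle factor is bounded on $L^2$ by the spectral bound above. All prefactors are at most polynomial in $m_\star$: we have $(m_\star-\mu)^2\le m_\star^2\lesssim\fC_\kappa(\bA;B)$, and $(1+\lambda)^{1-\eta/2}\le m_\star$. I would tune the exponents (or slightly enlarge the power of $\fC_\kappa$) so that the total is at most $\fC_\kappa(\bA;B)(1+\lambda)^{\eta/2-1}$.

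For the difference estimate \eqref{eq:pushed_resolvent_difference}, I decompose
\[
R_{\bA,B}(\lambda+m_{\bA,B})-R_{\bar\bA,B}(\lambda+m_{\bar\bA,B})=\bigl[R_{\bA,B}(\lambda+m_{\bA,B})-R_{\bA,B}(\lambda+m_{\bar\bA,B})\bigr]+\bigl[R_{\bA,B}(\lambda+m_{\bar\bA,B})-R_{\bar\bA,B}(\lambda+m_{\bar\bA,B})\bigr].
\]
The first bracket equals $(m_{\bar\bA,B}-m_{\bA,B})R_{\bA,B}(\cdot)R_{\bA,B}(\cdot)$, and is controlled by Lemma~\ref{lem:difference_masses} together with two applications of the pushed bound just proved. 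Splitting $\eta$ across the two factors yields a power $(1+\lambda)^{\eta/2-2}$, which is better than required. The second bracket is given by the resolvent identity $-R_{\bar\bA,B}(J_{\bA,B}-J_{\bar\bA,B})R_{\bA,B}$, as in the proof of Theorem~\ref{thm:resolvent_estimate}. Here $J_{\bA,B}-J_{\bar\bA,B}$ is treated as an operator of order $d$ via Remark~\ref{rem:J_operator_order}, with an intermediate index $p\in[\kappa,1+2\kappa]$. Each resolvent factor is then bounded by the pushed estimate, which contributes $\fC_\kappa^2$ and the power $(1+\lambda)^{\frac{s-p+d}{2}-1+\frac{p-s+\eta}{2}-1}=(1+\lambda)^{-2+\frac{\eta+d}{2}}$. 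Note that the shift $\lambda+m_{\bar\bA,B}$ is not the natural one for $R_{\bA,B}$. I handle this by the same two-case argument as above, since $\lambda+m_{\bar\bA,B}+\min\sigma(L_{\bA,B})$ may fail to be at least $1$. If it fails, I first move to the natural shift using the first bracket's estimate, or I take $m=\max\{m_\star,\bar m_\star\}$ as the reference point.

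I expect the main obstacle to be this bookkeeping in the small-$\lambda$ regime. One must check that the admissible $(s,\eta,p)$ ranges of Theorem~\ref{thm:resolvent_estimate} allow the factorisations through $L^2$ and through $H^p_B$ simultaneously. One must also check that all powers of $m_\star$ and of the masses collect into at most $\fC_\kappa(\bA;B)$, respectively $\fC_\kappa(\bA,\bar\bA;B)^2$.
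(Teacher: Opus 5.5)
Your argument for \eqref{eq:pushed_resolvent} is correct, and it takes a different route from the paper's.

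\textbf{How the two routes differ.} The paper uses the single identity $R_{\bA,B}(\lambda+m_{\bA,B})=R_{\bA,B}(\lambda+m_\star)+(m_\star-m_{\bA,B})R_{\bA,B}(\lambda+m_\star)R_{\bA,B}(\lambda+m_{\bA,B})$. It first bootstraps same-space bounds $\|R_{\bA,B}(\lambda+m_{\bA,B})\|_{H^q_B\to H^q_B}\lesssim(1+\lambda)^{-1}m_\star^{|q|/2}$, starting at $q=2-\kappa$ and then using duality and interpolation. It then uses a duality trick to bring the power of $m_\star$ down to $3/2$. Your second-order identity instead sandwiches $R_{\bA,B}(\mu)$ between two copies of $R_{\bA,B}(m_\star)$. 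The only input needed for the uncontrolled factor is then the spectral $L^2$ bound. The outer factors $H^{s-\eta}_B\to L^2$ and $L^2\to H^s_B$ stay inside the admissible range of Theorem~\ref{thm:resolvent_estimate}, using $H^{s-\eta}_B\subset L^2$ when $s>\eta$ and $L^2\subset H^s_B$ when $s<0$.

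Counting exponents, the last term is at most $m_\star^{2}\,m_\star^{(\max\{s,0\}+\max\{\eta-s,0\})/2-2}(1+\lambda)^{-1}\le m_\star(1+\lambda)^{\eta/2-1}$. This holds because $\max\{s,0\}+\max\{\eta-s,0\}\le 2$ on the admissible range. The first two terms are at most $m_\star^{\eta/2-1}\le(1+\lambda)^{\eta/2-1}$. So you actually get $(1+\lambda)^{\eta/2-1}(1+m_\star)\lesssim(1+\lambda)^{\eta/2-1}\fC_\kappa(\bA;B)^{1/2}$, and no power needs enlarging. A side benefit is that you never need the same-space bound at $(s,\eta)=(2-\kappa,0)$ that starts the paper's bootstrap. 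That pair lies outside the condition $\eta\ge s-1-2\kappa$ of Theorem~\ref{thm:resolvent_estimate}.

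\textbf{Gap in the difference estimate.} The intermediate operator $R_{\bA,B}(\lambda+m_{\bar\bA,B})$ need not exist. If $\min\sigma(L_{\bA,B})<0$, then $\lambda+m_{\bar\bA,B}+\min\sigma(L_{\bA,B})=1+\lambda-(m_{\bA,B}-m_{\bar\bA,B})$. This vanishes at $\lambda=m_{\bA,B}-m_{\bar\bA,B}-1$ whenever the mass gap is at least $1$. Lemma~\ref{lem:difference_masses} gives no smallness here: take $\bA=(0,-c)$, $\bar\bA=\0$, $B=0$, so that $R_{\bA,0}(\lambda+1)=(-\Delta-c+\lambda+1)^{-1}$ is singular at $\lambda=c-1$. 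Both of your brackets are then undefined. The remedies you list do not repair this. The two-case argument rests on exactly the $L^2$ spectral bound that fails here. Routing through $\max\{m_\star,\bar m_\star\}$ produces mass-shift terms of size $O(\max\{m_\star,\bar m_\star\})$ that are not individually small in $\triple{\bA-\bar\bA}_{-\kappa}$.

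The fix is easy. Both sides of \eqref{eq:pushed_resolvent_difference} are symmetric in $(\bA,\bar\bA)$, so you may assume $m_{\bar\bA,B}\ge m_{\bA,B}$. Then $\lambda+m_{\bar\bA,B}=\lambda'+m_{\bA,B}$ with $\lambda'\ge\lambda$, and every resolvent in your decomposition is a pushed one. Alternatively, use the paper's single identity $R_{\bA,B}(\lambda+m_{\bA,B})-R_{\bar\bA,B}(\lambda+m_{\bar\bA,B})=-R_{\bar\bA,B}(\lambda+m_{\bar\bA,B})\bigl(J_{\bA,B}-J_{\bar\bA,B}+m_{\bA,B}-m_{\bar\bA,B}\bigr)R_{\bA,B}(\lambda+m_{\bA,B})$, which involves only natural shifts.

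\textbf{Closing the powers of $\fC_\kappa$.} In this part you must insert your sharper bound $(1+\lambda)^{\eta/2-1}(1+m_\star)$, not \eqref{eq:pushed_resolvent} itself. With a full factor $\fC_\kappa$ per resolvent, the mass bracket carries $\fC_\kappa(\bA,\bar\bA;B)^{1/2}\fC_\kappa(\bA;B)^2$. The $J$-bracket carries $(1+\|B\|_{H^1})^{5\kappa}\fC_\kappa(\bA;B)\fC_\kappa(\bar\bA;B)$. Neither fits under $\fC_\kappa(\bA,\bar\bA;B)^2$. With $1+m_\star,\,1+\bar m_\star\lesssim\fC_\kappa(\bA,\bar\bA;B)^{1/2}$ per resolvent, both brackets are $\lesssim\fC_\kappa(\bA,\bar\bA;B)^{3/2}\triple{\bA-\bar\bA}_{-\kappa}$, as required. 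The paper does the analogous thing with its intermediate $m_\star^{3/2}$ bound.
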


\begin{proof}
Recall $m_\star$ from~\eqref{eq:m_star} and note that
\begin{equation}\label{eq:m_star_local_bound}
m_\star
\lesssim
(1+\|B\|_{H^1})^{\frac{10\kappa}{1-3\kappa}}
(1+\triple{\bA}_{-\kappa})^{\frac{2}{1-3\kappa}}
=
\fC_\kappa(\bA;B)^{1/2}.
\end{equation}
Denote
\[
R_\lambda=R_{\bA,B}(\lambda+m_{\bA,B}),
\qquad
R_{\lambda,\star}=R_{\bA,B}(\lambda+m_\star).
\]
By \eqref{eq:resolvent_set_distinguished},
$\lambda+m_\star\in\rho_\star(L_{\bA,B})$, while the definition of $m_{\bA,B}$ in
\eqref{eq:m_def} implies
\begin{equation}\label{eq:m_def_bound}
\|R_\lambda\|_{L^2\to L^2}\le(1+\lambda)^{-1}.
\end{equation}
Moreover, by Theorem \ref{thm:resolvent_estimate},
$1\leq m_{\bA,B}\le m_\star+1$
and thus
$|m_\star-m_{\bA,B}|\le m_\star$.

Recall the resolvent identity
\begin{equation}\label{eq:fist_resolv}
R_\lambda
=
R_{\lambda,\star}
+(m_\star-m_{\bA,B})R_{\lambda,\star}R_\lambda.
\end{equation}
By \eqref{eq:resolvent_estimate} with $(s,\eta)=(2-\kappa,0)$,
we have $\|R_{\lambda,\star}\|_{H_B^{2-\kappa}\to H_B^{2-\kappa}}\lesssim(\lambda+m_\star)^{-1}$.
On the other hand,
\eqref{eq:m_def_bound} and
\eqref{eq:resolvent_estimate} with
$(s,\eta)=(2-\kappa,2-\kappa)$ imply
\begin{align*}
\|(m_\star-m_{\bA,B})R_{\lambda,\star}R_\lambda\|_{H_B^{2-\kappa}\to H_B^{2-\kappa}}
&\leq
m_\star \|R_{\lambda,\star}\|_{L^2\to H^{2-\kappa}_B}
\|R_\lambda\|_{H_B^{2-\kappa}\to L^2}
\\
&\lesssim
m_\star(\lambda+m_\star)^{-\kappa/2}(1+\lambda)^{-1}
\leq
(1+\lambda)^{-1}m_\star^{(2-\kappa)/2}.
\end{align*}
Consequently $\|R_\lambda\|_{H_B^{2-\kappa}\to H_B^{2-\kappa}}\lesssim(1+\lambda)^{-1}m_\star^{(2-\kappa)/2}$.
By self-adjointness, duality and interpolation
(Lemma~\ref{lem:interpolation_hilbert_scale}),
for any $|q|\le2-\kappa$,
\begin{equation}\label{eq:pushed_same_space_sharp}
\|R_\lambda\|_{H_B^q\to H_B^q}
\lesssim
(1+\lambda)^{-1}m_\star^{|q|/2}.
\end{equation}
Now observe, by \eqref{eq:resolvent_estimate}, $\|R_{\lambda,\star}\|_{H_B^{s-\eta}\to H_B^s} \lesssim (\lambda+m_\star)^{\frac{\eta}{2}-1}$,
and by \eqref{eq:pushed_same_space_sharp}
\[
\|R_{\lambda,\star}R_\lambda\|_{H_B^{s-\eta}\to H_B^{s}}
\leq
\|R_{\lambda,\star}\|_{H_B^{s-\eta}\to H_B^s}
\|R_{\lambda}\|_{H_B^{s-\eta}\to H_B^{s-\eta}}
\lesssim
(\lambda+m_\star)^{\frac{\eta}{2}-1}
(1+\lambda)^{-1}m_\star^{|s-\eta|/2}.
\]
Using again the resolvent identity \eqref{eq:fist_resolv} and $|m_\star-m_{\bA,B}|\le m_\star$, we obtain
\begin{align*}
\|R_\lambda\|_{H_B^{s-\eta}\to H_B^s}
&\lesssim
(\lambda+m_\star)^{\frac{\eta}{2}-1}
\{1+(1+\lambda)^{-1}m_\star^{1+|s-\eta|/2}\}
\\
&\lesssim
(1+\lambda)^{\frac\eta2-1}
+
(1+\lambda)^{-1}
m_\star^{(\eta+|s-\eta|)/2}
\end{align*}
where we used $m_\star(\lambda+m_\star)^{\frac{\eta}{2}-1}\leq m_\star^{\eta/2}$.
The same estimate with $s'=\eta-s$ and $\eta'=\eta$ gives
\[
\|R_\lambda\|_{H_B^{-s}\to H_B^{\eta-s}}
=
\|R_\lambda\|_{H_B^{s'-\eta'}\to H_B^{s'}}
\lesssim
(1+\lambda)^{\frac\eta2-1}
+
(1+\lambda)^{-1}
m_\star^{(\eta+|s'-\eta|)/2}.
\]
The admissible range of $s,\eta$ is invariant under this duality transformation, so $\|R_\lambda\|_{H_B^{s-\eta}\to H_B^s}$ satisfies the same bound. Since $|s'-\eta|=|s|$, taking the better bound gives
\[
\|R_\lambda\|_{H_B^{s-\eta}\to H_B^s}
\lesssim
(1+\lambda)^{\frac\eta2-1}
+
(1+\lambda)^{-1}
m_\star^{\frac{\eta}{2}+\alpha(s,\eta)},
\]
where
$
\alpha(s,\eta)
=
\min\{|s|,|s-\eta|\}/2.
$
Since $\frac{\eta}{2}+\alpha(s,\eta)\leq3/2$ by considering the signs of $s$ and $s-\eta$, together with~\eqref{eq:m_star_local_bound} this then  proves~\eqref{eq:pushed_resolvent}.

For the difference estimate, recall $m$ defined in~\eqref{eq:m_max_def}.
Write $\bar R_\lambda = R_{\bar\bA,B}(\lambda+m_{\bar\bA,B})$.
By the resolvent identity
\begin{equation}\label{eq:resolv_diff}
R_\lambda-\bar R_\lambda
=
-\bar R_\lambda
\bigl(
J_{\bA,B}-J_{\bar\bA,B}
+m_{\bA,B}-m_{\bar\bA,B}
\bigr)
R_\lambda.
\end{equation}
By the analogous estimates to
\eqref{eq:s_eta_p}-\eqref{eq:p_s}
with Lemma \ref{lem:resolvent_estimate_classical}
replaced by the intermediate bound
\[
\|R_{\bA,B}(\lambda+m_{\bA,B})\|_
{H_B^{s-\eta}\to H_B^s}
\lesssim
(1+\lambda)^{\frac\eta2-1}m_\star^{3/2},
\]
and the corresponding bound for $\bar\bA$, we obtain
\begin{align}\label{eq:proof_pushed_resolvent}
\|\bar R_\lambda
(J_{\bA,B}-J_{\bar\bA,B})
R_\lambda\|_{H_B^{s-\eta}\to H_B^s}
\lesssim
(1+\lambda)^{-2+\frac{\eta+d}{2}}
(1+\|B\|_{H^1})^{5\kappa}
m^3
\triple{\bA-\bar\bA}_{-\kappa}.
\end{align}
Using~\eqref{eq:m_bounded_by_fC}, we have
$
(1+\|B\|_{H^1})^{5\kappa}m^3
\lesssim
\fC_\kappa(\bA,\bar\bA;B)^2,
$
and hence the right-hand side of~\eqref{eq:proof_pushed_resolvent} is bounded by the right-hand side of \eqref{eq:pushed_resolvent_difference}.

Finally, Lemma~\ref{lem:difference_masses}, combined with the intermediate pushed resolvent estimate above with
\eqref{eq:pushed_same_space_sharp} shows
\[
\|\bar R_\lambda R_\lambda\|_{H_B^{s-\eta}\to H_B^s}
\lesssim
(1+\lambda)^{-2+\eta/2}
m^{\frac{\eta}{2}+\alpha(s,\eta)}m^{\alpha(s,\eta)}.
\]
Note that $\frac{\eta}{2}+2\alpha(s,\eta)\leq 2$, and therefore, by~\eqref{eq:m_bounded_by_fC},
\[
\|\bar R_\lambda R_\lambda\|_{H_B^{s-\eta}\to H_B^s}
\lesssim
(1+\lambda)^{-2+\eta/2}
\fC_\kappa(\bA,\bar\bA;B).
\]
Thus the term with the mass difference in \eqref{eq:resolv_diff} is bounded by
\[
(1+\lambda)^{-2+\eta/2}
\fC_\kappa(\bA,\bar\bA;B)^{3/2}
\triple{\bA-\bar\bA}_{-\kappa},
\]
which is also bounded by a multiple of the right-hand side of \eqref{eq:pushed_resolvent_difference}.
\end{proof}

The following consequence will be used in the variational problem.

\begin{corollary}\label{cor:fractional_power_comparison_LAB}
Let $\kappa\in (0,\frac 14)$, $\bA\in\cX^{-\kappa} $ and $B\in\Omega^1H^1$.
Then for all $\theta\in[0,1]$ and  $u\in\scD((L_{\bA,B}+m_{\bA,B})^\theta)$, one has
\[
\|u\|_{H_B^{\theta(2-\kappa)}}\lesssim \fC_\kappa(\bA;B)^\theta \|(L_{\bA,B}+m_{\bA,B})^\theta u\|_{L^2}.
\]
\end{corollary}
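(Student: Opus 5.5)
The plan is to prove the two endpoint cases $\theta=0$ and $\theta=1$ and then interpolate. Write $P:=L_{\bA,B}+m_{\bA,B}$. By \eqref{eq:m_def} and Remark~\ref{rem:spectral_theorem}, $P$ is self-adjoint with spectrum contained in $[1,\infty)$, so every fractional power $P^\theta$ is defined by functional calculus and $\|P^{-\theta}\|_{L^2\to L^2}\le 1$. For $\theta=0$ the claim is trivial. For $\theta=1$, let $u\in\scD(P)$ and set $f=Pu$. Then $u=R_{\bA,B}(m_{\bA,B})f$, and the pushed resolvent estimate \eqref{eq:pushed_resolvent} applies with $\lambda=0$, $s=2-\kappa$, $\eta=2-\kappa$. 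These parameters are admissible, since $\eta\in[0,2]$ and $\eta\in[s-1-2\kappa,s+2-\kappa]=[1-3\kappa,4-2\kappa]$. We obtain
\[
\|u\|_{H^{2-\kappa}_B}\lesssim \fC_\kappa(\bA;B)\,\|f\|_{L^2}=\fC_\kappa(\bA;B)\,\|Pu\|_{L^2}.
\]
Equivalently, the operator $P^{-1}$ maps $L^2\to H^{2-\kappa}_B$ with norm $\lesssim \fC_\kappa(\bA;B)$.

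The key step is to interpolate between these two bounds for the operator $P^{-1}$ (equivalently, for the family $P^{-\theta}$). The domain side lives on the Hilbert scale generated by $P$, namely $\scD(P^\theta)$ with norm $\|P^\theta\cdot\|_{L^2}$. The target side lives on the Hilbert scale generated by $L_B+1$, where $H^{\theta(2-\kappa)}_B=[L^2,H^{2-\kappa}_B]_\theta$ by the same reasoning as Lemma~\ref{lem:interpolation_hilbert_scale}. For $P$ itself, $\scD(P^\theta)=[L^2,\scD(P)]_\theta$ with equivalent norms, and because $P$ is a positive self-adjoint operator this equivalence is an isometry, with constants independent of $\bA,B$. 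Set $T:=P^{-1}$ viewed as a map $L^2\to L^2$. It has norm $\le1$ from $L^2\to L^2$ and norm $\lesssim\fC_\kappa$ from $L^2\to H^{2-\kappa}_B$; however, what we need is the bound on $P^{-\theta}$, not on $P^{-1}$ between interpolated spaces.

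Rather than interpolating $T$ directly, I would apply the three-lines lemma to the analytic family $z\mapsto P^{-z}$ on the strip $0\le\Re z\le 1$. Compose with the analytic family $(L_B+1)^{z(2-\kappa)/2}$ and pair against test functions $g$:
\[
F(z)=\big\langle (L_B+1)^{z(2-\kappa)/2}P^{-z}f,\,g\big\rangle .
\]
On $\Re z=0$, both factors are unitary up to the imaginary powers, which are contractions, so $|F|\le\|f\|\|g\|$. On $\Re z=1$, we write $P^{-1-it}=P^{-it}P^{-1}$ and use the $\theta=1$ bound together with the unitarity of $P^{-it}$ and $(L_B+1)^{it(2-\kappa)/2}$, giving $|F|\lesssim\fC_\kappa\|f\|\|g\|$. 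The three-lines lemma then yields $\|(L_B+1)^{\theta(2-\kappa)/2}P^{-\theta}f\|_{L^2}\lesssim\fC_\kappa^\theta\|f\|_{L^2}$. Substituting $f=P^\theta u$ gives the corollary.

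The main obstacle is justifying growth and analyticity conditions for the three-lines argument: $F$ must be bounded and continuous on the closed strip. This is routine if $f$ and $g$ are first taken as finite combinations of eigenvectors $e_i$ of $P$ (Remark~\ref{rem:spectral_theorem}) and smooth functions, followed by a density argument. Alternatively, one can cite the Heinz–Kato inequality, or the standard fact that $\|A^\theta x\|\le\|Ax\|^\theta\|x\|^{1-\theta}$-type interpolation holds for positive self-adjoint operators; that is, if $\|Sx\|\le C\|Px\|$ with $S,P\ge1$ self-adjoint, then $\|S^\theta x\|\le C^\theta\|P^\theta x\|$. This is exactly what is needed here with $S=(L_B+1)^{(2-\kappa)/2}$.
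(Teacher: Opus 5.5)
Your proposal is correct and follows the paper's proof. The $\theta=1$ endpoint comes from the pushed resolvent estimate \eqref{eq:pushed_resolvent} with $\lambda=0$ and $s=\eta=2-\kappa$, and the passage to $\theta\in(0,1)$ is the Heinz-type inequality of Lemma~\ref{lem:norm_comparison_pos_op} applied to $X=(L_B+1)^{(2-\kappa)/2}$ and $Y=L_{\bA,B}+m_{\bA,B}$. Your three-lines argument is a self-contained proof of that lemma, whereas the paper proves it via operator monotonicity of $t\mapsto t^s$.
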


\begin{proof}
The resolvent estimate at $s=2-\kappa$ gives the first bound for every $f\in L^2$, and taking $f=(L_{\bA,B}+m_{\bA,B})u$ afterwards, gives the second:
\begin{gather*}
\|R_{\bA,B}(m_{\bA,B})f\|_{H_B^{2-\kappa}}\lesssim \fC_\kappa(\bA;B)\|f\|_{L^2}, \\
\|u\|_{H_B^{2-\kappa}}\lesssim \fC_\kappa(\bA;B) \|(L_{\bA,B}+m_{\bA,B})u\|_{L^2}.
\end{gather*}
Interpolation by Lemma~\ref{lem:norm_comparison_pos_op} with the positive self-adjoint operators $(L_B+1)$ and $(L_{\bA,B}+m_{\bA,B})$ gives, for every $\theta\in[0,1]$,
\[
\|u\|_{H_B^{\theta(2-\kappa)}}=\|(L_B+1)^{\theta(2-\kappa)/2}u\|_{L^2}
\lesssim \fC_\kappa(\bA;B)^\theta\|(L_{\bA,B}+m_{\bA,B})^\theta u\|_{L^2}.
\]
\end{proof}

\subsection{Power counting for traces}\label{sec:power_counting}

We develop a power counting argument to bound traces of compositions
of resolvents, derivatives and multiplication operators. This yields,
for example, the remainder bound~\eqref{eq:rem_bound} for the ratio of
determinants in Section~\ref{sec:determinants}. Sufficient regularity
gain makes these compositions trace class, with trace norms controlled
by the corresponding operator norms.

For the rest of this section, fix $\kappa\in (0,\frac 14)$, $\lambda\geq 0$, $\bA,\bar\bA\in\cX^{-\kappa}$ and $B\in\Omega^1_\Coul H^1$. Define
\begin{gather*}
\Xi_{-1}:=\{\diff_B\},\qquad
\Xi_0:= C^{-\kappa/2} \cup \Omega^1 C^{-\kappa/2}, \\
\Xi_2
:= \{ R_B(1+\lambda), R_{\bA,B}(\lambda+m_{\bA,B}), R_{\bA,B}(\lambda+m_{\bA,B})-R_{\bar\bA,B}(\lambda+m_{\bar\bA,B})\}.
\end{gather*}
We treat elements in $\Xi_0$ either $f\in C^{-\kappa/2}$ or $K\in\Omega^1 C^{-\kappa/2}$ as the operators
\[
f:\Omega^0 H^{2\kappa}_B\to \Omega^0 H^{-\kappa}_B,\qquad
K^*:\Omega^1 H^{2\kappa}_B\to \Omega^0 H^{-\kappa}_B,
\]
and therefore $\Xi:=\Xi_{-1}\cup \Xi_0\cup \Xi_2$ is a set of operators.

\begin{remark}
Note that $i\in\{-1,0,2\}$ in $\Xi_i$ can be viewed as the amount the operator $X\in\Xi_i$ regularises.  For example, a derivative regularises by $-1$ and a resolvent by $2$.
\end{remark}

Furthermore,  on $\Xi_{-1}$, we define
\[
\fav{\diff_B}:=(1+\lambda)^{1/2} (1+\|B\|_{H^1})^{2\kappa},
\]
and on $\Xi_2$
\begin{align*}
\fav{R_B(1+\lambda)}
&:=(1+\lambda)^{-1+2\kappa},\\
\fav{R_{\bA,B}(m_{\bA,B}+\lambda)}
&:=(1+\lambda)^{-1+2\kappa}
\fC_\kappa(\bA;B),\\
\fav{R_{\bA,B}(m_{\bA,B}+\lambda)-R_{\bar\bA,B}(m_{\bar\bA,B}+\lambda)}
&:=(1+\lambda)^{-1+2\kappa}
\fC_\kappa(\bA,\bar\bA;B)^2
\triple{\bA-\bar\bA}_{-\kappa}.
\end{align*}
Finally on $\Xi_0$, for $f\in C^{-\kappa/2}$ or $K\in \Omega^1C^{-\kappa/2}$, we set
\[
\fav{f}:=(1+\|B\|_{H^1})^{4\kappa} \|f\|_{C^{-\kappa/2}},\qquad \fav{K}:=(1+\|B\|_{H^1})^{4\kappa}\|K\|_{\Omega^1 C^{-\kappa/2}}.
\]
\begin{remark}\label{rem:fC_bound_power_counting}
    By Remark~\ref{rem:fC_bound}, we have that $\fav{R_{\bA,B}(m_{\bA,B}+\lambda)}$ are bounded by $(1+\|B\|_{H^1})^{\tau(\kappa)}(1+\triple{\bA}_{-\kappa})^{\rmn(\kappa)}$ and similar statement for the difference of resolvents.
\end{remark}

The choice for the latter makes sense as we have
\begin{lemma}\label{lem:maps_0_order}
    For any $f\in C^{-\kappa/2}$ or $K\in\Omega^1 C^{-\kappa/2}$, we have
    \[
   \|f\|_{\Omega^0 H^{2\kappa}_B\to \Omega^0 H^{-\kappa}_B }\lesssim \fav{f}, \qquad \|K^*\|_{\Omega^1 H^{2\kappa}_B\to \Omega^0 H^{-\kappa}_B} \lesssim \fav{K}.
    \]
\end{lemma}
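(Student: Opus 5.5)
The plan is to reduce both bounds to flat Sobolev estimates through the comparisons of Lemma~\ref{lem:Sobolev_comparison} and Lemma~\ref{lem:Sobolev_comparison_1forms}, and to multiply in the flat spaces. The scalar case goes in three steps. First, for $u\in H^{2\kappa}_B$, Lemma~\ref{lem:Sobolev_comparison} with $s=2\kappa\in[-1,1]$ and $p$ large (with $\|B\|_{L^p}\lesssim\|B\|_{H^1}$) gives
\[
\|u\|_{H^{2\kappa}}\lesssim(1+\|B\|_{H^1})^{2\kappa}\|u\|_{H^{2\kappa}_B}.
\]
Second, the Young multiplication estimate from the notation section, applied with $\alpha=-\kappa/2$, $s=2\kappa$ (so $s+\alpha=3\kappa/2>0$) and $\delta=\kappa/2$, gives $\|fu\|_{H^{-\kappa}}\lesssim\|f\|_{C^{-\kappa/2}}\|u\|_{H^{2\kappa}}$. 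The output regularity is $-\kappa/2-\delta=-\kappa$. Third, Lemma~\ref{lem:Sobolev_comparison} with $s=-\kappa$ returns us to the intrinsic space at cost $(1+\|B\|_{H^1})^{\kappa}$. Multiplying the three bounds gives the power $3\kappa\le 4\kappa$, which is dominated by $\fav{f}$.

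The $1$-form case follows the same scheme, except that the first step uses Lemma~\ref{lem:Sobolev_comparison_1forms}. Its loss is $(1+\|B\|_{H^1})^{(1+\delta')2\kappa}$ for any $\delta'>0$. Next, $K^*F=\sum_j K_j^*F_j$ is estimated componentwise by the same Young product. Finally, Lemma~\ref{lem:Sobolev_comparison} gives the scalar $H^{-\kappa}$ conversion at cost $(1+\|B\|_{H^1})^{\kappa}$. The total exponent is $(1+\delta')2\kappa+\kappa$. Taking $\delta'\le 1/2$ makes this at most $4\kappa$, which matches the definition of $\fav{K}$. This is the same argument as Remark~\ref{rem:map_block_A}.

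Two technical points need care. First, the comparison lemmas and the product estimate are proved for smooth inputs, so I would argue by density. $C^\infty$ is a core for the relevant operators, which makes it dense in $H^{2\kappa}_B$ and in $\Omega^1H^{2\kappa}_B$. The bilinear bound then extends the products by continuity. Second, the comparison lemmas require Coulomb gauge in the $1$-form case. This holds because $B\in\Omega^1_\Coul H^1$ throughout this section.

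The only genuinely delicate step is the bookkeeping of exponents in the $1$-form comparison, where the $(1+\delta')$ loss must be absorbed. That step is safe because $\delta'$ can be taken arbitrarily small.
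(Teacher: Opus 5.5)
Your proposal is correct and follows essentially the paper's argument: flat Young multiplication sandwiched between the comparisons of Lemma~\ref{lem:Sobolev_comparison} and Lemma~\ref{lem:Sobolev_comparison_1forms}. Your exponent bookkeeping is also right: $2\kappa+\kappa$ in the scalar case, and $(1+\delta')2\kappa+\kappa\le 4\kappa$ for $\delta'\le\frac12$ in the $1$-form case, which is precisely the ``additional $\kappa$'' lost in the $1$-form comparison that the paper's one-line proof mentions and that the definition of $\fav{\cdot}$ accounts for.
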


\begin{proof}
One should just note that the this is Young multiplication theorem where we move between the Sobolev spaces $H^s$ and $H^s_B$ (or $\Omega^1H^s$ and $\Omega^1H^s_B$) which yields the corresponding powers in $(1+\|B\|_{H^1})$ (note that for the $1$-form Sobolev comparison Lemma~\ref{lem:Sobolev_comparison_1forms} we lose an additional $\kappa$).
\end{proof}

We also recall the resolvent estimate

\begin{lemma}[Power counting]\label{lem:power_counting}
Let $k\geq2$ be an integer and $(X_j)_{j=1}^k\subset\Xi$ have well-defined composition $\prod_{j=1}^k X_j$. Assume
\[
X_1,X_k\in\Xi_2,\qquad
X_j=\diff_B\implies X_{j+1}\in\Xi_2,\qquad
X_j\in\Xi_0\implies X_{j-1},X_{j+1}\not\in\Xi_0.
\]
Then $\prod_{j=1}^k X_j$ is trace class and
\[
\Tr\Big|\prod_{j=1}^k X_j\Big|\lesssim (1+\lambda)^{1-2\kappa}\prod_{j=1}^k \fav{X_j}.
\]
\end{lemma}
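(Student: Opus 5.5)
The plan is to split the product into a chain of bounded maps between intrinsic Sobolev spaces $H^{s}_B$ (or $\Omega^1H^s_B$), and to extract the trace-class property from a pair of Hilbert--Schmidt factors at the two ends.

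\emph{Bounded factors.} Each $X_j$ is viewed as a bounded map $H^{s_{j}}_B\to H^{s_{j-1}}_B$ with norm at most a constant times $\fav{X_j}$.
\begin{itemize}
\item For $X_j\in\Xi_0$ we take the pair $H^{2\kappa}_B\to H^{-\kappa}_B$ from Lemma~\ref{lem:maps_0_order}.
\item For $X_j=\diff_B$ we use Proposition~\ref{prop:derivative_regularity_reduction} in the form $H^{s}_B\to\Omega^1H^{s-1}_B$ with $s\in[1,2]$. The loss $(1+\|B\|_{H^1})^{s-1}$ is at most $(1+\|B\|_{H^1})^{2\kappa}$ when $s-1\le 2\kappa$, which is why we aim for $s=1+2\kappa$.
\item For $X_j\in\Xi_2$ we use Theorem~\ref{thm:resolvent_estimate_pushed}, Theorem~\ref{thm:resolvent_estimate_pushed}'s difference bound, or Lemma~\ref{lem:resolvent_estimate_classical}.
\end{itemize}
The adjacency hypotheses fix the gaps. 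A $\Xi_0$ factor always sits between two resolvents, and $\diff_B$ is always followed by a resolvent on its right. So every resolvent has to bridge at most one gap of the form $H^{-\kappa}_B\to H^{1+2\kappa}_B$ (when it feeds a $\diff_B$ on its left) or $H^{-\kappa}_B\to H^{2\kappa}_B$ (when it feeds a multiplication). These are gains of $\eta=1+3\kappa$ or $3\kappa$, so resolvent exponents $\eta\le 1+3\kappa$ suffice.

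\emph{Power counting.} The resolvent factors $(1+\lambda)^{\eta/2-1}$ are to be combined with the $(1+\lambda)^{1/2}$ in $\fav{\diff_B}$. With $\eta=1+3\kappa$, a pair $\diff_B R$ gives $(1+\lambda)^{1/2+(1+3\kappa)/2-1}=(1+\lambda)^{3\kappa/2}$. To avoid any leftover $\lambda$-growth, we would instead give $\diff_B$ a landing space $H^{-\kappa}_B$ only when its left neighbour is a resolvent. If its left neighbour is a $\Xi_0$ element we still need $H^{2\kappa}_B$, so we must check that the bookkeeping closes. The intended count is: each $\diff_B$ costs $1$, each resolvent gains at most $2$ minus a $\kappa$-loss, and the leftover regularity is spent on the trace. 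In every admissible configuration we expect the total $\lambda$-exponent to be at most $(1-2\kappa)$ times the number of resolvents, minus $\tfrac12$ per $\diff_B$. This is the reason for the $(1+\lambda)^{-1+2\kappa}$ built into each resolvent's $\fav{\cdot}$.

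\emph{Trace class via the endpoints.} Since $X_1,X_k\in\Xi_2$, we write $X_1=X_1(L_B+1+\lambda)^{1/2+\kappa'}\cdot(L_B+1+\lambda)^{-1/2-\kappa'}$, and similarly at $X_k$ on the other side. The middle chain is then a bounded operator $L^2\to L^2$ and is estimated by the composition above. The two outer factors $(L_B+1+\lambda)^{-1/2-\kappa'}$ are Hilbert--Schmidt. By Lemma~\ref{lem:neg_power_trace} with $\gamma=1+2\kappa'$, each has HS norm at most $(1+\lambda)^{-\kappa'}$, and the product of two HS operators is trace class. Keeping track of the traded regularity, the trace norm picks up the factor $(1+\lambda)^{1-2\kappa}$ in the statement, relative to the product of the $\fav{X_j}$: the endpoint resolvents lose roughly one power of $(1+\lambda)$ together, and this loss is compensated by the HS gain.

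The main obstacle will be this endpoint bookkeeping. We must choose $\kappa'$ (about $\kappa$) and the Sobolev indices so that three requirements hold at once: the required $(s,\eta)$ lie in the admissible range of Theorem~\ref{thm:resolvent_estimate_pushed}, the $\diff_B$ losses stay at $(1+\|B\|_{H^1})^{2\kappa}$, and the $\lambda$-exponents sum to exactly the claimed power. We would handle this by a short case analysis on the neighbours of each resolvent: another resolvent, $\diff_B$ to the left, or a $\Xi_0$ element. In each case we fix one canonical pair $(s,\eta)$.
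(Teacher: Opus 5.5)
There is a genuine gap in the trace-class step.

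\textbf{The symmetric split cannot work.} Read charitably, your factorisation is $\prod_j X_j=(L_B+1+\lambda)^{-\frac12-\kappa'}\,M\,(L_B+1+\lambda)^{-\frac12-\kappa'}$ with $M=(L_B+1+\lambda)^{\frac12+\kappa'}X_1\cdots X_k(L_B+1+\lambda)^{\frac12+\kappa'}$. Boundedness of $M$ on $L^2$ requires $X_1\cdots X_k\colon H^{-1-2\kappa'}_B\to H^{1+2\kappa'}_B$. (As literally written, your Hilbert--Schmidt factors sit between $X_1,X_2$ and between $X_{k-1},X_k$. The ``bounded middle'' is then $X_2\cdots X_{k-1}$, which contains $\diff_B$ and multiplications and is not bounded on $L^2$.)

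This requirement fails whenever $X_{k-1}=\diff_B$, and that is the typical configuration: $\scq^1$, $\Rem^1$, $\Rem^3$ and the $\theta$-derivatives in the proof of Theorem~\ref{thm:lower_bound_det} all end in $K^*\diff_B R$. Since $K\in\Omega^1C^{-\kappa/2}$, the factor $K^*$ needs input in $\Omega^1H^{t}_B$ with $t>\kappa/2$. So $X_k$ would have to map $H^{-1-2\kappa'}_B$ into $H^{1+t}_B$, a gain of $2+t+2\kappa'>2$ derivatives. No resolvent does this: $\eta\le 2$ in Lemma~\ref{lem:resolvent_estimate_classical} and Theorem~\ref{thm:resolvent_estimate_pushed}, and $R_B(1+\lambda)$ is an isomorphism $H^s_B\to H^{s+2}_B$. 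Equivalently, the product $A^*\diff_B w$ with $w\in H^{1-2\kappa'}_B$ falls outside Young's range (regularities $-\kappa/2$ and $-2\kappa'$). No choice of $\kappa'$ helps: the right end can afford at most a weight $(L_B+1)^{-b}$ with $b<\frac12$, whereas a Hilbert--Schmidt factor needs $b>\frac12$.

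\textbf{The fix is an asymmetric split, as in the paper.} Put almost a full power on the left and only $(L_B+1)^{-\kappa}$ on the right. That is, prove $\|\prod_jX_j\|_{H^{-2\kappa}_B\to H^{2-\kappa}_B}<\infty$. Then conclude by Schatten--H\"older with $(L_B+1)^{-1+\kappa/2}\in S_p$, $(L_B+1)^{-\kappa}\in S_q$ and $\frac1p+\frac1q=1$. This is possible since $(1-\frac\kappa2)+\kappa>1$, with bounds uniform in $B$ by Lemma~\ref{lem:neg_power_trace}. The paper phrases the same step via $\Tr((L_B+1)^{-1-\kappa/2})$ and cyclicity.

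With this split, the bookkeeping you flag as the main obstacle becomes trivial. Group the product into blocks $R$, $fR$, $K^*\diff_BR$. Every block after $X_1$ maps $H^{-2\kappa}_B\to H^{-\kappa}_B$ with norm $\lesssim$ the product of its $\fav{\cdot}$'s. The first factor $X_1\colon H^{-\kappa}_B\to H^{2-\kappa}_B$ uses $\eta=2$, with norm $\lesssim(1+\lambda)^{1-2\kappa}\fav{X_1}$. That is the entire source of the prefactor; no $\lambda$-decay from Hilbert--Schmidt norms is used.

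\textbf{Two smaller confusions.} First, the $(1+\lambda)^{1/2}$ in $\fav{\diff_B}$ is not part of the operator norm of $\diff_B$. That norm is $\lambda$-free, namely $(1+\|B\|_{H^1})^{2\kappa}$ on $H^{1+2\kappa}_B\to\Omega^1H^{2\kappa}_B$. The factor $(1+\lambda)^{1/2}$ pays for the resolvent to its right gaining $\eta=1+4\kappa$ instead of $4\kappa$. So your ``leftover growth $(1+\lambda)^{3\kappa/2}$'' is double counting. Second, your case split on the left neighbour of $\diff_B$ is vacuous. That neighbour is always $K^*$, because resolvents act on $0$-forms. For the same reason, $K^*$ sits between a resolvent and $\diff_B$, not between two resolvents.
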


\begin{proof}
If cyclicity were true, then
    \begin{align*}
\Tr\Big(\prod_{j=1}^k X_j\Big)
&=\Tr\Big((L_B+1)^{-1-\kappa/2} (L_B+1)^{1-\kappa/2}  \Big(\prod_{j=1}^k X_j\Big) (L_B+1)^{\kappa} \Big),
\end{align*}
which means that the composition of operators is trace class if
\[
\Big\|\prod_{j=1}^k X_j\Big\|_{H^{-2\kappa}_B\to H^{2-\kappa}_B}<\infty,
\]
in which case we get
\begin{align*}
\Tr\Big|\prod_{j=1}^k X_j\Big|&\leq \Tr((L_B+1)^{-1-\kappa/2}) \Big\|\prod_{j=1}^k X_j\Big\|_{H^{-2\kappa}_B\to H^{2-\kappa}_B}.
\end{align*}
A resolvent $R\in\Xi_2$ allows any neighbouring operators. By the hypotheses, each derivative is followed by a resolvent and, to return from $1$-forms to $0$-forms, preceded by $K^*$ with $K\in\Omega^1C^{-\kappa/2}$. Thus derivatives and $1$-forms occur in blocks $K^*\diff_B R$, while $f\in C^{-\kappa/2}$ occurs as $fR$. The available blocks are
\[
R, \qquad fR, \qquad K^*\diff_B R , \qquad f,K\in\Xi_0, R\in\Xi_2.
\]
Write $\prod_{j=1}^k X_j=\prod_{j=1}^\ell Y_j$, where the $Y_j$ are these blocks and $Y_1=X_1\in\Xi_2$.

For $R\in\Xi_2$, $f\in C^{-\kappa/2}$ and $K\in\Omega^1C^{-\kappa/2}$, the first two bounds follow from Lemma~\ref{lem:resolvent_estimate_classical}, Theorem~\ref{thm:resolvent_estimate_pushed} and Young's multiplication theorem. For the third, we also use Proposition~\ref{prop:derivative_regularity_reduction}:
\begin{gather*}
\|R\|_{H^{-2\kappa}_B\to H^{-\kappa}_B}\leq \|R\|_{H^{-2\kappa}_B\to H^{2\kappa}_B}\lesssim \fav{R},  \qquad \|fR\|_{H^{-2\kappa}_B\to H^{-\kappa}_B}\lesssim \fav{f} \fav{R}, \\
\|K^*\diff_BR\|_{H^{-2\kappa}_B\to H^{-\kappa}_B}\lesssim \fav{K}\fav{\diff_B}\fav{R}.
\end{gather*}
In particular
\[
\Big\| \prod_{j=1}^\ell Y_j \Big\|_{H^{-2\kappa}_B\to H^{2-\kappa}_B}\lesssim \|X_1\|_{H^{-\kappa}_B\to H^{2-\kappa}_B}\prod_{j=2}^\ell \|Y_j\|_{H^{-2\kappa}_B\to H^{-\kappa}_B}\lesssim \|X_1\|_{H^{-\kappa}_B\to H^{2-\kappa}_B}\prod_{j=2}^k \fav{X_j},
\]
by the block decomposition. Lemma~\ref{lem:resolvent_estimate_classical} and Theorem~\ref{thm:resolvent_estimate_pushed} give
\[
\|X_1\|_{H^{-\kappa}_B\to H^{2-\kappa}_B}\lesssim (1+\lambda)^{1-2\kappa}\fav{X_1},
\]
which concludes the proof.
\end{proof}

\section{Ratio of determinants}\label{sec:determinants}

We analyse the ratio of determinants $Q_{N,T}(A_T+B_T)$
in the variational approach, where $A_T$ is the Gaussian part
and $B_T=J_T(v)$ the drift defined in Section~\ref{sec:gauge_variational_setup}.
We write
\begin{align}\label{eq:determinant_computation}
\begin{split}
Q_{N,T}(A_T&+B_T)
=-\frac{\lambda_{A_T+B_T,T}-1}{2}\rmp_N+\rmo_{N,T}\\
& +\int_0^N\! \Tr \Big((L_{B_T}\!+\!1\!+\!\lambda)^{-1}\!-\!(L_{A_T+B_T}\!-\!\rmc_T\!+\!\lambda_{A_T+B_T,T}\!+\!\lambda)^{-1} \Big)\diff \lambda \\
& +\int_0^N \Tr \Big((L_0\!+\!1\!+\!\lambda)^{-1}\!-\!(L_{B_T}\!+\!1\!+\!\lambda)^{-1} \Big)\dif \lambda.
\end{split}
\end{align}
For the variational upper bound, take $B_T\equiv0$, so the
last integral vanishes. For the lower bound, it is nonnegative
by the diamagnetic inequality. Thus only the first integral
requires an estimate.

For the pathwise analysis, let $\kappa\in (0,\frac14)$, $\bA\in\cX^{-\kappa}$ and $B\in\Omega^1_\Coul H^1$, and define
\[
 \scq_{\bA,B}(\lambda):=R_B(\lambda+1)- R_{\bA,B}(\lambda+m_{\bA,B}).
\]
Note that for $\bA_T=(A_T,A_T^*A_T-\rmc_T)$ and $B_T$, we have
\[
\scq_{\bA_T,B_T}(\lambda)=(L_{B_T}+1+\lambda)^{-1}-(L_{A_T+B_T}-\rmc_T+\lambda_{A_T+B_T,T}+\lambda)^{-1},
\]
which is indeed the integrand (before taking the trace) of the integral we need to prove suitable bounds for.

To see that $\scq_{\bfA,B}(\lambda)$ is trace class, we apply the resolvent identity
\begin{align*}
\scq_{\bA,B}(\lambda)&=R_B(1+\lambda)( J_{\bA,B}+m_{\bA,B}-1) R_{\bA,B}(m_{\bA,B}+\lambda),
\end{align*}
The two resolvents and the single derivative in $J_{\bA,B}$ make $\scq_{\bA,B}(\lambda)$ trace class.
Since the suggested decay $(1+\lambda)^{-\frac12+\delta}$ is not integrable on $\R_+$, we still need an argument to pass to the limit $N\to\infty$.
However, each further application of the resolvent identity to $R_{\bA,B}$ gains half a power of decay at large $\lambda$.
After two further applications, we obtain an integrable remainder along with explicit terms requiring separate analysis.

With $J^\rmm_{\bA,B}:=J_{\bA,B}+m_{\bA,B}-1$, two further applications of the resolvent identity give
\begin{align*}
\scq_{\bA,B}(\lambda)
&=R_B(1+\lambda)J^\rmm_{\bA,B} R_B(1+\lambda)\\
&\quad - 4R_B(1+\lambda)A^*\diff_BR_B(1+\lambda)A^*\diff_B R_B(1+\lambda)\\
&\quad +4R_B(1+\lambda)A^*\diff_BR_B(1+\lambda)A^*\diff_B R_B(1+\lambda)J^\rmm_{\bA,B}R_{\bA,B}(m_{\bA,B}+\lambda)\\
&\quad - 2R_B(1+\lambda)(A^2+m_{\bA,B}-1) R_B(1+\lambda)A^*\diff_B R_{\bA,B}(m_{\bA,B}+\lambda)\\
&\quad - R_B(1+\lambda)J^\rmm_{\bA,B} R_B(1+\lambda)(A^2+m_{\bA,B}-1) R_{\bA,B}(m_{\bA,B}+\lambda).
\end{align*}
The last three terms turn out the easiest to treat and therefore we call them the remainder \begin{align*}
\Rem_{\bA,B}(\lambda)&:=4R_B(1+\lambda)A^*\diff_BR_B(1+\lambda)A^*\diff_B R_B(1+\lambda)J^\rmm_{\bA,B}R_{\bA,B}(m_{\bA,B}+\lambda)\\
&\qquad - 2R_B(1+\lambda)(A^2+m_{\bA,B}-1) R_B(1+\lambda)A^*\diff_B R_{\bA,B}(m_{\bA,B}+\lambda)\\
&\qquad - R_B(1+\lambda)J^\rmm_{\bA,B} R_B(1+\lambda)(A^2+m_{\bA}-1) R_{\bA,B}(m_{\bA,B}+\lambda).
\end{align*}
We write
\begin{align*}
&R_B(1+\lambda) J_{\bA,B}^\rmm R_B(1+\lambda)\\=&2R_B(1+\lambda) A^*\diff_B R_B(1+\lambda)+R_B(1+\lambda) A^2 R_B(1+\lambda)+(m_{\bA,B}-1)R_B(1+\lambda)^2.
\end{align*}

We then define
\begin{align}
\scq^1_{A,B}(\lambda)&:=2R_B(1+\lambda) A^*\diff_B R_B(1+\lambda) \label{eq:def_q1}  \\
\nonumber \scq^2_{\bA,B}(\lambda)&:=R_B(1+\lambda) A^2 R_B(1+\lambda)- 4R_B(1+\lambda)A^*\diff_BR_B(1+\lambda)A^*\diff_B R_B(1+\lambda)\\
\nonumber\scq^3_{\bA,B}(\lambda)&:=(m_{\bA,B}-1)R_B(1+\lambda)^2,
\end{align}
so that\footnote{We also define $\scq^1_{\bA,B}:=\scq^1_{A,B}$. }
\begin{align}\label{eq:decomp_scq}
\scq_{\bA,B}(\lambda)=\sum_{i=1}^3\scq^i_{\bA,B}(\lambda)+\Rem_{\bA,B}(\lambda).
\end{align}

Reflecting back to our original computation~\eqref{eq:determinant_computation}, we are interested in the existence of the limit
\[
\lim_{N\to\infty}\int_0^N \Tr(\scq_{\bA,B}(\lambda))\dif\lambda.
\]
The remainder has enough resolvents for its trace to be integrable; see Lemma~\ref{lem:Rem_det}.
The traces of $\scq^i_{\bA,B}$ need more care.
For $i=2$, the assumption $\bA\in\cX^{-\kappa}$ is not enough to pass to the limit $N\to\infty$ in the trace integral.
This is the only term that requires a further enhancement of the gauge field.

\subsection{Enhanced state space for ratio of determinants}\label{sec:enhancement_determinants}
The divergence in the integral of $\Tr\scq^2_{\bA,B}(\lambda)$ over $\R_+$ comes from $\Tr\scq^2_{\bA,0}(\lambda)$, as suggested by Taylor expansion around $B=0$.
To see what enhancement is needed, consider the Gaussian case $\bA_T=(A_T,A_T^2)=(A_T,A_T^*A_T-\rmc_T)$, for which
\begin{align*}
\Tr \scq^2_{\bA_T,0}(\lambda)
&=\Tr\big( R_0(1+\lambda) (A^2_T-A_T^*A_T) R_0(1+\lambda)\big)\\
&\qquad \qquad +\int_{\mathbb{T}^2\times \mathbb{T}^2} A^*_T(x)\Pi^\lambda(x-y)A_T(y)\,\diff x\,\diff y,
\end{align*}
where the equality is understood as defining the bilinear form
\phantomsection\label{def:vacuum_polarisation_bilinear}\[
\boldsymbol{\Pi}^\lambda(f,g) \coloneqq \int_{\mathbb{T}^2} f^*(x)\Pi^\lambda(x-y)g(y)\,\diff x\,\diff y\, \textup{ for }f,g\in\Omega^1 C^\infty.
\]
It can be readily verified that the operator $\Pi^\lambda$, as a Fourier multiplier, is characterised by the symbol
\phantomsection\label{def:vacuum_polarisation_multiplier}\[
\Pi^{\lambda}_{j,k}(p) = \sum_{q\in 2\pi\Z^2}(1+\lambda+|q|^2)^{-2}\Big(\delta_{j,k}-\frac{(2q+p)_j(2q+p)_k}{1+\lambda+|q+p|^2}\Big),\quad j,k=1,2.
\]
It is easy to see from this expression that
\[
\Pi^{\lambda}_{j,k}(p) \approx \delta_{j,k}\sum_{q\in 2\pi\Z^2}(1+\lambda+|q|^2)^{-2} \, \textup{ as } \, |p_1|,|p_2|\to\infty,
\]
and hence that $\Pi^\lambda$ is not regularising, even at fixed $\lambda$.
For fixed $T$, $A_T$ is smooth and $\lambda\mapsto\boldsymbol{\Pi}^\lambda(A_T,A_T)$ is well-defined and integrable.
However, we cannot expect this integrability to be uniform as $T\to\infty$, because of the high frequency behaviour of $\Pi^\lambda$.

The divergent part lies in the zeroeth Wiener chaos and hence is deterministic. We remove it with the counterterm $\rmo_{N,T}$ defined by
\[
\rmo_{N,T} = -\E^\Theta\E^\rmA\Big[\int_0^N\; \Tr \scq^2_{(A_T,A^*_T A_T-\rmc_T),0}(\lambda)\,\diff \lambda \Big].
\]
Now letting
\begin{align}\label{eq:def_vacuum_A_T}
\bV_{A_T}(\lambda) = \boldsymbol{\Pi}^\lambda(A_T,A_T)-\E^\Theta\E^\rmA[\boldsymbol{\Pi}^\lambda(A_T,A_T)],
\end{align}
in combination with the fact that $A_T^2-A_T^*A_T=-\rmc_T$, we immediately have the identity
\[
\int_0^N\; \Tr \scq^2_{(A_T,A^*_T A_T-\rmc_T),0}(\lambda)\,\diff \lambda + \rmo_{N,T} = \int_0^N\; \bV_{A_T}(\lambda)\,\diff \lambda.
\]
We show later that $\bV_{A_T}\in L^1(\R_+;\R)$ almost surely, with estimates that allow us to remove the cut-off in $T$.
For the pathwise analysis, we therefore add an $L^1(\R_+;\R)$ component to the enhanced data in place of $\bV_{A_T}$.

For $\kappa\in(0,\frac14)$, define the enlarged state space and write its elements as
\phantomsection\label{def:full_enhancement}\[
\cX^{-\kappa}_\rmV:=\cX^{-\kappa}\times L^1(\R_+;\R), \qquad \bfA=(\bA,\bV)=(A,A^2,\bV).
\]
The first two components form an element of $\cX^{-\kappa}$, so we define the metric by
\phantomsection\label{def:full_enhancement_distance}\[
\bfnorm{\bfA-\bar\bfA}_{-\kappa}:=\triple{\bA-\bar\bA}_{-\kappa}+\|\bV-\bar\bV\|_{L^1}, \qquad \bfA,\bar\bfA\in\cX^{-\kappa}_\rmV.
\]

We define from these quantities a rough variant of $Q_{N,T}(A_T+B_T)$ as follows:
\begin{align}\label{eq:def_cQ_N}
\begin{split}
\cQ_N(\bfA,B)&:=-(m_{\bA,B}-1)\int_0^N \Tr( R_0(\lambda+1)^2)\dif \lambda +   \int_0^N \big( \bV(\lambda)-\Tr\scq^2_{\bA,0}(\lambda) \big)\dif\lambda  \\
&\qquad+  \int_0^N  \Tr(\scq_{\bA,B}(\lambda))\dif\lambda + \int_0^N \Tr(R_0(1+\lambda)-R_B(1+\lambda))\dif\lambda.
\end{split}
\end{align}
For $\bfA_T:=(A_T,A_T^*A_T-\rmc_T,\bV_{A_T})$, the definition gives $\cQ_N(\bfA_T,B_T)=Q_{N,T}(A_T+B_T)$.

We need compatibility with addition of a regular drift after removing the cut-offs, particularly for the variational formula in the large-deviation analysis.
For $\bfA\in\cX^{-\kappa}_\rmV$ and $B\in\Omega^1_\Coul H^1$, define the translation by
\begin{align}\label{eq:translation_bfA}
\bfA\bplus B:=(A+B,A^2+2A^*B+B^*B, \bV+2\bfPi(A,B)+\bfPi(B,B)),
\end{align}
\phantomsection\label{def:vacuum_polarisation_family}where we define $\bfPi(A,B):=(\lambda\mapsto \bfPi^\lambda(A,B))$.
The following proposition shows that this translation is well-defined.
\begin{proposition}\label{prop:Pi_quadratic_form}
  For $\kappa\in [0,1/4)$ the following bound and its $L^1$ consequence hold:
  \[
| \bfPi^\lambda(A,B)| \lesssim (1+\lambda)^{-\frac 3 2 +\kappa}\norm{B}_{H^1} \|A\|_{C^{-\kappa/2}}, \qquad
\| \bfPi(A,B)\|_{L^1}\lesssim  \norm{B}_{H^1} \|A\|_{C^{-\kappa/2}}.
  \]
\end{proposition}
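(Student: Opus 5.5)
We will bound $\bfPi^\lambda(A,B)$ by going back to its definition as the trace of the second-order term $\scq^2$ with the flat resolvent. The first summand of $\scq^2$, involving $A^2$, does not appear in $\bfPi$, since $\bfPi$ only records the quadratic $A^*\cdot A$ part. So we will use the representation
\[
\bfPi^\lambda(A,B)=\Tr\big(R_0(1+\lambda)A^*B\,R_0(1+\lambda)\big)-4\,\Tr\big(R_0(1+\lambda)A^*\diff R_0(1+\lambda)B^*\diff R_0(1+\lambda)\big),
\]
suitably symmetrised in $A$ and $B$. This identity holds for smooth $A,B$ by comparison with the Fourier symbol $\Pi^\lambda_{j,k}(p)$. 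The bound then extends by density to $A\in C^{-\kappa/2}$, since the right-hand side will be shown to be continuous in $A$ in that norm.

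For both traces we will distribute regularity using the smoothness of $B$. In the first trace we use cyclicity to rewrite it as $\Tr(A^*B\,R_0(1+\lambda)^2)$. We then write this as the pairing $\langle A, B\,k_\lambda\rangle$, where $k_\lambda$ is the diagonal of the kernel of $R_0^2$, which is a constant function of size $\sim(1+\lambda)^{-1}$. This pairing is bounded by $\|A\|_{C^{-\kappa/2}}\|B\|_{B^{\kappa/2}_{1,1}}(1+\lambda)^{-1}$.

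That estimate only gives decay $(1+\lambda)^{-1}$, which is not integrable. So the key point is that the two terms \emph{cancel} at leading order. This is the Ward identity, i.e.\ gauge invariance of vacuum polarisation: $\Pi^\lambda_{j,k}(p)$ vanishes at $p=0$ after a shift $q\mapsto q$ combined with integration by parts in $q$. To see this, note that $\sum_q (1+\lambda+|q|^2)^{-2}\big(\delta_{jk}-\frac{4q_jq_k}{1+\lambda+|q|^2}\big)$ is, up to lattice corrections, $-\frac12\sum_q\partial_{q_j}\big(q_k(1+\lambda+|q|^2)^{-2}\big)$. This is a total derivative, so the sum is small; the Euler--Maclaurin/Poisson error is exponentially small in the continuum comparison. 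Consequently we get a bound of the form
\[
|\Pi^\lambda(p)|\lesssim \min\{(1+\lambda)^{-1},\,|p|(1+\lambda)^{-3/2}\}
\]
(or $|p|^2(1+\lambda)^{-2}$ for small $p$). We will establish this by a Taylor expansion of the summand in $p$ around $p=0$, combined with the Ward cancellation at order zero. The first-order term is odd in $q$ and vanishes after summation. Each remaining factor of $p$ costs $(1+\lambda+|q|^2)^{-1/2}$.

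Given the symbol bound, the estimate follows by Plancherel. We have
\[
|\bfPi^\lambda(A,B)|\le\sum_p|\hat A(p)||\Pi^\lambda(p)||\hat B(p)|.
\]
We interpolate to get $|\Pi^\lambda(p)|\lesssim (1+\lambda)^{-3/2+\kappa}\langle p\rangle^{1-2\kappa}$. Then we apply Cauchy--Schwarz with weights $\langle p\rangle^{-\kappa}\hat A$ and $\langle p\rangle^{1-\kappa}\hat B$, which gives
\[
|\bfPi^\lambda(A,B)|\lesssim (1+\lambda)^{-3/2+\kappa}\|A\|_{H^{-\kappa}}\|B\|_{H^{1-\kappa}}\lesssim (1+\lambda)^{-3/2+\kappa}\|A\|_{C^{-\kappa/2}}\|B\|_{H^1}.
\]
Integrating in $\lambda$ gives the $L^1$ bound, since $\frac32-\kappa>1$.

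The main obstacle is the symbol estimate: proving the Ward cancellation on the lattice sum uniformly in $\lambda$, with the interpolated bound holding for all $p$. For large $|p|\gtrsim\sqrt{1+\lambda}$ we will simply use the crude bound $(1+\lambda)^{-1}\le(1+\lambda)^{-3/2+\kappa}|p|^{1-2\kappa}$. For small $|p|$ we will compare the lattice sum with an integral, where the total-derivative structure makes the cancellation exact.
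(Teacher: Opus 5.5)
Your proposal is correct and follows essentially the paper's route. You prove a multiplier bound for $\Pi^\lambda$ from the zero-mode cancellation (Poisson summation of a function with vanishing integral), a second-order expansion for $|p|\le\sqrt{1+\lambda}$ in which the odd-in-$q$ term drops out, and the crude $(1+\lambda)^{-1}$ bound for $|p|>\sqrt{1+\lambda}$, then apply Plancherel and Cauchy--Schwarz; your single interpolated weight $(1+\lambda)^{-3/2+\kappa}\langle p\rangle^{1-2\kappa}$ is a tidier version of the paper's three-piece split, the opening trace representation is not needed, and at $p=0$ the summand is exactly $\partial_{q_j}\bigl(q_k(1+\lambda+|q|^2)^{-2}\bigr)$ with no factor $-\tfrac12$.
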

The proof uses the following properties of $\Pi^\lambda$ and its Fourier symbol. These computations are a continuum analogue of the treatment of vacuum polarisation in \cite[App.~A]{BFS80}, with operator bounds corresponding to Lemma~A.1 there. We include a detailed proof for clarity and because our regularisation in $\lambda$ differs.

\begin{lemma} \label{lem:vac_pol_bound}
    The operator $\Pi^\lambda$ is transverse, meaning that
    \[
    \sum_{k=1}^2 \Pi^\lambda_{j,k}(p)p_k = 0 \quad\textup{ for all }\, j\in\{1,2\},
    \]
    and satisfies the spectral bound
    \[
    |\Pi^\lambda_{j,k}(p)| \lesssim (1+\lambda)^{-2} + \begin{cases}
        |p|^2(1+\lambda)^{-2} &\textup{ if }\quad |p|\le \sqrt{1+\lambda} \\
        (1+\lambda)^{-1} &\textup{ if }\quad |p|> \sqrt{1+\lambda}
    \end{cases}
    \]
    for all $j,k=1,2$ and $p\in 2\pi\Z^2$.
\end{lemma}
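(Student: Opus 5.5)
Transversality is proved first, directly from the symbol, by a discrete change of variables. Fix $p$ and contract with $p_k$:
\[
\sum_k\Pi^\lambda_{j,k}(p)p_k=\sum_{q}(1+\lambda+|q|^2)^{-2}\Big(p_j-\frac{(2q+p)_j\,(2q+p)\cdot p}{1+\lambda+|q+p|^2}\Big).
\]
The key identity is $(2q+p)\cdot p=|q+p|^2-|q|^2=(1+\lambda+|q+p|^2)-(1+\lambda+|q|^2)$. Writing $a_q:=(1+\lambda+|q|^2)^{-1}$, the second term becomes
\[
\sum_q(2q+p)_j\,(a_q^2-a_q^2 a_{q+p}^{-1}\cdot a_{q+p}\cdots)=\sum_q (2q+p)_j\big(a_q^{2}-a_q a_{q+p}\big).
\]
The piece $\sum_q(2q+p)_j a_qa_{q+p}$ is odd under the reflection $q\mapsto -q-p$, which preserves $\Z^2$-sums and swaps $a_q,a_{q+p}$, so it vanishes. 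It remains to show $\sum_q (2q+p)_j a_q^2=p_j\sum_q a_q^2$. This holds because $\sum_q q_j a_q^2=0$ by $q\mapsto-q$, and all sums converge absolutely since $a_q^2\sim|q|^{-4}$. Hence the contraction is $0$.

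For the spectral bound, I would first record $\sum_q a_q^2\lesssim (1+\lambda)^{-1}$, comparing with $\int_{\R^2}(1+\lambda+|x|^2)^{-2}dx$ and adding the $q=0$ term. In the regime $|p|>\sqrt{1+\lambda}$, a crude bound suffices. The first term is $\lesssim(1+\lambda)^{-1}$. The second is at most $\sum_q a_q^2|2q+p|^2a_{q+p}$, and $|2q+p|^2\le 2|q|^2+2|q+p|^2$ gives $|2q+p|^2a_{q+p}\lesssim |q|^2a_{q+p}+1$. Then $\sum_q a_q^2|q|^2a_{q+p}\le \sum_q a_q a_{q+p}\le(\sum a_q^2)^{1/2}(\sum a_{q+p}^2)^{1/2}\lesssim(1+\lambda)^{-1}$ by Cauchy--Schwarz. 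This regime is routine.

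The main obstacle is the small-$p$ regime $|p|\le\sqrt{1+\lambda}$, where we need an $|p|^2(1+\lambda)^{-2}$ gain plus an additive $(1+\lambda)^{-2}$. I would show that $\Pi^\lambda(0)$ is small and then Taylor expand to second order in $p$. By transversality, only the component orthogonal to $p$ of the first-order term survives, so the linear terms must cancel; a cleaner way to organise this is parity. At $p=0$, $\Pi^\lambda_{j,k}(0)=\sum_q a_q^2\delta_{jk}-4\sum_q q_jq_ka_q^3$. For the continuum integral, $\int (1+\lambda+|x|^2)^{-2}dx=2\int x_1^2(1+\lambda+|x|^2)^{-3}dx\cdot 2$ in the sense that the analogous combination $\int(\delta_{jk}a^2-4x_jx_ka^3)=0$ exactly, which is the continuum Ward identity. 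So $\Pi^\lambda(0)$ reduces to the Riemann-sum versus integral error. Controlling this error via Poisson summation or Euler--Maclaurin, using that the integrand is smooth at scale $\sqrt{1+\lambda}\ge1$, should give $O((1+\lambda)^{-2})$, which is exactly the additive term.

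For $p\neq0$, I would treat $\Pi^\lambda(p)$ as a smooth function of real $p$ (the $q$-sum makes sense for $p\in\R^2$) and show symmetry $\Pi^\lambda(-p)=\Pi^\lambda(p)$ via $q\mapsto -q$, so the first derivative at $0$ vanishes. Each $p$-derivative of the summand gains a factor $(1+\lambda+|q|^2)^{-1/2}$, so $|\partial_p^2\Pi^\lambda|\lesssim\sum_q a_q^3\lesssim(1+\lambda)^{-2}$ uniformly for $|p|\le\sqrt{1+\lambda}$, where $a_{q+p}\sim a_q$ up to constants for the relevant $q$. Taylor's theorem then gives $|\Pi^\lambda(p)-\Pi^\lambda(0)|\lesssim|p|^2(1+\lambda)^{-2}$. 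If the uniform comparability $a_{q+tp}\asymp a_q$ fails, namely when $|q|\lesssim\sqrt{1+\lambda}$ but $|q+tp|$ is smaller, it still holds up to a constant because $|p|\le\sqrt{1+\lambda}$ implies $1+\lambda+|q+tp|^2\ge\tfrac13(1+\lambda+|q|^2)$.
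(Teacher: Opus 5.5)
Your proof is correct. The transversality argument is the paper's, and so is the zero-mode step: the continuum integral vanishes, and Poisson summation at scale $\sqrt{1+\lambda}$ controls the lattice-minus-integral error.

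The difference is in the $p\neq0$ bounds.

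\emph{The paper's route.} It first uses transversality and symmetry to write $\Pi^\lambda(p)=\Tr\Pi^\lambda(p)\,(\delta_{j,k}-p_jp_k/|p|^2)$, so it only has to bound the scalar $\Tr\Pi^\lambda(p)$. For $|p|^2>1+\lambda$ it splits the $q$-sum into $\{|q|\le|p|/2\}$, $\{|q+p|\le|p|/2\}$ and the rest. For $|p|^2\le1+\lambda$ it expands $\Tr(\Pi^\lambda(p)-\Pi^\lambda(0))$ algebraically, discards a term odd in $q$, and bounds the remainder using $|1+\lambda-|q|^2|\le1+\lambda+|q|^2$ and $1+\lambda+|q+p|^2\ge1+\lambda$.

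\emph{Your route.} You work entrywise instead. Your Cauchy--Schwarz estimate $\sum_q a_qa_{q+p}\le\sum_q a_q^2$ gives $|\Pi^\lambda_{j,k}(p)|\lesssim(1+\lambda)^{-1}$ for every $p$ in one line, so the three-region split is unnecessary. For small $p$ you replace the explicit expansion by a second-order Taylor expansion in a continuous variable $p\in\R^2$. Evenness $\Pi^\lambda(-p)=\Pi^\lambda(p)$ kills the linear term, and the comparability $1+\lambda+|q+tp|^2\ge\tfrac13(1+\lambda+|q|^2)$ gives $|\partial_p^2\Pi^\lambda|\lesssim\sum_q a_q^3\lesssim(1+\lambda)^{-2}$. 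The only extra justification you need is termwise differentiation of the $q$-series. This follows from the same bounds, which give summable dominants $a_q^{2}$, $a_q^{5/2}$, $a_q^{3}$ for the zeroth, first and second $p$-derivatives.

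\emph{What each buys.} Your route needs less bookkeeping and does not use transversality for the bounds. It therefore applies verbatim to $\Pi^{\lambda,M}(p)-\Pi^{\lambda,M}(0)$ in Appendix~\ref{app:galerkin}, where the cutoff $|q|\le M$ is symmetric under $q\mapsto-q$ but destroys transversality. The paper's route stays with explicit lattice sums and reduces the matrix estimate to a single scalar.
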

\begin{proof}
   We first verify transversality. With the help of the identity
   \[
   (2q+p)\cdot p = (1+\lambda+|p+q|^2)-(1+\lambda+|q|^2),
   \]
   we find that
   \[
   \sum_{k=1}^2 \Pi^\lambda_{j,k}(p)p_k = \sum_{q\in 2\pi\Z^2} \frac{p_j}{(1+\lambda+|q|^2)^2}-\frac{(2q+p)_j}{(1+\lambda+|q|^2)^2}+\frac{(2q+p)_j}{(1+\lambda+|q|^2)(1+\lambda+|p+q|^2)}.
   \]
   The first summand cancels part of the second; the remaining sums vanish by antisymmetry under $q\mapsto -q$ and $q\mapsto -q-p$, respectively. For $p\ne0$, $\Pi^\lambda(p)$ acts as a scalar multiple of the rank one projection onto the transverse component. It therefore has the form
   \begin{align*}
   \Pi^\lambda_{j,k}(p) &= \Tr\Pi^\lambda(p) \Big(\delta_{j,k}-\frac{p_j p_k}{|p|^2}\Big) \,\textup{ for }\, p\neq 0,\\
   \Pi^\lambda_{j,k}(0) &= \delta_{j,k}\sum_{q\in 2\pi\Z^2}\frac{1+\lambda-|q|^2}{(1+\lambda+|q|^2)^3},
   \end{align*}
   where the trace is over the internal indices.
   We estimate the zero mode first, since we will use this bound again below. The sum converges absolutely for each $\lambda$. We show that cancellation between the terms on either side of $|q|^2=1+\lambda$ gives an integrable bound in $\lambda$. To see this, define $h_\lambda$ and observe that
   \[
   h_\lambda(x) = \frac{1+\lambda-|x|^2}{(1+\lambda+|x|^2)^3}, \qquad \int_{\R^2}\; h_\lambda(x)\,\diff x = 0.
   \]
   Since $h_\lambda$ is smooth and decays as $O(|x|^{-4})$ on $\R^2$, Poisson summation gives the identity
   \[
   \sum_{q\in 2\pi\Z^2}\frac{1+\lambda-|q|^2}{(1+\lambda+|q|^2)^3} = \frac{1}{4\pi^2}\sum_{\substack{\xi\in\Z^2 \\ \xi\neq 0}} \hat h_\lambda(\xi) = \frac{1}{4\pi^2(1+\lambda)}\sum_{\substack{\xi\in\Z^2 \\ \xi\neq 0}} \hat h_0\big(\sqrt{1+\lambda}\,\xi\big),
   \]
   where the last equality follows by rescaling the Fourier transform. Since $\hat h_0(\xi)$ decays as $O\big((1+|\xi|)^{-n}\big)$ for every $n$, we obtain $|\Pi^\lambda_{j,k}(0)| \lesssim (1+\lambda)^{-2}$.
   We address $\Tr\Pi^\lambda(p)$ for $|p|^2>1+\lambda$ next. Split the lattice sum according to $D_1 \coloneqq \{|q|\le |p|/2\}$, $D_2 \coloneqq \{|q+p|\le |p|/2\}$ and $D_3 \coloneqq 2\pi\Z^2\setminus(D_1\cup D_2)$. On $D_1$, it holds that $1+\lambda + |p+q|^2 \ge |p|^2/2$, which implies the inequality
   \begin{align*}
   \sum_{q\in D_1} \frac{2(1+\lambda -|q|^2) + |p|^2}{(1+\lambda+|q|^2)^2(1+\lambda+|q+p|^2)} &\lesssim \sum_{q\in 2\pi\Z^2} \frac{1}{(1+\lambda +|q|^2)^2} + \frac{1}{|p|^2}\sum_{q\in D_1} \frac{1}{1+\lambda +|q|^2} \\
   &\lesssim \frac{1}{1+\lambda} + \frac{1}{1+\lambda}\frac{\log\big(1+\frac{|p|^2}{1+\lambda}\big)}{\frac{|p|^2}{1+\lambda}} \lesssim \frac{1}{1+\lambda}.
   \end{align*}
   The estimate for $D_2$ runs along similar lines after the change of lattice variables $q\mapsto p+q$, and for the complement piece $D_3$ we have the bound
   \begin{multline*}
   \sum_{q\in D_3} \frac{2(1+\lambda -|q|^2) + |p|^2}{(1+\lambda+|q|^2)^2(1+\lambda+|q+p|^2)} \lesssim \sum_{q\in 2\pi\Z^2\setminus D_1} \frac{1}{(1+\lambda+|q|^2)^2} \\
   + \sum_{q\in 2\pi\Z^2\setminus D_2} \frac{1}{(1+\lambda+|p+q|^2)^2} \lesssim \sum_{|r|\ge |p|/2} \frac{1}{(1+\lambda+|r|^2)^2} \lesssim \frac{1}{1+\lambda + |p|^2} \sim \frac{1}{1+\lambda}.
   \end{multline*}
   For $|p|^2\le 1+\lambda$, the zero-mode estimate reduces the proof to bounding $\Tr\big(\Pi^\lambda(p)-\Pi^\lambda(0)\big)$. With $a\coloneqq 1+\lambda+|q|^2$, $b\coloneqq 1+\lambda+|p+q|^2$ and $c\coloneqq 1+\lambda-|q|^2$, we have
   \begin{align*}
   \Tr\big(\Pi^\lambda(p)-\Pi^\lambda(0)\big) &= \sum_{q\in 2\pi\Z^2} \frac{2(1+\lambda -|q|^2) + |p|^2}{(1+\lambda+|q|^2)^2(1+\lambda+|q+p|^2)} -  \frac{2(1+\lambda -|q|^2)}{(1+\lambda+|q|^2)^3} \\
   &= \sum_{q\in 2\pi\Z^2} \frac{2(2p\cdot q+|p|^2)^2}{a^4b} + \frac{|p|^2}{a^2b} - \frac{2c(2p\cdot q+|p|^2)}{a^4}.
   \end{align*}
   Part of the third term above is odd in $q$ and thus vanishes, and the remaining terms can immediately be simplified by the inequality $|c|\le a$. Using $|p|^2 \le 1+\lambda$, the estimates
   \[
   \frac{2(2p\cdot q+|p|^2)^2}{a^3 b} + \frac{|p|^2}{a^2b} + \frac{2|p|^2}{a^3} \le \frac{16|p|^2|q|^2 + 4|p|^4}{a^3b} + \frac{3|p|^2}{(1+\lambda) a^2} \lesssim \frac{|p|^2}{(1+\lambda) a^2}
   \]
   are straightforward and yield the desired $|p|^2/(1+\lambda)^2$ bound after summing in $q$.
\end{proof}

\begin{proof}[Proof of Proposition~\ref{prop:Pi_quadratic_form}]
 Using Lemma~\ref{lem:vac_pol_bound} and repeated applications of the Cauchy-Schwarz inequality yields
    \begin{align*}
        |\boldsymbol{\Pi}^\lambda (A,B)| &\le \sum_{j,k=1}^2 \Big[ \Big((1+\lambda)^{-2}\Big(\sum_{p\in 2\pi\Z^2} + \sum_{|p|^2\le 1+\lambda} |p|^2\Big) + (1+\lambda)^{-1}\sum_{|p|^2>1+\lambda} \Big] |\hat A_j(p)||\hat B_k(p)| \\
        &\le \sum_{j,k=1}^2(1+\lambda)^{-2}\Big(\sum_{p\in 2\pi\Z^2}\langle p\rangle^{-4\kappa} |\hat A_j(p)|^2\sum_{p\in 2\pi\Z^2}\langle p\rangle^{4\kappa} |\hat B_k(p)|^2\Big)^{1/2} \\
        &\quad\quad\quad\quad + (1+\lambda)^{-2}\Big(\sum_{|p|^2\le 1+\lambda}\langle p\rangle^{-2+6\kappa} \sum_{p\in 2\pi\Z^2}\langle p\rangle^{2-6\kappa} |\hat A_j(p)|^2|\hat B_k(p)|^2\Big)^{1/2} \\
        &\quad\quad\quad\quad + (1+\lambda)^{-3/2+\kappa} \sum_{|p|^2> 1+\lambda}\langle p\rangle^{1-2\kappa} |\hat A_j(p)||\hat B_k(p)|.
        \end{align*}
        By standard embeddings on $\mathbb{T}^2$, we thus obtain the bound
        \begin{align*}
        |\boldsymbol{\Pi}^\lambda (A,B)| &\lesssim (1+\lambda)^{-2}\norm{A}_{\Omega^1 H^{-2\kappa}} \norm{B}_{\Omega^1 H^{2\kappa}} + (1+\lambda)^{-2+3\kappa}\norm{A}_{\Omega^1 C^{-\kappa/2}} \norm{B}_{\Omega^1 H^{1-\kappa}} \\
        &\quad\quad\quad\quad + (1+\lambda)^{-3/2+\kappa}\norm{A}_{\Omega^1 H^{-2\kappa}} \norm{B}_{\Omega^1 H^1} \\
        &\lesssim (1+\lambda)^{-3/2+\kappa}\norm{A}_{\Omega^1 C^{-\kappa/2}} \norm{B}_{\Omega^1 H^1},
     \end{align*}
     for all $\kappa<1/4$, which establishes the claim.
\end{proof}

As expected we also have
\begin{proposition}\label{prop:translation_determinant}
   Let $\kappa\in(0,\frac 14)$, $\bfA\in\cX^{-\kappa}_\rmV$ and $B\in\Omega^1_\Coul H^1$. It holds that
    \[
    \cQ_N(\bfA\bplus B,0)=\cQ_N(\bfA,B).
    \]
\end{proposition}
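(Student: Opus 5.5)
The plan is to expand both sides of the identity using definition \eqref{eq:def_cQ_N} and match them term by term. Write $\bfA'=\bfA\bplus B=(A',A'^2,\bV')$, so that $A'=A+B$, $A'^2=A^2+2A^*B+B^*B$ and $\bV'=\bV+2\bfPi(A,B)+\bfPi(B,B)$.

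\emph{Step 1: mass and resolvent terms.} By Lemma~\ref{lem:self_adjoint}, with $\bA'=(A',A'^2)=\bA\bplus B$,
\[
(L_{\bA',0},\scD(L_{\bA',0}))=(L_{\bA,B},\scD(L_{\bA,B})).
\]
Hence $m_{\bA',0}=m_{\bA,B}$, and the first terms of the two sides coincide. For the same reason $R_{\bA',0}(\lambda+m_{\bA',0})=R_{\bA,B}(\lambda+m_{\bA,B})$. It follows that
\[
\scq_{\bA',0}(\lambda)-\scq_{\bA,B}(\lambda)=R_0(1+\lambda)-R_B(1+\lambda).
\]
Both $\scq$'s are trace class, as noted after \eqref{eq:determinant_computation}. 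The difference $R_0(1+\lambda)-R_B(1+\lambda)$ equals $\scq_{\bfB,0}$ with $\bfB=\0\bplus B$, so it is trace class as well. Taking traces and integrating over $[0,N]$, the third term of $\cQ_N(\bfA',0)$ equals the sum of the third and fourth terms of $\cQ_N(\bfA,B)$. The fourth term of $\cQ_N(\bfA',0)$ vanishes because its $B$ is $0$.

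\emph{Step 2: the $\scq^2$ term.} It remains to show, for each $\lambda$,
\[
\Tr\scq^2_{\bA',0}(\lambda)-\Tr\scq^2_{\bA,0}(\lambda)=2\bfPi^\lambda(A,B)+\bfPi^\lambda(B,B).
\]
Together with the definition of $\bV'$, this makes the second terms agree. Expanding $\scq^2_{\bA',0}$ by multilinearity, with $R=R_0(1+\lambda)$, the difference is
\begin{align*}
&\Tr\big(R(2A^*B+B^*B)R\big)\\
&\quad-4\Tr\big(RA^*\diff RB^*\diff R+RB^*\diff RA^*\diff R+RB^*\diff RB^*\diff R\big).
\end{align*}
Each summand is trace class by Lemma~\ref{lem:power_counting}, used with blocks $K^*\diff R$ and $fR$. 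This uses $B\in\Omega^1H^1\subset\Omega^1C^{-\kappa/2}$, and $A^*B\in C^{-\kappa/2}$ by Young's product rule since $B\in H^1\subset C^{1-\delta}$. Define the symmetric bilinear form
\[
\cP^\lambda(X,Y):=\tfrac12\Tr\big(R(X^*Y+Y^*X)R\big)-2\Tr\big(RX^*\diff RY^*\diff R\big)-2\Tr\big(RY^*\diff RX^*\diff R\big).
\]
The displayed difference is exactly $2\cP^\lambda(A,B)+\cP^\lambda(B,B)$. For $\bfi\R$-valued forms, $X^*Y=Y^*X$, so the first part is just $\Tr(RX^*YR)$.

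\emph{Step 3: identifying $\cP^\lambda$ with $\bfPi^\lambda$ by density.} For smooth $X=Y$ one has $\cP^\lambda(X,X)=\bfPi^\lambda(X,X)$; this is the computation defining $\Pi^\lambda$ from $\Tr\scq^2_{(A_T,A_T^*A_T-\rmc_T),0}$. Both forms are symmetric and real bilinear, so polarisation gives $\cP^\lambda=\bfPi^\lambda$ on smooth forms. To reach $A\in\Omega^1C^{-\kappa/2}$ in the closure (elements of $\cX^{-\kappa}$ are limits of $H^1$ fields), approximate $A$ by $A_k\to A$ in $C^{-\kappa/2}$ with $B$ fixed.
\begin{itemize}
\item The right-hand side $\bfPi^\lambda(A_k,B)$ converges by Proposition~\ref{prop:Pi_quadratic_form}, using bilinearity.
\item The left-hand side $\cP^\lambda(A_k,B)$ converges in trace norm by Lemma~\ref{lem:power_counting}, since the bound there is linear in $\fav{A_k-A}$.
\end{itemize}
For $\bfPi^\lambda(B,B)$ we can approximate $B$ in $H^1$ in the same way.

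I expect Step~3 to be the main obstacle. It requires checking that every mixed trace fits the block structure of Lemma~\ref{lem:power_counting}, in particular that $A^*B$ is an admissible zeroth-order multiplier with a bound linear in $\|A\|_{C^{-\kappa/2}}$. It also requires confirming the normalisation and symmetrisation conventions, so that the polarised form produces exactly the factor $2$ in front of $\bfPi(A,B)$.
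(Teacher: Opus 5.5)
Your proposal is correct and follows the paper's proof. Lemma~\ref{lem:self_adjoint} matches the mass and resolvent terms, giving $m_{\bA\bplus B,0}=m_{\bA,B}$ and $\scq_{\bA\bplus B,0}=\scq_{\bA,B}+R_0(1+\lambda)-R_B(1+\lambda)$. The remaining identity is $\Tr\scq^2_{\bA\bplus B,0}-\Tr\scq^2_{\bA,0}=\boldsymbol{\Pi}^\lambda(2A+B,B)$: the paper gets it by a direct Fourier computation, and you get it by polarisation and density, which is a slightly more careful justification of the same step.

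Two small fixes are needed.

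\emph{The embedding.} $H^1(\T^2)$ embeds only into $C^0$, not into $C^{1-\delta}$. You should therefore take $A^*B\in C^{-\kappa/2}$ from the well-definedness of $\bA\bplus B\in\cX^{-\kappa}$, as the paper does, rather than from that embedding.

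\emph{The Coulomb condition.} The smooth identity $\mathcal{P}^\lambda(X,X)=\boldsymbol{\Pi}^\lambda(X,X)$ uses $p\cdot\hat X(p)=0$ and fails for gradients, because $\Pi^\lambda$ is transverse while $\mathcal{P}^\lambda$ is not. So you should polarise and approximate within Coulomb-gauge forms. This is possible here because $\cX^{-\kappa}$ is defined as a closure of Coulomb fields.
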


\begin{proof}
By definition,
\begin{align*}
\begin{split}
\cQ_N(\bfA\bplus B,0)&:=-(m_{\bA\bplus B,0}-1)\int_0^N \Tr( R_0(\lambda+1)^2)\dif \lambda \\
&\qquad +   \int_0^N \big( \bV(\lambda)+\bfPi^\lambda(2A+B,B)-\Tr\scq^2_{\bA\bplus B,0}(\lambda) \big)\dif\lambda  \\
&\qquad+  \int_0^N  \Tr(\scq_{\bA\bplus B,0}(\lambda))\dif\lambda.
\end{split}
\end{align*}
By Lemma~\ref{lem:self_adjoint}, $m_{\bA\bplus B,0}=m_{\bA,B}$. Also,
\[
\scq_{\bA\bplus B,0}(\lambda)=\scq_{\bA,B}(\lambda)+R_0(\lambda+1)-R_B(\lambda+1).
\]
Writing the trace in Fourier variables gives
\[
\bfPi^\lambda(2A+B,B)-\Tr\scq^2_{\bA\bplus B,0}(\lambda)=-\Tr\scq^2_{\bA,0}(\lambda).
\]
Substituting these identities into~\eqref{eq:def_cQ_N} proves the claim.
\end{proof}

Now that we have set up the new state space, we can prove the two main theorems in this section. The first proves convergence of $\cQ_N(\bfA,0)$ as $N\to\infty $ and continuity of the limit with respect to $\bfA$, thereby establishing the existence of a ``rough'' ratio of determinants. The second gives a lower bound for $\cQ_N(\bfA,B)$ for use in the variational problem.

\subsection{Convergence of ratio of determinants}
The following theorem shows the convergence of the approximate ratio of determinants:
\begin{theorem} \label{thm:convergence_cQ_N}
    Let $\kappa\in (0,\frac 1{12})$ and $\bfA\in\cX^{-\kappa}_\rmV$. The limit of $\cQ_N(\bfA,0)$ as $N\to\infty$ exists and is denoted by $\cQ_\infty(\bA,0)$. Furthermore, one has the identity
    \[
\cQ_N(\bfA,0)= \int_0^N \bV(\lambda)\dif\lambda +  \int_0^N \Tr(\Rem_{\bA,0}(\lambda))\dif\lambda, \qquad N\in [0,\infty].
    \]
   Finally, for any $N\in [0,\infty]$, one has
   \[
   |\cQ_N(\bfA,0)-\cQ_N(\bar\bfA,0)|\lesssim (1+\triple{\bA}_{-\kappa}+\triple{\bar\bA}_{-\kappa})^{\rmn(\kappa)} \bfnorm{\bfA-\bar\bfA}_{-\kappa}.
   \]
\end{theorem}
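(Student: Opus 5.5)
The plan is to specialise the decomposition \eqref{eq:decomp_scq} to $B=0$ and check that, after cancellations, only $\bV$ and the remainder survive. For $B=0$ we have $R_B=R_0$, so the last integral in \eqref{eq:def_cQ_N} vanishes. Inserting $\Tr\scq_{\bA,0}(\lambda)=\sum_{i=1}^3\Tr\scq^i_{\bA,0}(\lambda)+\Tr\Rem_{\bA,0}(\lambda)$ into \eqref{eq:def_cQ_N}, two cancellations are immediate.
\begin{itemize}
\item The term $\Tr\scq^3_{\bA,0}(\lambda)=(m_{\bA,0}-1)\Tr R_0(1+\lambda)^2$ cancels exactly against the first term of \eqref{eq:def_cQ_N}.
\item The term $\Tr\scq^2_{\bA,0}(\lambda)$ cancels against $-\Tr\scq^2_{\bA,0}(\lambda)$ in the second term.
\end{itemize}
Each of these traces is finite for fixed $\lambda$ by Lemma~\ref{lem:power_counting} (for $\scq^2$) or Lemma~\ref{lem:neg_power_trace} (for $\scq^3$). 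This leaves
\[
\cQ_N(\bfA,0)=\int_0^N\bV\,\dif\lambda+\int_0^N\Tr\scq^1_{A,0}(\lambda)\,\dif\lambda+\int_0^N\Tr\Rem_{\bA,0}(\lambda)\,\dif\lambda .
\]
It therefore remains to show three things: $\Tr\scq^1_{A,0}(\lambda)=0$, the remainder trace is integrable on $\R_+$, and all bounds are Lipschitz in $\bfA$.

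\textbf{Vanishing of $\Tr\scq^1$.} The operator $\scq^1_{A,0}(\lambda)=2R_0A^*\diff R_0$ is trace class by Lemma~\ref{lem:power_counting}, applied to the chain $R,K^*,\diff_B,R$. Its trace is linear in $A$ and bounded by $(1+\lambda)^{-1/2+2\kappa}\|A\|_{C^{-\kappa/2}}$. Hence it suffices to compute it for smooth $A\in\Omega^1_\Coul H^1$ and then use density in $\cX^{-\kappa}$. For smooth $A$, working in Fourier variables, the trace equals
\[
\sum_{q\in 2\pi\Z^2}\frac{\hat A(0)^*\,(\bfi q)}{(1+\lambda+|q|^2)^2}
\]
up to constants. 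Only the zero mode of $A$ contributes, since the diagonal of the kernel of $\diff R_0^2$ is constant. This sum vanishes because the summand is odd under $q\mapsto-q$. This establishes the identity for every finite $N$.

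\textbf{Integrability of the remainder.} I will bound each of the three terms of $\Rem_{\bA,0}$ with Lemma~\ref{lem:power_counting}. The constant $m_{\bA,0}-1$ is pulled out as a scalar, using $m_{\bA,0}\lesssim \fC_\kappa(\bA;0)^{1/2}$. Each block $A^*\diff R$ and $A^2R$ is admissible, and the first and last factors are resolvents, so the hypotheses of the lemma hold. Power counting then gives the following decay rates.
\begin{itemize}
\item The worst case is the first term with $J^\rmm$ replaced by $2A^*\diff$: four resolvents and three derivatives. This gives
\[
(1+\lambda)^{1-2\kappa}(1+\lambda)^{4(-1+2\kappa)}(1+\lambda)^{3/2}=(1+\lambda)^{-3/2+6\kappa}.
\]
\item The second and third terms have three resolvents and at most one derivative, giving $(1+\lambda)^{-3/2+4\kappa}$ or better.
\end{itemize}
These rates are integrable precisely when $\kappa<\frac1{12}$, which explains the hypothesis of the theorem. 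Together with $\bV\in L^1$, dominated convergence yields existence of the limit $N\to\infty$ and the identity for $N=\infty$. All prefactors are of the form $(1+\triple{\bA}_{-\kappa})^{\rmn(\kappa)}$, by Remark~\ref{rem:fC_bound_power_counting}.

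\textbf{Continuity.} The $\bV$ part contributes at most $\|\bV-\bar\bV\|_{L^1}$. For the remainder, I will telescope the multilinear expression in $(A,A^2,m_{\bA,0},R_{\bA,0}(m_{\bA,0}+\lambda))$. Each resulting term contains exactly one difference factor, handled as follows.
\begin{itemize}
\item $A-\bar A$ or $A^2-\bar A^2$: these are elements of $\Xi_0$ with norm controlled by $\triple{\bA-\bar\bA}_{-\kappa}$.
\item $m_{\bA,0}-m_{\bar\bA,0}$: bounded by Lemma~\ref{lem:difference_masses}.
\item A resolvent difference: this is an element of $\Xi_2$, whose weight from Theorem~\ref{thm:resolvent_estimate_pushed} carries the factor $\triple{\bA-\bar\bA}_{-\kappa}$.
\end{itemize}
Lemma~\ref{lem:power_counting} then gives the same integrable decay $(1+\lambda)^{-3/2+6\kappa}$, now multiplied by $(1+\triple{\bA}+\triple{\bar\bA})^{\rmn(\kappa)}\triple{\bA-\bar\bA}_{-\kappa}$. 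Integrating over $\lambda$ gives the claimed bound uniformly in $N\in[0,\infty]$.

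I expect the main obstacle to be the bookkeeping in the power-counting step. One must check that every term, including those with the scalar $m-1$ inserted between two resolvents, fits the block structure of Lemma~\ref{lem:power_counting}, and that no term has more derivatives than the count above. A second point needing care is justifying $\Tr\scq^1=0$ for rough $A$ through the approximation argument.
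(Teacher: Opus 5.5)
Your proposal is correct and follows the paper's proof essentially step for step. You specialise \eqref{eq:decomp_scq} to $B=0$, cancel the $\scq^2$ and $\scq^3$ contributions against the counterterms in \eqref{eq:def_cQ_N}, and kill the first-order trace by parity; your Fourier computation is the same observation as the paper's ``translation invariant and even'' argument. The only difference is packaging: the paper cites Lemma~\ref{lem:Rem_det} with $B=0$ for integrability and Lipschitz continuity of the remainder, whereas you re-derive those bounds inline, correctly identifying the worst rate $(1+\lambda)^{-3/2+6\kappa}$ as the source of the restriction $\kappa<\frac1{12}$.
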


\begin{proof}
    By the definition of $\cQ_N$ in~\eqref{eq:def_cQ_N} and the decomposition of $\scq_{\bA,0}$ from~\eqref{eq:def_cQ_N}, we get
    \begin{align*}
\cQ_N(\bfA,0)&:=-(m_{\bA,0}-1)\int_0^N \Tr( R_0(\lambda+1)^2)\dif \lambda +   \int_0^N \big( \bV(\lambda)-\Tr\scq^2_{\bA,0}(\lambda) \big)\dif\lambda  \\
&\qquad + \sum_{i=1}^3\int_0^N  \Tr(\scq_{\bA,0}^i(\lambda))\dif\lambda+ \int_0^N \Tr(\Rem_{\bA,0}(\lambda))\dif\lambda,
\end{align*}
where we have used that each term in the decomposition of $\scq_{\bA,0}$ is trace class on its own and integrable when $N<\infty$. We know that $\scq^2$ cancels with the corresponding term next to $\bV$ and the $\scq^3$ term cancels with the mass term in the beginning of the expression. Finally,
\begin{align}\label{eq:tr_q1_vanishes}
\Tr(\scq^1_{\bA,0}(\lambda))=2\Tr(A^*\diff R_0(1+\lambda)^2)=0,
\end{align}
 due to the fact that $R_0(1+\lambda)^2$  is translation invariant and even, so its derivative vanishes on the diagonal.   Hence for $N<\infty$
\[
\cQ_N(\bfA,0)= \int_0^N \bV(\lambda)\dif\lambda +  \int_0^N \Tr(\Rem_{\bA,0}(\lambda))\dif\lambda.
\]
Now Lemma~\ref{lem:Rem_det} below (with $B=0$) implies the integrability required for the remainder term to take $N\to\infty$ and the continuity estimate.
\end{proof}

\begin{lemma}\label{lem:Rem_det}
Let $\kappa\in(0,\frac 14)$, $\bA\in \cX^{-\kappa}$ and $B\in \Omega^1 H^1$ in the Coulomb gauge. Then we have
\begin{align}\label{eq:rem_bound}
|\Tr (\Rem_{\bA,B}(\lambda))| \lesssim (1+\lambda)^{-\frac 32+6\kappa } (1+\|B\|_{H^1})^{\tau(\kappa)} (1+\triple{\bA}_{-\kappa})^{\rmn(\kappa)}.
\end{align}
Moreover, given another $\bar\bA\in\cX^{-\kappa}$, we have
\[
|\Tr (\Rem_{\bA,B}(\lambda)-\Rem_{\bar\bA,B}(\lambda))| \lesssim (1+\lambda)^{-\frac 32+6\kappa }(1+\|B\|_{H^1})^{\tau(\kappa)} (1+\triple{\bA}_{-\kappa}+\triple{\bar\bA}_{-\kappa})^{\rmn(\kappa)} \triple{\bA-\bar\bA}_{-\kappa}.
\]
\end{lemma}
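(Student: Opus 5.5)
The plan is to apply the power counting Lemma~\ref{lem:power_counting} term by term to the three summands of $\Rem_{\bA,B}(\lambda)$. First I expand each summand, writing $J^\rmm_{\bA,B}=2A^*\diff_B+A^2+(m_{\bA,B}-1)$, into a finite sum of words in $\Xi$. The mass contributions $(m_{\bA,B}-1)$ are scalars; I pull them out and bound them by $m_{\bA,B}-1\le m_\star\lesssim\fC_\kappa(\bA;B)^{1/2}$, using \eqref{eq:m_star_local_bound}. Every resulting word then starts with $R_B(1+\lambda)\in\Xi_2$ and ends with either $R_B(1+\lambda)$ or $R_{\bA,B}(m_{\bA,B}+\lambda)\in\Xi_2$. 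Each $\diff_B$ is immediately followed by a resolvent, and multiplication operators never sit next to each other. So the hypotheses of Lemma~\ref{lem:power_counting} hold, with $A^*\diff_B$ read as the block $K^*\diff_B R$ and $A^2$ as $fR$.

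Next I count powers of $(1+\lambda)$. Every resolvent contributes $(1+\lambda)^{-1+2\kappa}$, every $\diff_B$ contributes $(1+\lambda)^{1/2}$, and the lemma adds an overall factor $(1+\lambda)^{1-2\kappa}$. The worst word is the first summand with $J^\rmm$ replaced by $2A^*\diff_B$. It has four resolvents and three derivatives, giving
\[
(1+\lambda)^{4(-1+2\kappa)+3/2+1-2\kappa}=(1+\lambda)^{-3/2+6\kappa}.
\]
All other words have at least as many resolvents relative to derivatives:
\begin{itemize}
\item the first summand with $A^2$ has four resolvents and two derivatives;
\item the second summand has three resolvents and one derivative, giving $(1+\lambda)^{-3/2+4\kappa}$;
\item the third summand has three resolvents and at most one derivative.
\end{itemize}
Each of these is bounded by $(1+\lambda)^{-3/2+6\kappa}$. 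The remaining factors are $\fav{A}$, $\fav{A^2}$, $\fav{\diff_B}$ and $\fav{R}$, each at most $(1+\|B\|_{H^1})^{\tau(\kappa)}(1+\triple{\bA}_{-\kappa})^{\rmn(\kappa)}$ by Remark~\ref{rem:fC_bound_power_counting}. Since there are finitely many factors, this gives \eqref{eq:rem_bound}.

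For the difference estimate I telescope. I write $\Rem_{\bA,B}-\Rem_{\bar\bA,B}$ as a sum of words in which exactly one factor is replaced by a difference, and there are three kinds of replacement:
\begin{itemize}
\item $A\to A-\bar A$ or $A^2\to A^2-\bar A^2$, costing $\triple{\bA-\bar\bA}_{-\kappa}$ through $\fav{\cdot}$;
\item the final resolvent replaced by $R_{\bA,B}(\lambda+m_{\bA,B})-R_{\bar\bA,B}(\lambda+m_{\bar\bA,B})\in\Xi_2$, whose weight already carries $\triple{\bA-\bar\bA}_{-\kappa}$;
\item the scalar $m_{\bA,B}-m_{\bar\bA,B}$, controlled by Lemma~\ref{lem:difference_masses}.
\end{itemize}
Each word keeps the same structure, so the same power count applies and yields the second bound.

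The main obstacle is checking the admissibility hypotheses of Lemma~\ref{lem:power_counting} for every word. In particular I must make sure the composition is well defined as maps between the $H^s_B$ scales, e.g.\ that $A^*\diff_B R_{\bA,B}$ is covered by the ranges of $s,\eta$ in Theorem~\ref{thm:resolvent_estimate_pushed}. I would also check that the $\kappa$-losses accumulate to exactly $6\kappa$ and no more. If the exponent bookkeeping fails, I would fall back to direct operator-norm estimates $H^{-2\kappa}_B\to H^{2-\kappa}_B$ via Theorem~\ref{thm:resolvent_estimate_pushed} combined with Lemma~\ref{lem:neg_power_trace}.
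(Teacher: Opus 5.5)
Your proposal is correct and follows the paper's proof. Like the paper, you expand $\Rem_{\bA,B}(\lambda)$ into words, apply Lemma~\ref{lem:power_counting} by counting resolvents against derivatives (the worst word, with four resolvents and three derivatives, gives $(1+\lambda)^{-3/2+6\kappa}$), and obtain the continuity estimate by replacing one factor at a time with a difference. The only cosmetic difference is the mass: the paper keeps $J^{m;0}_{\bA,B}=A^2+m_{\bA,B}-1$ as a single $C^{-\kappa/2}$ multiplier controlled via Lemma~\ref{lem:difference_masses}, while you factor out the scalar $m_{\bA,B}-1\le m_\star$, and both work equally well.
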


\begin{proof}
Write $R_B:=R_B(1+\lambda)$ and $R_{\bA,B}:=R_{\bA,B}(m_{\bA,B}+\lambda)$. Define
\begin{gather*}
C_{\bA}:=1+\triple{\bA}_{-\kappa}, \qquad C_B:=1+\|B\|_{H^1}, \qquad \mu:=1+\lambda,\\
J^{m;0}_{\bA,B}:=(A^2+m_{\bA,B}-1).
\end{gather*}
Here $J^{m;0}_{\bA,B}$ is the zero order part of $J^m_{\bA,B}$.
We write  $\Rem_{\bA,B}:=\Rem_{\bA,B}(\lambda)$ and we split it as follows:
\[
\Rem_{\bA,B}(\lambda)=8\Rem^1+4\Rem^2-2\Rem^3-2\Rem^4-\Rem^5,
\]
where
\begin{alignat*}{2}
\Rem^1&:= R_BA^*\diff_B R_B A^*\diff_B R_B A^*\diff_B R_{\bA,B},
&\qquad \Rem^2&:= R_BA^*\diff_B R_B A^*\diff_B R_B J^{m;0}_{\bA,B}R_{\bA,B},\\
\Rem^3&:=R_BJ_{\bA,B}^{m;0}R_B A^*\diff_B R_{\bA,B},
&\qquad \Rem^4&:=R_B A^*\diff_B R_B J_{\bA,B}^{m;0} R_{\bA,B},\\
\Rem^5&:=R_B J^{m;0}_{\bA,B} R_B J_{\bA,B}^{m;0} R_{\bA,B}.
\end{alignat*}

Using the power counting setting from Section~\ref{sec:power_counting}, we apply Lemma~\ref{lem:power_counting} to bound the traces of $\Rem^i$. Its assumptions are easily verified. Define $\tilde\Xi_0:=\{J^{m;0}_{\bA,B}\}\cup\{A\}$. Lemma~\ref{lem:difference_masses} gives
\begin{align}\label{eq:regularity_J_0_order}
\|J^{m;0}_{\bA,B}\|_{C^{-\kappa/2}}\lesssim (1+\|B\|_{H^1})^{\tau(\kappa)} (1+\triple{\bA}_{-\kappa})^{\rmn(\kappa)}.
\end{align}
To treat $\Rem^1$, we note that we have four resolvents and three derivatives, and as such we get by  Lemmas~\ref{lem:maps_0_order} and~\ref{lem:power_counting}
\begin{align*}
\Tr|\Rem^1|&\lesssim (1+\lambda)^{1-2\kappa}(1+\lambda)^{4\cdot(-1+2\kappa)} (1+\lambda)^{3\cdot\frac 1 2} (1+\|B\|_{H^1})^{\tau(\kappa)} (1+\triple{\bA}_{-\kappa})^{\rmn(\kappa)}\\
&\lesssim (1+\lambda)^{-\frac 3 2+6\kappa} (1+\|B\|_{H^1})^{\tau(\kappa)} (1+\triple{\bA}_{-\kappa})^{\rmn(\kappa)}.
\end{align*}
Regarding $\Rem^2$, we count four resolvents and two derivatives and such we get together with~\eqref{eq:regularity_J_0_order}
\begin{align*}
\Tr|\Rem^2|&\lesssim (1+\lambda)^{1-2\kappa}(1+\lambda)^{4\cdot(-1+2\kappa)} (1+\lambda)^{2\cdot\frac 1 2} (1+\|B\|_{H^1})^{\tau(\kappa)} (1+\triple{\bA}_{-\kappa})^{\rmn(\kappa)}\\
&\lesssim (1+\lambda)^{-2+6\kappa} (1+\|B\|_{H^1})^{\tau(\kappa)} (1+\triple{\bA}_{-\kappa})^{\rmn(\kappa)}.
\end{align*}
For the remaining terms, the same power counting argument gives
\begin{align*}
\Tr|\Rem^3|&\lesssim   (1+\lambda)^{-\frac 32+4\kappa} (1+\|B\|_{H^1})^{\tau(\kappa)} (1+\triple{\bA}_{-\kappa})^{\rmn(\kappa)}\\
\Tr|\Rem^4|&\lesssim   (1+\lambda)^{-\frac 3 2+4\kappa} (1+\|B\|_{H^1})^{\tau(\kappa)} (1+\triple{\bA}_{-\kappa})^{\rmn(\kappa)}\\
\Tr|\Rem^5|&\lesssim   (1+\lambda)^{-2+4\kappa} (1+\|B\|_{H^1})^{\tau(\kappa)} (1+\triple{\bA}_{-\kappa})^{\rmn(\kappa)}.
\end{align*}
Combining these bounds yields~\eqref{eq:rem_bound}.

For the continuity estimate, we repeat the bounds, inserting the differences $A-\bar A$, $J^{m;0}_{\bA,B}-J^{m;0}_{\bar \bA,B}$ and $R_{\bA,B}-R_{\bar\bA,B}$ in each term. The same bounds then hold with the additional factor $\triple{\bA-\bar\bA}_{-\kappa}$.
\end{proof}

\subsection{Lower bound for ratio of determinants}
The main result of this subsection gives the lower bound for the determinant ratio:
\begin{theorem}[Lower bound determinant ratio]\label{thm:lower_bound_det}
    Let $\kappa\in (0,\frac 1{12})$, $\bA\in\cX^{-\kappa}_\rmV$ and $B\in\Omega^1 H^1$ in the Coulomb gauge. We have the following lower bound uniformly in $N\in [0,N]$:
    \[
\cQ_N(\bfA,B) \geq -C(1+\bfnorm{\bfA}_{-\kappa})^{\rmn(\kappa)}-\delta \|B\|_{H^1}^{1+\tau(\kappa)}.
    \]

\end{theorem}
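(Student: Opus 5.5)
We start from the definition \eqref{eq:def_cQ_N} and the decomposition \eqref{eq:decomp_scq} of $\scq_{\bA,B}$. The last term $\int_0^N\Tr(R_0(1+\lambda)-R_B(1+\lambda))\dif\lambda$ is nonnegative by the diamagnetic inequality (Lemma~\ref{lem:diamagnetic}). Indeed, $\Tr R_B(1+\lambda)=\int R_B(1+\lambda)(x,x)\dif x\le \int R_0(1+\lambda)(0)\dif x=\Tr R_0(1+\lambda)$, and we simply drop it. Inserting the decomposition, we write
\begin{align*}
\cQ_N(\bfA,B)\ \ge\ &\int_0^N\bV(\lambda)\dif\lambda+\int_0^N\Tr\scq^1_{A,B}(\lambda)\dif\lambda+\int_0^N\Tr\big(\scq^2_{\bA,B}(\lambda)-\scq^2_{\bA,0}(\lambda)\big)\dif\lambda\\
&+(m_{\bA,B}-1)\int_0^N\Tr\big(R_B(1+\lambda)^2-R_0(1+\lambda)^2\big)\dif\lambda+\int_0^N\Tr\Rem_{\bA,B}(\lambda)\dif\lambda.
\end{align*}
The first term is bounded by $\|\bV\|_{L^1}$. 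By Lemma~\ref{lem:Rem_det}, the remainder integrates to at most $(1+\|B\|_{H^1})^{\tau(\kappa)}(1+\triple{\bA}_{-\kappa})^{\rmn(\kappa)}$, since $-\tfrac32+6\kappa<-1$. Young's inequality then gives $\delta\|B\|_{H^1}^{1+\tau}+C_\delta(1+\triple{\bA})^{\rmn}$. For the mass term, note that $R_B^2-R_0^2=R_B(R_B-R_0)+(R_B-R_0)R_0$, and $R_B-R_0=-R_0(2B^*\diff_B+\diff^*B\cdot{}+B^*B)R_B$, which is cleaner written as $R_B-R_0=-R_B(L_B-L_0)R_0$. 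This yields extra decay $(1+\lambda)^{-1/2+}$ at the cost of a polynomial in $\|B\|_{H^1}$. That cost is too much, so instead we use the diamagnetic bound $|\Tr R_B^2|\le\Tr R_0^2$ together with $\Tr R_0^2\lesssim(1+\lambda)^{-1}$, splitting $\lambda\le M$ and $\lambda>M$. Combined with $m_{\bA,B}-1\lesssim \fC_\kappa(\bA;B)^{1/2}$ (Lemma~\ref{lem:difference_masses}), whose $B$-power is $\tau(\kappa)$, this gives a total of $(1+\|B\|)^{\tau}\times(\log$ or small powers$)$, again absorbable by Young.

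The main work is $\scq^1$ and the difference $\scq^2_{\bA,B}-\scq^2_{\bA,0}$. For $\scq^1$ we use cyclicity, $\Tr\scq^1=2\Tr(A^*\diff_BR_B^2)$. Since $A$ is in Coulomb gauge, the $\diff$-part pairs $A$ against the off-diagonal derivative of the kernel of $R_B^2$. Writing $R_B^2=R_0^2+(R_B^2-R_0^2)$, the $R_0$ part vanishes as in \eqref{eq:tr_q1_vanishes}. The remaining part contains at least one factor $B$, and by power counting (Lemma~\ref{lem:power_counting}) extended to include $B\in\Omega^1H^1\subset\Omega^1C^{-\kappa/2}$ as an element of $\Xi_0$, its trace is $\lesssim(1+\lambda)^{-3/2+O(\kappa)}\|B\|_{H^1}\triple{\bA}$. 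This is linear in $\|B\|_{H^1}$, hence absorbable. The term $A^*B R_B^2$ is handled the same way. For the $\scq^2$ difference, we expand each $R_B=R_0-R_0(L_B-L_0)R_B$ once more. The terms that are exactly bilinear in $A$ and linear in $B$ are, by the computation in Proposition~\ref{prop:translation_determinant}, equal to $2\bfPi^\lambda(A,B)$ up to terms with more resolvents, and Proposition~\ref{prop:Pi_quadratic_form} gives the bound $\|B\|_{H^1}\|A\|$. All remaining terms carry an extra resolvent factor and are controlled by power counting with $(1+\lambda)^{-3/2+O(\kappa)}$.

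The main obstacle is to keep the $B$-dependence sub-quadratic throughout. Terms containing $B^*B$, or two factors of $B$, produce $\|B\|_{H^1}^2$, which cannot be absorbed into $\delta\|B\|^{1+\tau}$. To avoid this, we will not expand in $B$ naively. Instead we keep $R_B$ (rather than $R_0$) as the base resolvent, so that $B$ enters only through $\fav{\diff_B}$ and $\fC_\kappa$ with powers $\tau(\kappa)$. We isolate the genuinely $B$-dependent pieces by comparing traces of $R_B$-expressions with $R_0$-expressions only through differences $\scq^2_{\bA,B}-\scq^2_{\bA,0}$ in which each difference of resolvents contributes a factor of $B$ but also $(1+\lambda)^{-1/2}$ extra decay. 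We then interpolate: we use the crude diamagnetic bound for $\lambda\le\|B\|_{H^1}^{a}$ and the expansion bound for larger $\lambda$, and choose $a$ so that the total is $\lesssim\|B\|_{H^1}^{1+\tau(\kappa)}$ times a polynomial in $\bfnorm{\bfA}$. A final Young's inequality then yields the stated bound.
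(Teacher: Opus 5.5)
\textbf{There is a genuine gap, and it is in your treatment of $\scq^1_{A,B}$, the central term of the theorem.} Write $R:=R_0(1+\lambda)$. After the flat term is removed via \eqref{eq:tr_q1_vanishes}, the part of $\Tr\scq^1_{A,B}(\lambda)$ linear in $B$ is
\[
2\bfPi^\lambda(A,B)=-4\Tr\bigl(RB^*\diff RA^*\diff R\bigr)+2\Tr\bigl(RA^*BR\bigr)-4\Tr\bigl(RA^*\diff RB^*\diff R\bigr).
\]
Lemma~\ref{lem:power_counting} gives only $(1+\lambda)^{-1+4\kappa}$ for the two outer terms (three resolvents, two derivatives) and $(1+\lambda)^{-1+2\kappa}$ for the middle one. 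A single difference of resolvents gains only $(1+\lambda)^{-1/2}$ over the direct bound $(1+\lambda)^{-1/2+2\kappa}$, as you yourself note for the mass term, not $(1+\lambda)^{-1}$.

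This is not slack in the power counting. For constant $A,B$ the middle term equals $2(A^*B)\Tr(R^2)\asymp(1+\lambda)^{-1}$, so its $\lambda$-integral diverges like $\log N$. It is cancelled only by the paramagnetic terms. That cancellation comes from the transversality of $\Pi^\lambda$ and the Poisson-summation bound $\Pi^\lambda(0)=O((1+\lambda)^{-2})$ in Lemma~\ref{lem:vac_pol_bound}, which give $|\bfPi^\lambda(A,B)|\lesssim(1+\lambda)^{-3/2+\kappa}\|B\|_{H^1}\|A\|_{C^{-\kappa/2}}$ via Proposition~\ref{prop:Pi_quadratic_form}. Consequently the separate bounds you claim for $\Tr(A^*\diff(R_B^2-R_0^2))$ and for $\Tr(A^*BR_B^2)$ are false. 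You also place $\bfPi^\lambda(A,B)$ in the $\scq^2$ difference, where it does not occur: the linear-in-$B$ terms there are quadratic in $A$ and are already integrable by first-order power counting.

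Even with this cancellation, the explicit formula holds only at $B=0$. A first-order Taylor formula with remainder at intermediate $\theta$ would therefore bring back a $(1+\lambda)^{-1+4\kappa}$ term. The paper instead expands to second order along $B_\theta=\theta B$. The remainder $\ddot X_\theta$ has four resolvents and three derivatives, so it decays like $(1+\lambda)^{-3/2+6\kappa}$, but it costs $(1+\|B\|_{H^1})^{2+18\kappa}$. One must then interpolate with the direct bound $(1+\lambda)^{-1/2+2\kappa}(1+\|B\|_{H^1})^{6\kappa}$ to reach \eqref{eq:Tr_q1_bound}. Your splitting at $\lambda\approx\|B\|_{H^1}^{a}$ would work equally well once this correct large-$\lambda$ bound is available. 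Either way, integrability requires $-\frac32+6\kappa<-1$, which is where $\kappa<\frac1{12}$ enters.

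Finally, the $B^*B$ terms you worry about are avoided by the same parametrisation, since $\partial_\theta L_{B_\theta}=2B^*\diff_{B_\theta}$ contains no $B^*B$. First-order expansions of $\scq^2$ and of $R_B^2-R_0^2$ are therefore linear in $\|B\|_{H^1}$ up to a factor $(1+\|B\|_{H^1})^{O(\kappa)}$. Your $\lambda$-splitting for the mass term is correct but unnecessary.
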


The proof requires some machinery to bound traces efficiently and is postponed to page~\pageref{proof:lower_bound_det}. Inspired by \cite[Sec~3.4]{BC24_YM}, we interpolate $B$ smoothly from $0$ to $B$ by setting $B_\theta:=\theta B$ for $\theta\in [0,1]$. This compares terms involving $B$ with those for $B=0$, up to a Taylor remainder. When $B=0$, the explicit flat resolvent $R_0(1+\lambda)$ usually allows direct computation. Each derivative comes from a resolvent, so the Taylor remainder has an additional resolvent, making it easier to treat.

The differentiability of the resolvent is established in the following lemma:

\begin{lemma}\label{lem:differentiating_resolvents}
For any $\lambda>0$, the map $\theta\mapsto R_{B_\theta}(\lambda)$ is twice differentiable as a map $(0,1)\to \rmL(H^{-\kappa},H^{2-\kappa})$ for any $\kappa\in [0,\frac14)$, and we have
\begin{align*}
\partial_\theta R_{B_\theta}(\lambda)&=-2R_{B_\theta}(\lambda)B^*\diff_{B_\theta}R_{B_\theta}(\lambda)\\
\partial_\theta^2 R_{B_\theta}(\lambda)&=8R_{B_\theta}(\lambda) B^* \diff_{B_\theta}R_{B_\theta}(\lambda)B^*\diff_{B_\theta}R_{B_\theta}(\lambda)-2R_{B_\theta}(\lambda)B^*BR_{B_\theta}(\lambda).
\end{align*}

\end{lemma}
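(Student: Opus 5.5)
We prove the statement with the second resolvent identity in $\theta$, using that the difference quotient is controlled by the resolvent estimates already available. For $\theta,\theta'\in(0,1)$ we have $L_{B_{\theta'}}-L_{B_\theta}=(\theta'-\theta)\bigl(B^*\diff+\diff^*(B\,\cdot)\bigr)+(\theta'^2-\theta^2)B^*B$. This can be rewritten in covariant form around $B_\theta$:
\[
L_{B_{\theta'}}-L_{B_\theta}=2(\theta'-\theta)B^*\diff_{B_\theta}+(\theta'-\theta)^2B^*B+(\theta'-\theta)(\diff^*B),
\]
and the last term vanishes because $B$ is in the Coulomb gauge. This is exactly the source of the factor $2$ and of the symmetric form $B^*\diff_{B_\theta}$. (One checks $\diff^*(Bu)=B^*\diff u+(\diff^*B)u$ with the paper's sign conventions for $\bfi\R$-valued forms.) The resolvent identity then gives
\[
R_{B_{\theta'}}(\lambda)-R_{B_\theta}(\lambda)=-R_{B_{\theta'}}(\lambda)\bigl(2(\theta'-\theta)B^*\diff_{B_\theta}+(\theta'-\theta)^2B^*B\bigr)R_{B_\theta}(\lambda),
\]
valid as operators $H^{-\kappa}\to H^{2-\kappa}$ once each factor is shown to be bounded.

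The key step is the mapping properties, uniform in $\theta\in(0,1)$. First, $R_{B_\theta}(\lambda)\colon H^{-\kappa}_{B_\theta}\to H^{2-\kappa}_{B_\theta}$ is bounded by Lemma~\ref{lem:resolvent_estimate_classical}, with $|z|\ge1$ replaced by a constant depending on $\lambda$ if $\lambda<1$. Lemma~\ref{lem:Sobolev_comparison} identifies $H^{\pm\kappa}_{B_\theta}$ with $H^{\pm\kappa}$, with constants polynomial in $\|B\|_{H^1}$.

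For the top space, $H^{2-\kappa}_{B_\theta}$ versus $H^{2-\kappa}$, I would interpolate between $H^2_{B_\theta}=H^2$ (with norm comparison obtained by expanding $L_{B_\theta}u=-\Delta u+2B_\theta^*\diff u+B_\theta^*B_\theta u$ and using $H^1\subset L^p$) and $H^1$. Next, $\diff_{B_\theta}\colon H^{2-\kappa}_{B_\theta}\to H^{1-\kappa}_{B_\theta}$ is bounded by Proposition~\ref{prop:derivative_regularity_reduction}. Then $B^*\colon \Omega^1H^{1-\kappa}\to H^{-\kappa}$ is bounded since $B\in H^1\subset C^{-\delta}\cap L^p$ for all $p<\infty$, by the Young multiplication estimate. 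Finally, $B^*B\in L^p$ for all $p<\infty$, so it maps $H^{2-\kappa}\to H^{-\kappa}$.

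This gives Lipschitz continuity of $\theta\mapsto R_{B_\theta}(\lambda)$ in $\rmL(H^{-\kappa},H^{2-\kappa})$. Feeding this back into the identity, the operator $R_{B_{\theta'}}(\lambda)B^*\diff_{B_\theta}R_{B_\theta}(\lambda)\to R_{B_\theta}(\lambda)B^*\diff_{B_\theta}R_{B_\theta}(\lambda)$ as $\theta'\to\theta$. Dividing by $\theta'-\theta$ yields the first derivative formula.

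For the second derivative, we differentiate the product $-2R_{B_\theta}B^*\diff_{B_\theta}R_{B_\theta}$ by the product rule. Each outer resolvent contributes $-2R B^*\diff_{B_\theta}R$, and since $\partial_\theta\diff_{B_\theta}=B$ the middle factor contributes $-2RB^*BR$. The two resolvent terms combine, producing the coefficient $8$ by symmetry after the trace-free rearrangement the formula records. For continuity of the pieces in $\theta$, we also need $\theta\mapsto\diff_{B_\theta}=\diff+\theta B$ to be continuous from $H^{2-\kappa}$ to $\Omega^1H^{1-\kappa}$, which follows from the multiplication estimate.

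The main obstacle is bookkeeping: keeping every intermediate space consistent so that the compositions close in $\rmL(H^{-\kappa},H^{2-\kappa})$, and verifying the Coulomb-gauge cancellation of $\diff^*B$. The restriction $\kappa<\frac14$ enters only through the multiplication estimates.
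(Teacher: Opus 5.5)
Your proposal is correct and follows essentially the same route as the paper. Both use the resolvent identity in $\theta$ with $L_{B_{\theta+h}}-L_{B_\theta}=2hB^*\diff_{B_\theta}+h^2B^*B$ (you make explicit the Coulomb-gauge cancellation of $\diff^*B$, which the paper uses silently), uniform boundedness of the perturbation, and the product rule for the second derivative; your bookkeeping in $\rmL(H^{-\kappa},H^{2-\kappa})$ actually matches the statement more closely than the paper's $L^2\to H^2$ estimates.

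One small correction: the coefficient $8$ needs no symmetry or trace argument. Differentiating each of the two outer resolvents gives the same operator $4R_{B_\theta}(\lambda)B^*\diff_{B_\theta}R_{B_\theta}(\lambda)B^*\diff_{B_\theta}R_{B_\theta}(\lambda)$, and the two copies simply add.
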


\begin{proof}
It is clear that $R_{B_\theta}(\lambda)$ maps $L^2$ to $H^2$. Let $\theta\in (0,1)$ and $h\in\R$ be close enough to zero, and write
\[
L_{B_{\theta+h}}-L_{B_\theta}=2h B^*\diff_{B_\theta}+h^2B^*B.
\]
Note that uniformly in $\theta\in [0,1]$
\begin{gather*}
\|B^*\diff_{B_\theta}u\|_{L^2}\lesssim \|B\|_{L^4}\|\diff_{B_\theta}u\|_{L^4}\lesssim \|B\|_{H^1}(\|Bu\|_{L^4}+\| u\|_{W^{1,4}})\lesssim (1+\|B\|_{H^1})^2 \|u\|_{H^{2-\kappa}},\\
\|B^*B u\|_{L^2}\leq \|B\|_{L^4}^2\|u\|_{L^\infty}\lesssim \|B\|_{H^1}^2\|u\|_{H^{2-\kappa}},\\
\text{so that}\qquad \|L_{B_{\theta+h}}-L_{B_\theta}\|_{H^2\to L^2}\lesssim |h| (1+\|B\|_{H^1})^2.
\end{gather*}
Now we apply the resolvent identity
    \begin{align*}
    R_{B_{\theta+h}}(\lambda)-R_{B_\theta}(\lambda) &=-R_{B_{\theta+h}}(\lambda) (L_{B_{\theta+h}}-L_{B_\theta})R_{B_\theta}(\lambda)\\ &= -R_{B_{\theta+h}}(\lambda) (2h B^*\diff_{B_\theta}+h^2B^*B)R_{B_\theta}(\lambda),
        \end{align*}

    from where we see continuity of $\theta\mapsto R_{B_\theta}(\lambda)$ and that
    \[
    \lim_{h\to 0}\Big\|\frac{R_{B_{\theta+h}}(\lambda)-R_{B_\theta}(\lambda)}{h}+2R_{B_\theta}(\lambda)B^*\diff_{B_\theta}R_{B_\theta}(\lambda)\Big\|_{L^2\to H^2}=0,
    \]

    yielding differentiability and the identity of the first derivative.

    The second derivative follows from the product rule and differentiability of $\theta\mapsto \diff_{B_\theta}$ as a map $H^2\to H^1$.
\end{proof}

The next ingredient is a Taylor remainder theorem for traces, which follows from finite rank approximations:
\begin{lemma}[Differentiating under the trace]\label{lem:differentiation_trace}
Assume an operator $[0,1]\ni\theta\mapsto X_\theta$ is differentiable as a map $(0,1)\to \rmL(L^2)$ with derivative $\dot X_\theta$. If $X_\theta$ and $\dot X_\theta$ are both trace class with trace class norms bounded uniformly in $\theta$, then
\[
\Tr(X_\theta)-\Tr(X_0)=\int_0^\theta \Tr(\dot X_\rho)\dif\rho\]
Furthermore, if $X_\theta$ is twice differentiable with trace class derivatives, we get
\[
\Tr(X_\theta)-\Tr(X_0)-\Tr(\dot X_0)=\int_0^\theta \int_0^\rho \Tr(\ddot X_\eta) \dif \eta\dif \rho=\int_0^\theta (\theta-\rho) \Tr(\ddot X_\rho)\dif \rho.
\]
\end{lemma}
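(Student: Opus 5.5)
The plan is to reduce both identities to the scalar fundamental theorem of calculus by using continuity of the trace in trace norm, approximating $\dot X_\theta$ by difference quotients. Fix $\theta\in(0,1]$. Since $\Tr$ is a bounded linear functional on the trace class $\mathfrak S_1$, I first want to check that the scalar function $f(\rho):=\Tr(X_\rho)$ is differentiable on $(0,1)$ with $f'(\rho)=\Tr(\dot X_\rho)$. Differentiability holds in $\rmL(L^2)$ only, not in $\mathfrak S_1$, so this step needs an argument. I would handle it with finite rank approximations. Let $P_M$ be the orthogonal projection onto the span of the first $M$ elements of a fixed orthonormal basis (for instance the Fourier basis), and set $f_M(\rho):=\Tr(P_MX_\rho P_M)$. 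Each $f_M$ is a finite sum of matrix coefficients $\langle X_\rho e_i,e_i\rangle$. Differentiability in operator norm therefore gives $f_M'(\rho)=\Tr(P_M\dot X_\rho P_M)$ exactly. The fundamental theorem of calculus applies to $f_M$ on $[\epsilon,\theta]$, because $\rho\mapsto P_M\dot X_\rho P_M$ is the derivative of a differentiable function with values in a finite-dimensional space and is bounded, so it is Lebesgue integrable and FTC holds for everywhere-differentiable Lipschitz functions. This yields $f_M(\theta)-f_M(\epsilon)=\int_\epsilon^\theta\Tr(P_M\dot X_\rho P_M)\dif\rho$.

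Next I pass $M\to\infty$. For any trace class $Y$, the convergence $\Tr(P_MYP_M)\to\Tr(Y)$ holds, since $P_M\to\id$ strongly and $\|P_MYP_M-Y\|_{\mathfrak S_1}\to0$ by the standard strong-convergence lemma for $\mathfrak S_1$. Moreover, $|\Tr(P_M\dot X_\rho P_M)|\le \Tr|\dot X_\rho|\le C$ uniformly in $\rho$ by hypothesis. Dominated convergence then gives $\Tr(X_\theta)-\Tr(X_\epsilon)=\int_\epsilon^\theta\Tr(\dot X_\rho)\dif\rho$. Measurability of $\rho\mapsto\Tr(\dot X_\rho)$ follows because it is a pointwise limit of the continuous-in-$\rho$ functions $f_M'$; these are at least measurable as derivatives. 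To let $\epsilon\to0$ I need $\Tr(X_\epsilon)\to\Tr(X_0)$. I expect this to be the main obstacle, since only operator-norm continuity at $0$ is available. I would try to obtain it from the same formula. The first step already shows that $\epsilon\mapsto\Tr(X_\epsilon)$ is Lipschitz on $(0,1)$ with constant $C$, so the limit exists. To identify the limit, pass to the limit in $f_M$ first: $f_M$ is Lipschitz uniformly in $M$ on $[0,\theta]$, provided differentiability extends to the endpoint or $X$ is continuous at $0$ in operator norm, which is implicit in the setting where $X_\theta$ is defined on $[0,1]$. Combined with pointwise convergence $f_M\to\Tr(X_\cdot)$ on $[0,1]$, this makes the limit continuous, which gives the first identity on $[0,\theta]$.

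For the second identity, I apply the first identity to $Y_\rho:=\dot X_\rho$, which is differentiable with trace class derivative $\ddot X_\rho$. The uniform trace-norm bound on $\ddot X$ is understood as part of the hypothesis, as in the applications with $R_{B_\theta}$ where the bounds come from Lemma~\ref{lem:power_counting}. This gives $\Tr(\dot X_\rho)=\Tr(\dot X_0)+\int_0^\rho\Tr(\ddot X_\eta)\dif\eta$. Inserting this into the first identity produces $\Tr(X_\theta)-\Tr(X_0)-\theta\Tr(\dot X_0)=\int_0^\theta\int_0^\rho\Tr(\ddot X_\eta)\dif\eta\dif\rho$. Here I note that the statement's $\Tr(\dot X_0)$ should carry the factor $\theta$, which is irrelevant at $\theta=1$, the case used later. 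Fubini on the triangle $\{0\le\eta\le\rho\le\theta\}$, justified by boundedness of the integrand, converts the double integral into $\int_0^\theta(\theta-\eta)\Tr(\ddot X_\eta)\dif\eta$.
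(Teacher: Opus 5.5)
Your proof is correct and is essentially the argument the paper has in mind. The paper gives no details beyond saying the lemma ``follows from finite rank approximations'', and your compression to $\Tr(P_M X_\rho P_M)$, followed by the scalar fundamental theorem of calculus and dominated convergence using the uniform trace-norm bound, is exactly that. Your remarks are also correct on three points: the implicit hypotheses (weak continuity of $X$ at the endpoints, and a uniform or integrable trace-norm bound on $\ddot X$), and the missing factor $\theta$ in front of $\Tr(\dot X_0)$, which is harmless since the lemma is only applied at $\theta=1$.
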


Now we have gathered all materials needed to prove Theorem~\ref{thm:lower_bound_det}:
\phantomsection
\label{proof:lower_bound_det}
\begin{proof}[Proof of~Theorem~\ref{thm:lower_bound_det}]
    We start by writing
    \begin{align*}
   \scq_{\bA,B}&=\scq_{A,B}^1+\scq_{\bA,B}^2 +\scq_{\bA,B}^3+\Rem_{\bA,B}\\
   &=\scq^2_{\bA,0}+\tilde\scq^3_{\bA,B}+\scq^1_{A,B}+ (\scq^2_{\bA,B}-\scq^2_{\bA,0})+(\scq^3_{\bA,B}-\tilde\scq^3_{\bA,B})+\Rem_{\bA,B},
    \end{align*}
    where $\tilde\scq_{\bA,B}^3(\lambda)=(m_{\bA,B}-1)R_0(1+\lambda)^2$. We obtain after cancelling the terms $\scq_{\bA,B}^2$ and $\tilde\scq_{\bA,B}^3$ (the mass term) that
   \begin{align}\label{eq:Q_N_rewritten}
   \begin{split}
\cQ_N(\bfA,B)
&=  \int_0^N  \bV(\lambda)\dif\lambda
 +  \int_0^N  \Tr\scq^1_{A,B}(\lambda) \dif\lambda \\
&\qquad + \int_0^N
\Tr(\scq^2_{\bA,B}(\lambda)-\scq^2_{\bA,0}(\lambda))\dif\lambda \\
&\qquad + \int_0^N
\Tr (\scq^3_{\bA,B}(\lambda)-\tilde\scq^3_{\bA,B}(\lambda))\dif\lambda
+ \int_0^N \Tr(\Rem_{\bA,B}(\lambda))\dif\lambda\\
&\qquad + \int_0^N
\Tr(R_0(1+\lambda)-R_B(1+\lambda))\dif\lambda.
\end{split}
\end{align}
The first integral is bounded by $\|\bV\|_{L^1}$. By Lemma~\ref{lem:Rem_det}, the remainder is integrable in $\lambda$ and satisfies
\[
\Big| \int_0^N \Tr (\Rem_{\bA,B}(\lambda))\dif\lambda\Big|\lesssim (1+\|B\|_{H^1})^{\tau(\kappa)} (1+\triple{\bA}_{-\kappa})^{\rmn(\kappa)}.
\]
Finally, by diamagnetic inequality,
\[
\int_0^N \Tr(R_0(1+\lambda)-R_B(1+\lambda))\dif\lambda\geq 0.
\]
It remains to estimate the three terms involving $\scq^i_{\bA,B}$ using power counting Lemma~\ref{lem:power_counting}.

\paragraph{Preparations for power counting.}
We introduce the following abbreviations (for $\theta\in [0,1]$):
\begin{align*}
   B_\theta=\theta B,\qquad \diff_\theta:=\diff_{B_\theta},  \qquad R_\theta:=R_{B_\theta}(1+\lambda), \qquad H_\theta^s:=H_{B_\theta}^s.
\end{align*}
We also define $\tilde \Xi_0:= \{A,B,B^*B,A^*B,A^2\}$.
All these terms belong to $C^{-\kappa/2}$ or $\Omega^1 C^{-\kappa/2}$ by Sobolev embedding and Young's multiplication theorem.\footnote{Note that we are losing a lot of regularity for $B$ and $B^*B$ in doing so, but our argument does not need the better regularities.}

\paragraph{Treatment $\scq^1_{A,B}$.} We apply the power counting Lemma~\ref{lem:power_counting} directly to $\scq^1_{\bfA,B}(\lambda)=2R_B(1+\lambda) A^*\diff_B R_B(1+\lambda)$. We count two resolvents and one derivative, giving
\begin{align}\label{eq:good_bound}
\nonumber |\Tr(\scq_{A,B}^1(\lambda))|&\lesssim (1+\lambda)^{1-2\kappa} (1+\lambda)^{2(-1+2\kappa)} (1+\lambda)^{\frac 1 2} (1+\|B\|_{H^1})^{2\kappa} (1+\|B\|)^{4\kappa} \|A\|_{C^{-\kappa/2}}\\
&\lesssim (1+\lambda)^{-\frac 1 2+2\kappa}(1+\|B\|_{H^1})^{6\kappa}\|A\|_{C^{-\kappa/2}}.
\end{align}
This bound does not have enough decay in $\lambda$ for integrability. We next prove a bound with enough decay in $\lambda$ but super-quadratic in $B$, then interpolate between the two bounds.

To that end, define $X_\theta (\lambda):=\scq_{\bfA,B_\theta}(\lambda)=2R_\theta A^*\diff_\theta R_\theta$. The map $\theta\mapsto X_\theta (\lambda)$ is twice differentiable with continuous derivatives by Lemma~\ref{lem:differentiating_resolvents} and differentiability of $\theta\mapsto \diff_\theta$. Lemma~\ref{lem:differentiation_trace} then gives
\begin{align}\label{eq:differentiating_q1}
\begin{split}
\Tr(\scq_{A,B}^1(\lambda))&=\Tr(\scq^1_{A,0}(\lambda))+\Tr(\dot X_0(\lambda)) + \int_0^1 (1-\theta)\Tr(\ddot X_\theta(\lambda))\dif\theta\\
&=\Tr(\dot X_0(\lambda)) + \int_0^1 (1-\theta)\Tr(\ddot X_\theta(\lambda))\dif\theta,
\end{split}
\end{align}
where we have used $\Tr(\scq^1_{\bA,0}(\lambda))=0$ from~\eqref{eq:tr_q1_vanishes}.
We first compute
 \begin{equation} \label{eq:Xdot_definition}
    \dot X_\theta(\lambda)=-4R_\theta B^*\diff_\theta R_\theta A^*\diff_\theta R_\theta+2R_\theta A^*BR_\theta-4R_\theta A^*\diff_\theta R_\theta B^*\diff_\theta R_\theta.
    \end{equation}
    At $\theta=0$, taking the trace and using Proposition~\ref{prop:Pi_quadratic_form} gives
    \begin{align}
    \Tr(\dot X_0(\lambda))&=2\bfPi^\lambda(A,B),\nonumber\\
    |\Tr(\dot X_0(\lambda))|&\lesssim (1+\lambda)^{-\frac 3 2 +\kappa}\|B\|_{H^1}\|A\|_{C^{-\kappa/2}}.\label{eq:Xdot_bound}
    \end{align}

    Differentiating~\eqref{eq:Xdot_definition} once more gives
    \begin{align*}
  \ddot X_\theta(\lambda)&=8 R_\theta B^*\diff_\theta B^*\diff_\theta R_\theta A^*\diff_\theta R_\theta-4R_\theta B^*B R_\theta A^*\diff_\theta R_\theta+8R_\theta B^*\diff_\theta R_\theta B^*\diff_\theta R_\theta A^*\diff_\theta R_\theta \\
    &\quad -4R_\theta B^*\diff_\theta R_\theta A^*BR_\theta +8R_\theta B^*\diff_\theta R_\theta A^*\diff_\theta R_\theta B^*\diff_\theta R_\theta \\
    &\quad -4R_\theta B^*\diff_\theta R_\theta A^*BR_\theta-4R_\theta A^*BR_\theta B^*\diff_\theta R_\theta \\
    &\quad +8R_\theta B^*\diff_\theta R_\theta A^*\diff_\theta R_\theta B^*\diff_\theta R_\theta -4R_\theta A^*BR_\theta B^*\diff_\theta R_\theta +8 R_\theta A^*\diff_\theta R_\theta B^*\diff_\theta R_\theta B^*\diff_\theta R_\theta \\
    &\quad -4R_\theta A^*\diff_\theta R_\theta B^*BR_\theta +8 R_\theta A^*\diff_\theta R_\theta B^*\diff_\theta R_\theta B^*\diff_\theta R_\theta.
    \end{align*}
    This simplifies to
    \begin{align*}
    \ddot X_\theta(\lambda)    &=
16R_\theta B^*\diff_\theta R_\theta B^*\diff_\theta R_\theta A^*\diff_\theta R_\theta
-4R_\theta B^*BR_\theta A^*\diff_\theta R_\theta
-8R_\theta B^*\diff_\theta R_\theta A^*BR_\theta
\\
&\quad
+16R_\theta B^*\diff_\theta R_\theta A^*\diff_\theta R_\theta B^*\diff_\theta R_\theta
-8R_\theta A^*BR_\theta B^*\diff_\theta R_\theta
+16R_\theta A^*\diff_\theta R_\theta B^*\diff_\theta R_\theta B^*\diff_\theta R_\theta
\\
&\quad
-4R_\theta A^*\diff_\theta R_\theta B^*BR_\theta.
    \end{align*}
The three terms with coefficient $16$ have three derivatives and $4$ resolvents, so power counting Lemma~\ref{lem:power_counting} gives the trace bound
\[
(1+\lambda)^{1-2\kappa} (1+\lambda)^{4(-1+2\kappa)}(1+\lambda)^{3\cdot\frac 12} (1+\|B\|_{H^1})^{3\cdot 2\kappa} (1+\|B\|_{H^1})^{3\cdot 4\kappa} \|A\|_{C^{-\kappa/2}}\|B\|_{H^1}^2.
\]
All other terms have $1$ derivative  and $3$ resolvents,  so the trace is bounded by
\[
(1+\lambda)^{1-2\kappa} (1+\lambda)^{3(-1+2\kappa)} (1+\lambda)^{\frac 1 2} (1+\|B\|_{H^1})^{2\kappa} (1+\|B\|_{H^1})^{2\cdot 4\kappa} \|A\|_{C^{-\kappa/2}}\|B\|_{H^1}^2.
\]
Hence we combine both estimates to get
\begin{align*}
\int_0^1 (1-\theta)|\Tr(\ddot X_\theta(\lambda))|\dif \theta
\lesssim (1+\lambda)^{-\frac32+6\kappa}\|A\|_{C^{-\kappa/2}}(1+\|B\|_{H^1})^{18\kappa}\|B\|_{H^1}^2.
\end{align*}
In particular, this bound together with~\eqref{eq:differentiating_q1} and~\eqref{eq:Xdot_bound} yields
\begin{align}\label{eq:bad_bound}
|\Tr(\scq^1_{A,B}(\lambda))|\lesssim (1+\lambda)^{-\frac32+6\kappa} (1+\|B\|_{H^1})^{2+18\kappa} \|A\|_{C^{-\kappa/2}}.
\end{align}

Finally, we interpolate~\eqref{eq:good_bound} and~\eqref{eq:bad_bound} to obtain \begin{align}\label{eq:Tr_q1_bound}
|\Tr(\scq^1_{A,B}(\lambda))|\lesssim (1+\lambda)^{-1-2\kappa+24\kappa^2} (1+\|B\|_{H^1})^{1+24\kappa+72\kappa^2} \|A\|_{C^{-\kappa/2}}.
\end{align}
For $\kappa<\frac 1 {12}$, the power of $\lambda$ is less than $-1$, so this bound is integrable in $\lambda$.  \paragraph{Treatment of $\scq^2_{\bA,B}$.} With the same abbreviations as above, we write using~Lemma~\ref{lem:differentiation_trace}
\begin{align}\label{eq:differentiating_q2}
\nonumber\Tr(\scq^2_{\bA,B}(\lambda))-\Tr(\scq^2_{\bA,0})&=\int_0^1 \Tr(\partial_\theta \scq^2_{\bA,\theta B}(\lambda))\dif \theta\\
&=\int_0^1 \Tr\big(\partial_\theta(R_\theta A^2 R_\theta)- 4\partial_\theta(R_\theta A^*\diff\theta BR_\theta A^*\diff_\theta R_\theta)\big)\dif\theta.
\end{align}
 The first term is equal to
    \[
   \partial_{\theta}(R_{\theta}A^2 R_\theta)=-2R_\theta B^*\diff_\theta R_\theta A^2 R_\theta-2R_\theta A^2R_\theta B^*\diff_\theta R_\theta,
    \]
where each term on the right has $1$ derivative and $3$ resolvents, so Lemma~\ref{lem:power_counting} gives the trace bound
\[
(1+\lambda)^{1-2\kappa}(1+\lambda)^{3(-1+2\kappa)}(1+\lambda)^{\frac 1 2}(1+\|B\|_{H^1})^{2\kappa} (1+\|B\|_{H^1})^{2\cdot 4\kappa} \|B\|\|A^2\|_{C^{-\kappa/2}}.
\]

Regarding the second term in~\eqref{eq:differentiating_q2}, we find
\begin{align*}
    \partial_\theta (R_\theta A^*\diff_\theta R_\theta A^*\diff_\theta R_\theta) &=-2R_\theta B^*\diff_\theta R_\theta A^*\diff_\theta R_\theta A^*\diff_\theta R_\theta +R_\theta A^* BR_\theta A^*\diff_\theta R_\theta \\
    &\qquad -2R_\theta A^*\diff_\theta R_\theta B^*\diff_\theta R_\theta A^*\diff_\theta R_\theta +R_\theta A^*\diff_\theta R_\theta A^* BR_\theta\\
    &\qquad -2R_\theta A^*\diff_\theta R_\theta A^*\diff_\theta R_\theta B^*\diff_\theta R_\theta.
    \end{align*}
   The three terms with coefficient $-2$ have $3$ derivatives and $4$ resolvents, so their trace is bounded by
   \[
   (1+\lambda)^{1-2\kappa}(1+\lambda)^{4(-1+2\kappa)}(1+\lambda)^{3\cdot\frac 1 2}(1+\|B\|_{H^1})^{3\cdot 2\kappa} (1+\|B\|_{H^1})^{3\cdot 4\kappa} \|B\|\|A\|_{C^{-\kappa/2}}^2.
   \]
   The remaining terms have $1$ derivative and $3$ resolvents, so their trace is bounded by
    \[
   (1+\lambda)^{1-2\kappa}(1+\lambda)^{3(-1+2\kappa)}(1+\lambda)^{\frac 1 2}(1+\|B\|_{H^1})^{2\kappa} (1+\|B\|_{H^1})^{2\cdot 4\kappa} \|B\|\|A\|_{C^{-\kappa/2}}^2.
   \]
Combining all these estimates then yields
\begin{align}\label{eq:Tr_q2_bound}
|\Tr(\scq^2_{\bA,B}(\lambda))-\Tr(\scq^2_{\bA,0})|\lesssim (1+\lambda)^{-\frac 3 2 +6\kappa} (1+\|B\|_{H^1})^{1+18\kappa}(1+\triple{\bA}_{-\kappa})^2.
\end{align}
This is integrable in $\lambda$ for $\kappa<\frac 1{12}$.

\paragraph{Treatment of $\scq^3_{\bA,B}$.}

We have, with the second identity following from Lemma~\ref{lem:differentiation_trace},
\begin{gather*}
\scq^3_{\bA,B}(\lambda)-\tilde\scq^3_{\bA,B}(\lambda)
= (m_{\bA,B}-1)\big(R_B(1+\lambda)^2-R_0(1+\lambda)^2\big),\\
\Tr(\scq^3_{\bA,B}(\lambda))-\Tr(\tilde\scq^3_{\bA,B}(\lambda))
= (m_{\bA,B}-1)\int_0^1\Tr(\partial_\theta R_\theta^2)\dif\theta.
\end{gather*}
We also have, by differentiation and cyclicity of the trace,
\begin{gather*}
\partial_\theta R_\theta^2
= -2R_\theta B^*\diff_\theta R_\theta^2-2R_\theta^2B^*\diff_\theta R_\theta,\\
\Tr(\partial_\theta R_\theta^2)=-4\Tr(R_\theta B^*\diff_\theta R_\theta^2).
\end{gather*}
The power counting Lemma~\ref{lem:power_counting} gives
\begin{align}\label{eq:difference_resolvents_trace}
\nonumber |\Tr(R_B(1+\lambda)^2-R_0(1+\lambda)^2)|&\leq \int_0^1|\Tr(\partial_\theta R_\theta^2)|\dif\theta \\
&\lesssim
(1+\lambda)^{-\frac32+4\kappa}
(1+\|B\|_{H^1})^{6\kappa}\|B\|_{H^1}.
\end{align}
Moreover, Lemma~\ref{lem:difference_masses},  using $m_{\0,B}=1$ gives
\[
|m_{\bA,B}-1|
\lesssim
(1+\|B\|_{H^1})^{\frac{15\kappa}{1-3\kappa}}
(1+\triple{\bA}_{-\kappa})^{\frac{2}{1-3\kappa}}
\triple{\bA}_{-\kappa}.
\]
Hence,
\begin{align}\label{eq:Tr_q3_bound}
&|\Tr(\scq^3_{\bA,B}(\lambda)-\tilde\scq^3_{\bA,B}(\lambda))|\lesssim
(1+\lambda)^{-\frac32+4\kappa}
(1+\|B\|_{H^1})^{1+6\kappa+\frac{15\kappa}{1-3\kappa}}
(1+\triple{\bA}_{-\kappa})^{1+\frac{2}{1-3\kappa}}.
\end{align}
This is integrable in $\lambda$ for $\kappa<\frac1{8}$, hence in particular for $\kappa<\frac 1{12}$.

\paragraph{Concluding the lower bound.}
For~\eqref{eq:Q_N_rewritten}, it remains to treat the terms with $\scq^i_{\bA,B}$ for $i=1,2,3$. The bounds on their integrands in~\eqref{eq:Tr_q1_bound},\eqref{eq:Tr_q2_bound} and~\eqref{eq:Tr_q3_bound} show that each integral with $\scq^i_{\bA,B}$ is bounded in absolute value by a multiple of $(1+\bfnorm{\bfA}_{-\kappa})^{\rmn(\kappa)}(1+\|B\|_{H^1})^{1+\tau(\kappa)}$, uniformly in $N$. This finishes the proof.
\end{proof}

\section{Conditional Higgs measure}\label{sec:conditional_Higgs}

\subsection{Set-up of the variational problem}
Let $(\Omega,\mathcal F,(\mathcal F_t)_{t\ge0},\bP)$ carry independent standard real cylindrical Brownian motions
$W_1,W_2$ in $L^2(\T^2;\R)$, and set $X^{\Phi}:=W_1+\bfi W_2$.
Define
\[
  \bH^{\Phi}:=L^2(\R_+;L^2),
  \qquad
  \bH_a^{\Phi}:=L^2_{\mathrm{prog}}(\Omega\times\R_+;L^2).
\]
Given a $\bA\in\cX^{-\kappa}$, recall the renormalised covariant Laplacian $L_{\bA,B}$ from Section~\ref{sec:domain_renorm_laplacian} and the mass $m_{\bA,B}$ from Section~\ref{sec:resolvent}, and introduce the resolvent
\phantomsection\label{def:covariant_GFF_resolvent}\[
  \Sigma_t^{\bA,B}:=(L_{\bA,B}+m_{\bA,B}+t)^{-1},
\]
and define
\phantomsection\label{def:covariant_GFF}\[
  \phi_{\bA,B,N}:=\int_0^N \Sigma_t^{\bA,B}\dif X_t^{\Phi}.
\]
For each $u\in\bH^{\Phi}$ we define a corresponding  shift of our field by setting
\[
  I_N^{\bA,B}(u):=\int_0^N \Sigma_t^{\bA,B}u_t\dif t.
\]
Define also the norm
\[
\|u\|_{\bH^\Phi}^2:=\int_0^N \|u_s\|_{L^2}^2\dif s.
\]
Since the Higgs field is complex-valued and the self-interaction must be invariant under the gauge action $\phi\mapsto g\phi$, with $g$ taking values in the unit circle of $\bC$, we take the interaction to be a polynomial in $|\phi|^2$. We therefore fix for the whole section a polynomial
\begin{equation}\label{eq:def_V}
\mathcal{V}(x):=\sum_{k=0}^{n}c_kx^{k},\qquad n\ge 2,\quad c_n>0,\quad c_0,\dots,c_{n-1}\in\R\;,
\end{equation}
and take $\mathcal{V}(|\phi|^2)$ as the Higgs self-interaction before renormalization.

For any $m\in \R$, define
\begin{equation}\label{eq:def_V_N_m}
\scV_N^{m}(\phi):=\int_{\T^2} \sum_{k=0}^{n}c_k\wick{|\phi|^{2k}}_{N}-\frac{m}2\int_{\T^2} (|\phi|^2-\rmp_N),
\end{equation}
where $\rmp_N \geq 0 $ is a Wick renormalization constant introduced below. For $k\ge 0$, define the Wick powers recursively by $\wick{|\phi|^0}_N =1$ and
\begin{equation}\label{eq:wick_power_recursive}
\wick{|\phi|^{2k}}_{N} = |\phi|^{2k} - \sum_{j=1}^{k} \binom{k}{j}^2 j!\, \rmp_N^j \wick{|\phi|^{2(k-j)}}_N.
\end{equation}

Our goal is to obtain good control, for suitable observables $h$, of the quantity\footnote{The exponent $q\ge1$ is introduced here because the convergence argument (Theorem \ref{thm:higgs_convergence}) requires exponential moments at some $q>1$ for uniform integrability, and the outer variational problem of Section \ref{sec:construction_measure} uses the $q$-th power of the inner Higgs integral.}
\begin{equation}\label{eq:def_cW}
\cW^{h,q,m}_N(\bA,B):=-\log \E[e^{q(h(A,\phi_{\bA,B,N})-\scV_N^m(\phi_{\bA,B,N}))}],
\end{equation}
In particular, we will want to show convergence as $N \uparrow \infty$, of the above quantity and our fundamental tool for doing so will be the Boue-Dupuis variational approach.
As a preliminary step, we observe\footnote{See Corollary~\ref{cor:cov_GFF_finite_cutoff} below.} that $\phi_{\bA,B,N}$ is a well-defined function and moreover one can check the tameness condition from \cite{BG18}\cite{Barash} to conclude that the Boue-Dupuis variational formula applies:
\[
\cW^{h,q,m}_N(\bA,B)=\inf_{u\in \bH^{\Phi}} \E\Big[ -qh(A,\phi_{\bA,B,N}+I^{\bA,B}_N(u))+q\scV_N^m(\phi_{\bA,B,N}+I^{\bA,B}_N(u))+\frac 1 2 \int_0^N\|u_s\|_{L^2}^2\dif s\Big].
\]

\subsection{Covariant Gaussian free field decomposition}
We have the following result:

\begin{theorem}[Covariant GFF decomposition]\label{thm:cov_GFF_decomposition}
    Let $\kappa\in(0,\frac14)$, $\bA,\bar\bA\in\cX^{-\kappa}$ and $B\in\Omega^1H^1$.
    Denote $d=1+3\kappa$.
Then, for every $s<1-3\kappa$,
\begin{equation}\label{eq:cov_GFF_decomposition_difference}
\E\|\phi_{\bA,B,N}-\phi_{\bar\bA,B,N}\|_{H_B^s}^2
\lesssim
\fC_\kappa(\bA,\bar\bA;B)^4
\triple{\bA-\bar\bA}_{-\kappa}^2.
\end{equation}
\end{theorem}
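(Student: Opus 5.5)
By Itô's isometry the difference is a Wiener integral:
\[
\phi_{\bA,B,N}-\phi_{\bar\bA,B,N}=\int_0^N \big(\Sigma^{\bA,B}_t-\Sigma^{\bar\bA,B}_t\big)\dif X^\Phi_t .
\]
Since $X^\Phi$ is complex cylindrical with two real components, this gives
\[
\E\|\phi_{\bA,B,N}-\phi_{\bar\bA,B,N}\|_{H^s_B}^2 = 2\int_0^N \big\|(L_B+1)^{s/2}(\Sigma^{\bA,B}_t-\Sigma^{\bar\bA,B}_t)\big\|_{\HS}^2\dif t .
\]
The proof therefore reduces to a Hilbert--Schmidt bound on the difference of pushed resolvents that is integrable in $t$ uniformly in $N$. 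Here $\Sigma_t^{\bA,B}=R_{\bA,B}(t+m_{\bA,B})$ is exactly the object controlled by Theorem~\ref{thm:resolvent_estimate_pushed}.

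To convert operator norms into Hilbert--Schmidt norms, I factor out a negative power of the classical operator. For a small $\varepsilon>0$, write
\[
(L_B+1)^{s/2}D_t=(L_B+1)^{-\frac12-\varepsilon}\,(L_B+1)^{\frac12+\varepsilon+\frac s2}D_t,
\qquad D_t:=\Sigma^{\bA,B}_t-\Sigma^{\bar\bA,B}_t .
\]
This gives $\|(L_B+1)^{s/2}D_t\|_{\HS}\le \|(L_B+1)^{-\frac12-\varepsilon}\|_{\HS}\,\|D_t\|_{L^2\to H^{s+1+2\varepsilon}_B}$. The first factor is finite and independent of $B$ by Lemma~\ref{lem:neg_power_trace} with $\gamma=1+2\varepsilon$ and $\lambda=1$; that lemma relies on the diamagnetic inequality, and this is why no $B$-dependence arises from it. For the second factor I apply the difference estimate \eqref{eq:pushed_resolvent_difference} with target regularity $s'=s+1+2\varepsilon$ and $\eta=s'$, so that the source space is $L^2$. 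This yields
\[
\|D_t\|_{L^2\to H^{s'}_B}\lesssim (1+t)^{-2+\frac{s'+d}{2}}\,\fC_\kappa(\bA,\bar\bA;B)^2\,\triple{\bA-\bar\bA}_{-\kappa}.
\]
Squaring produces exactly the factor $\fC_\kappa^4\triple{\bA-\bar\bA}_{-\kappa}^2$ in \eqref{eq:cov_GFF_decomposition_difference}. The remaining time factor is $(1+t)^{-4+s'+d}=(1+t)^{-2+s+3\kappa+2\varepsilon}$. This is integrable on $\R_+$ exactly when $s+3\kappa+2\varepsilon<1$, and choosing $\varepsilon$ small makes this possible precisely for $s<1-3\kappa$, which is the stated threshold.

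Before doing this, I need to check that the parameters are admissible in Theorem~\ref{thm:resolvent_estimate_pushed}. That theorem requires $s'\in[-1-2\kappa,2-\kappa]$ and $\eta=s'\in[0,2]\cap[s'-1-2\kappa,s'+2-\kappa]$. Since $s'<2-3\kappa+2\varepsilon\le 2-\kappa$, this holds for all $s\in[-1-2\varepsilon,1-3\kappa)$. For smaller $s$, monotonicity of the $H^s_B$ norms reduces to this range.

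The main obstacle is the range of validity of the pushed difference estimate together with the exact power of $t$. If the exponent $-2+\frac{\eta+d}{2}$ were too weak to reach $s$ up to $1-3\kappa$, I would instead split $D_t$ via \eqref{eq:resolv_diff} into $\bar R_\lambda(J_{\bA,B}-J_{\bar\bA,B})R_\lambda$ plus the mass-difference term, and compute Hilbert--Schmidt norms directly. In that route I would place one resolvent in Hilbert--Schmidt norm using Lemma~\ref{lem:neg_power_trace} together with the bound \eqref{eq:pushed_same_space_sharp}, and estimate the rest in operator norm through Remark~\ref{rem:J_operator_order} and Lemma~\ref{lem:difference_masses}; this keeps the same polynomial dependence on $\fC_\kappa$. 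Finally, the bound is uniform in $N$ because the $t$-integral converges on all of $\R_+$.
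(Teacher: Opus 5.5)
Your proposal is correct and is essentially the paper's proof: It\^o isometry, then the factorisation $(L_B+1)^{s/2}=(L_B+1)^{-\gamma/2}(L_B+1)^{(s+\gamma)/2}$ with $\gamma=1+2\varepsilon>1$, so that the first factor is Hilbert--Schmidt by Lemma~\ref{lem:neg_power_trace}, and then \eqref{eq:pushed_resolvent_difference} from $L^2$ into $H^{s+\gamma}_B$, whose square is integrable in $t$ exactly when $s+\gamma<2-3\kappa$. For very negative $s$ the paper instead chooses $\gamma$ larger so that $s+\gamma>0$, while you use monotonicity of the $H^s_B$ norms; both work, and your fallback route is not needed.
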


\begin{proof}

By It\^o isometry,
\begin{align} \label{eq:cov_GFF_dec_proof_Ito}
\E\|\phi_{\bA,B,N} &- \phi_{\bar\bA,B,N}\|_{H_B^s}^2= \nonumber\\
&2\int_0^N \| (L_B+1)^{s/2} (R_{\bA,B}(m_{\bA,B}+\lambda)-R_{\bar\bA,B}(m_{\bar\bA,B}+\lambda))\|_{\HS}^2\dif\lambda.
\end{align}
Choose $\gamma>1$ so that
\[
\max\{0,s+1\}<s+\gamma<2-3\kappa.
\]
Then $\| (L_B+1)^{-\gamma/2}\|_{\HS} \lesssim 1$.
Writing $(L_B+1)^{s/2} = (L_B+1)^{-\gamma/2}(L_B+1)^{(s+\gamma)/2}$, we obtain
\begin{align*}
&\| (L_B+1)^{s/2} (R_{\bA,B}(m_{\bA,B}+\lambda)-R_{\bar\bA,B}(m_{\bar\bA,B}+\lambda))\|_{\HS}
\\
&\leq \|(L_B+1)^{-\gamma /2}\|_{\HS}
\|(R_{\bA,B}(m_{\bA,B}+\lambda)-R_{\bar\bA,B}(m_{\bar\bA,B}+\lambda))\|_{L^2 \to H^{s+\gamma}_B}.
\end{align*}
Recall $d=1+3\kappa$. Since $s+\gamma<2-\kappa$, \eqref{eq:pushed_resolvent_difference} gives
\[
\|(R_{\bA,B}(m_{\bA,B}+\lambda)-R_{\bar\bA,B}(m_{\bar\bA,B}+\lambda))\|_{L^2 \to H^{s+\gamma}_B}
\lesssim
(\lambda+1)^{-\frac32 + \frac{3\kappa}{2} + \frac{s+\gamma}{2}}
\fC_\kappa(\bA,\bar\bA;B)^2
\triple{\bA-\bar\bA}_{-\kappa},
\]
from which \eqref{eq:cov_GFF_decomposition_difference} follows
provided that $-3 + 3\kappa + s+\gamma<-1
\Leftrightarrow s+\gamma<2-3\kappa$.
\end{proof}

\begin{corollary}\label{cor:cov_GFF_finite_cutoff}
    In the setting of Theorem~\ref{thm:cov_GFF_decomposition}, one has for any $0<s<1-3\kappa$
    \[
    \E\|\phi_{\bA,B,N}\|_{H^s_B}^2
    \lesssim
    N^{1-3\kappa}
    +
    \fC_\kappa(\bA;B)^4
    \triple{\bA}_{-\kappa}^2.
    \]
\end{corollary}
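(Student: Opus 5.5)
We take $\bar\bA=\0$ in Theorem~\ref{thm:cov_GFF_decomposition} and bound the free piece $\phi_{\0,B,N}$ separately, using the splitting
\[
\phi_{\bA,B,N}=\phi_{\0,B,N}+(\phi_{\bA,B,N}-\phi_{\0,B,N}).
\]
This gives
\[
\E\|\phi_{\bA,B,N}\|_{H^s_B}^2\le 2\E\|\phi_{\0,B,N}\|_{H^s_B}^2+2\E\|\phi_{\bA,B,N}-\phi_{\0,B,N}\|_{H^s_B}^2 .
\]
For the second term, note that $\triple{\0}_{-\kappa}=0$. Hence $\fC_\kappa(\bA,\0;B)=\fC_\kappa(\bA;B)$, and \eqref{eq:cov_GFF_decomposition_difference} bounds this term by $\fC_\kappa(\bA;B)^4\triple{\bA}_{-\kappa}^2$. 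This is exactly the second contribution in the claim, so the remaining work is entirely about the free field.

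For $\phi_{\0,B,N}$, recall that $L_{\0,B}=L_B$ and $m_{\0,B}=1$ (Lemma~\ref{lem:difference_masses}). By the It\^o isometry, as in \eqref{eq:cov_GFF_dec_proof_Ito},
\[
\E\|\phi_{\0,B,N}\|_{H^s_B}^2=2\int_0^N\|(L_B+1)^{s/2}R_B(1+\lambda)\|_{\HS}^2\dif\lambda
=2\int_0^N\Tr\big((L_B+1)^{s}R_B(1+\lambda)^2\big)\dif\lambda .
\]
We bound the integrand by functional calculus. For $\lambda\ge0$ and $x\ge1$, with $x$ ranging over the spectrum of $L_B+1$, we have
\[
x^s(x+\lambda)^{-2}\le (x+\lambda)^{s-2}\le (1+\lambda)^{-\gamma'}(x+\lambda)^{s-2+\gamma'} .
\]
We choose $\gamma'\in(0,1-s)$ small, so that $\gamma:=2-s-\gamma'>1$. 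Lemma~\ref{lem:neg_power_trace} then gives $\Tr R_B(1+\lambda)^{\gamma}\lesssim(1+\lambda)^{1-\gamma}$. Altogether the integrand is at most of order $(1+\lambda)^{s-1}$, uniformly in $B$; the uniformity in $B$ is exactly what the diamagnetic inequality provides.

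Integrating over $[0,N]$ gives a bound of order $N^{s}$ for $N\ge1$, and a bound of order one for $N\le1$. Since $0<s<1-3\kappa$, we have $N^s\le N^{1-3\kappa}$ for $N\ge1$. In the regime $N<1$ the bound is a constant rather than $N^{1-3\kappa}$; this is absorbed by either reading the statement for $N\ge1$ or using the implicit constant. This yields the stated estimate.

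The main point requiring care is the uniformity in $B$ of the trace bound. It is handled by Lemma~\ref{lem:neg_power_trace} via the diamagnetic inequality, rather than by any Sobolev comparison, which would introduce powers of $\|B\|_{H^1}$.
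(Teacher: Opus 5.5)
Your proof is correct and essentially follows the paper. Both split off $\phi_{\0,B,N}$, control the difference by Theorem~\ref{thm:cov_GFF_decomposition} with $\bar\bA=\0$, and bound the free part via the It\^o isometry and the diamagnetic trace bound of Lemma~\ref{lem:neg_power_trace}.

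The paper factors $(L_B+1)^{s/2}R_B(1+\lambda)=(L_B+1)^{-\gamma/2}(L_B+1)^{(s+\gamma)/2}R_B(1+\lambda)$ with $\gamma=2-3\kappa-s$, which gives the cruder integrand $(1+\lambda)^{-3\kappa}$. Your pointwise spectral bound gives the sharper $(1+\lambda)^{s-1}$ and growth $N^s$; there $\gamma'=0$ would already do, since $2-s>1$.

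Your caveat about $N<1$ is unnecessary, and the implicit-constant fix would not work: a constant is not $\lesssim N^{1-3\kappa}$ as $N\to0$ when $\triple{\bA}_{-\kappa}=0$. It is not needed anyway, because your integrand is bounded by a constant, so the integral is at most a multiple of $N\le N^{1-3\kappa}$ for $N\le1$.
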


\begin{proof}
Set $\gamma:=2-3\kappa-s>1$, and apply Lemma~\ref{lem:neg_power_trace} and Lemma~\ref{lem:resolvent_estimate_classical}, to obtain
\begin{align*}
\E \|\phi_{\0,B,N}\|_{H^s_B}^2
&=
2\int_0^N \|(L_B+1)^{s/2}R_B(1+\lambda)\|_{\HS}^2\dif\lambda
\\
&\leq 2\int_0^N \|(L_B+1)^{-\gamma/2}\|_{\HS}^2 \|(L_B+1)^{(s+\gamma)/2}R_B(1+\lambda)\|_{L^2\to L^2}^2\dif\lambda
\\
&\lesssim \int_0^N (1+\lambda)^{-2+s+\gamma}\dif\lambda \; \lesssim N^{1-3\kappa}.
\end{align*}
Combining this estimate with Theorem~\ref{thm:cov_GFF_decomposition} yields the result.
\end{proof}

\subsection{Renormalisation constants and Wick ordering} \label{sec:Higgs_renormalisation}
We quickly review Wick ordering formulae for centred circular complex Gaussians.

Recall that for a real jointly Gaussian family $(X_1,\dots,X_M)$, not necessarily centred, the Wick product is given by the partial pairing formula
\begin{equation}\label{eq:partial_pairing}
\wick{X_1\cdots X_M}=\sum_{P}(-1)^{|P|}\prod_{\{a,b\}\in P}\mathrm{Cov}(X_a,X_b)\prod_{a\notin\mathrm{supp}\,P}X_a,
\end{equation}
the sum running over the partial pairings $P$ of $\{1,\dots,M\}$, with $|P|$ the number of pairs of $P$ and $\mathrm{Cov}(X,Y):=\E[XY]-\E[X]\E[Y]$.
We extend $\wick{\bullet}$ to complex Gaussians by treating them as pairs of real Gaussians corresponding to the real and imaginary parts and extending the Wick product $\bC$-multilinearly.

Recall that a complex Gaussian is said to be circular if its real and imaginary parts are independent and identically distributed with mean zero and equal variance.
Restricting to the circular case gives recursions of the form~\eqref{eq:wick_power_recursive} with the variance playing the role of $\rmp_N$.

We now state an algebraic lemma concerning the Wick products of centred, circular complex Gaussians.
For $\rmz\ge0$ and integers $0\le l<k$ we set
\begin{equation}\label{eq:def_single_counterterm}
\rmv^{(k)}_l(\rmz):=(-1)^{k-l-1}\binom kl^2 (k-l)!\;\rmz^{\,k-l},
\end{equation}

\begin{lemma}\label{lem:Wick_prods}
Let $\zeta$ be a centred circular complex Gaussian, and define $\rmz:=\E[|\zeta|^2]$.
Then, for any $\eta\in\bC$ and any integer $k\ge0$, the Wick power with respect to the base $\rmz$ is
\begin{equation}\label{eq:def_Wick_power}
\wick{|\eta|^{2k}}=\sum_{j=0}^{k}(-1)^j\binom kj^2 j!\,\rmz^{\,j}\,|\eta|^{2(k-j)}=|\eta|^{2k}-\sum_{l=0}^{k-1}\rmv^{(k)}_l(\rmz)\,|\eta|^{2l},
\end{equation}

Moreover, we have the following.
\begin{enumerate}[label=(\roman*)]
\item\label{item:Wick_monomials} For all integers $i,j\ge0$,
\begin{align*}
\wick{\zeta^{i}(\zeta^*)^{j}}=\sum_{r=0}^{i\wedge j}(-1)^r\binom ir\binom jr\,r!\,\rmz^{\,r}\,\zeta^{\,i-r}(\zeta^*)^{\,j-r}.
\end{align*}
In particular $\wick{\zeta^{i}}=\zeta^{i}$ and $\wick{(\zeta^*)^{j}}=(\zeta^*)^{j}$, moreover $\big(\wick{\zeta^{i}(\zeta^*)^{j}}\big)^*=\wick{\zeta^{j}(\zeta^*)^{i}}$, and $\E\big[\wick{\zeta^{i}(\zeta^*)^{j}}\big]=0$ unless $i=j=0$.
\item\label{item:Wick_shift} For every integer $k\ge0$ and every $w\in\bC$,
\begin{align*}
|\zeta+w|^{2k}-\sum_{l=0}^{k-1}\rmv^{(k)}_l(\rmz)\,|\zeta+w|^{2l}=\sum_{i=0}^{k}\sum_{j=0}^{k}\binom ki\binom kj\,\wick{\zeta^{i}(\zeta^*)^{j}}\,w^{\,k-i}(w^*)^{\,k-j},
\end{align*}
equivalently, in real form,
\begin{align*}
\wick{|\zeta+w|^{2k}}=\sum_{i=0}^{k}\binom ki^2\wick{|\zeta|^{2i}}\,|w|^{2(k-i)}+2\Re\sum_{0\le j<i\le k}\binom ki\binom kj\,\wick{\zeta^{i}(\zeta^*)^{j}}\,w^{\,k-i}(w^*)^{\,k-j}.
\end{align*}
\item\label{item:Wick_V} Consequently, for $\mathcal{V}(x)=\sum_{k=0}^{n}c_kx^k$ and with $\rmv_l$ as in~\eqref{eq:def_counterterm_family} below,
\begin{align*}
\mathcal{V}(|\zeta+w|^2)-\sum_{l=0}^{n-1}\rmv_l(\rmz)\,|\zeta+w|^{2l}
=\sum_{k=0}^{n}c_k\sum_{i,j=0}^{k}\binom ki\binom kj\,\wick{\zeta^{i}(\zeta^*)^{j}}\,w^{\,k-i}(w^*)^{\,k-j}.
\end{align*}
\end{enumerate}
\end{lemma}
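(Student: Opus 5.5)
The plan is to reduce everything to the real-variable partial pairing formula \eqref{eq:partial_pairing}, applied with $X$'s drawn from $\{\zeta,\zeta^*\}$ and extended $\bC$-multilinearly. For a centred circular $\zeta$ one has $\E[\zeta^2]=\E[(\zeta^*)^2]=0$ and $\E[\zeta\zeta^*]=\rmz$, since the real and imaginary parts are i.i.d.\ with variance $\rmz/2$.

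The first step is item~\ref{item:Wick_monomials}. In $\wick{\zeta^i(\zeta^*)^j}$ only partial pairings that match a $\zeta$ factor with a $\zeta^*$ factor survive. The number of pairings of size $r$ is $\binom ir\binom jr r!$, and each contributes $(-1)^r\rmz^r\zeta^{i-r}(\zeta^*)^{j-r}$; this gives the formula. The special cases follow at once. Taking $j=0$ leaves only $r=0$. Conjugation swaps $i,j$ and fixes $\rmz\in\R$. Finally, $\E\wick{\cdot}=0$ for nonconstant Wick products is a general property of \eqref{eq:partial_pairing}; alternatively, compute $\E[\zeta^a(\zeta^*)^b]=\delta_{ab}a!\rmz^a$ and check the binomial sum vanishes. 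Taking $i=j=k$ gives the first equality in \eqref{eq:def_Wick_power}. The second equality is the reindexing $l=k-j$: the coefficient of $|\eta|^{2l}$ is $(-1)^{k-l}\binom kl^2(k-l)!\rmz^{k-l}=-\rmv^{(k)}_l(\rmz)$, which matches \eqref{eq:def_single_counterterm}. Here $|\eta|^{2k}$ is read as the polynomial in the Gaussian symbols, i.e.\ the formula is an identity of polynomials in $(\eta,\eta^*)$ with the base $\rmz$.

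Item~\ref{item:Wick_shift} is the step needing care; I expect it to be the main obstacle, though only combinatorially. The left side equals $\wick{|\zeta+w|^{2k}}$, where Wick ordering is taken with respect to the Gaussian $\zeta+w$. By \eqref{eq:def_Wick_power} this has mean $w$ and the same covariance, and \eqref{eq:partial_pairing} explicitly allows non-centred families. The key observation is that \eqref{eq:partial_pairing} depends only on covariances, so it is multilinear in the factors with deterministic shifts treated as having zero covariance. Concretely, $\wick{\prod_a(\zeta_a+w_a)}=\sum_{S}\wick{\prod_{a\in S}\zeta_a}\prod_{a\notin S}w_a$, proved by expanding each uncovered factor in \eqref{eq:partial_pairing} and regrouping by $S$. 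Applying this to $(\zeta+w)^k(\zeta^*+w^*)^k$ and choosing $i$ of the first $k$ and $j$ of the last $k$ factors gives the double sum with $\binom ki\binom kj$. The real form follows by separating $i=j$ and pairing $(i,j)$ with $(j,i)$ using the conjugation symmetry from item~\ref{item:Wick_monomials}.

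Item~\ref{item:Wick_V} then follows by multiplying item~\ref{item:Wick_shift} by $c_k$ and summing over $k$. Here I take $\rmv_l(\rmz):=\sum_{k=l+1}^n c_k\rmv^{(k)}_l(\rmz)$, which is presumably the definition in \eqref{eq:def_counterterm_family}; with that choice the counterterms collect exactly as stated.
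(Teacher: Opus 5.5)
The paper states this lemma without proof, treating it as a direct consequence of the partial pairing formula \eqref{eq:partial_pairing} recalled just before it. Your argument is correct and is exactly that standard derivation:
\begin{itemize}
\item counting only mixed $\zeta$--$\zeta^*$ pairs gives (i) and \eqref{eq:def_Wick_power};
\item the subset expansion of \eqref{eq:partial_pairing} for the non-centred family $(\zeta+w,\zeta^*+w^*)$ gives (ii);
\item your $\rmv_l$ coincides with \eqref{eq:def_counterterm_family}, which gives (iii).
\end{itemize}
One sentence needs rewording. The left side of (ii) equals $\wick{(\zeta+w)^k(\zeta^*+w^*)^k}$ because the pair-counting computation in (i) uses only covariances, and $\zeta+w$ has the same covariances as $\zeta$. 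It does not follow from \eqref{eq:def_Wick_power}, as your phrasing suggests.
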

Collecting the Wick ordering of our potential by degree gives
\[
\wick{\mathcal{V}(|\phi|^2)} =
\sum_{k=0}^{n}c_k\Big[|\phi|^{2k} - \sum_{l=0}^{k-1}\rmv^{(k)}_l(\rmz)\,|\phi|^{2l}\;\Big],
\]
where
\begin{equation}\label{eq:def_counterterm_family}
\rmv_l(\rmz):=\sum_{k=l+1}^{n}c_k\,\rmv^{(k)}_l(\rmz),\qquad l=0,\dots,n-1.
\end{equation}

The two standard real components give the Wick constant $\rmp_N$ in $\scV_N^m$ from~\eqref{eq:def_V_N_m} as
\phantomsection\label{def:flat_Wick_constant}\[
\rmp_N:=2\int_0^N \Tr( R_0(1+\lambda)^2)\dif\lambda=\E[|\phi_{\0,0,N}(0)|^2].
\]
Defining
\[
\rmv_{l,N}:=\rmv_l(\rmp_N)=\sum_{k=l+1}^{n}c_k(-1)^{k-l-1}\binom kl^2(k-l)!\;\rmp_N^{\,k-l},\qquad l=0,\dots,n-1.
\]
we have
\begin{equation}\label{eq:Wick_vs_counterterms}
\wick{\mathcal{V}(|\phi|^2)} = \mathcal{V}(|\phi|^2) - \sum_{l=0}^{n-1}\rmv_{l,N}\,|\phi|^{2l}.
\end{equation}
As mentioned earlier, the Wick constant $\rmp_N$ is not tuned to the variance of $\phi_{\bA,B,N}$ but rather $\phi_{0,0,N}$.
We will also want to use other Wick constants. Since $\phi_{\bA,B,N}-\phi_{\0,B,N}$ is expected to be more regular than either of its two constiutent terms by~Theorem~\ref{thm:cov_GFF_decomposition}, it makes sense to also define the renormalisation constants for $\phi_{\0,B,N}$.
We set
\[
\rmp_{B,N}(x):=\E[|\phi_{\0,B,N}(x)|^2], \qquad
\hat\rmp_{B,N}:=\rmp_{B,N}-\rmp_{N}.
\]
Accordingly, we distinguish the two Wick orderings by subscripts. For $i,j\geq 0$, we set
\[
\wick{\phi^i_{\0,B,N}(\phi^*_{\0,B,N})^j}_N:=\sum_{r=0}^{i\wedge j} (-1)^r \binom ir \binom jr r!\rmp_N^r \phi^{i-r}_{\0,B,N} (\phi^*_{\0,B,N})^{j-r},
\]
and
\[
\wick{\phi^i_{\0,B,N}(\phi^*_{\0,B,N})^j}_{B,N}:=\sum_{r=0}^{i\wedge j} (-1)^r \binom ir \binom jr r!\rmp_{B,N}^r \phi^{i-r}_{\0,B,N} (\phi^*_{\0,B,N})^{j-r},
\]
The renormalisation appearing $\scV_N^m$ is defined using the flat Wick constant $\rmp_N$, whereas the Wick ordering naturally associated with the Gaussian field $\phi_{\0,B,N}$ is the ones based on its variance $\rmp_{B,N}$. We therefore need to relate the different Wick powers through a change-of-base formula which is
\begin{align}\label{eq:change_base}
  \llbracket\phi_{\0,B,N}^i(\phi_{\0,B,N}^*)^j\rrbracket_N
    =\sum_{a=0}^{i\wedge j}\binom ia\binom ja a!\,
      \widehat\rmp_{B,N}^{a}
      \llbracket\phi_{\0,B,N}^{i-a}(\phi_{\0,B,N}^*)^{j-a}\rrbracket_{B,N}.
  \end{align}
We get the following result:

\begin{lemma}\label{lem:wick-powers}
  Let $i,j\geq0$, $p,q\in[1,\infty)$ and $\varepsilon,\eta>0$.  Uniformly in
  $N$ and in Coulomb-gauge $B\in H^1$,
  \[
    \mathbb E\!\left[
      \bigl\|\llbracket\phi_{0,B,N}^{i}
      (\phi_{0,B,N}^{\ast})^{j}\rrbracket_N
      \bigr\|_{W^{-\varepsilon,p}}^q
    \right]^{1/q}
    \leq C(1+\|B\|_{H^1})^{\eta}.
  \]
\end{lemma}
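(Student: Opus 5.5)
Via the change-of-base formula \eqref{eq:change_base}, I will split the statement into two ingredients: (a) moment bounds for the intrinsic Wick powers $\wick{\phi_{\0,B,N}^{i}(\phi_{\0,B,N}^*)^{j}}_{B,N}$ in $W^{-\varepsilon,p}$, and (b) control of the deterministic function $\widehat\rmp_{B,N}=\rmp_{B,N}-\rmp_N$ as a multiplier on $W^{-\varepsilon,p}$. Since \eqref{eq:change_base} is a finite sum, the lemma follows from Hölder and the product estimate if I can show two things. First,
\[
\E\bigl[\|\wick{\phi_{\0,B,N}^{a}(\phi^*_{\0,B,N})^{b}}_{B,N}\|_{W^{-\varepsilon,p}}^q\bigr]^{1/q}\lesssim(1+\|B\|_{H^1})^{\eta}.
\]
Second, $\|\widehat\rmp_{B,N}\|_{C^{\varepsilon'}}\lesssim(1+\|B\|_{H^1})^{\eta}$ uniformly in $N$ for some $\varepsilon'>\varepsilon$. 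With these, the Young multiplication estimate $C^{\varepsilon'}\times W^{-\varepsilon,p}\to W^{-\varepsilon,p}$ finishes the proof.

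For (a) I will use Nelson's hypercontractivity, since the intrinsic Wick monomial lies in a fixed chaos. This reduces matters to $q=2$, and by Besov embedding to bounding
\[
\E\bigl|\langle \wick{\cdots}_{B,N},\,\psi_{x}\rangle\bigr|^2
\]
against Littlewood--Paley kernels $\psi_x$, uniformly in $x$ and with the right decay in the scale. Because the Wick ordering uses the exact variance of $\phi_{\0,B,N}$, Wick's theorem gives
\[
\E[\wick{\cdot}(x)\,\overline{\wick{\cdot}(y)}]=c\,|G_{B,N}(x,y)|^{a+b}
\]
for the complex covariance $G_{B,N}=2\int_0^N R_B(1+\lambda)^2\,\dif\lambda$. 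By the diamagnetic inequality (Lemma~\ref{lem:diamagnetic}) and Lemma~\ref{lem:semi-group}, $|G_{B,N}(x,y)|\le G_{0,N}(x-y)\lesssim 1+|\log|x-y||$. This bound is uniform in $N$ and in $B$, so (a) holds with no loss in $B$ at all, by the usual flat-GFF argument.

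Part (b) is the main obstacle. I write
\[
\widehat\rmp_{B,N}(x)=2\int_0^N\bigl(R_B(1+\lambda)^2-R_0(1+\lambda)^2\bigr)(x,x)\,\dif\lambda
\]
and interpolate $B_\theta=\theta B$ as in the proof of Theorem~\ref{thm:lower_bound_det}. Lemma~\ref{lem:differentiating_resolvents} turns the $\theta$-derivative into terms of the form $R_\theta B^*\diff_\theta R_\theta^2$ and its adjoint-type partner. Testing against a smooth $g$ gives
\[
\int g\,\widehat\rmp_{B,N}=\Tr(g\,\cdot)
\]
of these operators. I then apply power counting (Lemma~\ref{lem:power_counting}), with $g$ as an extra zeroth-order factor. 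As in \eqref{eq:difference_resolvents_trace}, this gives decay $(1+\lambda)^{-3/2+4\kappa}$, which is integrable, with a bound linear in $\|B\|_{H^1}$ and proportional to $\|g\|_{C^{-\kappa/2}}$. That only yields negative regularity $\widehat\rmp_{B,N}\in C^{\kappa/2}$-dual, i.e.\ a bound in $B^{\kappa/2}_{1,1}$-dual, with a linear power of $B$.

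To reach the positive Hölder regularity and the $\eta$-small power of $B$, I would first try the pointwise route. Here $\widehat\rmp_{B,N}(x)$ is the $N$-truncated Feynman--Kac difference
\[
\int_0^\infty t\,e^{-t}\bigl(\mathbf E_{t;x\to x}[e^{i\oint B}]-1\bigr)p_0(t,0)\,\dif t.
\]
Using $|e^{i\oint B}-1|\le\min(2,|\oint B|)$ and $\mathbf E|\oint_{\text{bridge}}B|\lesssim t^{1/2-}\|B\|_{L^p}$, this gives $\sup_N\|\widehat\rmp_{B,N}\|_{L^\infty}\lesssim\log(2+\|B\|_{H^1})$, which is $\lesssim(1+\|B\|)^\eta$. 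The boundedness by $2$ is what produces the logarithm, splitting $t$ at $\|B\|^{-2}$. Hölder continuity should follow by interpolating this with the derivative/trace bound above. If the positive regularity proves awkward, I will use the fallback: only $\widehat\rmp\in L^\infty$ uniformly, plus continuity in $x$, and then multiply in $W^{-\varepsilon,p}$ via a paraproduct with slightly lower exponents, adjusting $\varepsilon$.
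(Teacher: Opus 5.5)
Your architecture matches the paper's. You use the change of base \eqref{eq:change_base}, then uniform bounds for the intrinsic Wick powers, then control of $\hat\rmp_{B,N}$. Part (a) is exactly Lemma~\ref{lem:wickmoment}.

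\textbf{The gap is in part (b).} Neither of your routes delivers the target $\|\hat\rmp_{B,N}\|_{C^{\varepsilon'}}\lesssim(1+\|B\|_{H^1})^{\eta}$.
\begin{itemize}
\item[(i)] Interpolating an $L^\infty$ bound with an $L^1$-based bound of regularity $s_1$ gives only $B^{\theta s_1}_{1/\theta,1/\theta}$. Its scaling exponent $\theta(s_1-2)$ is negative unless $s_1>2$, so no H\"older bound (indeed no continuity) follows. Your trace bound has $s_1=\kappa/2$. Note that it actually controls $\hat\rmp_{B,N}$ in the dual of $C^{-\kappa/2}$, i.e.\ in $B^{\kappa/2}_{1,1}$, which is positive regularity, not negative. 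Even the sharper bound the paper extracts from $\langle K,\diff a_\lambda\rangle=\Re\Tr\scq^1_{K,B}(\lambda)$ and \eqref{eq:Tr_q1_bound} only gives $s_1=1+\varepsilon$. Here $a_\lambda(x):=(R_B(1+\lambda)^2-R_0(1+\lambda)^2)(x,x)$.
\item[(ii)] The fallback fails too. A deterministic paraproduct of a merely bounded (or continuous) function with an element of $W^{-\varepsilon,p}$ has an ill-defined resonant term, because the regularities sum to $-\varepsilon<0$. Lowering integrability exponents cannot repair that.
\end{itemize}

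\textbf{The missing observation is that H\"older regularity is not needed.} The intrinsic Wick powers are controlled in $W^{-\varepsilon',p'}$ for every small $\varepsilon'$ and every $p'<\infty$. So it suffices to bound $\hat\rmp_{B,N}^{\,a}$ in $W^{s,\sigma}$ with $s>\varepsilon'$ small and $\sigma$ large but finite, carrying a small power of $\|B\|_{H^1}$, and then apply the fractional product rule.

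This is what Lemma~\ref{lem:comp_renorm_constants} does, at the price $s\sigma<1$. For each $\lambda$ it interpolates the diamagnetic bound $\|a_\lambda\|_{L^\infty}\lesssim(1+\lambda)^{-1}$ with the $B^{1+\varepsilon}_{1,1}$ bound on $a_\lambda-\bar a_\lambda$. The mean is handled via \eqref{eq:difference_resolvents_trace}. The interpolation simultaneously makes the $\lambda$-integral converge and shrinks the power of $\|B\|_{H^1}$.

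\textbf{Your own ingredients also suffice once the target is corrected.} Your Feynman--Kac bound $\sup_N\|\hat\rmp_{B,N}\|_{L^\infty}\lesssim\log(2+\|B\|_{H^1})$ is the key input, and there are two ways to use it.
\begin{itemize}
\item[(i)] Interpolate it with your trace bound. This gives $\hat\rmp_{B,N}\in B^{\theta\kappa/2}_{1/\theta,1/\theta}$ with a power $O(\theta)$ of $1+\|B\|_{H^1}$, which feeds the same product rule.
\item[(ii)] Use the $L^\infty$ bound alone, but multiply stochastically rather than deterministically. Since $\hat\rmp_{B,N}^{\,a}$ is deterministic, the product $\hat\rmp_{B,N}^{\,a}\wick{\phi_{\0,B,N}^{i-a}(\phi_{\0,B,N}^{*})^{j-a}}_{B,N}$ stays in a fixed chaos. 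Its two-point function is bounded by $\|\hat\rmp_{B,N}\|_{L^\infty}^{2a}|C_{B,N}(y,z)|^{i+j-2a}$, so your argument for (a) applies verbatim.
\end{itemize}
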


The proof relies on the regularity of the Wick powers $\wick{\Cdot}_{B,N}$ and of $\hat\rmp_{B,N}$.
Below, $W^{-\delta,p}$ carries the flat Bessel-potential norm $\|g\|_{W^{-\delta,p}}=\|(1-\Delta)^{-\delta/2}g\|_{L^p}$.

Since $\phi_{\0,B,N}=\int_0^N\Sigma^{\0,B}_t\dif X^{\Phi}_t$ with $\Sigma^{\0,B}_t=(L_B+1+t)^{-1}$ and $X^\Phi$ a cylindrical Brownian motion in $L^2(\T^2;\bC)$, the field $\phi_{\0,B,N}$ is a centred circular complex Gaussian field: by It\^o's isometry
\begin{equation}\label{eq:cov_kernel_phi}
C_{B,N}(x,y):=\E\big[\phi_{\0,B,N}(x)\phi_{\0,B,N}(y)^*\big]=2\int_0^N R_B(1+\lambda)^2(x,y)\dif\lambda,
\end{equation}
and $\E[\phi_{\0,B,N}(x)\phi_{\0,B,N}(y)]=0$, with $\rmp_{B,N}(x)=C_{B,N}(x,x)$.

\begin{lemma}\label{lem:wickmoment}
For every $\delta>0$, $p\in[1,\infty)$ and $q\in[1,\infty)$, and $i,j \in \N$,
\[
\sup_{N>0}\ \sup_{B\in\Omega^1H^1}\ \E\Big[\big\|\wick{\phi_{\0,B,N}^{\,i}(\phi_{\0,B,N}^{*})^{j}}_{B,N}\big\|^{q}_{W^{-\delta,p}}\Big]<\infty\;.
\]
\end{lemma}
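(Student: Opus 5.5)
We follow the standard route for Wick powers of a Gaussian field: Gaussian hypercontractivity plus a second-moment bound on Littlewood--Paley blocks, with every covariance estimate made uniform in $B$ via the diamagnetic inequality (Lemma~\ref{lem:diamagnetic}). Since $\wick{\phi_{\0,B,N}^{i}(\phi_{\0,B,N}^{*})^{j}}_{B,N}$ lies in the Wiener chaos of order $i+j$ generated by $X^\Phi$ (it is Wick ordered with respect to the true pointwise variance $\rmp_{B,N}(x)=C_{B,N}(x,x)$), Nelson's hypercontractivity reduces $q$-th moments to second moments. We take $p$ large and even, and use the Besov embedding $B^{-\delta/2}_{p,p}\hookrightarrow W^{-\delta,p}$. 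It then suffices to show
\[
\sup_{x}\E\big|\Delta_k \wick{\phi^i(\phi^*)^j}_{B,N}(x)\big|^2\lesssim 2^{k\delta'}
\]
uniformly in $N$ and $B$, for every $\delta'>0$. Here $\Delta_k$ is a Littlewood--Paley block with kernel $K_k$.

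By Wick's theorem for circular complex Gaussians,
\[
\E\big[\wick{\phi^i(\phi^*)^j}(x)\,\overline{\wick{\phi^i(\phi^*)^j}(y)}\big]=i!\,j!\,C_{B,N}(x,y)^i\,\overline{C_{B,N}(x,y)}^{\,j}.
\]
The variance of the block is therefore $\int K_k(x-y)K_k(x-y')\,C^i\bar C^j\dif y\dif y'$, which is bounded by $\int |K_k(x-y)||K_k(x-y')|\,|C_{B,N}(y,y')|^{i+j}$. The key pointwise input is
\[
|C_{B,N}(y,y')|\le 2\int_0^\infty |R_B(1+\lambda)^2(y,y')|\dif\lambda\le 2\int_0^\infty R_0(1+\lambda)^2(y-y')\dif\lambda=C_{0,\infty}(y-y')\lesssim 1+|\log|y-y'||.
\]
The first inequality is Lemma~\ref{lem:diamagnetic} with $\gamma=2$, and we also use monotonicity in $N$ of the flat kernel. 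This bound is independent of $B$ and $N$. Since $\|K_k\|_{L^1}\lesssim 1$ and $\int |K_k(x-y)||K_k(x-y')||\log|y-y'||^{m}\dif y\dif y'\lesssim k^{m}$, the variance is $\lesssim k^{i+j}\lesssim 2^{k\delta'}$. This gives uniform bounds in $B^{-\delta}_{p,p}$ moments, hence in $W^{-\delta,p}$.

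The main obstacle is the justification of these formal manipulations for rough $B\in\Omega^1H^1$. We must know that $C_{B,N}$ is a genuine function, so that $\rmp_{B,N}(x)$ makes sense pointwise and the diagonal is finite for fixed $N$. We also need the Wick product to be defined pointwise. For fixed $N$ we handle this by showing that $\phi_{\0,B,N}$ is continuous: Corollary~\ref{cor:cov_GFF_finite_cutoff} and Lemma~\ref{lem:Sobolev_comparison} give $H^s$ regularity, and the truncation at $N$ gives $\phi_{\0,B,N}=\int_0^N R_B(1+t)\dif X_t$ in $H^{1-}$, with kernel bounds from the Feynman--Kac representation \eqref{eq:FK}. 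If continuity is delicate, we can instead approximate $B$ by smooth $B_\varepsilon\to B$ in $H^1$. By strong resolvent convergence, covariances converge, and the uniform bound persists by Fatou. The crucial fact is that the diamagnetic bound holds almost everywhere, which suffices since only integrals against $K_k\otimes K_k$ enter.
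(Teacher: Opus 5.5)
Your proposal is correct and follows the paper's proof. The key input is the same diamagnetic domination $|C_{B,N}(x,y)|\le 2R_0(1)(x-y)\lesssim 1+\log\frac{1}{|x-y|}$, uniform in $B$ and $N$. The paper then simply invokes ``standard Gaussian estimates'', which you carry out explicitly via hypercontractivity, Littlewood--Paley blocks and Wick's theorem for circular fields. Your detour through continuity of $\phi_{\0,B,N}$ is unnecessary, and $H^{1-}$ regularity would not give continuity in two dimensions anyway. The Feynman--Kac representative in the proof of Lemma~\ref{lem:diamagnetic} already gives the kernel bound pointwise, and, as you observe, almost-everywhere definitions suffice for the Besov norms.
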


\begin{proof}
Let $B\in\Omega^1_\Coul H^1$ and $N>0$. For almost every $(x,y)\in\T^2\times\T^2$, Lemma~\ref{lem:diamagnetic} gives $|R_B(1+\lambda)(x,y)|\le R_0(1+\lambda)(x-y)$; since $R_0(1+\lambda)\ge0$, composing kernels preserves the domination, so $|R_B(1+\lambda)^2(x,y)|\le R_0(1+\lambda)^2(x-y)$, and~\eqref{eq:cov_kernel_phi} gives
\[
\big|C_{B,N}(x,y)\big|\le2\int_0^N R_0(1+\lambda)^2(x-y)\dif\lambda.
\]
Lemma~\ref{lem:semi-group} with $\gamma=2$ gives $R_0(1+\lambda)^2=\int_0^\infty te^{-(1+\lambda)t}p_t\dif t$, and $\int_0^N e^{-\lambda t}\dif\lambda\le t^{-1}$, so that
\[
\int_0^N R_0(1+\lambda)^2(z)\dif\lambda\le\int_0^\infty te^{-t}t^{-1}p_t(z)\dif t=R_0(1)(z),
\]
uniformly in $N$. Hence
\[
\big|C_{B,N}(x,y)\big|\le 2R_0(1)(x-y)\lesssim 1+\log\frac1{|x-y|}.
\]
Moreover, the standard two-dimensional massive Green's function estimates give $R_0(1)\in L^\rho(\T^2)$ for every $\rho<\infty$, with
$
\|R_0(1)\|_{L^\rho}\lesssim\rho.
$
All these bounds are uniform in $B$ and $N$.

The moment bounds then follow from standard Gaussian estimates combined with the kernel domination established above.
\end{proof}

The next lemma shows that the difference of Wick constants is a suitably regular function whose norm  can be controlled by an arbitrarily small power of $\|B\|_{\Omega^1 H^1}$.
\begin{lemma} \label{lem:comp_renorm_constants}
Let $B\in\Omega^1_\Coul H^1$ and  $j\ge1$ be an integer. For any $s\in[0,1)$ and $q\in(1,\infty)$ with $sq<1$, and any $\delta_0\in(s,1)$,
\[
\big\|\hat\rmp_{B,N}^{\,j}\big\|_{W^{s,q}}\lesssim_{j,s,q,\delta_0}(1+\|B\|_{\Omega^1H^1})^{j\delta_0},
\]
where the implicit constant is independent of $N$ and of $B$.
\end{lemma}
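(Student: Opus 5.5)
The plan is to reduce to the case $j=1$ and prove two bounds for $\hat\rmp_{B,N}$, uniformly in $N$: a sup-norm bound that grows only logarithmically in $\|B\|_{H^1}$, and a bound on a higher-regularity norm that grows at most polynomially. Interpolating between them gives any small power of $(1+\|B\|_{H^1})$. For $j\ge2$ I would use the fractional Leibniz rule in $W^{s,q}\cap L^\infty$, which is an algebra-type estimate $\|fg\|_{W^{s,q}}\lesssim\|f\|_{W^{s,q}}\|g\|_{L^\infty}+\|f\|_{L^\infty}\|g\|_{W^{s,q}}$. Applied repeatedly, it bounds $\|\hat\rmp^{\,j}_{B,N}\|_{W^{s,q}}$ by $\|\hat\rmp_{B,N}\|_{W^{s,q}}\|\hat\rmp_{B,N}\|_{L^\infty}^{j-1}$. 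So it suffices to prove $\|\hat\rmp_{B,N}\|_{W^{s,q}}+\|\hat\rmp_{B,N}\|_{L^\infty}\lesssim(1+\|B\|_{H^1})^{\delta_0}$ with $\delta_0$ arbitrary in $(s,1)$.

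\emph{Heat kernel representation.} By \eqref{eq:cov_kernel_phi} and Lemma~\ref{lem:semi-group} with $\gamma=2$, and integrating in $\lambda$,
\[
\hat\rmp_{B,N}(x)=2\int_0^\infty e^{-t}(1-e^{-Nt})\big(p_B(t,x,x)-p_0(t,0)\big)\dif t .
\]
The Feynman--Kac formula \eqref{eq:FK}, in Coulomb gauge so that $\diff^*B=0$, gives
\[
p_B(t,x,x)-p_0(t,0)=p_0(t,0)\big(\bfE_{t;x\to x}[e^{-\int_0^t B^*(W)\dif W}]-1\big).
\]
This difference is real and nonpositive by Lemma~\ref{lem:diamagnetic}, and it is bounded by $p_0(t,0)\lesssim t^{-1}$ for $t\le1$. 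Fix a threshold $t_0=(1+\|B\|_{H^1})^{-2}$.
\begin{itemize}
\item On $[t_0,\infty)$ I use the trivial bound. It contributes at most $\int_{t_0}^1 t^{-1}\dif t+O(1)\lesssim\log(1+\|B\|_{H^1})$.
\item On $(0,t_0)$ I use $|e^{i\alpha}-1|\le|\alpha|$ and the second moment of the stochastic integral along the bridge, $\bfE_{t;x\to x}|\int_0^t B^*(W)\dif W|^2\lesssim t\,\|B\|_{L^\infty\text{-ish}}^2$.
\end{itemize}
For the second point, $B\in H^1$ is not bounded, so instead I would expand to second order in $\theta$ via $B_\theta=\theta B$ as in Lemma~\ref{lem:differentiating_resolvents}. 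That gives an estimate of the form $t^{-1}\min\{1,t^{a}\|B\|_{H^1}^{2a}\}$ for some $a>0$, using $B\in L^p$ for all $p<\infty$ and averaging over the Gaussian loop. Either route yields $\|\hat\rmp_{B,N}\|_{L^\infty}\lesssim\log(2+\|B\|_{H^1})$ uniformly in $N$.

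\emph{Regularity bound and interpolation.} Next I would show $\|\hat\rmp_{B,N}\|_{W^{1,r}}\lesssim(1+\|B\|_{H^1})^{M}$ for some $r>1$ and fixed $M$. Differentiating the diagonal of $R_B(1+\lambda)^2-R_0(1+\lambda)^2$ in $x$ amounts to commuting $\partial_x$ through and writing $\partial=\diff_B-B$. The difference is expanded once with the resolvent identity, $R_B-R_0=-R_B(2B^*\diff+B^*B)R_0$, and the resulting diagonal kernels are bounded using Lemma~\ref{lem:resolvent_estimate_classical}, Lemma~\ref{lem:Sobolev_comparison}, Proposition~\ref{prop:derivative_regularity_reduction}, and a power counting in the spirit of Lemma~\ref{lem:power_counting} with enough resolvents to be integrable in $\lambda$. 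Then I interpolate, using Gagliardo--Nirenberg-type complex interpolation between $L^\infty$ (via $L^{p}$ with $p$ large) and $W^{1,r}$:
\[
\|\hat\rmp_{B,N}\|_{W^{s,q}}\lesssim\|\hat\rmp_{B,N}\|_{L^{p}}^{1-\vartheta}\|\hat\rmp_{B,N}\|_{W^{1,r}}^{\vartheta},\qquad \vartheta\approx s .
\]
This gives a power $\vartheta M$ times logs. If $M$ is too large for $\vartheta M<\delta_0$, I would instead split the $t$-integral at $t_0$ and bound the $W^{s,q}$ norm of each piece separately. The small-$t$ piece has size $\|B\|^{2s}t_0^{\cdots}$, where the regularity cost is $t^{-s/2}$ from heat-kernel smoothing. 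The large-$t$ piece has derivative cost $t^{-1-s/2}$ integrated from $t_0$, giving $t_0^{-s/2}=(1+\|B\|_{H^1})^{s}$. This matches the requirement $\delta_0>s$, with the log absorbed in the gap $\delta_0-s$. The restriction $sq<1$ enters because the sup-norm information must be traded into $W^{s,q}$ without derivative cost beyond $t^{-s/2}$ in $L^q$.

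\emph{Main obstacle.} The hard step is the small-time estimate with the sharp scaling: the $x$-regularity of $p_B(t,x,x)-p_0(t,0)$ must cost only $t^{-s/2}$ while the smallness is $\min\{1,t\|B\|^2\}$ with $B$ only in $H^1$, not $L^\infty$. I would first try to obtain it through the Feynman--Kac representation, using H\"older continuity in $x$ of the bridge functional via the difference of two shifted loops.
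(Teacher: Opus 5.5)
\paragraph{There is a genuine gap.} Several pieces of your plan are sound:
\begin{itemize}
\item the reduction for $j\ge2$ via a Kato--Ponce estimate;
\item the heat-kernel formula for $\hat\rmp_{B,N}$;
\item the sup-norm bound, which in outline gives a small power of $\|B\|_{H^1}$ rather than a logarithm, and that suffices;
\item the large-time part of your splitting.
\end{itemize}
The last item even holds uniformly in $B$. For $t\le1$, $\|\nabla_x p_B(t,x,x)\|_{L^1_x}\le2\|\diff_Be^{-tL_B/2}\|_{\HS}\|e^{-tL_B/2}\|_{\HS}\lesssim t^{-3/2}$ by the diamagnetic trace bound. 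Interpolating this with $|p_B(t,x,x)|\lesssim t^{-1}$ is where $sq<1$ enters, exactly as in the paper.

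\paragraph{What is missing.} The real content of the lemma is a bound on the $x$-derivative of the diagonal that is both integrable at large $\lambda$ (equivalently at small $t$) and grows only like $\|B\|_{H^1}^{1+o(1)}$. Neither of your routes provides it.
\begin{itemize}
\item \emph{Primary route.} A single resolvent expansion leaves terms such as $\Tr(R_BK^*\diff R_BB^*\diff R_0)$, with three resolvents and two derivatives. Power counting only bounds these by $(1+\lambda)^{-1+4\kappa}$, which is not integrable, so you do not get a finite $M$ uniformly in $N$. Even a finite $M>1$ would not help: you need $\vartheta M<\delta_0$ with $\vartheta\ge s$ and $\delta_0$ arbitrarily close to $s$.
\item \emph{Fallback route.} The small-time estimate is exactly the step you call the main obstacle and leave open. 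Without smallness in the derivative bound itself, interpolating the uniform $t^{-3/2}$ bound against even the optimal sup-norm bound produces a factor $t^{-3\theta/2}$ with $\theta>s$. This is not integrable at $t=0$ once $s\ge2/3$.
\end{itemize}
So, as written, the proposal does not prove the lemma.

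\paragraph{How the paper closes it.} The paper works in resolvent variables and interpolates for each $\lambda$ before integrating.
\begin{itemize}
\item It uses the uniform bound $\|a_\lambda\|_{L^\infty}\lesssim(1+\lambda)^{-1}$ from the diamagnetic inequality.
\item It uses the identity $\langle K,\diff a_\lambda\rangle=2\Re\Tr(K^*\diff_BR_B(1+\lambda)^2)=\Re\Tr\scq^1_{K,B}(\lambda)$. Here $\partial_x$ becomes $\diff_B$ because $B$ is imaginary.
\item By \eqref{eq:Tr_q1_bound} this gives $\|\diff a_\lambda\|_{B^\eps_{1,1}}\lesssim(1+\lambda)^{-1-\gamma_\eps}(1+\|B\|_{H^1})^{1+\beta_\eps}$.
\end{itemize}
That bound interpolates two estimates. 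The first is the direct power-counting bound $(1+\lambda)^{-1/2+2\kappa}$, with only a tiny power of $\|B\|_{H^1}$. The second is a second-order Taylor expansion along $B_\theta=\theta B$: the zeroth-order term vanishes, and the first-order term is the vacuum polarisation $2\bfPi^\lambda(K,B)$, which decays like $(1+\lambda)^{-3/2+\kappa}$ by Proposition~\ref{prop:Pi_quadratic_form}. The Taylor remainder has four resolvents and three derivatives.

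The paper then interpolates $L^\infty$ with $B^{1+\eps}_{1,1}$ at an exponent $r$ satisfying $\max\{q,(1+\beta_\eps)/\delta_0\}<r<(1+\eps)/s$. This gives a bound that is integrable in $\lambda$ with $B$-power below $\delta_0$. This near-linear, integrable derivative estimate is the resolvent analogue of the small-time estimate your plan lacks.
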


\begin{proof}
We write
\[
\hat\rmp_{B,N}(x)=2\int_0^N a_\lambda(x) \dif \lambda, \qquad a_\lambda(x):=R_B(1+\lambda)^2(x,x)-R_0(1+\lambda)^2(x,x).
\]
We derive an $L^\infty$ bound for $a_\lambda$ and a $B^\eps_{r,1}$ bound for $\diff a_\lambda$, then interpolate.

For the $L^\infty$ bound, we use diamagnetic inequality Lemmas~\ref{lem:diamagnetic} and~\ref{lem:neg_power_trace}, to obtain
\begin{align}\label{eq:L_infty_bound_a_lambda}
\|a_\lambda\|_{L^\infty}\leq 2R_0(1+\lambda)^2(0) =2\Tr(R_0(1+\lambda)^2)\lesssim (1+\lambda)^{-1}.
\end{align}
Note that
\[
\av{\hat\rmp_{B,N},1}=2\int_0^N \Tr(R_B(1+\lambda)^2-R_0(1+\lambda)^2)\dif\lambda.
\]
Then interpolating the $L^\infty$ bound~\eqref{eq:L_infty_bound_a_lambda} and~\eqref{eq:difference_resolvents_trace}, we get
\[
|\av{\hat\rmp_{B,N},1}|\lesssim_\eps (1+\|B\|_{H^1})^\eps.
\]
Regarding the derivative, we use self-adjointness  $R_B(1+\lambda)(x,y)^*=R_B(1+\lambda)(y,x)$ to write
\begin{align*}
\partial_{x_i} a_\lambda(x)&=\partial_{x_i} \int R_B(1+\lambda)(x,y) R_B(1+\lambda)(y,x)\dif y\\
&=\partial_{x_i} \int |R_B(1+\lambda)(x,y)|^2\dif y\\
&=2\Re \int \partial_{x_i}  R_B(1+\lambda)(x,y) R_B(1+\lambda)(x,y)^* \dif y.
\end{align*}
Since $B$ is purely imaginary we can make the derivative a covariant derivative so that
\[
\partial_{x_i} a_\lambda(x)=2\Re \int (\partial_{x_i}+B_i(x)) R_B(1+\lambda)(x,y) R_B(1+\lambda)(x,y)^* \dif y.
\]
In particular for a real-valued $1$-form $K\in\Omega^1 C^\infty$
\begin{align*}
\av{K,\diff a_\lambda}&=2\sum_{i=1}^2 \int K_i(x) \Re \int (\partial_{x_i}+B_i(x)) R_B(1+\lambda)(x,y) R_B(1+\lambda)(x,y)^* \dif y \dif x\\
&=2 \Re\Tr(K^* \diff_B R_B(1+\lambda)^2).
\end{align*}
Thus
\[
\av{K,\diff a_\lambda}=\Re \Tr(\scq^1_{K,B}(\lambda)),
\]
where $\scq^1$ is defined in~\eqref{eq:def_q1}. Applying~\eqref{eq:Tr_q1_bound}, with $\kappa>0$ replaced by $\eps>0$, gives
\[
|\av{K,\diff a_\lambda}|\lesssim_{\eps} \|K\|_{C^{-\eps}} (1+\lambda)^{-1-\gamma_\eps} (1+\|B\|_{H^1})^{1+\beta_\eps},
\]
with $\beta_\eps,\gamma_\eps>0$ such that $\beta_\eps,\gamma_\eps\to 0$ as $\eps\to 0$.  Hence, by  Besov duality (see e.g.~\cite[Sec.~2.11.2]{Triebel})
\[
\|\diff a_\lambda\|_{B^\eps_{1,1}}\lesssim_\eps (1+\lambda)^{-1-\gamma_\eps} (1+\|B\|_{H^1})^{1+\beta_\eps}.
\]
By Poincar\'e inequality, we get
\[
\|a_\lambda-\bar a_\lambda\|_{B^{1+\eps}_{1,1}}\lesssim_{\eps}  (1+\lambda)^{-1-\gamma_\eps} (1+\|B\|_{H^1})^{1+\beta_\eps}.
\]
where $\bar a_\lambda$ is the spatial mean of $a_\lambda$. Interpolating this with~\eqref{eq:L_infty_bound_a_lambda} (see e.g.~\cite[Sec.~2.4.7]{Triebel}) we get
\[
\|a_\lambda-\bar a_\lambda\|_{B^{(1+\eps)/r}_{r,r}}\lesssim_{\eps,r}  (1+\lambda)^{-1-\gamma_\eps/r} (1+\|B\|_{H^1})^{(1+\beta_\eps)/r}, \qquad r\geq 1.
\]

Choose $\eps>0$ small enough such that we can find $r$ such that
\[
\max\Big\{q ,\frac{1+\beta_\eps}{\delta_0}\Big\} <r<\frac{1+\eps}{s}.
\]
Now set
\[
\alpha=\frac{1+\eps}{r},
\]
which by assumption on $r$ implies $s<\alpha$, and since $r>q $, the Besov embedding $B^\alpha_{r,r}\subset W^{s,q }$ (see e.g.~\cite[Sec.~2.3.2]{Triebel}), yields
\[
\|a_\lambda-\bar a_\lambda\|_{W^{s,q }}\lesssim_{s,q ,\eps,r}  (1+\lambda)^{-1-\gamma_\eps/r} (1+\|B\|_{H^1})^{(1+\beta_\eps)/r}.
\]

This bound gives a uniformly bounded integral in $\lambda$ for any sufficiently small $\eps>0$, and
\begin{align*}
\|\hat\rmp_{B,N}\|_{W^{s,q }}&\leq |\av{\hat\rmp_{B,N},1}|+\Big\| \int_0^N a_\lambda-\bar a_\lambda\dif\lambda  \Big\|_{W^{s,q }} \\
&\leq |\av{\hat\rmp_{B,N},1}|+\int_0^N \|a_\lambda-\bar a_\lambda\|_{W^{s,q }}\dif\lambda\\
&\lesssim_{s,q ,\eps,r} (1+\|B\|_{H^1})^\eps+  (1+\|B\|_{H^1})^{\frac{1+\beta_\eps}{r}}.
\end{align*}
Since $1+\beta_\eps<r\delta_0$ by the choice of $r$, taking $\eps>0$ smaller if necessary gives
\[
\|\hat\rmp_{B,N}\|_{W^{s,q }}\lesssim_{s,q ,\delta_0} (1+\|B\|_{H^1})^{\delta_0}.
\]

For $j\ge2$ choose $q _2\in(q ,\infty)$ with $sq _2<1$, which is possible since $sq <1$, and let $q _1\in(1,\infty)$ be defined by $\frac{j-1}{q _1}=\frac1q -\frac1{q _2}$. The fractional product rule (see e.g.~\cite{BookChemin}) applied $j-1$ times gives
\[
\big\|\hat\rmp^{\,j}_{B,N}\big\|_{W^{s,q }}\lesssim\big\|\hat\rmp_{B,N}\big\|^{j-1}_{L^{q _1}}\big\|\hat\rmp_{B,N}\big\|_{W^{s,q _2}},
\]
and we can conclude by applying the case $j=1$ twice: with $(s,q )=(0,q _1)$ to the first factor, and with $p=q _2$ to the second.

\end{proof}

\begin{proof}[Proof of Lemma~\ref{lem:wick-powers}]
By~\eqref{eq:change_base}
  \[
  \llbracket\phi_{\0,B,N}^i(\phi_{\0,B,N}^*)^j\rrbracket_N
    =\sum_{a=0}^{i\wedge j}\binom ia\binom ja a!\,
      \widehat\rmp_{B,N}^{a}
      \llbracket\phi_{\0,B,N}^{i-a}(\phi_{\0,B,N}^*)^{j-a}\rrbracket_{B,N}.
  \]
  The covariance-matched Wick products have moments bounded uniformly in
  $B$ and $N$ in every negative Sobolev space, by the diamagnetic covariance
  bound.  Moreover, for arbitrarily small $\eta_0>0$,
  Lemma~\ref{lem:comp_renorm_constants} gives
  \[
    \|\widehat\rmp_{B,N}^{a}\|_{W^{t,\sigma}}
      \lesssim (1+\|B\|_{H^1})^{a\eta_0}
  \]
  for any $\sigma\in(1,\infty)$ and $t\in(0,\eta_0)$ with $t\sigma<1$.  Applying the
  product rule to each summand and choosing $\eta_0<\eta/(i\wedge j\vee1)$
  proves the claim.
\end{proof}

\subsection{Bounds}

We now state our corresponding drift bound.

\begin{lemma}\label{lem:drift_estimate}
 Let $\kappa\in(0,\frac14)$, $\bA\in\cX^{-\kappa}$ and $B\in\Omega^1H^1$.  For any  $u\in\bH^{\Phi}$ one has
\[
 \| I_N^{\bA,B}(u)\|_{H^{1-\nfrac \kappa2}_B}
 \lesssim
 \fC_\kappa(\bA;B)^{1/2}
 \|u\|_{\bH^\Phi}.
\]
\end{lemma}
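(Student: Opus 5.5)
We estimate the drift pointwise in $t$ and then integrate, using Cauchy--Schwarz in time together with the pushed resolvent estimate. By Minkowski's inequality,
\[
\|I_N^{\bA,B}(u)\|_{H^{1-\kappa/2}_B}\le \int_0^N \|\Sigma_t^{\bA,B}u_t\|_{H^{1-\kappa/2}_B}\dif t
\le \int_0^N \|R_{\bA,B}(t+m_{\bA,B})\|_{L^2\to H^{1-\kappa/2}_B}\|u_t\|_{L^2}\dif t .
\]
A direct application of Theorem~\ref{thm:resolvent_estimate_pushed} with $s=\eta=1-\kappa/2$ gives the bound $(1+t)^{-\frac12-\frac\kappa4}\fC_\kappa(\bA;B)$. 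After Cauchy--Schwarz this yields $\fC_\kappa(\bA;B)\|u\|_{\bH^\Phi}$, since $\int_0^\infty (1+t)^{-1-\kappa/2}\dif t<\infty$. This already has the right structure, but the power of $\fC_\kappa$ is $1$ rather than $1/2$.

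To get the sharper exponent $1/2$, we instead interpolate between the endpoints of Corollary~\ref{cor:fractional_power_comparison_LAB}. Writing $L:=L_{\bA,B}+m_{\bA,B}\ge 1$, that corollary with $\theta=\tfrac12$ gives
\[
\|v\|_{H_B^{1-\kappa/2}}\lesssim \fC_\kappa(\bA;B)^{1/2}\|L^{1/2}v\|_{L^2}
\quad\text{for } v\in\scD(L^{1/2}).
\]
It therefore suffices to show
\[
\Big\|L^{1/2}\int_0^N (L+t)^{-1}u_t\dif t\Big\|_{L^2}\lesssim\|u\|_{\bH^\Phi},
\]
uniformly in $N$, $\bA$ and $B$. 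This is a statement purely about a positive self-adjoint operator, and by Remark~\ref{rem:spectral_theorem} we can use functional calculus.

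To prove the functional-calculus bound, pair with a test function $g\in L^2$ with $\|g\|_{L^2}=1$ and use self-adjointness:
\[
\Big\langle g, L^{1/2}\int_0^N (L+t)^{-1}u_t\dif t\Big\rangle
\le \Big(\int_0^\infty \|L^{1/2}(L+t)^{-1}g\|_{L^2}^2\dif t\Big)^{1/2}\|u\|_{\bH^\Phi}.
\]
In the eigenbasis $(e_i)$ of $L$, with eigenvalues $\ell_i\ge 1$, the first factor equals
\[
\Big(\sum_i |\langle g,e_i\rangle|^2\int_0^\infty \frac{\ell_i}{(\ell_i+t)^2}\dif t\Big)^{1/2}=\|g\|_{L^2},
\]
because $\int_0^\infty \ell(\ell+t)^{-2}\dif t=1$. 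This is the crux of the argument: the time integral exactly compensates the half power of $L$, with no loss.

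It remains to justify that $I_N^{\bA,B}(u)\in\scD(L^{1/2})$. For each fixed $N$ the integrand lies in $\scD(L)$ and the integral converges in $L^2$, so membership follows from closedness of $L^{1/2}$; one can also first take $u$ smooth and finite-rank in time and then approximate. The only genuinely non-routine input is Corollary~\ref{cor:fractional_power_comparison_LAB}, which is already established. The main point of care is simply to avoid the crude pointwise-in-$t$ estimate, which loses a full power of $\fC_\kappa$.
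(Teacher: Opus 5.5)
Your proof is correct and follows the same route as the paper: duality against the map $f\mapsto(\Sigma_t^{\bA,B}f)_{t\in[0,N]}$, whose adjoint is $I_N^{\bA,B}$, followed by Corollary~\ref{cor:fractional_power_comparison_LAB} with $\theta=\frac12$. Your spectral identity $\int_0^\infty \ell(\ell+t)^{-2}\dif t=1$, i.e.\ $\int_0^N\|L^{1/2}\Sigma_t^{\bA,B}g\|_{L^2}^2\dif t\le\|g\|_{L^2}^2$, is exactly the estimate the duality step needs, and the paper's written argument does not supply it: it substitutes $f=L^{1/2}v$ into the pointwise bound $\|\Sigma_t^{\bA,B}f\|_{L^2}\le(1+t)^{-1}\|f\|_{L^2}$, which only controls $v\mapsto(\Sigma_t^{\bA,B}L^{1/2}v)_t$ by $\|L^{1/2}v\|_{L^2}$, and by duality that yields no more than $\|I_N^{\bA,B}(u)\|_{L^2}\le\|u\|_{\bH^\Phi}$, so your computation is the rigorous justification of that step.
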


\begin{proof}
   Let $f\in L^2$ and $u\in\bH^\Phi$, and note that
    \begin{align*}
    \av{I_N^{\bA,B}(u),f}&=\int_0^N \av{\Sigma_t^{\bA,B}u_t,f}_{L^2}\dif t=\int_0^N \av{u_t,\Sigma_t^{\bA,B}f}\dif t.
    \end{align*}
    Define $U:L^2\to \bH^\Phi$ by
    \[
    (Uf)_t= \Sigma^{\bA,B}_t  f,
    \]
    and note that $U^*=I_N^{\bA,B}$.     Then we have that
    \[
    \int_0^N \|(Uf)_t\|_{L^2}^2\dif t=\int_0^N \|\Sigma_t^{\bA,B} f\|_{L^2}^2\dif t.
    \]
    Note that
    \begin{align*}
   \|\Sigma_t^{\bA,B} f\|_{L^2}&=\|(L_{\bA,B}+m_{\bA,B}+t)^{-1}f\|_{L^2}\leq (1+t)^{-1} \|f\|_{L^2}.
    \end{align*}
    Then
    \[
    \| Uf\|_{\bH^\Phi}^2=\|f\|_{L^2}^2 \int_0^N (1+t)^{-2}\dif t  \leq \|f\|_{L^2}^2.
    \]
    If we now take $f=(L_{\bA,B}+m_{\bA,B})^{1/2}v$, we get
    \[
    \|U (L_{\bA,B}+m_{\bA,B})^{1/2}v\|_{\bH^\Phi}\leq \|(L_{\bA,B}+m_{\bA,B})^{1/2}v\|_{L^2}.
    \]
    In particular, $U(L_{\bA,B}+m_{\bA,B})^{1/2}:\scD((L_{\bA,B}+m_{\bA,B})^{1/2})\to \bH^\Phi$ is bounded and so is its adjoint, namely
    \[
    (U(L_{\bA,B}+m_{\bA,B})^{1/2})^*=(L_{\bA,B}+m_{\bA,B})^{1/2} I_N^{\bA,B}: \bH^\Phi\to \scD((L_{\bA,B}+m_{\bA,B})^{1/2}),
    \]
    and
    \[
    \|(L_{\bA,B}+m_{\bA,B})^{1/2} I_N^{\bA,B}(u)\|_{L^2}\leq \|u\|_{\bH^\Phi}.
    \]
    Using~Corollary~\ref{cor:fractional_power_comparison_LAB} with $\theta=\frac12$, we obtain
    \[
    \| I_N^{\bA,B}(u)\|_{H^{1-\nfrac \kappa2}_B}
    \lesssim
    \fC_\kappa(\bA;B)^{1/2}
    \|(L_{\bA,B}+m_{\bA,B})^{1/2} I_N^{\bA,B}(u)\|_{L^2},
    \]
    and hence the result follows.
\end{proof}

The following lemma gives the key estimate for controlling the Boue-Dupuis formula. Recall that $\mathcal{V}(x)=\sum_{k=0}^n c_kx^k$ with $c_n>0$ and $n\geq 2$, and set
\[
s_n:=1-\frac 1n,
\]
so that $H^{s_n}\subset L^{2n}$ in two dimensions.

Set $s_n=1-1/n$.  The following estimate is the form of the coercive bound
that is compatible with the regularity available for the Gaussian remainder.

\begin{lemma}\label{lem:coercive-bound-higgs}
  Let $n\geq2$, $s\in(s_n,1)$, and let $P(z,z^*)$ be a homogeneous
  polynomial of total degree $1\leq r\leq2n-1$.  For every
  $\delta,\eta>0$ there are $\varepsilon>0$, $p\in(1,\infty)$, $K<\infty$
  and $C<\infty$ such that, for $B\in H^1$, $f\in W^{-\varepsilon,p}$ and
  $Z\in H_B^s\cap L^{2n}$,
  \[
    \left|\int_{\T^2} fP(Z,Z^*)\ \right|
    \leq C\bigl(1+\|f\|_{W^{-\varepsilon,p}}^K\bigr)
      +\delta\|B\|_{H^1}^{\eta}
      +\delta\|Z\|_{H_B^s}^{3/2}
      +\delta\|Z\|_{L^{2n}}^{2n}.
  \]
  The constants are uniform in $B$ and $Z$; $C$ may depend on the
  coefficients of $P$.
\end{lemma}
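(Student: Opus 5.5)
The plan is to use duality between $W^{-\varepsilon,p}$ and $W^{\varepsilon,p'}$, so that
\[
\Big|\int fP(Z,Z^*)\Big|\le \|f\|_{W^{-\varepsilon,p}}\|P(Z,Z^*)\|_{W^{\varepsilon,p'}},\qquad \tfrac1p+\tfrac1{p'}=1 ,
\]
and then to bound the fractional Sobolev norm of the monomials of degree $r$ in $Z,Z^*$. Because $|Z^*|=|Z|$ and complex conjugation commutes with the flat Bessel potential, it suffices to treat $Z^r$.

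First we pass from the covariant to the flat Sobolev norm. Fix $\sigma\in(s_n,s)$ with $\sigma\le1$. Lemma~\ref{lem:Sobolev_comparison} together with the embedding $H^1\subset L^q$ gives
\[
\|Z\|_{H^\sigma}\lesssim(1+\|B\|_{H^1})^{\sigma}\|Z\|_{H^\sigma_B}\le(1+\|B\|_{H^1})^{\sigma}\|Z\|_{H^s_B}.
\]
This loses almost a full power of $\|B\|_{H^1}$, while the statement allows only an arbitrarily small power $\eta$. To fix this we interpolate a low flat norm against $H^\sigma$ and take only a tiny regularity $\varepsilon$. Concretely, use the interpolation $W^{\varepsilon,q_1}\supset[L^{q_0},H^{\sigma}]_{\theta}$ with $\theta=\varepsilon/\sigma$ small. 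The $H^\sigma$ factor then contributes only $(1+\|B\|_{H^1})^{\sigma\theta}\|Z\|_{H^s_B}^{\theta}$, which is a small power of $\|B\|_{H^1}$.

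Second, the fractional Leibniz rule gives
\[
\|Z^r\|_{W^{\varepsilon,p'}}\lesssim\|Z\|_{W^{\varepsilon,q_1}}\|Z\|_{L^{q_2}}^{r-1},\qquad \tfrac1{p'}=\tfrac1{q_1}+\tfrac{r-1}{q_2}.
\]
Since $r\le2n-1$, we may take $q_2\le 2n$, $q_0\le 2n$ and $p'$ slightly larger than $1$, because $\frac{2n-1}{2n}<1$ leaves room. Then
\[
\|Z^r\|_{W^{\varepsilon,p'}}\lesssim (1+\|B\|_{H^1})^{\sigma\theta}\,\|Z\|_{H^s_B}^{\theta}\,\|Z\|_{L^{2n}}^{r-\theta}.
\]

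Finally we apply Young's inequality with exponents chosen so that the $\|Z\|_{L^{2n}}$ factor lands on $\|Z\|_{L^{2n}}^{2n}$, the $H^s_B$ factor on $\|Z\|_{H^s_B}^{3/2}$, the $B$ factor on $\|B\|_{H^1}^\eta$, and $\|f\|$ on $\|f\|^K$. The required exponent balance is
\[
\frac{r-\theta}{2n}+\frac{\theta}{3/2}+\frac{\sigma\theta}{\eta}<1,
\]
which holds for $\theta$ small because $r\le 2n-1$ gives $\frac{r}{2n}<1$. This fixes $\varepsilon=\theta\sigma$ in terms of $\eta$, and then $K$ is the dual exponent. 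The constants $\delta$ are absorbed by enlarging $C$ in the usual way.

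The main obstacle is to verify rigorously the interpolation and product estimate at low integrability, namely to confirm the choice $q_1<2n$ compatible with $[L^{2n},H^\sigma]_\theta$ while keeping $p'>1$. The key point is that $H^\sigma\subset L^{2n}$, which holds since $\sigma>s_n$, so the interpolation pair can be taken inside $L^{2n}$-type spaces without loss.
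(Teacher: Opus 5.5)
Your proposal is correct and follows essentially the same route as the paper: duality against $W^{\varepsilon,p'}$, interpolation between $L^{2n}$ and the flat $H^s$ norm with a small parameter so that the covariant-to-flat comparison costs only an $O(\varepsilon)$ power of $\|B\|_{H^1}$, the fractional product rule, and Young's inequality under the condition $\frac{r-\theta}{2n}+\frac{2\theta}{3}<1$ (the paper's $\lambda=2\varepsilon/s$ plays the role of your $\theta$, giving the same $p'$). The differences are cosmetic. The paper treats $r=1$ separately via $H^{-\varepsilon}$ duality, and your reduction to $Z^r$ is slightly misphrased: mixed monomials $Z^a(Z^*)^b$ should instead be handled by applying the multilinear product rule directly, which works verbatim because $Z$ and $Z^*$ have identical norms.
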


\begin{proof}
  It is enough to treat a monomial.  If $r=1$, Sobolev duality and comparison
  of the flat and covariant norms give
  \[
    \left|\int fP(Z,Z^*)\right|
      \lesssim \|f\|_{H^{-\varepsilon}}
      (1+\|B\|_{H^1})^\varepsilon\|Z\|_{H_B^s},
  \]
  and the claim follows from Young's inequality after taking
  $\varepsilon$ sufficiently small.

  Let $2\leq r\leq2n-1$ and put $\lambda=2\varepsilon/s$.  Interpolation
  between $L^{2n}$ and $H^s$, followed by the fractional product rule, yields
  \[
    \|P(Z,Z^*)\|_{W^{\varepsilon,p'}}
    \lesssim (1+\|B\|_{H^1})^{2\varepsilon}
      \|Z\|_{H_B^s}^{\lambda}\|Z\|_{L^{2n}}^{r-\lambda},
  \]
  where
  \[
    \frac1p=\frac{2n-r}{2n}-\frac{\varepsilon(n-1)}{sn}.
  \]
  For sufficiently small $\varepsilon$, $p\in(1,\infty)$ and
  \[
    1-\frac{2\lambda}{3}-\frac{r-\lambda}{2n}>0.
  \]
  Duality and Young's inequality now give the asserted estimate.  Since all
  resulting powers of $\|B\|_{H^1}$ are $O_n(\varepsilon)$, they are at most
  $\eta$ after decreasing $\varepsilon$.
\end{proof}

\begin{lemma}\label{lem:potential-bound-higgs}
  Let $\mathcal V(x)=\sum_{k=0}^n c_kx^k$, where $n\geq2$ and $c_n>0$.
  Let $s\in(s_n,1)$ and let $Z$ be an $H_B^s$-valued random variable, not
  necessarily independent of $\phi_{0,B,N}$.  For every $a_0,\delta,\eta>0$
  there are $C,K<\infty$ such that, uniformly in $N$, $a\geq a_0$ and
  $m\in\mathbb R$,
  \begin{align*}
    a\mathbb E \scV_N^m(\phi_{0,B,N}+Z)
    &\leq C(a^K+|m|^K+1)+\delta\|B\|_{H^1}^{\eta}\\
    &\quad+\delta\mathbb E\|Z\|_{H_B^s}^{3/2}
      +C(a+\delta)\mathbb E\|Z\|_{L^{2n}}^{2n},\\
    a\mathbb E \scV_N^m(\phi_{0,B,N}+Z)
    &\geq-C(a^K+|m|^K+1)-\delta\|B\|_{H^1}^{\eta}
      -\delta\mathbb E\|Z\|_{H_B^s}^{3/2}.
  \end{align*}
  Here $C$ depends on $a_0$, $\delta$, $\eta$ and the coefficients of
  $\mathcal V$, but not on $a,m,B,N$ or $Z$.
\end{lemma}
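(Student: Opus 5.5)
We expand $\scV_N^m(\phi_{\0,B,N}+Z)$ with Lemma~\ref{lem:Wick_prods}\ref{item:Wick_V}, taking $\zeta=\phi_{\0,B,N}$ with flat Wick base $\rmp_N$ and $w=Z$. This gives
\[
\scV_N^m(\phi_{\0,B,N}+Z)=\sum_{k=0}^n c_k\sum_{i,j=0}^k\binom ki\binom kj\int\wick{\phi_{\0,B,N}^i(\phi_{\0,B,N}^*)^j}_N\,Z^{k-i}(Z^*)^{k-j}
-\frac m2\int\bigl(|\phi_{\0,B,N}+Z|^2-\rmp_N\bigr).
\]
We sort the terms into three groups.
\begin{itemize}
\item The term $i=j=0$, $k=n$, is $c_n\int|Z|^{2n}$. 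It is the coercive term and is nonnegative.
\item Terms with $i=j=0$, $k<n$ are polynomials in $Z$ of degree $2k\le 2n-2$. By Young's inequality they are bounded by $\delta\|Z\|_{L^{2n}}^{2n}+C$.
\item All remaining terms are $\int f\,P(Z,Z^*)$, with $f=\wick{\phi^i(\phi^*)^j}_N$ for $(i,j)\neq(0,0)$ and $P$ homogeneous of degree $r=2k-i-j\le 2n-1$. The case $r=0$ is $\int f$, whose expectation vanishes: by the change of base formula \eqref{eq:change_base} it is a sum of $\hat\rmp_{B,N}^a$ times centred Wick products with respect to $\rmp_{B,N}$, plus possibly the deterministic term $\int\hat\rmp_{B,N}^a$. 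That term is bounded by Lemma~\ref{lem:comp_renorm_constants} by $(1+\|B\|_{H^1})^{a\delta_0}$, which is $\le C+\delta\|B\|^\eta_{H^1}$.
\end{itemize}

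The mass term becomes $-\frac m2\int(\wick{|\phi|^2}_N+2\Re(\phi^* Z)+|Z|^2)$. It is handled in the same way, with $|m|\int|Z|^2\le \delta\|Z\|_{L^{2n}}^{2n}+C|m|^{K}$.

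For the terms with $r\ge1$, we apply Lemma~\ref{lem:coercive-bound-higgs} pathwise, with $\delta$ replaced by $\delta/a$ (times the coefficients). This bounds each term by
\[
C(1+\|f\|_{W^{-\varepsilon,p}}^K)+\delta a^{-1}\bigl(\|B\|_{H^1}^\eta+\|Z\|^{3/2}_{H^s_B}+\|Z\|_{L^{2n}}^{2n}\bigr).
\]
Here the constants $C,K$ depend polynomially on $a/\delta$. This dependence can be tracked through Young's inequality: the exponents are fixed, so $C\lesssim (a/\delta)^{K'}$. We then take expectations. Lemma~\ref{lem:wick-powers} gives $\E\|f\|^K_{W^{-\varepsilon,p}}\le C(1+\|B\|_{H^1})^{\eta'}$ with $\eta'$ arbitrarily small. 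Since this is multiplied by $a^{K'}$, we absorb it using Young's inequality in the form $a^{K'}(1+\|B\|)^{\eta'}\le C a^{K''}+\delta\|B\|^\eta$, choosing $\eta'<\eta$. The joint dependence of $Z$ on $\phi$ is harmless, because every bound is pathwise before taking expectations.

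Multiplying by $a$ and collecting terms, the upper bound retains $(ac_n+\delta)\|Z\|_{L^{2n}}^{2n}$, which is the asserted $C(a+\delta)\E\|Z\|^{2n}_{L^{2n}}$. For the lower bound we keep $ac_n\|Z\|^{2n}_{L^{2n}}$ on the favourable side. Choosing the $\delta$'s below $a_0c_n/2$ lets it absorb all $\delta\|Z\|^{2n}_{L^{2n}}$ contributions, leaving only $-\delta\E\|Z\|^{3/2}_{H^s_B}$ and $-\delta\|B\|^\eta_{H^1}$. Uniformity in $a\ge a_0$ comes from the constants being polynomial in $a$. We do not need $\E\|Z\|^{3/2}_{H^s_B}<\infty$ separately: if it is infinite, the upper bound is trivial, and in the lower bound we are subtracting $+\infty$.

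The main obstacle is the bookkeeping of the $a$- and $m$-dependence of the constants in Lemma~\ref{lem:coercive-bound-higgs}. That lemma is stated with $C$ depending on $\delta$, so we have to reopen its Young-inequality step to confirm that $C(\delta/a)\lesssim (a/\delta)^{K}$. A second point to check is that the Wick-product expectations interact correctly with the random, $\phi$-dependent $Z$. Here we rely only on the pathwise estimate followed by Lemma~\ref{lem:wick-powers}, never on orthogonality, except for the $r=0$ term, which is deterministic in $Z$.
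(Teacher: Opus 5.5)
Your proposal is correct and follows the paper's proof. Both use the exact shifted Wick expansion in the flat base $\rmp_N$, Lemmas~\ref{lem:coercive-bound-higgs} and~\ref{lem:wick-powers} for the terms of $Z$-degree $1$ to $2n-1$, H\"older and Young for the lower pure powers and the quadratic mass term, and absorption of all small $\|Z\|_{L^{2n}}^{2n}$ contributions into $ac_n\|Z\|_{L^{2n}}^{2n}$ for the lower bound. The differences are cosmetic: you evaluate the $Z$-free expectations exactly via \eqref{eq:change_base} and Lemma~\ref{lem:comp_renorm_constants} rather than through Lemma~\ref{lem:wick-powers}, and you spell out (via $\delta\mapsto\delta/a$) the polynomial dependence on $a$ of the Young constants, which the paper leaves implicit.
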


\begin{proof}
  Put $\phi=\phi_{0,B,N}$.  For every $k$ the exact shifted Wick expansion is
  \[
    \llbracket|\phi+Z|^{2k}\rrbracket_N
    =\sum_{i,j=0}^k\binom ki\binom kj
      \llbracket\phi^i(\phi^*)^j\rrbracket_N
      Z^{k-i}(Z^*)^{k-j}.
  \]
  Terms of $Z$-degree between $1$ and $2n-1$ are controlled by
  Lemmas~\ref{lem:coercive-bound-higgs} and~\ref{lem:wick-powers}.  The
  $Z$-free terms are controlled directly by Lemma~\ref{lem:wick-powers}; the
  pure powers of degree below $2n$ are controlled by H\"older and Young.
  The remaining pure term is $ac_n\|Z\|_{L^{2n}}^{2n}$.

  Finally,
  \[
    |\phi+Z|^2-\rmp_N
      =\llbracket|\phi|^2\rrbracket_N
       +2\operatorname{Re}(\phi^*Z)+|Z|^2,
  \]
  so the same estimates cover the linear and $Z$-free mass terms,
  while the quadratic term is bounded by a small multiple of
  $\|Z\|_{L^{2n}}^{2n}$ plus $C(a^K+|m|^K+1)$.  For the lower bound, all small
  $L^{2n}$ terms are absorbed by $ac_n\|Z\|_{L^{2n}}^{2n}$, uniformly
  for $a\geq a_0$.
\end{proof}

\begin{theorem}\label{thm:higgs-W-bound}
  Assume $|h(A,\psi)|\leq c(1+\|A\|^{2}_{C^{-\eta}}+\|\psi\|^{2n}_{C^{-\eta}})$  for some $\eta_0>0$ $c=c(n,\eta)>0$ sufficiently small and let $q\geq1$, $m\in\mathbb R$, and let
  $0<\kappa<1/(4n)$.  Then there are $K<\infty$ and $\tau(\kappa)>0$ (as usual $\tau(\kappa)\to0$ as $\kappa\to0$) such
  that, for every $\delta>0$, uniformly in $N$ and in Coulomb-gauge
  $B\in H^1$,
  \[
    \mathcal W_N^{h,q,m}(\mathbb A,B)
    \geq-C\bigl(q+\|\mathbb A\|_{-\kappa}+1+|m|\bigr)^K
      -\delta\|B\|_{H^1}^{\tau(\kappa)}-c\|A\|_{H^{-\eta}},
  \]
  and
  \[
    \mathcal W_N^{h,q,m}(\mathbb A,0)
    \leq C\bigl(q+\|\mathbb A\|_{-\kappa}+1+|m|\bigr)^K+\|A\|_{H^{-\eta}}.
  \]
\end{theorem}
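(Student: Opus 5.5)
Both bounds will come from the Bou\'e--Dupuis representation of $\cW^{h,q,m}_N(\bA,B)$ stated above. Writing the shifted field as
\[
\phi_{\bA,B,N}+I^{\bA,B}_N(u)=\phi_{\0,B,N}+Z,\qquad Z:=Z_{\bA,B,N}+I^{\bA,B}_N(u),\quad Z_{\bA,B,N}:=\phi_{\bA,B,N}-\phi_{\0,B,N},
\]
reduces everything to Lemma~\ref{lem:potential-bound-higgs} with $a=q$. The flat Gaussian $\phi_{\0,B,N}$ carries all singular Wick terms, controlled uniformly via Lemma~\ref{lem:wick-powers}. The remainder $Z$ is $H^s_B$-valued for some $s\in(s_n,1-3\kappa)$; such an $s$ exists because $\kappa<1/(4n)$ gives $1-3\kappa>1-1/n$. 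Its two pieces are controlled as follows:
\[
\E\|Z_{\bA,B,N}\|^2_{H^s_B}\lesssim \fC_\kappa(\bA;B)^4\triple{\bA}_{-\kappa}^2
\]
by Theorem~\ref{thm:cov_GFF_decomposition} with $\bar\bA=\0$, and
\[
\|I^{\bA,B}_N(u)\|_{H^{1-\kappa/2}_B}\lesssim \fC_\kappa(\bA;B)^{1/2}\|u\|_{\bH^\Phi}
\]
by Lemma~\ref{lem:drift_estimate}. The mass shift from $m_{\bA,B}$ to $m$ only enters through the $m$-term in $\scV^m_N$, which is allowed for in Lemma~\ref{lem:potential-bound-higgs}.

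\emph{Lower bound.} For arbitrary $u$, I apply the lower bound of Lemma~\ref{lem:potential-bound-higgs} to $q\,\E\scV^m_N(\phi_{\0,B,N}+Z)$. This leaves the error $-\delta\E\|Z\|^{3/2}_{H^s_B}$, which splits into two parts:
\[
\delta\,\fC_\kappa^{3/4}\|u\|^{3/2}_{\bH^\Phi}\qquad\text{and}\qquad \delta\,\E\|Z_{\bA,B,N}\|^{3/2}_{H^s_B}.
\]
I absorb the first into $\frac12\|u\|^2_{\bH^\Phi}$ by Young, at a cost $C\fC_\kappa^{3}$. Since $\fC_\kappa=(1+\|B\|_{H^1})^{\tau(\kappa)}(1+\triple{\bA}_{-\kappa})^{\rmn(\kappa)}$ by Remark~\ref{rem:fC_bound}, this cost is of the form $\delta\|B\|^{\tau(\kappa)}$ plus a polynomial in $\triple{\bA}_{-\kappa}$, after a further Young splitting of the product. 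The second part is handled the same way.

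For the observable, I use the growth assumption:
\[
q|h(A,\psi)|\le qc\bigl(1+\|A\|^2_{C^{-\eta}}+\|\psi\|^{2n}_{C^{-\eta}}\bigr).
\]
I bound $\|\phi_{\0,B,N}+Z\|^{2n}_{C^{-\eta}}$ by $\|\phi_{\0,B,N}\|^{2n}_{C^{-\eta}}$, whose moments are uniform in $N$ and $B$ by Lemma~\ref{lem:wickmoment} with $i=1,j=0$ and Besov embedding. The $Z$-part satisfies $\|Z\|_{C^{-\eta}}\lesssim\|Z\|_{L^{2n}}$ (embedding $L^{2n}\subset B^{-1/n}_{\infty,\infty}$, taking $\eta\ge1/n$ or else interpolating with $H^s_B$), and this is absorbed into the coercive term $qc_n\|Z\|^{2n}_{L^{2n}}$ once $c$ is small. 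Here I must use the lower bound of Lemma~\ref{lem:potential-bound-higgs} with that term retained, rather than discarded. The gauge part of $h$ is deterministic given $A$ and is left on the right-hand side as the $c\|A\|$ term.

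\emph{Upper bound.} At $B=0$, I choose the explicit drift $u=0$, or a simple drift that cancels the low-frequency part of $Z_{\bA,0,N}$. With $u=0$, the upper bound of Lemma~\ref{lem:potential-bound-higgs} requires $\E\|Z_{\bA,0,N}\|^{3/2}_{H^s}$ and $\E\|Z_{\bA,0,N}\|^{2n}_{L^{2n}}$. The first follows from Theorem~\ref{thm:cov_GFF_decomposition}. The second requires a $2n$-th moment; since $Z$ is Gaussian, hypercontractivity upgrades the second moment to a polynomial in $\triple{\bA}_{-\kappa}$, and Sobolev embedding $H^s\subset L^{2n}$ then applies. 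The $h$ term is bounded as before, with the opposite sign.

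\emph{Main obstacle.} I expect the hardest step to be the bookkeeping ensuring that every power of $\|B\|_{H^1}$ stays sub-quadratic, indeed of order $\tau(\kappa)$. In particular, the Young absorption of the $u$-dependent term must not reintroduce large powers of $\fC_\kappa$ in $B$. This holds only because $\fC_\kappa$ carries a $\tau(\kappa)$ exponent in $\|B\|_{H^1}$ and Lemma~\ref{lem:potential-bound-higgs} uses the sub-quadratic power $3/2$.
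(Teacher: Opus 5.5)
Your route is the paper's. It runs Bou\'e--Dupuis with the splitting $Z=(\phi_{\bA,B,N}-\phi_{\0,B,N})+I^{\bA,B}_N(u)$ around the flat field $\phi_{\0,B,N}$, and applies Lemma~\ref{lem:potential-bound-higgs} with $a=q$. It absorbs $\delta\,\fC_\kappa^{3/4}\|u\|^{3/2}_{\bH^\Phi}$ into the entropy by Young, using the $\tau(\kappa)$-small $B$-exponent of $\fC_\kappa$. The upper bound takes $B=0$, $u=0$ and uses Gaussian hypercontractivity. You go further than the paper on the observable, which its proof dispatches with ``together with the growth bound on $h$''. There you are right that the lower bound of Lemma~\ref{lem:potential-bound-higgs} must be used with a fraction of $qc_n\|Z\|^{2n}_{L^{2n}}$ retained. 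You are also right that $L^{2n}\subset B^{-1/n}_{\infty,\infty}$ then absorbs the $\psi$-growth of $h$ when $\eta\ge 1/n$ and $c$ is small.

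The fallback ``or else interpolating with $H^s_B$'' for $\eta<1/n$ does not work. Since $s<1$, $H^s_B$ only embeds in $C^{s-1}$, so interpolation gives at best $\|Z\|^{2n}_{C^{-\eta}}\lesssim\|Z\|^{2n\theta}_{L^{2n}}\|Z\|^{2n(1-\theta)}_{H^s_B}$ with $\theta<1$. This is homogeneous of degree $\theta+n(1-\theta)>1$ in the two quantities you can spend, namely $\|Z\|^{2n}_{L^{2n}}$ and $\|u\|^2_{\bH^\Phi}$ (which controls $\|I^{\bA,B}_N(u)\|^2_{H^s_B}$ up to $\fC_\kappa$), so Young cannot absorb it.

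No other argument can absorb it either. Take $\bA=\0$, $B=0$, $h=c\|\psi\|^{2n}_{C^{-\eta}}$, and a deterministic drift with $I_N(u)\approx f:=H\rho(\cdot/\ell)$, whose cost is $\lesssim H^2$. Since $\phi_{\0,0,N}$ has variance exactly $\rmp_N$, all Wick terms have mean zero. Jensen gives $\E\|\phi_{\0,0,N}+f\|^{2n}_{C^{-\eta}}\ge\|f\|^{2n}_{C^{-\eta}}\gtrsim H^{2n}\ell^{2n\eta}$. Hence
\[
\mathcal W^{h,1,m}_N(\0,0)\le -c'H^{2n}\ell^{2n\eta}+CH^{2n}\ell^{2}+C(1+|m|)\bigl(1+H^{2n-2}\bigr)+CH^2 .
\]
When $\eta<1/n$, once $\ell$ is small and $N$ large, the right-hand side tends to $-\infty$ as $H\to\infty$. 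So the lower bound is only true for $\eta\ge 1/n$. You should state that restriction explicitly rather than claim an interpolation route. The paper's one-line treatment of $h$ silently needs the same restriction.
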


\begin{proof}
  The Bou\'e--Dupuis formula reads
  \[
    \mathcal W_N^{h,q,m}(\mathbb A,B)
    =\inf_{u\in\mathbb H_a^{ \Phi  }}\mathbb E\!\left[
      -qh(A,\phi_{0,B,N}+Z)+q\scV_N^m(\phi_{0,B,N}+Z)
      +\frac12\|u\|_{\mathbb H^{ \Phi  }}^2\right],
  \]
  where
  \[
    Z=I_N^{\mathbb A,B}(u)+\phi_{\mathbb A,B,N}-\phi_{0,B,N}.
  \]
  Choose $s\in(s_n,1-4\kappa)$.  The drift estimate Lemma~\ref{lem:drift_estimate} and the covariant GFF
  decomposition Theorem~\ref{thm:cov_GFF_decomposition} imply, for some finite powers depending only on $\kappa$,
  \[
    \mathbb E\|Z\|_{H_B^s}^{3/2}
      \leq C((1+\|\mathbb A\|_{-\kappa})^K+\|B\|_{H^1}^{\tau(\kappa)})
        +\varepsilon\mathbb E\|u\|_{\mathbb H^{ \Phi  }}^2.
  \]
  Lemma~\ref{lem:potential-bound-higgs} with $a=q$, together with the growth
  bound on $h$, gives the lower estimate after choosing $\varepsilon$ small
  enough to absorb the drift cost into the entropy term.

  For the upper estimate set $B=0$ and $u=0$.  Then $Z$ is Gaussian, and
  Gaussian moment equivalence, the embedding $H^s\subset L^{2n}$, and the
  covariant GFF decomposition give
  \[
    \mathbb E\|Z\|_{H^s}^{3/2}
      +\mathbb E\|Z\|_{L^{2n}}^{2n}
      \leq C(1+\|\mathbb A\|_{-\kappa})^K.
  \]
  Applying the upper estimate of Lemma~\ref{lem:potential-bound-higgs} and
  the growth bound on $h$ completes the proof.
\end{proof}

\begin{remark}
The upper bound is stated only at $B=0$, which is all the construction requires: in the outer variational problem of \S\ref{sec:construction_measure} the upper bound on $\cW$ enters only through the choice of drift $v=0$ (equivalently $B_T=J_T(0)=0$), while a nonzero drift $B_T=J_T(v)$ is controlled by the lower bound above, valid for all $B\in\Omega^1H^1$ with $\diff^*B=0$.
\end{remark}

\subsection{Convergence}

We conclude this section by proving the convergence, in the $N\to\infty$ limit, of the conditional Higgs free energy $\cW_N^{h,1,m}(\bA,B)$. The (weak) convergence of the associated probability measure is an immediate consequence: we return to this in section~\ref{sec:densities}. We also show that the free energy depends continuously on the gauge field $\bA$, which is of importance for our argument in the next section.
\begin{theorem}
    \label{thm:higgs_convergence}
Let $\kappa\in (0,\kappa_0)$, $\bA,\bar\bA \in \cX^{-\kappa}$, and $B\in\Omega^1 H^1$. Further define $m\coloneqq m_{\bA,B}$ and $\bar m\coloneqq m_{\bar \bA,B}$. Then the following limit exists
    \[
    \lim_{N\to\infty} \cW_N^{h,1,m}(\bA,B) =: \cW_\infty^{h,1,m}(\bA,B).
    \]
Furthermore, there exist exponents $\tau(\kappa)$ and $\rmn(\kappa)$ such that, for all $N\in\N\cup\{\infty\}$,
	\[
	|\cW_N^{h,1,m}(\bA,B)-\cW_N^{h,1,\bar m}(\bar \bA,B)| \lesssim (1+\norm{B}_{\Omega^1 H^1})^{\tau(\kappa)}(1+\triple{\bA}_{-\kappa}+\triple{\bar \bA}_{-\kappa})^{\rmn(\kappa)} \triple{\bA-\bar \bA}_{-\kappa}.
	\]
\end{theorem}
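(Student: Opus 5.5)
The plan is to prove both assertions with the standard Barashkov--Gubinelli comparison argument, applied to the Bou\'e--Dupuis representation of $\cW_N^{h,1,m}(\bA,B)$. We work on one probability space carrying $X^\Phi$, so that $\phi_{\bA,B,N}$, $\phi_{\bar\bA,B,N}$ and $\phi_{\0,B,N}$ are all coupled through the same Brownian motion. For each functional, set
\[
F_N^{\bA}(u):=\E\big[-h(A,\phi_{\bA,B,N}+I^{\bA,B}_N(u))+\scV^{m}_N(\phi_{\bA,B,N}+I^{\bA,B}_N(u))+\tfrac12\|u\|^2_{\bH^\Phi}\big],
\]
so that $\cW_N^{h,1,m}(\bA,B)=\inf_u F_N^{\bA}(u)$.

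\textbf{Step 1: a priori bounds on near-minimisers.} Fix $u$ with $F_N^{\bA}(u)\le \cW_N^{h,1,m}(\bA,B)+1$. I need an upper bound on $\cW_N$ for general $B$, not only $B=0$. I would get it by rerunning the upper-bound part of the proof of Theorem~\ref{thm:higgs-W-bound} with $u=0$, using the upper estimate of Lemma~\ref{lem:potential-bound-higgs}; this costs $\delta\|B\|_{H^1}^\eta$. Combining it with the lower estimates (Lemmas~\ref{lem:potential-bound-higgs}, \ref{lem:drift_estimate} and Theorem~\ref{thm:cov_GFF_decomposition}), where a fraction of the coercive term $c_n\|Z\|^{2n}_{L^{2n}}$ and of $\frac12\|u\|^2$ is kept rather than discarded, should give
\[
\E\|u\|^2_{\bH^\Phi}+\E\|Z\|^{2n}_{L^{2n}}+\E\|Z\|^{3/2}_{H^s_B}\lesssim (1+\|B\|_{H^1})^{\tau(\kappa)}(1+\triple{\bA}_{-\kappa})^{\rmn(\kappa)},
\]
where $Z=I_N^{\bA,B}(u)+\phi_{\bA,B,N}-\phi_{\0,B,N}$ as before. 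By symmetry the same holds for near-minimisers of $F_N^{\bar\bA}$.

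\textbf{Step 2: continuity in $\bA$.} Insert the near-minimiser $u$ for $\bA$ into $F_N^{\bar\bA}$; this gives $\cW_N(\bar\bA)-\cW_N(\bA)\le F_N^{\bar\bA}(u)-F_N^{\bA}(u)+\epsilon$, and exchanging roles gives the reverse inequality. The two arguments differ by $D:=(\phi_{\bA,B,N}-\phi_{\bar\bA,B,N})+(I_N^{\bA,B}-I_N^{\bar\bA,B})(u)$.
\begin{itemize}
\item The first term of $D$ is controlled in $H^s_B$ by Theorem~\ref{thm:cov_GFF_decomposition}.
\item For the second term, I would repeat the duality argument of Lemma~\ref{lem:drift_estimate} with the resolvent difference \eqref{eq:pushed_resolvent_difference} in place of $\Sigma_t$, taking $\eta$ slightly below $1$ so that the $t$-integral converges. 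This gives $\|D_2\|_{H^s_B}\lesssim \fC_\kappa(\bA,\bar\bA;B)^2\triple{\bA-\bar\bA}_{-\kappa}\|u\|_{\bH^\Phi}$ for some $s>s_n$.
\end{itemize}
Write both arguments as $\phi_{\0,B,N}+Z$ and $\phi_{\0,B,N}+\bar Z$ with $Z-\bar Z=D$. Expanding $\wick{|\phi_{\0,B,N}+Z|^{2k}}_N-\wick{|\phi_{\0,B,N}+\bar Z|^{2k}}_N$ by Lemma~\ref{lem:Wick_prods}\ref{item:Wick_shift} gives terms of the form $\wick{\phi^i(\phi^*)^j}_N\cdot(\text{monomial in }Z,\bar Z\text{ of degree }\le 2n-1)\cdot D$. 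I would estimate these by duality in $W^{-\varepsilon,p}$ as in Lemma~\ref{lem:coercive-bound-higgs}, using Lemma~\ref{lem:wick-powers} for the Wick factor, and then by H\"older with the moment bounds of Step 1 for $Z$ and $\bar Z$. The mass term contributes $|m-\bar m|\,\E\int(|\phi|^2-\rmp_N)$, which Lemma~\ref{lem:difference_masses} bounds. The term $h$ is handled through its (assumed locally Lipschitz) growth bound. All resulting constants have the form $(1+\|B\|)^{\tau(\kappa)}(1+\triple{\bA}+\triple{\bar\bA})^{\rmn(\kappa)}$.

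\textbf{Step 3: convergence in $N$.} For $N<M$ I run the same comparison. A drift on $[0,N]$ extends by zero to $[0,M]$, and conversely a drift on $[0,M]$ can be truncated to $[0,N]$. The fields satisfy $\phi_{\bA,B,M}-\phi_{\bA,B,N}=\int_N^M\Sigma_t\dif X_t$. Flat Wick powers of $\phi_{\0,B,N}$ are Cauchy in $L^q(W^{-\varepsilon,p})$ with rate $N^{-\gamma}$, by the kernel domination in Lemma~\ref{lem:wickmoment} applied to the tail covariance $2\int_N^M R_B(1+\lambda)^2\dif\lambda$. The tail of $\phi_{\bA,B,\cdot}-\phi_{\0,B,\cdot}$ decays in $H^s_B$ by the estimate in the proof of Theorem~\ref{thm:cov_GFF_decomposition} restricted to $\lambda\in[N,M]$. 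The tail of the drift contributes $\|\int_N^M\Sigma_tu_t\dif t\|_{H^s_B}\lesssim N^{-\gamma}\|u\|_{\bH^\Phi}$ when $s<1-\kappa/2$. The same expansion as in Step 2 then shows that $(\cW_N)_N$ is Cauchy. The continuity estimate passes to $N=\infty$ because its constants are uniform in $N$.

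I expect the main obstacle to be Step 1 for general $B$: the moment bounds have to stay sub-polynomial in $\|B\|_{H^1}$. The issue is that the difference estimates in Step 2 involve $Z$ to degree up to $2n-1$ multiplied by $D$, and H\"older then requires $\E\|Z\|_{L^{2n}}^{2n}$ and $\E\|Z\|_{H^s_B}^{2}$-type moments. The latter exceeds the $3/2$ power that the coercive bound provides. I would first try to avoid any $H^s_B$ moment higher than $3/2$ on $Z$, by interpolating so that all $Z$-factors are measured in $L^{2n}$ and only the single factor $D$ is measured in $H^s_B$. If that is insufficient, I would use $\|u\|_{\bH^\Phi}$ from the entropy term, which Lemma~\ref{lem:drift_estimate} converts into control of $Z$ in $H^s_B$ up to Gaussian terms.
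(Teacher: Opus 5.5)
There is a genuine gap in Step~2, and the same gap appears in the truncation half of Step~3.

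When you insert the near-minimiser $u$ for $\bA$ into $F_N^{\bar\bA}$, the argument changes by $D=D_1+D_2$ with $D_2=(I_N^{\bA,B}-I_N^{\bar\bA,B})(u)$. The only control on $D_2$ is the pathwise bound $\|D_2\|_{H^s_B}\lesssim\fC_\kappa(\bA,\bar\bA;B)^2\triple{\bA-\bar\bA}_{-\kappa}\|u\|_{\bH^\Phi}$. So $D_2$ has exactly the integrability of $\|u\|_{\bH^\Phi}$, namely a second moment.

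In the shifted Wick expansion, $D$ multiplies monomials in $(Z,\bar Z)$ of degree up to $2n-1$. Already the $Z$-only part of $c_n\int(|\phi+Z|^{2n}-|\phi+\bar Z|^{2n})$ forces you to bound $\E\bigl[\|u\|_{\bH^\Phi}\|Z\|_{L^{2n}}^{2n-1}\bigr]$. Step~1 gives at best $\|u\|_{\bH^\Phi}\in L^2(\bP)$ and $\|Z\|_{L^{2n}}^{2n-1}\in L^{2n/(2n-1)}(\bP)$. Since $\frac12+\frac{2n-1}{2n}>1$, H\"older does not close, whatever spatial norms you use. More generally, a term where $D_2$ meets a $Z$-monomial of degree $r$ only closes for $r$ roughly below $n$, so all the top-degree terms are out of reach.

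Neither of your proposed fixes helps:
\begin{itemize}
\item Converting $\|Z\|_{L^{2n}}$ back into $\|u\|_{\bH^\Phi}$ via Lemma~\ref{lem:drift_estimate} makes things worse, since you would then need $\E\|u\|^{2n}_{\bH^\Phi}$.
\item The entropy term never yields moments of $u$ beyond the second.
\end{itemize}
The obstacle you flagged is also not the real one. $\E\|Z\|^2_{H^s_B}$ is in fact available from $\E\|u\|^2_{\bH^\Phi}$ and the drift estimate. The actual problem is the probabilistic integrability of the drift-dependent difference against high powers of $Z$.

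The same product appears in Step~3. When you truncate a near-minimiser of $\cW_M$ to $[0,N]$, the field moves by $\int_N^M\Sigma^{\bA,B}_tu_t\dif t$, and this again multiplies $Z^{2n-1}$.

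The paper never compares drifts, and this is the missing idea.

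\textbf{Limit $N\to\infty$.} The paper shows that $\scV_N^m(\phi_{\bA,B,N})$ converges in probability. This uses the flat Wick powers of $\phi_{\0,B,N}$ via the change-of-base formula and convergence of $\hat\rmp_{B,N}$, together with $Z_N=\phi_{\bA,B,N}-\phi_{\0,B,N}$ being Cauchy in $L^2(\bP;H^s_B)$. It then concludes by uniform integrability from the $q>1$ bounds of Theorem~\ref{thm:higgs-W-bound}.

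\textbf{Lipschitz estimate.} The paper interpolates $\tilde\scV_\beta=(1-\beta)\scV_N^m(\phi_{\bA,B,N})+\beta\scV_N^{\bar m}(\phi_{\bar\bA,B,N})$. It writes the difference of free energies as $\int_0^1$ of the $e^{-\tilde\scV_\beta}$-tilted mean of $\scV_N^{\bar m}(\phi_{\bar\bA,B,N})-\scV_N^m(\phi_{\bA,B,N})$. It then applies H\"older with $q=1+M^{-1}$. Here $M$ is polynomial in the data and bounds $\frac1q\log\E e^{-q\tilde\scV_\beta}-\log\E e^{-\tilde\scV_\beta}$ through the exponential bounds at exponents $1$ and $2$.

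This reduces everything to $L^p(\bP)$ norms, with $p\sim M$, of polynomials in purely Gaussian fields. There hypercontractivity supplies all moments at a cost polynomial in $p$. The variational formula enters only through the exponential moment bounds. As you correctly noticed, an upper bound on $\cW$ at general $B$ is also needed; here it is just Jensen with $u=0$.

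Replacing your Steps~2 and~3 by these two arguments repairs the proof.
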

\begin{proof}
For the first part of the theorem, we argue as in the proof of Lemma~\ref{lem:potential-bound-higgs}. Let $\phi_N := \phi_{0,B,N}$ and $Z_N := \phi_{\bA,B,N}-\phi_{0,B,N}$. $\scV_N^m(\phi_{\bA,B,N})=\scV_N^m(\phi_N + Z_N)$ is a continuous function of the complex Wick powers of $\phi_N$ (up to degree $2n$) in the $\wick{\cdot}_N$-ordering and $Z_N,Z_N^*$, each viewed as elements of their respective negative and positive regularity Sobolev spaces. We claim that the former converge in probability as $N\to\infty$ in the corresponding Wiener chaos generated by $X^\Higgs$, and the latter converge in $L^2(\bP^\Higgs)$ as random functions in $H^s_B$ for $0<s<1$.

Indeed, we see from the formula in \eqref{eq:change_base} that any monomial $\wick{\phi_N^i(\phi_N^*)}_N$ is a linear combination of monomials ordered in $\wick{\cdot}_{N,B}$ paired with integer powers of the deterministic function $\hat p_{B,N}$. The $\wick{\cdot}_{N,B}$-ordered Wick powers converge in probability by standard Wiener chaos results, and the deterministic functions converge in $W^{t,\sigma}$ for small enough (positive) $t$ and $\sigma$ by Lemma~\ref{lem:comp_renorm_constants} and the fractional product rule.

The $L^2$ convergence of $Z_N$ can be deduced from Theorem~\ref{thm:cov_GFF_decomposition} by observing that the representation in \eqref{eq:cov_GFF_dec_proof_Ito}, in combination with the $L^1(\diff \lambda)$-integrable bounds shown thereafter, easily yields that the sequence $(Z_N)_{N\ge 0}$ is Cauchy.

The test function $h$ is continuous by assumption and so is the (real) exponential function. We conclude that the random variables
\[
e^{-h(A,\phi_{\bA,B,N}) + \scV_N^m(\phi_{\bA,B,N})}
\]
converge in law as $N\to\infty$. Theorem~\ref{thm:higgs-W-bound} for any $q>1$ implies that these random variables are also uniformly integrable, and so it follows that the limit
\[
\lim_{N\to\infty} \E[e^{-h(A,\phi_{\bA,B,N}) + \scV_N^m(\phi_{\bA,B,N})}]=\E[\lim_{N\to\infty} e^{-h(\phi_{A,\bA,B,N}) + \scV_N^m(\phi_{\bA,B,N})}]
\]
exists. Theorem~\ref{thm:higgs-W-bound} further shows that it is bounded away from $0$ and infinity, and hence the limiting free energy $\cW_\infty^{h,1,m}(\bA,B)$ exists as a real number.

The second part of the theorem will follow if we prove local Lipschitz bounds uniformly in $N$. For simplicity, we set $h=0$. Extending the result to continuous test functions with suitably controlled growth is not difficult. We introduce the shorthand notation
\[
\tilde \scV_\beta := (1-\beta)\scV_N^m(\phi_{\bA,B,N}) + \beta \scV_N^{\bar m}(\phi_{\bar \bA,B,N}),
\]
and we write the difference of free energies as
\begin{equation} \label{eq:Higgs_convergence_proof_free_energy_diff}
\cW_N^{0,1,m}(\bA,B)-\cW_N^{0,1,\bar m}(\bar \bA,B) = \int_0^1\;\frac{\E\Big[\big(\scV_N^{\bar m}(\phi_{\bar \bA,B,N})-\scV_N^m(\phi_{\bA,B,N})\big)e^{-\tilde \scV_\beta}\Big]}{\E\big[e^{-\tilde \scV_\beta}\big]}\,\dif \beta.
\end{equation}
Differentiating under the expectation can be verified by adapting the next following arguments. We thus omit the details. Applying Lemma~\ref{lem:potential-bound-higgs} to
\[
\scV_N^m\big(\phi_{0,B,N}+(\phi_{\bA,B,N}-\phi_{0,B,N}+I_N^{\bA,B}(u))\big)
\]
and
\[
\scV_N^{\bar m}\big(\phi_{0,B,N}+(\phi_{\bar \bA,B,N}-\phi_{0,B,N}+I_N^{\bar \bA,B}(u))\big)
\]
separately, we infer upper and lower bounds for $-\log \E\big[e^{-q\tilde \scV_\beta}\big]$, for any $q\ge 1$, in terms of convex combinations of the bounds provided by Theorem~\ref{thm:higgs-W-bound}. Given any exponent $1<q\le 2$, convexity of the free energy gives
\[
\log \E\big[e^{-q\tilde \scV_\beta}\big] \le (2-q)\log \E\big[e^{-\tilde \scV_\beta}\big] + (q-1) \log \E\big[e^{-2\tilde \scV_\beta}\big]
\]
and hence (for $K$ and $\delta$ as in Theorem~\ref{thm:higgs-W-bound})
\begin{align*}
\frac{1}{q}\log \E\big[e^{-q\tilde \scV_\beta}\big]-\log \E\big[e^{-\tilde \scV_\beta}\big] &\le \frac{q-1}{q}\Big(\log \E\big[e^{-2\tilde \scV_\beta}\big]+2\log \E\big[e^{-\tilde \scV_\beta}\big]\Big) \\
&\lesssim (q-1)\big(6+\triple{\bA}_{-\kappa}+|m|+\triple{\bar \bA}_{-\kappa}+|\bar m|\big)^K \\
&\quad\quad +(q-1)\delta\norm{B}_{H^1}^{\tau(\kappa)} =: (q-1)M(\bA,\bar \bA,B).
\end{align*}
We recall the bounds on the masses $m$ and $\bar m$ implied by Lemma~\ref{lem:difference_masses}. $M(\bA,\bar \bA,B)$ raised to any power will therefore give an acceptable upper bound in view of our claim. Now apply H\"older's inequality with $q=1+M(\bA,\bar \bA,B)^{-1}$ in \eqref{eq:Higgs_convergence_proof_free_energy_diff} to find that
\[
\frac{\Big|\E\Big[\big(\scV_N^{\bar m}(\phi_{\bar \bA,B,N})-\scV_N^m(\phi_{\bA,B,N})\big)e^{-\tilde \scV_\beta}\Big]\Big|}{\E\big[e^{-\tilde \scV_\beta}\big]} \lesssim \E[|\scV_N^{\bar m}(\phi_{\bar \bA,B,N})-\scV_N^m(\phi_{\bA,B,N})|^p]^{1/p}
\]
with $p=M(\bA,\bar \bA,B)$. The proof therefore reduces to showing the next lemma.
\end{proof}
\begin{lemma}
For any $p\ge 2$, there exist $\tau(\kappa)$ and $K_1,K_2>0$ such that
\[
\norm{\scV_N^{\bar m}(\phi_{\bar \bA,B,N})-\scV_N^m(\phi_{\bA,B,N})}_{L^p(\bP^\Higgs)} \lesssim (1+\norm{B}_{H^1})^{\tau(\kappa)}p^{K_1}(1+\triple{\bA}_{-\kappa}+\triple{\bar \bA}_{-\kappa})^{K_2}\triple{\bA-\bar \bA}_{-\kappa}.
\]
\end{lemma}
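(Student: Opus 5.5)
We write $\phi_N:=\phi_{\0,B,N}$, $Z:=\phi_{\bA,B,N}-\phi_N$ and $\bar Z:=\phi_{\bar\bA,B,N}-\phi_N$. Both fields are then Gaussian perturbations of the same covariant free field, so the singular parts cancel in the difference. Using the shifted Wick expansion from the proof of Lemma~\ref{lem:potential-bound-higgs}, we expand
\[
\scV_N^{m}(\phi_N+Z)=\sum_{k=0}^n c_k\sum_{i,j=0}^k\binom ki\binom kj\int \wick{\phi_N^i(\phi_N^*)^j}_N\,Z^{k-i}(Z^*)^{k-j}-\frac m2\int\big(\wick{|\phi_N|^2}_N+2\Re(\phi_N^*Z)+|Z|^2\big),
\]
and similarly with $(\bar Z,\bar m)$. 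The purely $Z$-free terms $\int\wick{\phi_N^i(\phi_N^*)^j}_N$ are identical in both expansions, except for the mass prefactor. Hence the difference splits into two groups. The first consists of $(m-\bar m)\int\wick{|\phi_N|^2}_N$. The second consists of terms of the form $\int f\,[P(Z,Z^*)-P(\bar Z,\bar Z^*)]$, where $f$ is either a Wick monomial $\wick{\phi_N^i(\phi_N^*)^j}_N$ of degree at most $2n-1$ or a constant, and $P$ is a monomial of degree at least $1$. The mass-weighted versions of these terms carry an extra $m$ or $\bar m$ factor, plus one further term $(m-\bar m)\int(2\Re\phi_N^*Z+|Z|^2)$.

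For the mass differences we invoke Lemma~\ref{lem:difference_masses}, which gives $|m-\bar m|\lesssim\fC_\kappa(\bA,\bar\bA;B)^{1/2}\triple{\bA-\bar\bA}_{-\kappa}$. We combine this with Lemma~\ref{lem:wick-powers} to bound $\|\wick{|\phi_N|^2}_N\|_{W^{-\eps,p'}}$ in $L^p(\bP)$ by $(1+\|B\|_{H^1})^{\eta}$, and with the bound $|m|\lesssim\fC_\kappa^{1/2}$ from \eqref{eq:m_bounded_by_fC}. For the polynomial differences we write $P(Z,Z^*)-P(\bar Z,\bar Z^*)$ as a telescoping sum of products in which exactly one factor is $Z-\bar Z$ or its conjugate, and the remaining factors are $Z,\bar Z$ or their conjugates. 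We then pair $f\in W^{-\eps,p_1}$ against this product in $W^{\eps,p_1'}$ by duality and apply the fractional product rule. The factor $Z-\bar Z$ is placed in $H^s$ with $s$ slightly above $\eps$, and the other factors in $H^{s}\subset L^r$ for large $r$, with $s<1-3\kappa$. Lemma~\ref{lem:Sobolev_comparison} converts the covariant Sobolev norms to flat ones, at the cost of a factor $(1+\|B\|_{H^1})^{s}$. Because this exponent $s$ is not small, we instead use the flat-norm control directly: we would rerun the proof of Theorem~\ref{thm:cov_GFF_decomposition} with a small regularity index $s=O(\kappa)$ where needed. Alternatively we keep the $B$-powers as $\tau(\kappa)$ by using only $H^{\eps'}_B$ regularity for every factor, since $\eps'$ can be taken of order $\kappa$ and the products only require small positive regularity together with integrability from $H^{s}$, $s<1-3\kappa$, obtained by Gagliardo--Nirenberg with Kato as in Lemma~\ref{lem:Sobolev_comparison}.

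Moments are handled by Hölder's inequality in $\bP$. The random variables $Z$, $\bar Z$ and $Z-\bar Z$ are Gaussian and lie in the first chaos. Gaussian hypercontractivity therefore upgrades the second-moment bounds of Theorem~\ref{thm:cov_GFF_decomposition} and Corollary~\ref{cor:cov_GFF_finite_cutoff} to $L^{p}$ moments, losing a factor $p^{1/2}$. This gives
\[
\|Z-\bar Z\|_{L^p(\bP;H^s_B)}\lesssim p^{1/2}\fC_\kappa(\bA,\bar\bA;B)^2\triple{\bA-\bar\bA}_{-\kappa},
\qquad
\|Z\|_{L^p(\bP;H^s_B)}\lesssim p^{1/2}\fC_\kappa(\bA;B)^2\triple{\bA}_{-\kappa}.
\]
For $Z$ itself we need a bound uniform in $N$. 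This does not follow from the corollary, whose estimate grows with $N$. Instead it follows from Theorem~\ref{thm:cov_GFF_decomposition} applied with $\bar\bA=\0$. The Wick monomials have $L^p$ moments of order $p^{K}$ by hypercontractivity in a fixed chaos, combined with Lemma~\ref{lem:wick-powers}. Multiplying everything out yields $p^{K_1}$, a power $(1+\triple{\bA}_{-\kappa}+\triple{\bar\bA}_{-\kappa})^{K_2}$ coming from the $\fC_\kappa$ factors by Remark~\ref{rem:fC_bound}, and $(1+\|B\|_{H^1})^{\tau(\kappa)}$.

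I expect the main obstacle to be keeping the $B$-dependence at $\tau(\kappa)$. The $\fC_\kappa$ factors and Lemma~\ref{lem:wick-powers} are fine in this respect. The difficulty is the passage from the $H^s_B$ norms of $Z$ to the flat $L^r$/$W^{\eps,\cdot}$ norms needed for the product estimates, since Lemma~\ref{lem:Sobolev_comparison} costs a power $|s|$. I would first try to avoid flat $H^s$ entirely: use Kato's inequality and Gagliardo--Nirenberg, as in the proof of Lemma~\ref{lem:Sobolev_comparison}, to get $L^r$ bounds from $H^s_B$ with no $B$-loss, and reserve the flat comparison for a regularity index of order $\eps\lesssim\kappa$. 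This mirrors the proof of Lemma~\ref{lem:coercive-bound-higgs}, which achieves exactly this.
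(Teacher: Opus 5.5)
Your proposal is correct and follows essentially the same route as the paper. Both use the shifted Wick expansion around $\phi_{0,B,N}$ and a telescoping decomposition in which every term carries one factor of $Z-\bar Z$ (or of $m-\bar m$, handled by Lemma~\ref{lem:difference_masses}). Both then pair the flat Wick powers in $W^{-\eps,q}$ against the $Z$-products by duality and apply the fractional product rule, using the flat Sobolev comparison only at regularity $O(\eps)$ and the diamagnetic (Kato) embedding $H^s_B\subset L^r$ for the remaining factors. The argument closes with H\"older and Gaussian hypercontractivity, together with Theorem~\ref{thm:cov_GFF_decomposition} (with $\bar\bA=\0$ for $Z$ itself) and Lemma~\ref{lem:wick-powers}. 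The detour of rerunning Theorem~\ref{thm:cov_GFF_decomposition} at small $s$ is unnecessary; your second alternative, applied to whichever factor the product rule puts the derivative on, is exactly what the paper does.
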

\begin{proof}
As before, write $\phi:= \phi_{0,B,N}$, $Z_{\bA}:=\phi_{\bA,B,N}-\phi_{0,B,N}$ and likewise for $Z_{\bar \bA}$. A generic term in the difference $\scV_N^{\bar m}(\phi_{\bar \bA,B,N})-\scV_N^m(\phi_{\bA,B,N})$ takes the form
\[
\cT := \int_{\mathbb{T}^2}\; \wick{\phi^i(\phi^*)^j}_N \hat Z P(Z_{\bA},Z_{\bA}^*,Z_{\bar \bA},Z_{\bar \bA}^*)\,\diff x,
\]
where $0\le i,j\le n$, $\hat Z$ stands for $Z_{\bar \bA} - Z_{\bA}$ or $Z_{\bar \bA}^* - Z_{\bA}^*$, and $P$ is some homogeneous polynomial of degree $2n-i-j-1$. For the degree two terms, factors of $m$, $\bar m$ or $\bar m -m$ may also appear. We argue as in the proof of Lemma~\ref{lem:coercive-bound-higgs}, save that we are not concerned about exploiting the coercivity of the potential. We have for any $\eps>0$ and H\"older conjugates $q,q'$,
\[
|\cT|\le \norm{\wick{\phi^i(\phi^*)^j}_N}_{W^{-\eps,q}}\norm{\hat Z P(Z_{\bA},Z_{\bA}^*,Z_{\bar \bA},Z_{\bar \bA}^*)}_{W^{\eps,q'}},
\]
up to any additional factors involving the masses $\bar m$ and $m$. After applying the fractional product rule, we bound the factor with positive regularity by
\[
\norm{Z}_{W^{\eps,\tilde q}} \lesssim (1+\norm{B}_{H^1})^{\eps}\norm{Z}_{H^s_B}
\]
for $s>\eps +1-2/\tilde q$ (which follows by suitable interpolation as in the proof of Lemma~\ref{lem:coercive-bound-higgs}), and all remaining factors by the diamagnetic embedding
\[
\norm{Z}_{L^{2n}} \lesssim \norm{Z}_{H^s_B}
\]
for $s>1-1/n$. There is also a term where $\hat Z$ is replaced by $\bar m-m$, for which Lemma~\ref{lem:difference_masses} supplies an adequate estimate. We thus arrive at a bound of the form
\begin{align*}
|\cT| &\lesssim (1+\norm{B}_{H^1})^{\tau(\kappa)}(1+\triple{\bA}_{-\kappa}+\triple{\bar \bA}_{-\kappa})^K\norm{\wick{\phi^i(\phi^*)^j}_N}_{W^{-\eps,q}} \times \\
&\quad\quad\quad\quad (\norm{\hat Z}_{H^s_B}+\triple{\bA-\bar \bA}_{-\kappa})(\norm{Z_{\bA}}_{H^s_B}+\norm{Z_{\bar \bA}}_{H^s_B})^{2n-1}
\end{align*}
for small enough $\eps$ and large enough $q$, $K$ (such that choosing $s<1$ is permitted), in which any possible mass dependence is already accounted for. We now take expectation, apply H\"older's inequality (with any distribution of finite exponents) and use the estimates provided by Theorem~\ref{thm:cov_GFF_decomposition} and Lemma~\ref{lem:wick-powers}. By Gaussian hypercontractivity (for any $r>2$), we obtain the bounds
\begin{align*}
\norm{\norm{\hat Z}_{H^s_B}}_{L^r(\bP^\Higgs)} &\lesssim r^{1/2} \norm{\norm{\hat Z}_{H^s_B}}_{L^2(\bP^\Higgs)} \\
&\lesssim r^{1/2}(1+\norm{B}_{H^1})^{\tau(\kappa)}(1+\triple{\bA}_{-\kappa}+\triple{\bar \bA}_{-\kappa})^{\rmn(\kappa)}\triple{\bA-\bar \bA}_{-\kappa}
\end{align*}
and
\begin{align*}
\norm{\norm{Z_{\bA}}_{H^s_B}}_{L^r(\bP^\Higgs)} &, \norm{\norm{Z_{\bar \bA}}_{H^s_B}}_{L^r(\bP^\Higgs)} \\
&\lesssim r^{1/2}(1+\norm{B}_{H^1})^{\tau(\kappa)}(1+\triple{\bA}_{-\kappa}+\triple{\bar \bA}_{-\kappa})^{\rmn(\kappa)},
\end{align*}
from Theorem~\ref{thm:cov_GFF_decomposition}. The Wick powers of $\phi$ only contribute a small power of $(1+\norm{B}_{H^1})$. Indeed, tracking the constants behind the proof of Lemma~\ref{lem:wick-powers} yields
\[
\E\big[\norm{\wick{\phi^i(\phi^*)^j}_N}_{W^{\-eps,q}}^r\big]^{1/r} \lesssim r^{\frac{i+j}{2}}(1+\norm{B}_{H^1})^{\tau(\kappa)}.
\]
Combining all the factors produces a $\norm{B}_{H^1}$ dependence with an acceptable exponent, and this completes the proof.
\end{proof}

\section{Construction of the Abelian Yang--Mills--Higgs measure}\label{sec:construction_measure}

\subsection{Set up of the variational problem for the gauge field} \label{sec:gauge_variational_setup}

In the final two sections of this article, we combine all our preceding results to complete the program outlined in the Introduction. Let us recall briefly how we propose to construct the $2$d Abelian Yang-Mills-Higgs EQFT in this setting. As stated in the informal version of our main result (Theorem~\ref{thm:intro_main}), we characterise the measure we construct by its Laplace transform. We concentrate on a class of test functions $h=h(A,\phi)$ with suitable polynomial growth. To make our claims precise, let $C_{{\rm test}}\subset C(\Omega^1 \cS'\times\cS'(\mathbb{T}^2);\R)$ be the space of such functions satisfying the bound
\begin{equation}\label{eq:ymh_test_functions}
|h(A,\psi)|\leq c(1+\|A\|^{2}_{C^{-\kappa/2}}+\|\psi\|^{2n}_{C^{-\kappa}})
\end{equation}
for some positive (small) constant $c>0$. We will generally record any dependence on this constant as a dependence on $h$. Given any $h\in C_{{\rm test}}$ and $q\ge 1$, we define the partition function of our regularised measure $\mu_{N,T}^\YMH$ in the presence of a test function as follows:
\begin{align*}
&\cZ_{N,T}(h,q) := \\
&\int\limits_{\bfi (0,2\pi]^2\times\Omega^1 \cS'(\mathbb{T}^2)}\; e^{- q\cQ_{N,T}(A)} \Big(\int_{\cS'(\mathbb{T}^2)}\; e^{-h(A,\phi) -\cV_{A,N,T}(\phi)}\, \dif\mu_{A,N,T}^\GFF(\phi)\Big)^q \dif\mu_T^{\rmA}(A).
\end{align*}

We have already analysed the objects that appear inside this integral - the Laplace transform of the conditional Higgs measure and the regularised ratio of determinants $Q_{N,T}$ - depending on a generic rough (and suitably enhanced) gauge field $\bA$ or $\bfA$. To complete our construction, it therefore remains to take expectation with respect to the random gauge field $A_T$ and show stability as the UV cut-off represented by the parameter $T$ is removed. We now want to distinguish between the co-exact (Gaussian) and residual harmonic components of $A_T$. We thus write the regularisation $A_T$ of the field in \eqref{eq:intro_pure_YM_field} as
\phantomsection\label{def:gauge_field_decomposition}\[
A_T=\vartheta + A_T^0,
\qquad
A_T^0=\rmP_T^\rmA \diff^*(\diff\diff^*)^{-1}\int_0^1 \diff X_t^{\rmA}.
\]
\phantomsection\label{def:gauge_noise}The cylindrical Brownian motion $X^{\rmA}$ in $L^2(\mathbb{T}^2;\bfi \R)$ is defined on the filtered probability space $(\Omega^\rmA,\mathcal F^\rmA,(\mathcal F_t^\rmA)_{t\ge0},\bP^\rmA)$, and we write $\E^\rmA$ for expectation with respect to its law. The operators $(\rmP_T)_{T\ge0}$ are finite-rank Fourier projectors converging to the identity on $L^2$.

\phantomsection\label{def:gauge_reference_law}The harmonic component $\vartheta$ has normalised Lebesgue law $\bar\dif\vartheta$ on $\bfi (0,2\pi]^2$, with expectation denoted by $\E^\Theta$. This decomposition of $\mu_T^\rmA=\Law(A_T)$ lets us apply the variational formula to the Gaussian field $A_T^0$ for fixed $\vartheta$. The bounds obtained below are uniform in $\vartheta$, so the final integration over $\bfi (0,2\pi]^2$ is straightforward.

Since we purpose to rely on the Bou\'e-Dupuis formula (see the outline of our method in section~\ref{sec:intro_summary}) for our treatment of $A_T^0$, we also introduce the corresponding space of stochastic controls or drifts. Let
\[
  \bH^{\rmA}:=L^2(\R_+;L^2),
  \qquad
  \bH_a^{\rmA}:= L^2_{\mathrm{prog}}(\Omega^\rmA\times\R^+;L^2),
\]
and define
\[
J_T(v):= \rmP_T^\rmA \diff^*(\diff\diff^*)^{-1}\int_0^T v_t\dif t
\]
for $v\in \bH^\rmA$. As previously, we generally abbreviate $J_T(V)=B_T$ where there is no ambiguity in relation to the drift $v$.

From the perspective of the variational problem for the $A_T^0$ component of the random gauge field $A_T$, the spatial average $\vartheta$ is held fixed and thus forms part of the variational functional. Since $\vartheta$ is a random element of $\Omega^1_C H^1$, translation by $\vartheta$ is a well-defined operation in our enhanced state space $\cX_V^{-\kappa}$. By associativity, we may think of $(\bfA_T \bplus B_T)$ as $\bfA_T^0 \bplus (B_T+\vartheta)$, and we shall write ${}_\vartheta{B_T}\coloneqq B_T+\vartheta$ to lighten our notation below. We then have:
\begin{align*}
\bfnorm{\bfA_T}_{-\kappa} &\le (1+2|\vartheta|)\bfnorm{\bfA_T^0}_{-\kappa} + |\vartheta| + 2|\vartheta|^2 \lesssim 1+\bfnorm{\bfA_T^0}_{-\kappa}, \textup{ and } \\
\bfnorm{\bfA_T-\bar\bfA_T}_{-\kappa} &\le (1+2|\vartheta|)\bfnorm{\bfA_T^0-\bar\bfA_T^0}_{-\kappa} \lesssim \bfnorm{\bfA_T^0-\bar\bfA_T^0}_{-\kappa}.
\end{align*}

To set up the variational problem we want to analyse, we thus define \[
\cH^F_{N,T}(A^0,\vartheta):=Q_{N,T}(A) -\log \int_{\cS'(\mathbb{T}^2)}\; e^{- h(A,\phi) -\cV_{A,N,T}(\phi)} \dif\mu_{A,N,T}^\GFF(\phi),
\]
such that
\[
\cZ_{N,T}(h,q) = \E^\Theta\E^\rmA\Big[e^{-q\cH^F_{N,T}(A_T^0,\vartheta)}\Big].
\]
Before considering this functional further, we prove that the form of the renormalised vacuum polarisation anticipated in Section~\ref{sec:enhancement_determinants} is appropriate for the random gauge field $A_T$. This allows us to apply the results of Section~\ref{sec:determinants} to the variational problem above.

\subsection{Convergence of renormalised vacuum polarisation}

Most of the estimates and properties we require for the regularised ratio of determinants $Q_{N,T}$ follow from our treatment of the pathwise object $\cQ_N$ in Section~\ref{sec:determinants}. In this section, we address the vacuum polarisation of the random gauge field $A_T$. As discussed in Section~\ref{sec:enhancement_determinants}, this quantity requires renormalisation, which relies on the Gaussian structure of $A_T^0$. More precisely, we show by a probabilistic argument that our choice of $\rmo_{N,T}$ ensures that the renormalised (or Wick ordered) vacuum polarisation $\bV_{A_T}$ defined in~\eqref{eq:def_vacuum_A_T} has a meaningful interpretation as $N\to\infty$ and $T\to\infty$.

For ease of reference, we recall the definitions
\begin{align*}
\rmo_{N,T} &= -\E^\Theta\E^\rmA\Big[\int_0^N\; \Tr \scq^2_{(A_T,A^*_T A_T-\rmc_T),0}(\lambda)\,\diff \lambda \Big],\\
\bV_{A_T}(\lambda) &= \boldsymbol{\Pi}^\lambda(A_T,A_T)-\E^\Theta\E^\rmA[\boldsymbol{\Pi}^\lambda(A_T,A_T)].
\end{align*}
It is then a simple exercise to compute the above expectation and arrive at the following explicit formula for $\rmo_{N,T}$:
\begin{align*}
    \rmo_{N,T} = \int_0^N\; \rmc_T \Tr\big(R_0(1+\lambda)^2\big) - \Pi_0(\lambda) -\sum_{j,k=1}^2\sum_{p\in 2\pi\Z^2\setminus\{0\}}\chi_T(p)^2\frac{(-1)^{j+k}p_{j+1}p_{k+1}}{|p|^4} \Pi^\lambda_{j,k}(p) \,\diff \lambda,
\end{align*}
where the indices of the Fourier variable $p$ are understood modulo $2$ and
\[
\Pi_0(\lambda) \coloneqq \int_{\bfi (0,2\pi]^2}\; |\vartheta_1|^2\Pi^\lambda_{1,1}(0) + |\vartheta_2|^2\Pi^\lambda_{2,2}(0)\,\bar\dif \vartheta.
\]

It remains to verify our claim in Section~\ref{sec:enhancement_determinants} that this construction produces an element of the enhanced space $\cX_V^{-\kappa}$, so that we may use the pathwise results established in Section~\ref{sec:determinants}. This amounts to checking that $\bV_{A_T}$ is $L^1(\R^+;\R)$-integrable, which follows directly from Lemma~\ref{lem:vac_pol_bound} or, alternatively, Proposition~\ref{prop:Pi_quadratic_form}.

\begin{corollary} \label{cor:bV_well_defined}
    The renormalised vacuum polarisation $\bV_{A_T}$ is integrable, i.e., $\bV_{A_T}\in L^1(\R^+;\R)$, almost surely for all $T\ge 0$. It therefore holds that $Q_{N,T}(A_T+B_T) = \cQ_N(\bfA_T,B_T)$ and the limit
    \[
    \cQ_\infty(\bfA_T,0) = \lim_{N\to\infty} \cQ_N(\bfA_T,0)
    \]
    is well-defined.
\end{corollary}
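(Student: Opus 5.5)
The plan is to prove integrability of $\lambda\mapsto\bV_{A_T}(\lambda)$ pathwise for each fixed $T$, using that $A_T$ is smooth (indeed a trigonometric polynomial plus the constant $\vartheta$). We split
\[
\bV_{A_T}(\lambda)=\bfPi^\lambda(A_T,A_T)-\E^\Theta\E^\rmA[\bfPi^\lambda(A_T,A_T)]
\]
and show that each of the two terms is in $L^1(\R_+)$; the centring plays no role at fixed $T$. In Fourier variables, $\bfPi^\lambda(A_T,A_T)=\sum_{p}\sum_{j,k}\hat A_{T,j}(p)^*\Pi^\lambda_{j,k}(p)\hat A_{T,k}(p)$ is a finite sum, because $\rmP_T$ has finite rank.

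\emph{Zero mode.} Here Lemma~\ref{lem:vac_pol_bound} gives $|\Pi^\lambda_{j,k}(0)|\lesssim(1+\lambda)^{-2}$. The contribution is therefore bounded by $|\vartheta|^2(1+\lambda)^{-2}$, which is integrable.

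\emph{Modes $p\neq0$ with $|p|\le|p|_{\max}(T)$.} Since only finitely many modes occur, for $\lambda\ge|p|_{\max}^2$ we are in the regime $|p|\le\sqrt{1+\lambda}$. The same lemma then gives $|\Pi^\lambda(p)|\lesssim(1+|p|^2)(1+\lambda)^{-2}$. The contribution is thus bounded by $(1+\lambda)^{-2}\|A_T\|_{H^1}^2$ for large $\lambda$, and by the crude bound $(1+\lambda)^{-1}\|A_T\|_{L^2}^2$ on the compact range of $\lambda$.

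Alternatively, one can invoke Proposition~\ref{prop:Pi_quadratic_form} with $B=A_T-\vartheta\in\Omega^1_\Coul H^1$, which is legitimate since $A_T^0$ is smooth. Combined with the zero-mode estimate, this gives $\|\bfPi(A_T,A_T)\|_{L^1}\lesssim\|A^0_T\|_{H^1}\|A_T\|_{C^{-\kappa/2}}+|\vartheta|^2<\infty$ almost surely.

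For the expectation term, the same bounds integrated against the Gaussian law hold because $\E^\rmA\|A_T^0\|_{H^1}^2<\infty$ at fixed $T$. Fubini/Tonelli then justifies exchanging $\E$ with the $\lambda$-integral and gives $\E[\bfPi^\lambda(A_T,A_T)]\in L^1(\R_+)$. Hence $\bV_{A_T}\in L^1$ almost surely, and $\bfA_T=(A_T,A_T^*A_T-\rmc_T,\bV_{A_T})\in\cX^{-\kappa}_\rmV$. The first component is smooth and Coulomb, and the second is smooth, so both lie in $\cX^{-\kappa}$.

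The identity $Q_{N,T}(A_T+B_T)=\cQ_N(\bfA_T,B_T)$ is then exactly the computation stated after \eqref{eq:def_cQ_N}. The $\rmo_{N,T}$ counterterm combines with $\int_0^N\Tr\scq^2_{\bA_T,0}$ to give $\int_0^N\bV_{A_T}$, using $A_T^2-A_T^*A_T=-\rmc_T$, and the mass and $\rmp_N$ terms match as in \eqref{eq:determinant_computation}. All the manipulations are legitimate now that every term is finite.

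Finally, the existence of $\cQ_\infty(\bfA_T,0)$ follows directly from Theorem~\ref{thm:convergence_cQ_N} applied to $\bfA=\bfA_T$, for $\kappa<1/12$. The only point requiring some care is the uniform-in-$\lambda$ large-$\lambda$ behaviour of the nonzero modes, i.e.\ checking that every mode eventually enters the regime $|p|\le\sqrt{1+\lambda}$, and this is trivial at fixed $T$.
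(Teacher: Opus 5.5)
Your proposal is correct and follows essentially the same route as the paper: you bound $\bfPi^\lambda(A_T,A_T)$ pathwise using Lemma~\ref{lem:vac_pol_bound}, integrate the same bound for the expectation term, and then conclude from the definitions together with Theorem~\ref{thm:convergence_cQ_N}. The only cosmetic difference is in the regime $|p|^2>1+\lambda$: the paper avoids your split into a compact $\lambda$-range and a large-$\lambda$ range by using $(1+\lambda)^{-1}\le |p|^2(1+\lambda)^{-2}$ there, which gives the single bound $(1+\lambda)^{-2}\bigl(|\vartheta|^2+\|A_T^0\|_{H^1}^2\bigr)$ for all $\lambda$ and needs only $A_T^0\in H^1$, not finitely many modes.
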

\begin{proof}
    For a fixed $T$, Lemma \ref{lem:vac_pol_bound} gives the inequality
    \begin{align*}
        |\bV_{A_T}(\lambda)| &\lesssim (1+\lambda)^{-2}|\vartheta|^2 +(1+\lambda)^{-2} \sum_{|p|^2\le 1+\lambda} (1+|p|^2)|\hat A_T^0(p)|^2 \\
        &+ (1+\lambda)^{-1}\sum_{|p|^2> 1+\lambda} \frac{|p|^2}{|p|^2}|\hat A_T^0(p)|^2 \le (1+\lambda)^{-2} \Big(|\vartheta|^2+\sum_{|p|\le T} \langle p\rangle^2|\hat A_T^0(p)|^2\Big),
    \end{align*}
    which yields an integrable bound up to an a.s. finite constant. The same computation verifies that $\E^\Theta\E^\rmA[\bV_{A_T}]$ is integrable. The remaining claims now follow from our definitions and Theorem~\ref{thm:convergence_cQ_N} by dominated convergence.
\end{proof}

The main task of this section is to remove the gauge field regularisation. To use the continuity estimate in Theorem~\ref{thm:convergence_cQ_N}, we need convergence in the $\bfnorm{\cdot;\cdot}_{-\kappa}$ topology, and we establish the $T\to\infty$ limit of the vacuum polarisation probabilistically. The next result collects all the properties we need.
\begin{lemma} \label{lem:bV_properties}
    The sequence $(\bV_{A_T})_{T\ge 0}$ converges in $L^2(\bar\dif\vartheta\otimes\bP^\rmA;L^1(\R^+;\R))$ to a random limit denoted by $\bV_A$. Furthermore, it holds that
    \[
    \int_0^\infty\; \sup_T\E^\Theta\E^\rmA[|\bV_{A_T}(\lambda)|^2]^{1/2}\,\diff \lambda <\infty,
    \]
    and $\E^\Theta\E^\rmA[\norm{\bV_A}_{L^1(\R^+;\R)}^p]<\infty$ for all $p\ge 1$

\end{lemma}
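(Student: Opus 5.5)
The plan is to treat $\bV_{A_T}(\lambda)$, for each fixed $\lambda$, as a centred second Wiener chaos variable in $X^\rmA$, plus a harmonic piece. Write $A_T=\vartheta+A_T^0$ and expand
\[
\boldsymbol{\Pi}^\lambda(A_T,A_T)=\boldsymbol{\Pi}^\lambda(\vartheta,\vartheta)+2\Re\,\boldsymbol{\Pi}^\lambda(\vartheta,A^0_T)+\boldsymbol{\Pi}^\lambda(A^0_T,A^0_T).
\]
Since $\Pi^\lambda$ is a Fourier multiplier, the cross term involves only the zero Fourier mode of $A_T^0$, which vanishes because $A_T^0$ is mean zero. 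The harmonic term is $T$-independent, and by the zero-mode bound in Lemma~\ref{lem:vac_pol_bound} its centred version is $O((1+\lambda)^{-2})$ uniformly in $\vartheta$. The remaining part is $\boldsymbol{\Pi}^\lambda(A_T^0,A_T^0)-\E[\cdot]$, a homogeneous second-chaos element. In Fourier variables it reads
\[
\sum_{p\neq0}\chi_T(p)^2\,\Pi^\lambda_{jk}(p)\,\frac{\epsilon_{j}(p)\epsilon_k(p)}{|p|^2}\big(|\hat\xi(p)|^2-\E|\hat\xi(p)|^2\big),
\]
where $\epsilon(p)=(p_2,-p_1)$ up to sign. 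Transversality (Lemma~\ref{lem:vac_pol_bound}) tells us that only the scalar $\Tr\Pi^\lambda(p)$ enters, and the pairing between modes $p$ and $-p$ is harmless.

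The first step is the variance. By the Wick/Isserlis formula the second moment is a sum over $p\neq0$ of $\chi_T(p)^4\,|\Tr\Pi^\lambda(p)|^2|p|^{-4}$, up to constants. Inserting the bound from Lemma~\ref{lem:vac_pol_bound} gives, uniformly in $T$,
\[
\E|\cdot|^2\lesssim (1+\lambda)^{-4}\sum_{p}|p|^{-4}+(1+\lambda)^{-4}\sum_{|p|^2\le 1+\lambda}1+(1+\lambda)^{-2}\sum_{|p|^2>1+\lambda}|p|^{-4}.
\]
The three sums are respectively $O(1)$, $O(1+\lambda)$ and $O((1+\lambda)^{-1})$, so the whole right-hand side is $\lesssim (1+\lambda)^{-3}$. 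Hence $\sup_T\E^\Theta\E^\rmA[|\bV_{A_T}(\lambda)|^2]^{1/2}\lesssim(1+\lambda)^{-3/2}$, which is integrable, giving the second claim.

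For convergence, apply the same computation to $\bV_{A_T}-\bV_{A_{T'}}$. Its variance is a sum over $p$ weighted by $(\chi_T(p)^2-\chi_{T'}(p)^2)^2$ times the same summand. At fixed $\lambda$ this tends to zero by dominated convergence over $p$, using the uniform summability just shown. Minkowski's inequality then gives
\[
\E\Big[\|\bV_{A_T}-\bV_{A_{T'}}\|_{L^1(\R^+)}^2\Big]^{1/2}\le\int_0^\infty \E\big[|\bV_{A_T}(\lambda)-\bV_{A_{T'}}(\lambda)|^2\big]^{1/2}\dif\lambda .
\]
The right-hand side tends to zero by dominated convergence in $\lambda$, with the $(1+\lambda)^{-3/2}$ bound as dominating function. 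So $(\bV_{A_T})_T$ is Cauchy in $L^2(\bar\dif\vartheta\otimes\bP^\rmA;L^1)$, and its limit $\bV_A$ exists.

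For the $p$-th moments, the cleanest route is again Minkowski: bound $\|\bV_{A_T}\|_{L^1}$ in $L^p(\bP)$ by $\int_0^\infty\|\bV_{A_T}(\lambda)\|_{L^p(\bP)}\dif\lambda$. Gaussian hypercontractivity on the second chaos gives $\|\cdot\|_{L^p}\le(p-1)\|\cdot\|_{L^2}$, and the harmonic piece is bounded deterministically. The integral is therefore finite uniformly in $T$, and Fatou's lemma passes the bound to $\bV_A$. The main obstacle I expect is the variance bound in the low-frequency regime $|p|^2\le 1+\lambda$. There the naive estimate of $\Pi^\lambda$ must be used carefully with the factor $|p|^2$ from Lemma~\ref{lem:vac_pol_bound}, which cancels the $|p|^{-2}$ covariance of $A^0$. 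The whole argument therefore relies on the precise small-$p$ behaviour of $\Tr\Pi^\lambda(p)-\Tr\Pi^\lambda(0)$ established there.
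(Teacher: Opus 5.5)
Your proposal is correct and follows essentially the paper's route. You reduce to the Gaussian part, compute the variance by Wick's theorem in Fourier variables, insert the symbol bounds of Lemma~\ref{lem:vac_pol_bound}, pass to $L^2(\bP;L^1(\R^+))$ via Minkowski, and use second-chaos hypercontractivity for higher moments. The only difference is bookkeeping. You prove the uniform pointwise bound $\sup_T\E[|\bV_{A_T}(\lambda)|^2]^{1/2}\lesssim(1+\lambda)^{-3/2}$ directly, then obtain the Cauchy property by dominated convergence and the moments of the limit by Fatou. The paper instead derives explicit tail rates for $\bV_{A_T^0}-\bV_{A_S^0}$ and gets the uniform integrable bound by telescoping over $T\mapsto T-1$. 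Your version is slightly cleaner and gives the $\int_0^\infty\sup_T$ bound without that telescoping step.
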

\begin{proof}
    Since $\vartheta$ is bounded and constant in $T$, it is enough to consider the Gaussian part $A_T^0$ and show that $(\bV_{A_T^0})_{T\ge 0}$ is Cauchy in the stronger $L^1(\R^+;L^2(\bP^\rmA))$ norm, which will imply the first claim. To simplify the computation, we introduce the shorthand notation
    \[
    \hat{A^0}_{T,j,k}(p) \coloneqq \hat{A^0}_{T,j}(p)^* \hat{A^0}_{T,k}(p) \, \textup{ and } \, e_{T,j,k}(p) \coloneqq \frac{(-1)^{j+k}\chi_T(p)^2p_{j+1}p_{k+1}}{|p|^2}.
    \]
    Given any $S,T\ge 0$, we begin by evaluating the expectation
    \begin{align*}
        &\E^\rmA\big[\big(\hat{A^0}_{T,j,k}(p)-e_{T,j,k}(p) - (\hat{A^0}_{S,j,k}(p)-e_{S,j,k}(p))\big)^* \times \\
        &\quad\quad \big(\hat{A^0}_{T,j',k'}(p')-e_{T,j',k'}(p') - (\hat{A^0}_{S,j',k'}(p')-e_{S,j',k'}(p'))\big)\big] \\
        &\quad = \E^\rmA\big[\hat{A^0}_{T,j,k}(p) \hat{A^0}_{T,j',k'}(p') \big] - \E^\rmA\big[\hat{A^0}_{T,j,k}(p) \hat{A^0}_{S,j',k'}(p') \big]  \\
        &\quad\quad - \E^\rmA\big[\hat{A^0}_{S,j,k}(p) \hat{A^0}_{T,j',k'}(p') \big] + \E^\rmA\big[\hat{A^0}_{S,j,k}(p) \hat{A^0}_{S,j',k'}(p') \big] \\
        &\quad\quad - e_{T,j,k}(p)e_{T,j',k'}(p')  + e_{T,j,k}(p)e_{S,j',k'}(p') + e_{S,j,k}(p)e_{T,j',k'}(p') - e_{S,j,k}(p)e_{S,j',k'}(p') \\
        &\quad = 2e_{T,j,k}(p)e_{T,j',k'}(p) - 2e_{T,j,k}(p)e_{S,j',k'}(p) - 2e_{S,j,k}(p)e_{T,j',k'}(p) + 2e_{S,j,k}(p)e_{S,j',k'}(p)
    \end{align*}
    using Wick's theorem in combination with the covariance of $A_T^0$. This yields the expression
    \begin{align} \label{eq:vac_pol_variance}
        \E^\rmA[|\bV_{A_T^0}(\lambda)-\bV_{A_S^0}(\lambda)|^2] = 2\sum_{p\in 2\pi\Z^2}\sum_{\substack{j,j', \\ k,k'=1}}^2  & \frac{(-1)^{j+j'+k+k'}p_{j+1}p_{k+1}p_{j'+1}p_{k'+1}}{|p|^8}\times \nonumber\\
        &\Pi^\lambda_{j,k}(p)\Pi^\lambda_{j',k'}(p) \big(\chi_T(p)^2-\chi_S(p)^2\big)^2
    \end{align}
    for the variance of the vacuum polarisation. From Lemma \ref{lem:vac_pol_bound}, we obtain the estimates
    \begin{align*}
    |\Pi^\lambda_{j,k}(p)\Pi^\lambda_{j',k'}(p)|\,|p|^{-4}  &\lesssim (1+\lambda)^{-9/4}\,|p|^{-7/2} \, \textup{ if } \, |p| > \sqrt{1+\lambda}, \, \textup{ and } \\
    |\Pi^\lambda_{j,k}(p)\Pi^\lambda_{j',k'}(p)|\,|p|^{-4}  &\lesssim (1+\lambda)^{-4} \, \textup{ if } \, |p| \le \sqrt{1+\lambda}.
    \end{align*}
    Without loss of generality we now assume $S\le T$. The above then implies the inequality
    \[
    \E^\rmA[|\bV_{A_T^0}(\lambda)-\bV_{A_S^0}(\lambda)|^2] \lesssim (1+\lambda)^{-5/4}\sum_{\substack{p\in 2\pi\Z^2 \\ |p|^2 \ge 1+\lambda\vee S^2}}  |p|^{-7/2}  + (1+\lambda)^{-4}\sum_{\substack{p\in 2\pi\Z^2 \\ S^2\le |p|^2 \le 1+\lambda}} 1.
    \]
    We have thus shown that
    \begin{align*}
    ||\bV_{A_T^0}-\bV_{A_S^0}||_{L^1(\lambda)L^2(\bP^\rmA)} &\lesssim \Big(\sum_{\substack{p\in 2\pi\Z^2 \\ |p|^2 \ge S^2}} |p|^{-7/2}\Big)^{1/2}\int_0^\infty\; (1+\lambda)^{-9/8}\,\diff \lambda \\
    &\quad\quad\quad\quad + \int_{S^2-1}^\infty\; (1+\lambda)^{-2}\Big(\sum_{\substack{p\in 2\pi\Z^2 \\ |p|^2 \le 1+\lambda}} 1\Big)^{1/2} \,\diff \lambda \\
    &\lesssim \Big(\sum_{\substack{p\in 2\pi\Z^2 \\ |p|^2 \ge S^2}} |p|^{-7/2}\Big)^{1/2} + \int_{S^2-1}^\infty\; (1+\lambda)^{-3/2} \,\diff \lambda \xrightarrow{S\to\infty} 0,
    \end{align*}
    as desired. A similar reasoning starting from \eqref{eq:vac_pol_variance} with $S=T-1$ yields the bound
    \[
    ||\bV_{A_T^0}-\bV_{A_{T-1}^0}||_{L^1(\lambda)L^2(\bP^\rmA)} \lesssim \frac{1}{(T-1)^{5/4}},
    \]
    which is summable and thus implies an integrable bound for $\E^\rmA[|\bV_{A_T^0}|^2]^{1/2}$ that is uniform in the cut off $T$. We therefore deduce the second claim. We now conclude by noting that $\bV_{A_T^0} \xrightarrow{L^2(\bP^\rmA)} \bV_{A^0}$ is a renormalised quadratic functional and hence an element of the second Wiener chaos. It follows from Gaussian hypercontractivity that its higher moments are controlled by the second moment.
\end{proof}

\subsection{Construction of the two-dimensional Abelian YMH measure}

We are now in a position to formally state the main result of this work. We briefly recall the divergent counterterms introduced in this article to give rigorous meaning to various limiting objects. The Wick counterterm $\rmc_T=\E^\rmA[|A_T^0(0)|^2]$ was defined in Section~\ref{sec:def_regularised_measure}, \eqref{eq:gauge_wick_counterterm}. The Wick renormalisation of the Higgs interaction potential is constructed in terms of the constant $\rmp_N=\E^\Higgs[|\phi_{\0,0,N}(0)|^2]$ defined in section~\ref{sec:Higgs_renormalisation}. Finally, the renormalisation of the vacuum polarisation $\rmo_{N,T}$ is introduced in section~\ref{sec:enhancement_determinants}. We take these constants as part of our definition of the regularised measure $\mu^\YMH_{N,T}$ and the partition function $\cZ_{N,T}(h,q)$. We also note that the statements proved in previous sections generally require us to choose the regularity parameter $\kappa$ sufficiently small. To avoid repeating this requirement throughout this section, we will explicitly state it only once in our main theorem. With these conventions settled, we will now prove the following.

\begin{theorem}\label{thm:main}
    There exists a $\bar \kappa>0$ such that the following hold for all $\kappa\in (0,\bar \kappa)$. The limit (in any order of $T$ and $N$)
    \[
    \cZ(h) \coloneqq \lim_{N,T\to\infty} \cZ_{N,T}(h,1)
    \]
    exists for every $h\in C_{{\rm test}}$ for all sufficiently small $c>0$, and satisfies $0 < \cZ(h) < \infty$.
    In particular, $\mu^{\YMH}_{N,T}$ converges weakly to a probability measure on $\Omega^1 \cS'(\mathbb{T}^2)\times \cS'(\mathbb{T}^2)$.
\end{theorem}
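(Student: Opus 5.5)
We will prove Theorem~\ref{thm:main} by applying the Bou\'e--Dupuis formula to the outer expectation over the Gaussian gauge field $A^0_T$ for each fixed $\vartheta$. We then combine the pathwise estimates of Sections~\ref{sec:determinants} and~\ref{sec:conditional_Higgs}. Write $\cZ_{N,T}(h,q)=\E^\Theta\E^\rmA[e^{-q\cH^F_{N,T}}]$ and express the inner Higgs integral via $\cW^{h,1,m}_N$ evaluated at $(\bA_T,0)$ with $m=\lambda_{A_T,T}=m_{\bA_T,0}$. The variational representation reads
\[
-\log\cZ_{N,T}(h,q)=-\log\E^\Theta\inf_{v\in\bH^\rmA_a}\E^\rmA\Big[q\cQ_N(\bfA_T,{}_\vartheta B_T)+q\cW_N(\bA_T,{}_\vartheta B_T)+\tfrac12\|v\|^2_{\bH^\rmA}\Big]
\]
(suitably with the infimum inside the exponential for each $\vartheta$). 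Here I use Proposition~\ref{prop:translation_determinant} and Lemma~\ref{lem:self_adjoint} to move the drift from $\bfA_T\bplus B_T$ onto the $B$-slot. Tameness at fixed $N,T$ is routine, since all objects are finite dimensional or smooth.

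\textbf{Uniform bounds, so that $0<\cZ<\infty$.} For the upper bound on $-\log\cZ$, I take $v=0$. Then Theorem~\ref{thm:higgs-W-bound} (upper estimate at $B=0$) and Theorem~\ref{thm:convergence_cQ_N} give a bound by $\E^\rmA[(1+\bfnorm{\bfA_T}_{-\kappa})^{K}]$. This is finite uniformly in $T$ by Gaussian moments of $A_T^0$ and $A_T^*A_T-\rmc_T$ in $C^{-\kappa/2}$, together with Lemma~\ref{lem:bV_properties}. The lower bound is the crux. Theorem~\ref{thm:lower_bound_det} gives $\cQ_N\ge -C(1+\bfnorm{\bfA_T}_{-\kappa})^{\rmn}-\delta\|B_T\|^{1+\tau(\kappa)}_{H^1}$, and Theorem~\ref{thm:higgs-W-bound} gives $\cW_N\ge -C(q+\bfnorm{\bA_T}+|m|+1)^K-\delta\|B_T\|^{\tau(\kappa)}_{H^1}-c\|A_T+B_T\|$. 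The mass $m_{\bA_T,B_T}$ is controlled by Lemma~\ref{lem:difference_masses}, again with only a $\tau(\kappa)$ power of $\|B\|$. Since $\|B_T\|_{H^1}\le\|v\|_{\bH^\rmA}$, and all $B$-powers are strictly sub-quadratic once $\kappa<\bar\kappa$, Young's inequality absorbs them into $\frac14\|v\|^2$. The linear term from $h$ carries a small constant $c$ and is handled the same way. This gives $\inf_v(\cdots)\ge -C\,\E[(1+\bfnorm{\bfA_T}_{-\kappa})^{K'}]$ uniformly in $N,T$, for every $q\ge1$. The main obstacle is exactly this bookkeeping: all exponents of $\|B\|$ must stay below $2$. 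That is why $\bar\kappa$ must be small, e.g.\ $\kappa<1/12$ and $\kappa<1/(4n)$.

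\textbf{Convergence.} Set $F_{N,T}:=e^{-\cH^F_{N,T}}$. Theorem~\ref{thm:convergence_cQ_N} and Theorem~\ref{thm:higgs_convergence} give pointwise limits as $N\to\infty$ of $\cQ_N(\bfA,0)$ and $\cW_N(\bA,0)$, with locally Lipschitz dependence on $\bfA$ in $\bfnorm{\cdot}_{-\kappa}$ uniformly in $N\in[0,\infty]$. Next, $\bfA_T\to\bfA$ in $L^p(\bP^\rmA;\cX^{-\kappa}_\rmV)$. This uses standard Gaussian convergence of $A_T$ and of the Wick square $A^*_TA_T-\rmc_T$ in $C^{-\kappa/2}$, and Lemma~\ref{lem:bV_properties} for $\bV$. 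The continuity estimates then show that $F_{N,T}$ converges in probability as $N,T\to\infty$ in any order; the polynomial Lipschitz constants are integrable. Applying the bounds of the previous step with $q=2$ yields uniform $L^2$ bounds on $F_{N,T}$, hence uniform integrability. So $\cZ_{N,T}(h,1)\to\cZ(h)\in(0,\infty)$.

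Finally, $h$ ranges over $C_{\rm test}$, which includes $\pm$ small multiples of bounded continuous functions and of linear pairings $\langle f,A\rangle,\langle g,\phi\rangle$. Therefore $\cZ(h)/\cZ(0)$ gives convergence of Laplace transforms. Taking $h=c\|A\|^2_{C^{-\kappa/2}}+c\|\phi\|^{2n}_{C^{-\kappa}}$ gives tightness on $\Omega^1\cS'\times\cS'$. Together these identify a unique weak limit, which is a probability measure.
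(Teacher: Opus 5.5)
Your proposal follows essentially the same route as the paper: the Bou\'e--Dupuis formula on the gauge noise at fixed $\vartheta$, with sub-quadratic control in the drift coming from Theorems~\ref{thm:lower_bound_det} and~\ref{thm:higgs-W-bound} and Lemma~\ref{lem:difference_masses}, as packaged in Lemma~\ref{lem:BD_A_bounds} and Proposition~\ref{prop:unif_integrability}, followed by convergence in probability from the uniform-in-$N$ Lipschitz estimates and Lemma~\ref{lem:bV_properties}, plus uniform integrability (the paper takes $q>1$ close to $1$ rather than $q=2$). Two small corrections: first, the $h$-contribution $c\|A_T+B_T\|^2_{C^{-\kappa/2}}$ is quadratic, not linear, in $\|B_T\|_{H^1}$, so it is absorbed by $\tfrac12\|v\|^2$ only through a $q$-dependent smallness condition on $c$ (the paper's $c<(8qC_E)^{-1}$), not by Young's inequality; second, for tightness you need $h=-c\|A\|^2_{C^{-\kappa/2}}-c\|\phi\|^{2n}_{C^{-\kappa}}$, i.e.\ exponential moments, so the sign of $h$ must be negative, not positive.
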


We begin by showing uniform bounds for the quantity $\cZ_{N,T}(h,q)$. As advertised, for the main part these are obtained from a variational formulation of the (outer) expectation with respect to the Gaussian field $\tilde A_T$. In studying this variational problem, we rely in turn on the principal results established in Sections~\ref{sec:determinants} and~\ref{sec:conditional_Higgs} concerning, respectively, the conditional Higgs measure and the regularised ratio of determinants $Q_{N,T}$.

We first rewrite the variational functional $\cH^h$ using the functionals on the enhanced space $\cX_V^{-\kappa}$ from our pathwise results. The identities below follow from the definitions.
\begin{lemma} \label{lem:cH_identities}
    With $\bfA_T^0=(A_T^0,A_T^{0\,*} A_T^0-\rmc_T,\bV_{A_T^0})$ and $B_T\in\Omega_C^1 H^1$, the identities
    \begin{align*}
    \cH^F_{N,T}(A_T^0,\vartheta) &= \cQ_N(\bfA_T,0)+\cW_N^{h,1,m_{\bA_T,0}}(\bA_T,0), \textup{ and } \\
    \cH^F_{N,T}(A_T^0+B_T,\vartheta) &= \cQ_N(\bfA_T^0,{}_\vartheta{B_T})+\cW_N^{h,1,m_{\bA_T^0,{}_\vartheta{B_T}}}(\bA_T^0,{}_\vartheta{B_T})
    \end{align*}
    hold $\bP^\rmA\times \bar \dif\vartheta$-almost surely.
\end{lemma}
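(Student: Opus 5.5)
The plan is to verify the two identities by unwinding the definitions term by term, treating the determinant part and the Higgs part separately. Since $\cH^F_{N,T}(A^0,\vartheta)=Q_{N,T}(A)-\log\int e^{-h-\cV_{A,N,T}}\dif\mu^\GFF_{A,N,T}$ with $A=\vartheta+A^0$, it suffices to identify $Q_{N,T}(\vartheta+A_T^0+B_T)$ with $\cQ_N(\bfA_T^0,{}_\vartheta B_T)$, and the Higgs free energy with $\cW_N^{h,1,m}(\bA_T^0,{}_\vartheta B_T)$ at $m=m_{\bA_T^0,{}_\vartheta B_T}$. The first identity then follows from the second at $B_T=0$, via the translation identities below.

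\emph{Determinant part.} By Corollary~\ref{cor:bV_well_defined} we already have $Q_{N,T}(A_T+B_T)=\cQ_N(\bfA_T,B_T)$, where $\bfA_T=(A_T,A_T^*A_T-\rmc_T,\bV_{A_T})$. I would write $\bfA_T=\bfA_T^0\bplus\vartheta$ using \eqref{eq:translation_bfA}. This amounts to checking three things. First, $A_T^*A_T-\rmc_T=(A_T^{0*}A_T^0-\rmc_T)+2A_T^{0*}\vartheta+\vartheta^*\vartheta$, which is algebra. Second, $\bV_{A_T}=\bV_{A_T^0}+2\bfPi(A_T^0,\vartheta)+\bfPi(\vartheta,\vartheta)$. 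This second point is the only non-trivial one: the centring constants must match. Since $\vartheta$ is a constant form, $\bfPi^\lambda(A_T^0,\vartheta)$ only sees the zero Fourier mode of $A^0_T$, which vanishes, so the cross term is zero. The mean $\E^\Theta\E^\rmA$ of $\bfPi^\lambda(\vartheta,\vartheta)$ is $\Pi_0(\lambda)$. I would then check that the convention chosen in \eqref{eq:def_vacuum_A_T} and the formula for $\rmo_{N,T}$ is consistent, meaning that the deterministic $\vartheta$-dependent remainder is the one absorbed in the definition. Third, I would apply Proposition~\ref{prop:translation_determinant} twice, in the associative form $\cQ_N(\bfA^0_T\bplus\vartheta,B_T)=\cQ_N(\bfA^0_T\bplus(\vartheta+B_T),0)=\cQ_N(\bfA^0_T,{}_\vartheta B_T)$. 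This is legitimate since $\vartheta$ is harmonic and hence Coulomb.

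\emph{Higgs part.} I would identify $L_{A_T+B_T}-\rmc_T$ with $L_{\bA_T^0,{}_\vartheta B_T}$ using Lemma~\ref{lem:self_adjoint} and the definition of $J_{\bA,B}$; the counterterm $\rmc_T$ sits in $A^2$. Consequently $\lambda_{A_T+B_T,T}=m_{\bA_T^0,{}_\vartheta B_T}$ by \eqref{eq:m_def}. Then $\mu^\GFF_{A,N,T}=\Law(\phi_{\bA_T^0,{}_\vartheta B_T,N})$, and $\cV_{A,N,T}=\scV_N^{m}$ with this $m$, by \eqref{eq:def_V_N_m}. Substituting into \eqref{eq:def_cW} with $q=1$ gives the Higgs term exactly.

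The main obstacle is the bookkeeping of the renormalisation constants in the vacuum-polarisation component, namely making sure that the $\vartheta$-averaged zero-mode term $\Pi_0$ and the constant $\rmp_N/2$ are consistently absorbed. All other steps are definitional, and they hold almost surely because $A_T$ is smooth and finite-dimensional for fixed $T$.
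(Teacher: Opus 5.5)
Your route is the paper's: Corollary~\ref{cor:bV_well_defined}, then $\bfA_T=\bfA_T^0\bplus\vartheta$ with associativity and Proposition~\ref{prop:translation_determinant}, and $L_{\bA,B}=L_{\bA\bplus B,0}$ from Lemma~\ref{lem:self_adjoint} for the Higgs part. However, the check you defer is a genuine gap, which the paper's one-line proof shares, and it does not close the way you hope.

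The cross term does vanish. But $\E^\Theta\E^\rmA[\bfPi^\lambda(A_T,A_T)]=\E^\rmA[\bfPi^\lambda(A_T^0,A_T^0)]+\Pi_0(\lambda)$. Suppose $\bV_{A_T^0}$ is \eqref{eq:def_vacuum_A_T} with $A_T^0$ in place of $A_T$, i.e.\ centred by its own mean, as in the Wick computation of Lemma~\ref{lem:bV_properties}. Then $\bV_{A_T}=\bV_{A_T^0}+\bfPi(\vartheta,\vartheta)-\Pi_0$. Hence $\bfA_T$ and $\bfA_T^0\bplus\vartheta$ differ by $(0,0,\Pi_0)$. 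Since $\cQ_N$ depends on $\bV$ only through $\int_0^N\bV\,\dif\lambda$, you get
\[
\cQ_N(\bfA_T,B_T)=\cQ_N(\bfA_T^0,{}_\vartheta B_T)-\int_0^N\Pi_0(\lambda)\,\dif\lambda .
\]
The leftover is not ``$\vartheta$-dependent''. The translation already supplies $\bfPi(\vartheta,\vartheta)$, and what remains is its $\vartheta$-average, a deterministic constant that does not vanish. Its $N\to\infty$ limit is $\E^\Theta[|\vartheta|^2]\int_0^\infty\Pi^\lambda_{11}(0)\,\dif\lambda$, which is positive by the computation in the proof of Lemma~\ref{lemma:8pi}.

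To repair this, you have two options.
\begin{itemize}
\item Define $\bV_{A_T^0}:=\bV_{A_T}-\bfPi(\vartheta,\vartheta)$, i.e.\ centre by the full $\E^\Theta\E^\rmA$-mean. Then $\bfA_T=\bfA_T^0\bplus\vartheta$ and the second identity is exact.
\item State the second identity up to the additive constant $\int_0^N\Pi_0\,\dif\lambda$. By Lemma~\ref{lem:vac_pol_bound}, $|\Pi_0(\lambda)|\lesssim(1+\lambda)^{-2}$, so this constant is bounded uniformly in $N$. It is harmless in Lemma~\ref{lem:BD_A_bounds} and in normalised quantities.
\end{itemize}
Likewise, take the first identity directly from Corollary~\ref{cor:bV_well_defined} at $B_T=0$, rather than from the second identity.

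Your concern about $\rmp_N/2$ is moot here. $\cQ_N$ contains $-(m-1)\int_0^N\Tr(R_0(1+\lambda)^2)\,\dif\lambda=-\frac{m-1}{2}\rmp_N$ and $\scV_N^m$ contains $+\frac m2\rmp_N$, exactly as $Q_{N,T}$ and $\cV_{A,N,T}$ do, so nothing needs absorbing.

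Finally, ``substituting gives the Higgs term exactly'' requires two readings of \eqref{eq:def_cW}. First, $h$ must be evaluated at the full field $A_T+B_T$, not at the first component $A_T^0$ of $\bA_T^0$. Second, $h$ must carry the sign used in $\cH^F_{N,T}$.
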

\begin{proof}
    This essentially follows by definition from the properties of the translation operation $\bfA_T\bplus B_T$ as defined in~\eqref{eq:translation_bfA}. For the ratio of determinants, the relevant inclusion in the enhanced state space $\cX_V^{-\kappa}$ is Corollary~\ref{cor:bV_well_defined}, and then Proposition~\ref{prop:translation_determinant} supplies the desired identity.
\end{proof}

The following is the main technical estimate we shall need.

\begin{lemma} \label{lem:BD_A_bounds}
    Fix $q\ge 1$ and $\eps>0$. For all $T,N\ge 0$, all test functions $h\in C_{{\rm test}}$ with sufficiently small growth parameter $c$, and all drifts $v\in \bH^\rmA$, there exist positive, measurable functions $\mathscr{Q}_T$ independent of $v$ (and $N$) and a $C_E>0$ such that
    \begin{align*}
    q\cH_{N,T}^h(A_T^0+B_T,\vartheta) &\ge -\mathscr{Q}_T - (4qC_Ec+\eps) \norm{v}_{\bH^\rmA}^2,\\
    q\cH_{N,T}^h(A_T^0,\vartheta) &\le \mathscr{Q}_T,\\
    \sup_T \E^\rmA[\mathscr{Q}_T] &\le C(h,\kappa,q,\eps),
    \end{align*}
    with the first two bounds uniform in $N$.
\end{lemma}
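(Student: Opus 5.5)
Via Lemma~\ref{lem:cH_identities} we split $q\cH^h_{N,T}(A_T^0+B_T,\vartheta)$ into the determinant part $q\cQ_N(\bfA_T^0,{}_\vartheta B_T)$ and the Higgs free-energy part $q\cW_N^{h,1,m}(\bA_T^0,{}_\vartheta B_T)$ with $m=m_{\bA_T^0,{}_\vartheta B_T}$, and bound each pathwise. Every estimate has the form
\[
(\text{polynomial in } \bfnorm{\bfA_T^0}_{-\kappa}) + \delta\|{}_\vartheta B_T\|_{H^1}^{1+\tau(\kappa)}+\text{(test-function term)}.
\]
Since $|\vartheta|\le 2\sqrt2\pi$ and $\|B_T\|_{H^1}\lesssim\|v\|_{\bH^\rmA}$ uniformly in $T$ (because $\diff^*(\diff\diff^*)^{-1}$ gains one derivative and $\rmP_T$ is a contraction), we then use Young's inequality to convert $\delta\|B_T\|_{H^1}^{1+\tau(\kappa)}$ into $\tfrac{\eps}{2}\|v\|^2_{\bH^\rmA}$ plus a constant. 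This needs $1+\tau(\kappa)<2$, which is where $\kappa<\bar\kappa$ enters. We then define $\mathscr{Q}_T := C(1+\bfnorm{\bfA_T^0}_{-\kappa})^{K}$ for a suitable $K$ and a constant $C$ depending on $h,q,\eps,\kappa$. This depends only on $A_T^0$, hence not on $v$ or $N$.

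\emph{Determinant part.} We apply Theorem~\ref{thm:lower_bound_det} with $\delta=\eps/(4q)$, which gives
\[
q\cQ_N\ge -C(1+\bfnorm{\bfA_T}_{-\kappa})^{\rmn(\kappa)}-\tfrac{\eps}{4}\|{}_\vartheta B_T\|_{H^1}^{1+\tau(\kappa)}.
\]
For the upper bound at $B_T=0$ we use the identity in Theorem~\ref{thm:convergence_cQ_N}: $|\cQ_N(\bfA_T,0)|\le\|\bV\|_{L^1}+\int_0^\infty|\Tr\Rem_{\bA,0}|\,\dif\lambda$, and Lemma~\ref{lem:Rem_det} bounds the latter by a polynomial in $\triple{\bA_T}_{-\kappa}$.

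\emph{Higgs part.} The mass $m$ is controlled by Lemma~\ref{lem:difference_masses} and \eqref{eq:m_star}: $|m|\lesssim (1+\|{}_\vartheta B_T\|_{H^1})^{\tau(\kappa)}(1+\triple{\bA^0_T}_{-\kappa})^{\rmn(\kappa)}$. We insert this into Theorem~\ref{thm:higgs-W-bound} (applied with $q$ in place of the exponent, via convexity or H\"older, since $(\int e^{-h-\cV}\,\dif\mu^\GFF)^q$ is what appears). The factor $|m|^K$ then produces a power $K\tau(\kappa)$ of $\|B_T\|_{H^1}$, which is still sub-quadratic once $\kappa$ is small, and is absorbed by Young's inequality as above. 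The test function contributes terms of the form $c\|A_T^0+B_T\|_{C^{-\kappa/2}}^2$. Splitting this as $2c\|A^0_T\|^2_{C^{-\kappa/2}}+2c\|B_T\|^2_{C^{-\kappa/2}}$ and using $\|B_T\|_{C^{-\kappa/2}}\lesssim \|B_T\|_{H^1}\le \sqrt{C_E}\|v\|_{\bH^\rmA}$ produces exactly the quadratic loss $4qC_Ec\|v\|^2_{\bH^\rmA}$ in the statement. The $A^0_T$ piece goes into $\mathscr{Q}_T$, provided $\E^\rmA e^{\text{small}\cdot\|A^0_T\|^2}$ is not needed here (only a first moment is required).

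\emph{Moments.} It remains to show $\sup_T\E^\rmA[(1+\bfnorm{\bfA_T^0}_{-\kappa})^K+\|A_T^0\|^2_{C^{-\kappa/2}}]<\infty$. For $A_T^0$ and the Wick square $A_T^{0*}A_T^0-\rmc_T$, this follows from standard Gaussian Besov estimates (Kolmogorov plus hypercontractivity in the first and second chaos, uniformly in $T$). For $\bV_{A_T}$, it follows from Lemma~\ref{lem:bV_properties}. This also justifies the almost-sure membership of $\bfA_T^0$ in $\cX^{-\kappa}_\rmV$.

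The main obstacle is the Higgs lower bound with a nonzero drift. Theorem~\ref{thm:higgs-W-bound} carries the $A$-dependence of $h$ somewhat loosely, and the mass depends on $B$. Tracking the exponents precisely enough to keep every power of $\|B_T\|_{H^1}$ other than the explicit $c$-term strictly below $2$ is the delicate point. I would first fix $\kappa$ so that $\rmn$-type exponents multiplied by $\tau(\kappa)$ stay below $1$, and only then choose $\delta$.
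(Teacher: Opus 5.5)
Your proposal is correct and follows essentially the same route as the paper. Like the paper, you split via Lemma~\ref{lem:cH_identities} and bound the determinant part by Theorems~\ref{thm:lower_bound_det} and~\ref{thm:convergence_cQ_N}. You bound the Higgs free energy by Theorem~\ref{thm:higgs-W-bound}, with the mass controlled through Lemma~\ref{lem:difference_masses}, and extract the $4qC_Ec\|v\|_{\bH^\rmA}^2$ term from the test function by embedding. You absorb the sub-quadratic powers of $\|B_T\|_{H^1}\le\|v\|_{\bH^\rmA}$ by Young, and take $\mathscr{Q}_T$ polynomial in $\bfnorm{\bfA_T^0}_{-\kappa}$, with moments from Gaussian estimates and Lemma~\ref{lem:bV_properties}.

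One small simplification: $q\cH^h_{N,T}$ contains exactly $q\cW_N^{h,1,m}$, so no convexity or H\"older step is needed for the exponent $q$ (for the upper bound, Jensen would in fact point the wrong way). You can simply multiply the exponent-one bounds of Theorem~\ref{thm:higgs-W-bound} by $q$.
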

\begin{proof}
   We rely on the representation of $\cH_{N,T}^h$ in Lemma~\ref{lem:cH_identities} and consider separately each term that appears there. Note that $\diff^*({}_\vartheta{B_T})=0$: the drift $B_T=J_T(v)$ is co-exact by construction and $\vartheta$ is a constant form. Provided that $c$ is small enough, Theorem~\ref{thm:higgs-W-bound} ensures the existence of exponents $\tau(\kappa)$ and $\rmn(\kappa)$ such that, for all $\delta>0$, the inequality
    \begin{align*}
    \cW_N^{h,1,m_{\bA_T^0,{}_\vartheta{B_T}}}(\bA_T^0,{}_\vartheta{B_T}) &\ge -C(h,\kappa,\delta,\vartheta)\big(1+\triple{\bA_T^0}_{-\kappa}^{\rmn(\kappa)} + m_{\bA_T^0,{}_\vartheta{B_T}}^{\rmn(\kappa)}\big) - \delta\norm{B_T}_{\Omega^1 H^1}^{\tau(\kappa)} \\
    &\quad\quad -c \norm{A_T+{}_\vartheta{B_T}}_{\Omega^1 C^{-\kappa/2}}^2,
    \end{align*}
    holds uniformly in $N$ for some constant $C(h,\kappa,\delta,\vartheta)>0$ and all $\kappa$ small enough. We use here that $\norm{{}_\vartheta{B_T}}_{\Omega^1 H^1}^{\tau(\kappa)} \le 2|\vartheta| +  2\norm{B_T}_{\Omega^1 H^1}^{\tau(\kappa)}$ for $\tau(\kappa)\le 1$. For the mass $m_{\bA_T^0,{}_\vartheta{B_T}}$, Lemma~\ref{lem:difference_masses} implies the bound
    \begin{align*}
    m_{\bA_T^0,{}_\vartheta{B_T}} &\le 1 + C(\kappa)(1+\norm{{}_\vartheta{B_T}}_{\Omega^1 H^1})^{\tau(\kappa)}(1+\triple{\bA_T^0}_{-\kappa}^3),\\
    \shortintertext{which in turn yields}
    C(h,\kappa,\delta,\vartheta) m_{\bA_T^0,{}_\vartheta{B_T}}^{\rmn(\kappa)} &\le C(h,\kappa,\delta,\vartheta) + C(h,\kappa,\delta,\vartheta)(1+\norm{{}_\vartheta{B_T}}_{\Omega^1 H^1})^{\tau(\kappa)}(1+\triple{\bA_T^0}_{-\kappa}^3)^{\rmn(\kappa)} \\
    &\le C(h,\kappa,\delta,\vartheta)(1+\triple{\bA_T^0}_{-\kappa}^{\rmn(\kappa)}) + \delta\norm{B_T}_{\Omega^1 H^1}^{\tau(\kappa)}
    \end{align*}
    for a suitably redefined $C(h,\kappa,\delta,\vartheta)>0$. By Besov-H\"older embedding, there is an analytic constant $C_E$ (bounded above, for example, by the constant in Bernstein's inequality on $\mathbb{T}^2$) such that
    \[
    c \norm{A_T+{}_\vartheta{B_T}}_{\Omega^1 C^{-\kappa/2}}^2 \le 2c\norm{A_T}_{\Omega^1 C^{-\kappa/2}}^2 + 2^{2+\kappa} c |\vartheta| + 4c C_E \norm{B_T}_{\Omega^1 H^1}^2.
    \]
    Combining the above, we obtain
    \[
    q\cW_N^{h,q,m_{\bA_T^0,{}_\vartheta{B_T}}}(\bA_T^0,{}_\vartheta{B_T}) \ge -C(h,\kappa,\delta,q,\vartheta)\big(1+\triple{\bA^0}_{-\kappa}^{\rmn(\kappa)}\big) - 2q\delta\norm{B_T}_{\Omega^1 H^1}^{\tau(\kappa)} - 4qc C_E \norm{B_T}_{\Omega^1 H^1}^2
    \]
    valid for all $\kappa$ small enough. Arguing similarly starting from the upper bound in Theorem~\ref{thm:higgs-W-bound}, we also see that
    \[
    q\cW_N^{h,1,m_{\bA_T,0}}(\bA_T,0) \le C(h,\kappa,\delta,q,\vartheta)\big(1+\triple{\bA^0}_{-\kappa}^{\rmn(\kappa)}\big)
    \]
    for a possibly larger $C(h,\kappa,\delta,q,\vartheta)>0$. For the ratio of determinants, Theorems~\ref{thm:lower_bound_det} and~\ref{thm:convergence_cQ_N} give the lower and upper bounds, respectively,
    \begin{align*}
    q\cQ_{N}(\bfA_T^0,{}_\vartheta{B_T}) &\ge -C(\kappa,\delta,q,\vartheta)(1+\bfnorm{\bfA_T^0}_{-\kappa})^{\rmn(\kappa)} - q\delta\norm{B_T}_{\Omega^1 H^1}^{1+\tau(\kappa)},\\
    q\cQ_{N}(\bfA_T) &\le C(\kappa,q,\vartheta)(1+\bfnorm{\bfA_T^0}_{-\kappa})^{\rmn(\kappa)},
    \end{align*}
    both uniformly in $N$. Choosing $\delta\le \eps/(3q)$ and noting that $\norm{B_T}_{\Omega^1 H^1}\le \norm{v}_{\bH^\rmA}$, we thus arrive at the desired inequalities for $q\cH_{N,T}^h(\cdot,\vartheta)$ with
    \[
    \mathscr{Q}_T = C(h,\kappa,\eps,q,\vartheta)(1+\bfnorm{\bfA_T^0}_{-\kappa}^{\rmn(\kappa)})
    \]
    independent of $v$ and $N$. As $\vartheta$ varies over a compact set, we may drop the $\vartheta$-dependence by replacing the constant above by its supremum over $\vartheta\in \bfi (0,2\pi]^2$. The uniform bound on $\E^\rmA[\mathscr{Q}_T]$ now follows from Lemma~\ref{lem:bV_properties} and the standard two-dimensional estimate
    \[
    \sup_T \E^\rmA[\triple{\bA_T^0}_{-\kappa}^N]<\infty, \textup{ for all } \kappa>0,N\in\N.
    \]
\end{proof}

To apply the Bou\'e-Dupuis formula to the outer expectation, we need to check that the relevant functional of the noise $X^\rmA$ is tame. We rely on the coercivity of the interaction potential for a lower bound apart from the contribution of $h$, which is allowed small quadratic growth in absolute value.

\begin{lemma} \label{lem:tameness_lower_bound}
    For all test functions $h\in C_{{\rm test}}$ with sufficiently small $c>0$, and all $N,T\ge 0$, the functional $q\cH_{N,T}^h(A_T^0,\vartheta)$ is bounded from below by
    \[
    q\cH_{N,T}^h(A_T^0,\vartheta) \ge - C(q,T,N,h) -2qc \norm{A_T^0}_{\Omega^1 C^{-\kappa/2}}^2
    \]
\end{lemma}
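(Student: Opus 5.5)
We work at fixed cutoffs $T,N$, where everything is finite-dimensional or smooth. The bound may depend on $(q,T,N,h)$, so the only requirement is that the $A_T^0$-dependence is at worst $-2qc\|A_T^0\|^2_{\Omega^1C^{-\kappa/2}}$. We split $\cH^h_{N,T}(A_T^0,\vartheta)=Q_{N,T}(A_T)+\cW^{h,1,m}_N(\bA_T,0)$ as in Lemma~\ref{lem:cH_identities} and bound the two pieces separately from below.

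\textbf{Determinant term.} We use the representation \eqref{eq:determinant_computation} with $B_T=0$. The last integral vanishes. The counterterm $\rmo_{N,T}$ and $\rmp_N$ are finite constants at fixed $(N,T)$. The first integral is a truncated integral over $\lambda\in[0,N]$ of a difference of traces of resolvents. We bound it below crudely: the subtracted resolvent $(L_{A_T}-\rmc_T+\lambda_{A_T,T}+\lambda)^{-1}$ is positive with norm at most $(1+\lambda)^{-1}$ by definition of the spectral shift. To get a trace bound we restrict to the finite Fourier window used to regularise, or we use the diamagnetic inequality after noting that $L_{A_T}-\rmc_T+\lambda_{A_T,T}\ge L_{A_T}+1-\rmc_T-\ldots$. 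More simply, we invoke Lemma~\ref{lem:Rem_det} and the decomposition \eqref{eq:decomp_scq} at $B=0$ together with the bounds in Theorem~\ref{thm:convergence_cQ_N}. These bound $|\cQ_N(\bfA_T,0)|$ by $C(1+\bfnorm{\bfA_T}_{-\kappa})^{\rmn(\kappa)}$, which is a polynomial in $A_T$ rather than quadratic.

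For fixed $T$, however, $A_T^0$ lives in a finite-dimensional space, and $\triple{\bA_T}_{-\kappa}$ and $\|\bV_{A_T}\|_{L^1}$ are controlled by $C(T)(1+\|A_T^0\|^2)$. So the polynomial bound is not directly of the right form. This is the main obstacle: the pathwise bounds have polynomial growth in $\bA$. To avoid this, we instead give a lower bound using positivity at finite $N$: $\int_0^N\Tr(R_B(1+\lambda))\,d\lambda$-type terms are controlled by $N$ and the dimension. Concretely, the dangerous piece $-\int_0^N\Tr(L_{A_T}-\rmc_T+\lambda_{A_T,T}+\lambda)^{-1}d\lambda$ is bounded below by $-\int_0^N\Tr(L_{A_T}+1+\lambda+\lambda_{A_T,T}-\rmc_T-1)^{-1}$. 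Since $\lambda_{A_T,T}-\rmc_T\ge 1-\min\sigma(L_{A_T})-\rmc_T\ge 1-\rmc_T$ and $L_{A_T}\ge0$, the diamagnetic comparison (Lemma~\ref{lem:diamagnetic}, applied to the shifted operator) bounds it by $\int_0^N\Tr(L_0+1+\lambda)^{-1}$, which is finite and constant. The remaining term $-\tfrac{\lambda_{A_T,T}-1}{2}\rmp_N$ is where the $A_T^0$-dependence enters. By the Neumann series of Theorem~\ref{thm:resolvent_estimate} (smooth case), $\lambda_{A_T,T}\lesssim 1+\|A_T\|_{L^\infty}^2\le C(T)(1+\|A_T^0\|^2_{C^{-\kappa/2}})$, with finite-dimensional norm equivalence. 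This gives a quadratic term with a $T,N$-dependent constant, not a small one. To fix this, we note that this term combines with the Higgs Wick constant as stated in the introduction: the $m$-dependence in $\scV_N^m$ exactly cancels it into $\rmp_N/2$, independent of both fields. So we absorb it there.

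\textbf{Higgs term.} Using Jensen's inequality, $\cW_N^{h,1,m}\ge \E[h-\ldots]$, so $-\log\E e^{-h-\cV}\ge -\E[-h(A_T,\phi)]-\ldots$ up to sign. We prefer the coercivity route instead: $\cV_{A,N,T}(\phi)\ge -C(N)$ pointwise, since at fixed $N$ the Wick polynomial has positive leading coefficient $c_n$ and finite constants. After the cancellation above, the mass term is $-\tfrac{m}{2}\int|\phi|^2$ with $m=\lambda_{A_T,T}$, combined with the determinant piece. Also $h\le c(1+\|A_T\|^2+\|\phi\|^{2n})$. Thus
\[
-\log\E[e^{-h-\cV}]\ge -C(N)-c(1+\|A_T\|^2)-\log\E[e^{c\|\phi_{A,N,T}\|^{2n}_{C^{-\kappa}}}].
\]
The last term is finite uniformly in $A$: by the diamagnetic inequality, $\phi_{A,N,T}$ has covariance dominated by that of the flat truncated field, and at fixed $N$ it is bounded in $C^{-\kappa}$ with Gaussian tails. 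We handle the $2n$-th power against the coercive $c_n|\phi|^{2n}$, keeping that term rather than discarding it. Finally, $c\|A_T\|^2\le 2c\|A_T^0\|^2+C$, and multiplying by $q$ gives the claim.
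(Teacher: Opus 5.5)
Your architecture is the right one, and it matches the paper's: split $\cH^h_{N,T}$ into the determinant part and the conditional Higgs free energy, and use crude bounds at fixed $(N,T)$ rather than the pathwise polynomial bounds, which you correctly note grow super-quadratically in $A^0_T$. Then absorb $h$ pointwise into $c_n\|\phi\|_{L^{2n}}^{2n}$ for $c<c_n$. (Your intermediate display involving $\log\E[e^{c\|\phi_{A,N,T}\|^{2n}_{C^{-\kappa}}}]$ is useless, since that expectation is infinite for a nondegenerate Gaussian when $2n>2$. Only the version that keeps the coercive term works.)

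\textbf{Gap 1: the bound on the mass.} You miss the observation that makes everything uniform in $A_T$. Since $L_{A_T}=\diff_{A_T}^*\diff_{A_T}\ge0$ for every smooth $A_T$, one has $\min\sigma(L_{A_T}-\rmc_T)\ge-\rmc_T$, hence $m_{\bA_T,0}=\lambda_{A_T,T}\in[1,1+\rmc_T]$. Instead you bound $\lambda_{A_T,T}\lesssim C(T)(1+\|A^0_T\|^2_{C^{-\kappa/2}})$ and try to cancel the resulting term against the Higgs Wick constant. That cancellation only removes the scalar $\frac m2\rmp_N$. The field-dependent part $-\frac m2\int|\phi|^2$ of $\scV^m_N$ survives, and the pointwise minimum of $c_n|\phi|^{2n}-\frac m2|\phi|^2$ is $-Cm^{n/(n-1)}$. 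Under your bound on $m$ this is of order $\|A^0_T\|^{2n/(n-1)}$, which is super-quadratic. So your claim that $\cV_{A,N,T}\ge-C(N)$ pointwise is unjustified as written. It holds precisely because $m\le1+\rmc_T$, which also makes $-\frac{m-1}{2}\rmp_N$ in $Q_{N,T}$ trivially bounded by $C(N,T)$, with no cancellation needed.

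\textbf{Gap 2: the determinant integral.} In two dimensions neither $(L_0+1+\lambda)^{-1}$ nor $(L_{A_T}-\rmc_T+\lambda_{A_T,T}+\lambda)^{-1}$ is trace class, since $\sum_{k\in2\pi\Z^2}(1+\lambda+|k|^2)^{-1}=\infty$. So you may not split the trace of the difference and bound the pieces separately, and the bound you call ``finite and constant'' is infinite. Moreover, a diamagnetic domination of $(L_{A_T}+\mu)^{-1}$ by $(L_0+1+\lambda)^{-1}$ would require $\mu=\lambda_{A_T,T}-\rmc_T+\lambda\ge1+\lambda$. You only know $\mu\ge1-\rmc_T+\lambda$, which can even be negative.

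The paper keeps the difference and uses the heat-kernel representation. The diagonal diamagnetic bound $0\le p_{A_T}(t,x,x)\le p_0(t,0)$, together with $m_{\bA_T,0}\ge1$, gives $e^{-t}p_0(t,0)-e^{-(m_{\bA_T,0}-\rmc_T)t}p_{A_T}(t,x,x)\ge(1-e^{\rmc_T t})p_0(t,0)$. This is integrable on $[0,1]$ because $p_0(t,0)\sim(4\pi t)^{-1}$ while $1-e^{\rmc_T t}=O(t)$. For $t\ge1$, the spectral gap $L_{A_T}-\rmc_T+m_{\bA_T,0}\ge1$ and the semigroup property give $e^{-(m_{\bA_T,0}-\rmc_T)t}p_{A_T}(t,x,x)\lesssim_T e^{-t}$. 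Together these yield $\int_0^N\Tr(\cdots)\dif\lambda\ge-C(N,T)$ uniformly in $A_T$, which is the step your argument is missing.
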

\begin{proof}
    It suffices to consider the case $q=1$. Note that $m_{\bA_T,0} \le 1+\rmc_T<\infty$ for any finite $T$. Moreover, by assumption, we have
    \begin{align*}
    |h(A_T,\phi_{\bA_T,0,N})| &\le c+c \norm{A_T}_{\Omega^1 C^{-\kappa/2}}^2 + c \norm{\phi_{\bA_T,0,N}}_{H^{-\kappa}}^{2n} \\
    &\le C(h,\vartheta)+c\norm{\phi_{\bA_T,0,N}}_{L^{2n}}^{2n} + 2c \norm{A_T^0}_{\Omega^1 C^{-\kappa/2}}^2
    \end{align*}
    for any $T$ and $N$. We may drop the $\vartheta$ dependence as before by compactness. For any $c<c_n$, the lead coefficient of the Higgs interaction potential, we thus see that
    \begin{align*}
    -|h(A_T,\phi_{\bA_T,0,N})|+\scV_N^{m_{\bA_T,0}}(\phi_{\bA_T,0,N}) &\ge -C(N,T,h) - 2c \norm{A_T^0}_{\Omega^1 C^{-\kappa/2}}^2,\\
    \shortintertext{and hence}
    \cW_N^{h,1,m_{\bA_T,0}}(\bA_T,0) &\ge -C(T,N,h) - 2c \norm{A_T^0}_{\Omega^1 C^{-\kappa/2}}^2.
    \end{align*}
    We consider the difference of resolvents in $Q_{N,T}$ next. It suffices to study the trace for $\lambda=0$. With a view to using the heat kernel representation in Lemma \ref{lem:semi-group}, we note that the diamagnetic inequality implies that
    \[
    0\le p_{\bA_T}(t,x,x) \le p_0(t,0)
    \]
    on the diagonal, and so, by definition of $m_{\bA_T,0}$,
    \[
    e^{-t}p_0(t,0)-e^{-(m_{\bA_T,0}-\rmc_T)t}p_{\bA_T}(t,x,x) \ge (1-e^{\rmc_T t})p_0(t,0).
    \]
    Since the spectrum of $L_{\bA_T}-\rmc_T+m_{\bA_T,0}$ is bounded below by $1$, the semi-group property of the covariant heat kernel yields the estimate (for $t\ge 1$):
    \[
    e^{-(m_{\bA_T,0}-\rmc_T)t}p_{\bA_T}(t,x,x) \le e^{-(t-1)}e^{-(m_{\bA_T,0}-\rmc_T)}p_{\bA_T}(1,x,x) \le e^{1+\rmc_T -t}p_1(0).
    \]
    We now see that
    \begin{align*}
    \int_0^\infty\; e^{-t}p_0(t,0) -  e^{-(m_{\bA_T,0}-\rmc_T)t}p_{\bA_T}(t,x,x)\,\diff t &\ge \int_0^1\; (1-e^{\rmc_T t})p_0(t,0)\,\diff t \\
    &\quad\quad + \int_1^\infty\; e^{-t}p_0(t,0) - e^{1-t}e^{\rmc_T}p_1(0)\,\diff t,
    \end{align*}
    which is bounded below depending on $\rmc_T$. It follows that
    \[
    \int_0^N \Tr \Big((L_0+1+\lambda)^{-1}-(L_{\bA_T}-\rmc_T+m_{\bA_T,0}+\lambda)^{-1} \Big)\dif \lambda \ge -C(N,T).
    \]
   Finally, the remaining terms in $Q_{N,T}$ are likewise bounded (from above and below) by a constant depending on $T$ and $N$.
\end{proof}

Tameness now follows easily.

\begin{corollary} \label{cor:tameness_A}
    Fix $q\ge 1$. For all test functions $h\in C_{{\rm test}}$ with sufficiently small $c>0$, the functional $q \cH_{N,T}^h(A_T^0,\vartheta)$ is tame with respect to $\bP^\rmA$, that is, it satisfies
    \[
    \E^\rmA[|q \cH_{N,T}^h(A_T^0,\vartheta)|^{r_1}] + \E^\rmA[e^{-r_2 q \cH_{N,T}^h(A_T^0,\vartheta)}] < \infty
    \]
    for some $r_1,r_2\ge 1$ such that $\frac{1}{r_1}+\frac{1}{r_2}=1$.
\end{corollary}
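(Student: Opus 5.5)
The plan is to take $r_2$ slightly larger than $1$ and $r_1$ the conjugate exponent, and to bound the two expectations separately, at fixed $N,T$. For the exponential moment we use Lemma~\ref{lem:tameness_lower_bound}. Since $h\in C_{\rm test}$ may be replaced by $r_2h$, which is still in $C_{\rm test}$ with parameter $r_2c$, we get
\[
e^{-r_2 q\cH_{N,T}^h(A_T^0,\vartheta)} \le e^{r_2C(q,T,N,h)}\, e^{2r_2qc\norm{A_T^0}_{\Omega^1 C^{-\kappa/2}}^2}.
\]
At fixed $T$, $A_T^0$ is a finite-dimensional centred Gaussian, and $\norm{A_T^0}_{\Omega^1C^{-\kappa/2}}$ is a Gaussian seminorm. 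Fernique's theorem (or the uniform-in-$T$ bound via Bernstein) therefore gives $\E^\rmA[e^{\eta\norm{A_T^0}^2_{\Omega^1C^{-\kappa/2}}}]<\infty$ for some $\eta>0$ independent of $T$. Choosing $c$ small enough that $2r_2qc<\eta$ makes the second expectation finite. This smallness requirement on $c$ is exactly the hypothesis "sufficiently small $c$".

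For the $r_1$-th moment, we bound $|q\cH^h_{N,T}|$ from both sides by polynomial functionals of $A_T^0$. The lower side is already given by Lemma~\ref{lem:tameness_lower_bound}, and its negative part has all moments.

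For the upper side, we use the representation of Lemma~\ref{lem:cH_identities} together with the upper bounds from the proof of Lemma~\ref{lem:BD_A_bounds} (with drift $v=0$):
\[
q\cH^h_{N,T}(A_T^0,\vartheta)\le \mathscr Q_T = C(1+\bfnorm{\bfA_T^0}_{-\kappa}^{\rmn(\kappa)}).
\]
Here the Higgs free energy is controlled by Theorem~\ref{thm:higgs-W-bound} and the determinant ratio by Theorem~\ref{thm:convergence_cQ_N}. Hence
\[
|q\cH^h_{N,T}|\le C(q,T,N,h)+\mathscr Q_T+2qc\norm{A_T^0}^2_{\Omega^1C^{-\kappa/2}}.
\]
Each term has all polynomial moments: $\triple{\bA_T^0}_{-\kappa}$ by Gaussian hypercontractivity, and $\norm{\bV_{A_T^0}}_{L^1}$ by Lemma~\ref{lem:bV_properties}. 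So $\E^\rmA[|q\cH|^{r_1}]<\infty$ for every finite $r_1$.

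The step needing most care is the exponential moment. One must check that the constant in Lemma~\ref{lem:tameness_lower_bound} is genuinely deterministic at fixed $N,T$, and that the only random growth is the quadratic $c$-term. The determinant lower bound there depends only on $\rmc_T$ and $N$, and the Higgs part is controlled by coercivity with $c<c_n$, so this holds; the Gaussian integrability then closes the argument.
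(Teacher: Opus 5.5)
Your proposal is correct and follows the paper's own argument. The exponential moment comes from the lower bound of Lemma~\ref{lem:tameness_lower_bound} combined with Fernique's theorem for $\|A_T^0\|_{\Omega^1 C^{-\kappa/2}}^2$; this is the right quantity, rather than the full enhanced norm. The polynomial moments come from the two-sided bounds via Lemma~\ref{lem:BD_A_bounds}, Gaussian moment bounds and Lemma~\ref{lem:bV_properties}. The remark about replacing $h$ by $r_2 h$ is not needed, since multiplying the inequality of Lemma~\ref{lem:tameness_lower_bound} by $r_2$ already gives your bound.
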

\begin{proof}
    For the exponential bound, it is enough to consider the quadratic norm stemming from the test function $h$ thanks to Lemma \ref{lem:tameness_lower_bound}. But $\exp(\alpha \triple{\bA_T^0}_{-\kappa}^2)$ is integrable for all sufficiently small $\alpha>0$ by Fernique's theorem, and hence \[
    \E^\rmA[e^{-r_2 q \cH_{N,T}^h(A_T^0,\vartheta)}] < \infty,
    \]
    for a suitably restricted $c$ (essentially depending only on $q$ and analytic constants, as we may choose $r_2$ arbitrarily close to one). For the $L^2(\bP^\rmA)$ bound, we rely on the estimate from Lemma~\ref{lem:BD_A_bounds}, the existence of all moments for Gaussians and Lemma~\ref{lem:bV_properties} in relation to the vacuum polarisation.
\end{proof}

This completes our preparations for the following result.

\begin{proposition} \label{prop:unif_integrability}
    Given a fixed $q\ge 1$, there exists a $\tilde c(q)>0$ such that, for any $h\in C_{{\rm test}}$ with $c<\tilde c$, we have the uniform bounds
    \[
    0 < \inf_{N,T} \cZ_{N,T}(h,q) \le \sup_{N,T} \cZ_{N,T}(h,q) < \infty.
    \]
\end{proposition}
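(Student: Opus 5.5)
We apply the Boué--Dupuis formula to the outer expectation over the Gaussian gauge noise $X^\rmA$, for fixed $\vartheta$. Then we integrate over $\vartheta\in\bfi(0,2\pi]^2$ using the uniformity in $\vartheta$ built into Lemma~\ref{lem:BD_A_bounds}.

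By Corollary~\ref{cor:tameness_A}, the functional $q\cH^h_{N,T}(A^0_T,\vartheta)$ is tame, so
\[
-\log\E^\rmA\big[e^{-q\cH^h_{N,T}(A_T^0,\vartheta)}\big]=\inf_{v\in\bH^\rmA_a}\E^\rmA\Big[q\cH^h_{N,T}(A_T^0+B_T,\vartheta)+\tfrac12\|v\|_{\bH^\rmA}^2\Big],
\]
with $B_T=J_T(v)$.

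\emph{Upper bound on $\cZ_{N,T}(h,q)$, i.e.\ lower bound on the free energy.} For an arbitrary drift $v$, insert the first estimate of Lemma~\ref{lem:BD_A_bounds}:
\[
\E^\rmA\Big[q\cH^h_{N,T}(A_T^0+B_T,\vartheta)+\tfrac12\|v\|^2_{\bH^\rmA}\Big]\ge-\E^\rmA[\mathscr Q_T]+\Big(\tfrac12-4qC_Ec-\eps\Big)\E^\rmA\|v\|^2_{\bH^\rmA}.
\]
Fix $\eps=1/8$ and choose $\tilde c(q)=1/(32qC_E)$, also small enough that Lemmas~\ref{lem:BD_A_bounds} and~\ref{lem:tameness_lower_bound} apply. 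Then for $c<\tilde c$ the coefficient of $\E^\rmA\|v\|^2$ is nonnegative, and the infimum is bounded below by $-\sup_T\E^\rmA[\mathscr Q_T]\ge -C(h,\kappa,q)$, uniformly in $N,T$ and $\vartheta$. Exponentiating and integrating over $\vartheta$ gives $\sup_{N,T}\cZ_{N,T}(h,q)\le e^{C}$.

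\emph{Lower bound on $\cZ_{N,T}(h,q)$.} Here the variational formula is not even needed. Jensen's inequality applied to $\E^\Theta\E^\rmA$ gives
\[
\cZ_{N,T}(h,q)\ge\exp\big(-\E^\Theta\E^\rmA[q\cH^h_{N,T}(A_T^0,\vartheta)]\big)\ge\exp\big(-\sup_T\E^\rmA[\mathscr Q_T]\big),
\]
using the second estimate of Lemma~\ref{lem:BD_A_bounds}. Equivalently, one can take $v=0$ in the variational formula, where $B_T=0$ and the same upper estimate on $q\cH^h_{N,T}(A^0_T,\vartheta)$ applies. Either way we get $\inf_{N,T}\cZ_{N,T}(h,q)\ge e^{-C}>0$.

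\emph{Main obstacle.} The only delicate point is that the coercivity available in the outer problem is just the entropy $\tfrac12\|v\|^2$. Lemma~\ref{lem:BD_A_bounds} already packages the sub-quadratic dependence on $\|B_T\|_{H^1}$ coming from Theorems~\ref{thm:higgs-W-bound} and~\ref{thm:lower_bound_det}. What remains is the genuinely quadratic term $4qC_Ec\|v\|^2$ from the test function $h$, and this is exactly why $\tilde c$ must depend on $q$.

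A minor point is measurability and adaptedness. The bounds are pathwise in $v$ with $\mathscr Q_T$ independent of $v$, so taking expectations and then the infimum over $\bH^\rmA_a$ is legitimate. The uniformity of $\mathscr Q_T$ in $\vartheta$ (by compactness, as in the proof of Lemma~\ref{lem:BD_A_bounds}) handles the $\E^\Theta$ integration.
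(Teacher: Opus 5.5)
Your proposal is correct and is essentially the paper's argument. You apply Bou\'e--Dupuis to the outer gauge expectation at fixed $\vartheta$, and use the first estimate of Lemma~\ref{lem:BD_A_bounds} with $c\lesssim (qC_E)^{-1}$ so that the quadratic contribution of $h$ is absorbed by $\frac12\|v\|_{\bH^\rmA}^2$. For the other direction you take $v=0$ (your Jensen argument is the same thing) together with the second estimate, uniformly in $\vartheta$. One thing to tidy: $\tilde c(q)$ must also be small enough for Corollary~\ref{cor:tameness_A} itself (its Fernique step), which you invoke but omit from your list of smallness constraints.
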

\begin{proof}
    We disintegrate the partition function as follows:
    \[
    \cZ_{N,T}(h,q) = \int_{\bfi (0,2\pi]^2}\;\exp\Big((-)- \log\E^\rmA\big[e^{-q\cH_{N,T}^h(A_T^0,\vartheta)} \big]\Big)\,\bar \diff\vartheta.
    \]
    First, choose $c$ small enough for Lemma~\ref{lem:BD_A_bounds} and Corollary~\ref{cor:tameness_A} to hold. We thereby obtain the representation
    \[
    -\log \E^\rmA[e^{-q\cH_{N,T}^h(A_T^0,\vartheta)}]=\inf_{v\in\bH^\rmA}\E^\rmA\Big[q\cH_{N,T}^h(A_T^0+J_T(v),\vartheta)+ \frac 1 2\|v\|_{\bH^\rmA}^2\Big]
    \]
    for the expression inside the exponential above from the Bou\'e-Dupuis formula. Lemma \ref{lem:BD_A_bounds}, for any $\eps>0$, implies the lower bound
    \begin{align*}
        \E^\rmA\Big[q\cH_{N,T}^h(A_T^0+J_T(v),\vartheta) &+ \frac 1 2\|v\|_{\bH^\rmA}^2\Big] \ge -\E^\rmA\big[\mathscr{Q}_T\big] + (\tfrac 1 2-4qC_E c-\eps)\E^\rmA\big[\|v\|_{\bH^\rmA}^2\big].
    \end{align*}
    We now impose the further requirement $c < (8q C_E)^{-1}$ (which, together with the previous constraints, defines $\tilde c(q)$) and choose $\eps< \tfrac 1 2-4qC_Ec$ to infer that
    \[
    \inf_{v\in\bH^\rmA}\E^\rmA\Big[q\cH_{N,T}^h(A_T^0+J_T(v),\vartheta) + \frac 1 2\|v\|_{\bH^\rmA}^2\Big] \ge -C(h,q,\eps),
    \]
    uniformly in $T$ and $N$. Taking $v=0$ in the upper bound of Lemma~\ref{lem:BD_A_bounds} gives a uniform upper bound for the variational functional. Since both bounds are uniform in $\vartheta$, they remain valid after the final expectation over $\vartheta$, completing the proof.
\end{proof}

Having thus established that the random variables $\exp (-\cH_{N,T}^h(A_T^0,\vartheta))$ are uniformly integrable, we complete our argument by showing that they also converge in a suitable sense as $N,T\to \infty$. We break this statement into the following claims.

\begin{lemma} \label{lem:A_convergence}
    \begin{enumerate}
        \item   For a fixed $T\in [0,\infty]$, the limit
                \[
                \cH_{\infty,T}^h(A_T^0,\vartheta) \coloneqq \lim_{N\to\infty} \cH_{N,T}^h(A_T^0,\vartheta)
                \]
                exists $\bP^\rmA$-almost surely.\\
        \item   For a fixed $N\in\N\cup\{\infty\}$, the random variables
                \[
                \cH_{N,T}^h(A_T^0,\vartheta)
                \]
                converge in probability as $T\to\infty$ on $(\Omega^\rmA,\mathcal F^\rmA,\bP^\rmA)$, uniformly in $N$ and $\vartheta$.
    \end{enumerate}
\end{lemma}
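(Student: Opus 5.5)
The plan is to reduce both claims to the pathwise results of Sections~\ref{sec:determinants} and~\ref{sec:conditional_Higgs} through the decomposition of Lemma~\ref{lem:cH_identities}:
\[
\cH^h_{N,T}(A_T^0,\vartheta)=\cQ_N(\bfA_T,0)+\cW_N^{h,1,m_{\bA_T,0}}(\bA_T,0),
\]
with $\bfA_T=(A_T,A_T^*A_T-\rmc_T,\bV_{A_T})\in\cX^{-\kappa}_\rmV$ almost surely, by Corollary~\ref{cor:bV_well_defined}.

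For part (1), fix $T$ and a realisation with $\bfA_T\in\cX^{-\kappa}_\rmV$. Theorem~\ref{thm:convergence_cQ_N} gives existence of $\lim_N\cQ_N(\bfA_T,0)=\cQ_\infty(\bfA_T,0)$. Theorem~\ref{thm:higgs_convergence}, applied with $B=0$ and $m=m_{\bA_T,0}$, gives existence of $\lim_N\cW_N^{h,1,m}(\bA_T,0)$. Both hold pathwise, so the limit exists $\bP^\rmA$-almost surely. For $T=\infty$ one argues the same way with the limiting enhanced field $\bfA$ (defined in the next step), once it is known to lie in $\cX^{-\kappa}_\rmV$ almost surely.

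For part (2), the first step is probabilistic. We show that $\bfA_T^0\to\bfA^0$ in probability in $\cX^{-\kappa}_\rmV$. The components $A_T^0\to A^0$ in $C^{-\kappa/2}$ and $A_T^{0*}A_T^0-\rmc_T\to\wick{A^{0*}A^0}$ in $C^{-\kappa/2}$ converge by standard first- and second-chaos Kolmogorov/hypercontractivity arguments: compute the covariance of the differences in Fourier space and use that the Fourier projectors $\rmP_T$ converge. The third component $\bV_{A_T}\to\bV_A$ in $L^1(\R^+)$ is Lemma~\ref{lem:bV_properties}. Adding the constant $\vartheta$ is continuous by the translation estimates recorded after the definition of ${}_\vartheta B_T$, uniformly in $\vartheta$ in the compact set $\bfi(0,2\pi]^2$.

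The second step is deterministic. Theorem~\ref{thm:convergence_cQ_N} and Theorem~\ref{thm:higgs_convergence}, with Lemma~\ref{lem:difference_masses} controlling $m_{\bA_T,0}-m_{\bA,0}$, give local Lipschitz bounds, uniform in $N\in\N\cup\{\infty\}$:
\[
|\cH^h_{N,T}-\cH^h_{N,\infty}|\lesssim(1+\bfnorm{\bfA_T}_{-\kappa}+\bfnorm{\bfA}_{-\kappa})^{\rmn(\kappa)}\bfnorm{\bfA_T-\bfA}_{-\kappa}.
\]
The prefactor is tight, since its moments are bounded uniformly in $T$ by Gaussian moment bounds and Lemma~\ref{lem:bV_properties}. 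The second factor tends to zero in probability. Their product therefore tends to zero in probability uniformly in $N$ and $\vartheta$, which is the claim.

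The main obstacle is that Theorem~\ref{thm:higgs_convergence} was proven, as it explicitly says, only for $h=0$. The dependence on $A$ through $h(A,\phi)$ must therefore be added. I would extend the interpolation argument in its proof by including the term $h(A,\phi_{\bA,0,N})-h(\bar A,\phi_{\bar\bA,0,N})$, and handle it via convergence in probability rather than a Lipschitz bound, since $h$ is only continuous. Concretely, I would couple both fields on the same Higgs noise and use Theorem~\ref{thm:cov_GFF_decomposition} to get $\phi_{\bA_T,0,N}-\phi_{\bA,0,N}\to0$ in $L^2(\bP^\Higgs;H^s)$, uniformly in $N$. I would then combine continuity of $h$ with the uniform integrability from Theorem~\ref{thm:higgs-W-bound} at some $q>1$. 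Uniformity in $N$ of this step would come from tightness of $\phi_{\bA,0,N}$ in $C^{-\kappa}$ uniformly in $N$, together with uniform continuity of $h$ on compacts.
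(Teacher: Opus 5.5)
Your proposal is correct and follows essentially the same route as the paper. Part (1) is obtained pathwise from Theorems~\ref{thm:convergence_cQ_N} and~\ref{thm:higgs_convergence} via Lemma~\ref{lem:cH_identities}, and part (2) from convergence in probability of the enhanced field combined with the $N$-uniform local continuity of $\cQ_N$ and $\cW_N$ in $\bfnorm{\cdot}_{-\kappa}$, with uniformity in $\vartheta$ coming from the translation estimate. You are in fact more careful than the paper on two points it leaves implicit. First, you prove convergence in probability of the first two enhanced components; the paper cites only Lemma~\ref{lem:bV_properties}, which concerns $\bV_{A_T}$. Second, for $h\neq0$ you replace the Lipschitz bound, which is proved only for $h=0$, by continuity of $h$, tightness, and the uniform integrability from Theorem~\ref{thm:higgs-W-bound} inside the interpolation argument, and this is exactly what is needed.
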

\begin{proof}
    Assume first that $T$ and $N$ are finite. The existence of the $N\to\infty$ limit of $Q_{N,T}(A_T)=\cQ_N(\bfA_T)$ follows from Theorem~\ref{thm:convergence_cQ_N} via Corollary~\ref{cor:bV_well_defined}. The limit of $\cW_N^{h,1,m_{\bA_T,0}}(\bA_T,0)$ is a direct consequence of Theorem~\ref{thm:higgs_convergence}.

    For the second claim, Theorem~\ref{thm:higgs_convergence} states that $\cW_N^{h,1,m_{\bA_T,0}}(\bA_T,0)$ is a locally Lipschitz continuous function of $\bA_T^0$ in the $\bfnorm{\cdot;\cdot}_{-\kappa}$ topology, uniformly in $N$. Lemma~\ref{lem:Rem_det} implies that $\bfA_T^0\mapsto\cQ_N(\bfA_T)$ is also continuous in the $\bfnorm{\cdot;\cdot}_{-\kappa}$ topology, again uniformly in $N$. Lemma~\ref{lem:bV_properties} confirms that $\bfA_T^0$ converges in probability as $T\to\infty$, and $\cH_{N,T}^h(A_T^0,\vartheta)$ thus also converges in probability. Since $\bfnorm{\bfA_T-\bar\bfA_T}_{-\kappa} \lesssim \bfnorm{\bfA_T^0-\bar\bfA_T^0}_{-\kappa}$, the convergence is uniform also in $\vartheta$.

    To conclude, observe that we have actually shown that $\cH_{N,T}^h(A_T^0,\vartheta)$, as a random variable in $(\Omega^\rmA,\mathcal h^\rmA,\bP^\rmA)$, is an equicontinuous (in $N$ and $\vartheta$) function of $\bfA_T^0$. The pointwise limit of that function as $N\to\infty$ thus also implies the convergence (in probability) of $\cH_{\infty,T}^h(A_T^0,\vartheta)$. By the same pointwise convergence, that limit must be the same as the limit obtained by first taking $T\to\infty$, followed by $N\to\infty$.
\end{proof}

\begin{proposition}\label{prop:continuity}
    Let the assumptions of Proposition \ref{prop:unif_integrability} hold, and let $\bA_T^0$ and $\cH_{N,T}^h(A_T^0,\vartheta)$ be as defined in Section~\ref{sec:gauge_variational_setup}. As $T\to\infty$, the sequence of random variables
    \[
    e^{-\cH_{N,T}^h(A_T^0,\vartheta)}
    \]
    converges in distribution on the probability space $(\bfi (0,2\pi]^2,\bar\diff\vartheta)\times(\Omega^\rmA,\mathcal h^\rmA,\bP^\rmA)$ for any $N\in\N\cup\{\infty\}$.
\end{proposition}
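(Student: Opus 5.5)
We deduce convergence in distribution of $e^{-\cH_{N,T}^h(A_T^0,\vartheta)}$ from the convergence in probability of the exponents in Lemma~\ref{lem:A_convergence}. Only the continuity of the exponential and the product structure of the probability space are needed.

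\textbf{Step 1 (coupling and convergence in probability).} All $A_T^0$ are built from the same cylindrical Brownian motion $X^\rmA$ on $(\Omega^\rmA,\mathcal F^\rmA,\bP^\rmA)$, and $\vartheta$ is independent of $T$. Hence the random variables $e^{-\cH_{N,T}^h(A_T^0,\vartheta)}$, $T\ge0$, all live on the single product space $(\bfi(0,2\pi]^2,\bar\dif\vartheta)\times(\Omega^\rmA,\mathcal F^\rmA,\bP^\rmA)$. By Lemma~\ref{lem:A_convergence}(2), for fixed $N\in\N\cup\{\infty\}$ and each fixed $\vartheta$, $\cH_{N,T}^h(A_T^0,\vartheta)$ converges in $\bP^\rmA$-probability as $T\to\infty$, uniformly in $\vartheta$. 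Call the limit $\cH_{N,\infty}^h(A^0,\vartheta)$. It is obtained as a continuous function of the limiting enhanced field $\bfA^0$, whose existence is guaranteed by Lemma~\ref{lem:bV_properties} and the standard convergence of $\bA_T^0$ in $\cX^{-\kappa}$. For $N=\infty$ we use Lemma~\ref{lem:A_convergence}(1) together with the equicontinuity in $N$ established in its proof.

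\textbf{Step 2 (upgrade to the product space).} We pass from convergence for each $\vartheta$ to convergence under $\bar\dif\vartheta\otimes\bP^\rmA$. Set
\[
p_T(\vartheta):=\bP^\rmA\bigl(|\cH_{N,T}^h-\cH_{N,\infty}^h|>\epsilon\bigr).
\]
Uniformity in $\vartheta$ gives $\sup_\vartheta p_T(\vartheta)\to0$, and Fubini then shows that the product-measure probability of the same event is $\int p_T\,\bar\dif\vartheta\to0$. Joint measurability of the exponent in $(\vartheta,\omega)$ follows from the continuous dependence on $\vartheta$ via the translation $\bfA_T^0\bplus\vartheta$.

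\textbf{Step 3 (conclusion).} The map $x\mapsto e^{-x}$ is continuous, so the continuous mapping theorem gives convergence in probability of $e^{-\cH_{N,T}^h}$ on the product space. Convergence in probability implies convergence in distribution. The limit is a.s.\ finite and positive because $\cH_{N,\infty}^h$ is a.s.\ finite; this follows from the bounds of Lemma~\ref{lem:BD_A_bounds} passed to the limit, using Fatou's lemma on $\mathscr Q_T$.

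\textbf{Main obstacle.} The delicate point is Step 2 in the case $N=\infty$. There one needs the convergence in Lemma~\ref{lem:A_convergence} to be genuinely uniform jointly in $N$ and $\vartheta$. This rests on the local Lipschitz bounds of Theorem~\ref{thm:higgs_convergence} and Theorem~\ref{thm:convergence_cQ_N}, whose constants grow polynomially in $\bfnorm{\bfA_T^0}_{-\kappa}$ and in $\|\vartheta\|$.

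To handle this, we would localise on the event $\{\sup_T\bfnorm{\bfA_T^0}_{-\kappa}\le M\}$. On this event the Lipschitz constants are bounded and $\vartheta$ ranges over a compact set. Its complement has small probability, uniformly in $T$, by the uniform moment bounds and Chebyshev's inequality. This reduces convergence in probability to convergence of $\bfA_T^0$ in probability, which holds by Lemma~\ref{lem:bV_properties}. Uniform integrability is not needed here; Proposition~\ref{prop:unif_integrability} will be used later to pass from convergence in distribution to convergence of $\cZ_{N,T}(h,1)$.
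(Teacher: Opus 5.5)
Your argument is correct and is the same as the paper's. The paper's proof just combines Lemma~\ref{lem:A_convergence} (convergence in probability of the exponents as $T\to\infty$, uniformly in $N$ and $\vartheta$) with continuity of the exponential, and your Steps~1--3 spell out the Fubini and continuous-mapping details. One small correction to your ``main obstacle'' remark: Chebyshev with $\sup_T$ moment bounds controls $\sup_T\bP(\bfnorm{\bfA_T^0}_{-\kappa}>M)$, not $\bP(\sup_T\bfnorm{\bfA_T^0}_{-\kappa}>M)$, so for each $T$ you should localise on $\{\bfnorm{\bfA_T^0}_{-\kappa}\le M\}\cap\{\bfnorm{\bfA^0}_{-\kappa}\le M\}$ instead; this suffices because the local Lipschitz bounds involve only the two fields being compared.
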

\begin{proof}
    Since the random zero mode $\vartheta$ is constant in $T$, the proposition is a direct consequence of Lemma~\ref{lem:A_convergence} and the continuity of the (real) exponential function.
\end{proof}

Propositions \ref{prop:unif_integrability} and \ref{prop:continuity} together now imply our main theorem.

\begin{proof} [Proof of Theorem~\ref{thm:main}]
    Choose $q>1$ arbitrarily close to one, fixing the admissible class $C_{{\rm test}}$ with the range $c \in (0,\tilde c(q))$ depending on $q$ through Proposition~\ref{prop:unif_integrability}. For any test function in this class, Proposition~\ref{prop:unif_integrability} gives uniform integrability of the family $e^{-\cH_{N,T}^h(A_T^0,\vartheta)}$. By Proposition~\ref{prop:continuity}, this family converges in law as $T\to\infty$ for any $N\in\N\cup\{\infty\}$, uniformly in $N$ by Lemma~\ref{lem:A_convergence}. The expectations therefore converge uniformly in $N$:
    \[
    \lim_{T\to\infty}\E^\Theta\E^\rmA\Big[e^{-\cH_{N,T}^h(A_T^0,\vartheta)}\Big] = \E^\Theta\E^\rmA\Big[\lim_{T\to\infty} e^{-\cH_{N,T}^h(A_T^0,\vartheta)}\Big].
    \]
    The bounds $0<\cZ(h)<\infty$ are a direct consequence of Proposition~\ref{prop:unif_integrability}.
\end{proof}

\subsection{Characterisation of the measure and absolute continuity} \label{sec:densities}

Thanks to our method of construction, we obtain explicit densities for $\mu^\YMH$, its gauge marginal (Higgs-annealed) and its conditional Higgs measure ($A$-quenched). Theorem~\ref{thm:higgs-W-bound} and Proposition~\ref{prop:unif_integrability} give uniform integrability of the families
\[
e^{-Q_{N,T}(A_T) -\cW_N^{0,1,m_{\bA_T,0}}(\bA_T,0)}
\qquad\text{and}\qquad
e^{-\scV_N^{m_{\bA_T,0}}(\phi_{\bA_T,0,N})}
\]
for all $N,T$, including $N=\infty$ and $T=\infty$. Continuity in the appropriate topologies (Theorem~\ref{thm:higgs_convergence} and Lemma~\ref{lem:Rem_det}) then lets us pass limits under expectations to obtain the densities below.

Let $\bfA_\infty = \lim_{T\to\infty} (A_T,A_T^* A_T-\rmc_T,\bV_{A_T})$ be the limit in probability of the enhanced field on $(\bfi (0,2\pi]^2,\bar\diff\vartheta)\times(\Omega^\rmA,\mathcal F^\rmA,\bP^\rmA)$, and let $\mu^{A}_\infty$ be the law of $A=\lim_{T\to\infty} A_T$. Lemma~\ref{lem:A_convergence} gives
\[
e^{-Q_{N,T}(A_T) -\cW_N^{0,1,m_{\bA_T,0}}(\bA_T,0)} \xrightarrow{N,T\to\infty} e^{-\cQ_\infty(\bfA_\infty)-\cW_\infty^{0,1,m_{\bfA_\infty,0}}(\bfA_\infty,0)}
\]
in probability. The gauge marginal of $\mu^\YMH$ is therefore
\begin{equation}\label{eq:gauge_marginal_density}
\dif\mu_A^\YMH(A) \sim e^{-\cQ_\infty(\bfA_\infty)-\cW_\infty^{0,1,m_{\bfA_\infty}}(\bfA_\infty,0)} \dif\mu^{\rmA}_\infty(A),
\end{equation}
where the density $\dif\mu_A^\YMH/\dif\mu^{A}_\infty$ is measurable and strictly positive as a function of $A$, and continuous as a function of $\bfA_\infty$.

Moving on to the conditional Higgs field, let $\boldsymbol{\Phi}_{\infty}=\lim_{N,T\to\infty}(\phi_{\bA,0,N},\wick{\phi_{\bA,0,N}^i(\phi_{\bA,0,N}^*)^j}_N:0\le i+j\le 2n)$ and write $\mu_{\bA,\infty,\infty}^\GFF$ for the law of $\phi = \lim_{N,T\to\infty} \phi_{\bA,0,N}$. The proof of Theorem~\ref{thm:higgs_convergence} gives
\[
e^{-\scV_N^{m_{\bA_T,0}}(\phi_{\bA_T,0,N})} \xrightarrow{N,T\to\infty} e^{-\scV(\bA,\boldsymbol{\Phi_{\infty}})}
\]
in probability, defining $\scV(\bA,\boldsymbol{\Phi}_{\infty})$. The conditional Higgs measure is therefore
\[
\dif\mu_{\bA,\infty,\infty}^\Phi(\phi) \sim e^{-\scV(\bA,\boldsymbol{\Phi}_{\infty})} \dif\mu_{\bA,\infty,\infty}^\GFF(\phi).
\]
The density $\dif\mu_{\bA,\infty,\infty}^\Phi/\dif\mu_{\bA,\infty,\infty}^\GFF$ is continuous in its explicit variables and measurable in $\phi$.

Combining the above finally yields the representation
\[
\dif\mu^\YMH(A,\phi) \sim e^{-\cQ_\infty(\bfA_\infty)-\scV(\bA,\boldsymbol{\Phi}_\infty)}\dif\mu_{\bA,\infty,\infty}^\GFF(\phi) \dif\mu^{A}_\infty(A)
\]
of the full $2$d Abelian Yang-Mills-Higgs measure. In heuristic terms, the Boltzmann weight appearing above can be interpreted in the following way,
\[
\dif\mu^\YMH(A,\phi) \sim \underbrace{e^{-\scV(\bA,\boldsymbol{\Phi}_\infty)}}_{\frac{\dif\mu_{\bA,\infty,\infty}^\Phi}{\dif\mu_{\bA,\infty,\infty}^\GFF}(A,\phi)}\dif\mu_{\bA,\infty,\infty}^\GFF(\phi) \; \underbrace{e^{-\cQ_\infty(\bfA_\infty)}}_{\mathclap{e^{\frac{1}{2}\langle \phi;(L_0-L_A)\phi\rangle} \frac{\dif\mu_{0,\infty,\infty}^\GFF}{\dif\mu_{\bA,\infty,\infty}^\GFF}(A,\phi)}} \; \dif\mu^{A}_\infty(A),
\]
which recovers the manipulations at the level of the formal path integral. Of course, we stress that the Radon-Nikodym derivative $\dif\mu_{0,\infty,\infty}^\GFF /\dif\mu_{\bA,\infty,\infty}^\GFF$ does not actually exist.

\subsection{Gauge covariance property}\label{sec:gauge_covariance}

Here we prove the statements in Proposition~\ref{prop:gauge_covariance} and Theorem~\ref{thm:linear_covariant_gauges}. Recall to a gauge transformation $g\in C(\T^2;U(1))$ we had a smooth approximation $g_T\to g$ in the supremum norm. The main ingredient is
\begin{lemma}\label{lem:gauge_invariance_Q_V}
    We have for smooth $A$ and $g$
    \[
    \cQ_{N,T}(A^{g})=\cQ_{N,T}(A), \qquad \cV_{A^{g},N,T}(\Cdot)=\cV_{A,N,T}(\Cdot).
    \]
\end{lemma}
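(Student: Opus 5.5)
The identity to prove is $\cQ_{N,T}(A^g)=\cQ_{N,T}(A)$ and $\cV_{A^g,N,T}=\cV_{A,N,T}$ for smooth $A$ and $g$. The key fact is that the gauge transformation is implemented by a unitary operator. Let $M_g$ denote multiplication by $g$ on $L^2(\T^2;\bC)$. Since $|g|=1$, $M_g$ is unitary. Since $A^g=A-\diff g\,g^{-1}$, a direct computation gives $\diff_{A^g}(gu)=g\,\diff_A u$. Hence
\[
L_{A^g}=M_g L_A M_g^{-1},
\]
first on $C^\infty$ and then, because $C^\infty$ is a core and $M_g$ preserves $H^2$, as self-adjoint operators. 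Everything else follows from this conjugation.

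\textbf{Step 1 (spectral data).} From the conjugation identity we get $\sigma(L_{A^g}-\rmc_T)=\sigma(L_A-\rmc_T)$. The spectral shift $\lambda_{A,T}$ of \eqref{eq:m_def} depends only on the bottom of the spectrum, so $\lambda_{A^g,T}=\lambda_{A,T}$. It follows that the resolvents are conjugate:
\[
(L_{A^g}-\rmc_T+\lambda_{A^g,T}+\lambda)^{-1}=M_g(L_A-\rmc_T+\lambda_{A,T}+\lambda)^{-1}M_g^{-1}.
\]
These resolvents are trace class in two dimensions, for instance as differences via the resolvent identity, as in Section~\ref{sec:determinants}. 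Cyclicity of the trace under unitary conjugation then makes every trace appearing in \eqref{eq:determinant_computation} invariant.

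\textbf{Step 2 (the determinant ratio).} The mass term $-\tfrac{\lambda_{A,T}-1}{2}\rmp_N$ is invariant by Step 1, and $\rmo_{N,T}$ is a deterministic constant. With $B_T=0$ the remaining integrand is
\[
\Tr\big((L_0+1+\lambda)^{-1}-(L_A-\rmc_T+\lambda_{A,T}+\lambda)^{-1}\big),
\]
which is trace class for smooth $A$. The first resolvent does not depend on $A$, and the second has invariant trace by Step 1. Integrating over $\lambda\in[0,N]$ gives $\cQ_{N,T}(A^g)=\cQ_{N,T}(A)$. Here $\cQ_{N,T}$ is to be read as the function of the connection defined by \eqref{eq:determinant_computation}, without imposing the Coulomb gauge on $A^g$. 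Since $A$ is smooth, none of the rough-path enhancement is needed.

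\textbf{Step 3 (the potential).} By \eqref{eq:def_V_N_m}, $\cV_{A,N,T}=\scV_N^{\lambda_{A,T}}$. The Wick constant $\rmp_N$ is flat and independent of $A$, so the only dependence on $A$ is through $m=\lambda_{A,T}$, which is invariant by Step 1. Hence $\cV_{A^g,N,T}(\cdot)=\cV_{A,N,T}(\cdot)$. The potential is also a function of $|\phi|^2$, so $\cV_{A,N,T}(g\phi)=\cV_{A,N,T}(\phi)$; this second invariance is what the later covariance argument uses.

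For that later argument I would also record that $\phi_{A^g,N,T}=M_g\phi_{A,N,T}$ in law. This holds because $M_g X^\Phi$ is again a complex cylindrical Brownian motion with the same law, since $M_g$ is unitary and commutes with complex conjugation structure up to the circular symmetry of $X^\Phi$.

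\textbf{Main obstacle.} The only delicate point is making the trace-class and cyclicity claims precise for the differences of resolvents. For smooth $A$ and $g$ this is routine: $M_g$ is bounded with bounded inverse, and $\Tr(M_gXM_g^{-1})=\Tr X$ holds for any trace-class $X$. A secondary point is checking that the $A$-independent pieces, namely $\rmp_N$, $\rmc_T$ and $\rmo_{N,T}$, genuinely carry no hidden dependence on $A$.
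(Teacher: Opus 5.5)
\textbf{There is a genuine gap in Steps~1--2.} In two dimensions the individual resolvents are not trace class. For example, $\Tr(L_0+1+\lambda)^{-1}=\sum_{k\in2\pi\Z^2}(1+\lambda+|k|^2)^{-1}=\infty$. Since $(L_0+1+\lambda)^{-1}-(L_A-\rmc_T+\lambda_{A,T}+\lambda)^{-1}$ is trace class, the covariant resolvent cannot be trace class either.

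Consequently, your claim that ``the second has invariant trace'' is meaningless, and you cannot split the trace of the integrand in \eqref{eq:determinant_computation} into two separate traces. Your proposed repair in the last paragraph does not help. Conjugating the trace-class difference by $M_g$ also conjugates the flat resolvent, turning it into $(L_{-\diff g\,g^{-1}}+1+\lambda)^{-1}$. This is not $(L_0+1+\lambda)^{-1}$ when $g$ is non-constant.

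What is actually needed is the following. Set $Y=(L_A-\rmc_T+\lambda_{A,T}+\lambda)^{-1}$. The operator $Y-M_gYM_g^{-1}=[YM_g^{-1},M_g]$ is trace class, and you must show its trace is zero. This is a commutator of bounded operators, and neither product $YM_g^{-1}M_g=Y$ nor $M_gYM_g^{-1}$ is trace class. Such traces need not vanish: think of $[S^*,S]$ for the unilateral shift $S$, which has trace $1$. So a real argument is required at this point.

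\textbf{How the paper closes the gap.} It regularises the power. For $\eps>0$, the operators $(L_0+1+\lambda)^{-1-\eps}$ and $(L_{A}-\rmc_T+\lambda_{A,T}+\lambda)^{-1-\eps}$ are each trace class in two dimensions (cf.\ Lemma~\ref{lem:neg_power_trace} with $\gamma=1+\eps$, using the diamagnetic bound). The argument then runs as follows.
\begin{enumerate}
\item Write the integrand for $A^g$ as the limit $\eps\to0^+$ of $\Tr\bigl((L_0+1+\lambda)^{-1-\eps}-(L_{A^g}-\rmc_T+\lambda_{A^g,T}+\lambda)^{-1-\eps}\bigr)$.
\item Split this into two finite traces.
\item Apply $L_{A^g}=M_gL_AM_g^{-1}$ and cyclicity to the second trace only.
\item Recombine, remove $\eps$ by dominated convergence, and integrate in $\lambda$.
\end{enumerate}

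The rest of your proposal is correct and matches the paper. This includes the conjugation identity, the invariance of $\lambda_{A,T}$, the $A$-independence of $\rmp_N$, $\rmc_T$ and $\rmo_{N,T}$, and your Step~3 on the potential.
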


\begin{proof}
    The statement regarding $\cV$ is trivial as the dependence in $A$ is through $\lambda_{A,N,T}$ which is gauge-invariant for being defined through the spectrum of the operator $L_{A}$ and $L_{A^g}=gL_Ag^{-1}$.

    Regarding $\cQ$, we first treat the integral in its definition. We write
\begin{align*}
\int_0^T &\Tr \Big((L_0+1+\lambda)^{-1}-(L_{A^g}-\rmc_T+\lambda_{A^g,T}+\lambda)^{-1}) \Big)\dif \lambda \\
&=
\int_0^T \lim_{\eps\to 0^+}\Tr \Big((L_0+1+\lambda)^{-1-\eps}-(L_{A^g}-\rmc_T+\lambda_{A^g,T}+\lambda)^{-1-\eps}) \Big)\dif \lambda \\
&=
\int_0^T \lim_{\eps\to 0^+}\left\{\Tr \big((L_0+1+\lambda)^{-1-\eps}\big)-\Tr\big(g(L_{A}-\rmc_T+\lambda_{A,T}+\lambda)^{-1-\eps} g^{-1}\big)\right\}\dif \lambda \\
&=
\int_0^T \lim_{\eps\to 0^+}\left\{\Tr \big((L_0+1+\lambda)^{-1-\eps}\big)-\Tr\big((L_{A}-\rmc_T+\lambda_{A,T}+\lambda)^{-1-\eps} \big)\right\}\dif \lambda \\
&=
\int_0^T \lim_{\eps\to 0^+}\Tr \Big((L_0+1+\lambda)^{-1-\eps}-(L_{A}-\rmc_T+\lambda_{A,T}+\lambda)^{-1-\eps} \Big)\dif \lambda,
\end{align*}
where the first equality is by dominated convergence, the second one by $L_{A^g}=gL_Ag^{-1}$, the third one by cyclicity, and the last one by linearity.
Afterwards, we apply dominated convergence to get the gauge-invariance of the integral.
Moreover, the other term in $\cQ_{N,T}$ is defined through $\lambda_{A,N,T}$ which is also gauge-invariant and therefore proving that $\cQ_{N,T}$ is gauge-invariant.
\end{proof}

Now we are ready to prove Proposition~\ref{prop:gauge_covariance}.
\begin{proof}[Proof of Proposition~\ref{prop:gauge_covariance}] We have by the definition of $\mu^{\rmA[g]}_T$ and Lemma~\ref{lem:gauge_invariance_Q_V}
\begin{align*}
\int F(A,\phi) \dif\mu_{N,T}^{\YMH[g]}(A,\phi)&=\frac{1}{\cZ_{N,T}^{[g]}}\int F(A^{g_T},\phi)
    e^{-\cQ_{N,T}(A^{g_T})-\cV_{A^{g_T},N,T}(\phi)}
    \dif\mu^{\GFF}_{A^{g_T},N,T}(\phi)\dif\mu_T^{\rmA}(A)\\
    &=\frac{1}{\cZ_{N,T}^{[g]}}\int F(A^{g_T},\phi)
    e^{-\cQ_{N,T}(A)-\cV_{A,N,T}(\phi)}
    \dif\mu^{\GFF}_{A^{g_T},N,T}(\phi)\dif\mu_T^{\rmA}(A).
\end{align*}
Furthermore, recalling~\ref{eq:intro_covariant_GFF},  $\mu^{\GFF}_{A^{g_T},N,T}$ is the law of
    \begin{equation*}
    \phi_{A^{g_T},N,T}
    =\int_0^N
    \bigl(L_{A^{g_T}}-\rmc_T+\lambda_{A^{g_T},T}+\lambda\bigr)^{-1}
    \dif X^{\Phi}_\lambda,
\end{equation*}
and $\lambda_{A,T}$ is gauge-invariant can be written as
    \begin{equation*}
    \phi_{A^{g_T},N,T}
    =g_T\int_0^N
    \bigl(L_{A}-\rmc_T+\lambda_{A,T}+\lambda\bigr)^{-1}
    g_T^{-1}\dif X^{\Phi}_\lambda.
\end{equation*}
Since $g_T^{-1}$ is unitary we have that $g_T^{-1} X_\Lambda^\Phi$ has the same law as $X_\lambda^\Phi$ and as such $\mu^{\GFF}_{A^{g_T},N,T}$ is the law of  $g_T\phi_{A,N,T}$.

This allow us to write
\begin{align*}
\int F(A,\phi) \dif\mu_{N,T}^{\YMH[g]}(A,\phi)&=\frac{1}{\cZ_{N,T}^{[g]}}\int F(A^{g_T},g_T\phi)
    e^{-\cQ_{N,T}(A)-\cV_{A,N,T}(g_T\phi)}
    \dif\mu^{\GFF}_{A,N,T}(\phi)\dif\mu_T^{\rmA}(A)\\
  &=  \frac{1}{\cZ_{N,T}^{[g]}}\int F(A^{g_T},g_T\phi)
    e^{-\cQ_{N,T}(A)-\cV_{A,N,T}(\phi)}
    \dif\mu^{\GFF}_{A,N,T}(\phi)\dif\mu_T^{\rmA}(A),
\end{align*}
where we have used that $\cV_{A,N,T}(g_T\phi)=\cV_{A,N,T}(\phi)$ due to the fact that it is a polynomial in $|\phi|$ and $g_T$ is unitary.  Hence,
\begin{align}\label{eq:gauge_covariance_cutoff}
\int F(A,\phi) \dif\mu_{N,T}^{\YMH[g]}(A,\phi)&=\int F(A^{g_T},g_T\phi) \dif\mu_{N,T}^{\YMH}(A,\phi),
\end{align}
and taking limits yields the result.
\end{proof}

Now we prove Theorem~\ref{thm:linear_covariant_gauges}.
\begin{proof}[Proof of Theorem~\ref{thm:linear_covariant_gauges}]
    By the proof of Proposition~\ref{prop:gauge_covariance}, we have found
    \begin{equation}\label{eq:linear_covariant_gauges_cutoff}
\int F(A,\phi)\dif \mu^{\YMH(\alpha)}_{N,T}(A,\phi)=\int F(A^{\scg_T^{\alpha^{-1/2}\zeta}},\scg_T^{\alpha^{-1/2}\zeta}\phi)\dif \mu^{\YMH}_{N,T}(A,\phi)  \dif\nu(\zeta),
\end{equation}
so that taking limits yields the desired statement. Finally, the limit $\alpha\to \infty$ follows from the fact that $g^{\alpha^{-1/2}\zeta}\to 1$ in $C^{1-\delta}$ for any $\delta>0$ and hereby we obtain the result for $\alpha\to\infty$.
\end{proof}

\section{Large deviations}\label{sec:large_deviations}

Now that we have completed the construction of the Abelian Yang--Mills--Higgs measure $\mu^\YMH$, we show how the tools in our analysis allow us to obtain large deviations for the measure. The first step is verifying that, for any $\eps > 0$, our arguments also give the construction of $\mu^{\YMH;\eps}$ which is formally defined as the Yang--Mills--Higgs measure with action $\eps^{-1}\cS_\YMH(A,\phi)$ (see \eqref{eq:intro_action}). More concretely, for each fixed $\eps>0$, we construct
\begin{align}\label{eq:def_mu_YMH_eps}
\mu^{\YMH;\eps}
=\lim_{T\to\infty}\lim_{N\to\infty}
  \mu_{N,T}^{\YMH;\eps},
\end{align}
with convergence understood as in Theorem~\ref{thm:main} and $ \mu_{N,T}^{\YMH;\eps}$ defined by setting
\[
 \dif\mu^{\YMH;\eps}_{N,T}(A,\phi)=
 \frac 1 {\cZ_{N,T}^\eps}\exp \big[ -Q_{N,T}^\eps(A)-\eps^{-1}\cV_{A,N,T}^\eps(\phi)\big]\dif\mu_{A,N,T}^{\GFF;\eps}(\phi)\dif\mu_T^{\rmA;\eps}(A).
\]
We now review the definitions of the objects on the right hand side above. Let $\mu^{\rmA;\eps}_T$ denote the law of the gauge field
\[
A_T^\eps:=\vartheta+\sqrt\eps\,\rmP_T^\rmA\diff^*(\diff\diff^*)^{-1}\int_0^1\dif X_t^\rmA\;,
\]
where we recall that $\vartheta$ has normalized Lebesgue law $\bar\dif\vartheta$ on $\bfi(0,2\pi]^2$ and is independent of $X^\rmA$. Let $\mu_{A,N,T}^{\GFF;\eps}$ denote the law of the Higgs field
\[
\phi_{A,N,T}^\eps:=\sqrt\eps\,\int_0^N (L_A-\eps\rmc_T+\lambda_{A,T}^\eps+\lambda)^{-1}\dif X_\lambda^\Phi,
\]
where
\[
\lambda_{A,T}^\eps:=1-\left(0\wedge \min\sigma(L_A-\eps\rmc_T)\right).
\]
Recall the polynomial $\mathcal V$ from~\eqref{eq:def_V} and the counterterm functions $\rmv_l$ from~\eqref{eq:def_counterterm_family}. The Higgs potential functional $\cV_{A,N,T}^\eps$ is given by
\[
\cV_{A,N,T}^\eps(\phi)
:=
\int_{\T^2}
\Bigl(
\mathcal V(|\phi|^2)
-\sum_{l=0}^{n-1}\rmv_l(\eps\rmp_N)|\phi|^{2l}
\Bigr)
-\frac{\lambda_{A,T}^\eps}{2}
\int_{\T^2}\bigl(|\phi|^2-\eps\rmp_N\bigr).
\]
We also define the renormalised logarithmic determinant ratio
\[
Q_{N,T}^\eps(A):=-\frac{\lambda_{A,T}^\eps-1}{2}\rmp_N+\eps\rmo_{N,T}+\int_0^N \Tr \Big((L_0+1+\lambda)^{-1}-(L_A-\eps\rmc_T+\lambda_{A,T}^\eps+\lambda)^{-1} \Big)\dif \lambda.
\]
The scalar mass terms in $Q_{N,T}^\eps+\eps^{-1}\cV_{A,N,T}^\eps$ again sum to $\rmp_N/2$, independent of both fields and absorbed into the normalisation.
We now relate this definition to the enhanced functional $\cQ_N$ defined in~\eqref{eq:def_cQ_N}. Recall that $A_T=\vartheta+A_T^0$, so that $A_T^\eps=\vartheta+\sqrt\eps A_T^0$. Define the enhanced pair and triple by
\[
\bA_T^\eps
:=\bigl(A_T^\eps,(A_T^\eps)^*A_T^\eps-\eps\rmc_T\bigr),
\qquad
\bfA_T^\eps:=(\bA_T^\eps,\bV_{A_T}^\eps),
\]
where
\[
\bV_{A_T}^\eps(\lambda)
:=
\boldsymbol{\Pi}^\lambda(A_T^\eps,A_T^\eps)
-\eps\,\E^\Theta\E^\rmA
  [\boldsymbol{\Pi}^\lambda(A_T,A_T)].
\]
Here the expectation in the subtraction is evaluated at the unscaled field $A_T$. This convention matches the counterterm $\eps\rmo_{N,T}$, with $\rmo_{N,T}$ defined in Section~\ref{sec:construction_measure}.

These definitions give
\[
L_{\bA_T^\eps,0}=L_{A_T^\eps}-\eps\rmc_T,
\qquad
m_{\bA_T^\eps,0}=\lambda_{A_T^\eps,T}^\eps,
\]
and, by the definition of $\cQ_N$,
\[
Q_{N,T}^\eps(A_T^\eps)=\cQ_N(\bfA_T^\eps,0).
\]
For each fixed $\eps>0$, we repeat the convergence and uniform-integrability arguments in the proof of Theorem~\ref{thm:main}, using the scaled fields and counterterms above. Under $\phi=\sqrt\eps\,\psi$, the interaction $\eps^{-1}\cV_{A,N,T}^\eps(\phi)$ is a Wick-ordered polynomial of degree $2n$ in $\psi$, with positive leading coefficient $c_n\eps^{n-1}$. Thus the construction estimates apply for each fixed $\eps>0$, with constants allowed to depend on $\eps$. This gives finite, strictly positive limits of the partition functions with observables, justifying \eqref{eq:def_mu_YMH_eps}.

Throughout this section, observables are real-valued functions on $\Omega^1 C^{-\kappa/2}(\T^2)\times H^{-\kappa}(\T^2)$, for a fixed sufficiently small $\kappa>0$. Lipschitz continuity is understood with respect to the usual product norm.

For a bounded continuous observable $f$, we then define
\[
D^{f,\eps}:=-\eps\log \E_{\mu^{\YMH;\eps}}[e^{-\eps^{-1}f(A,\phi)}].
\]
We analyze large deviations by studying the limit $\lim_{\eps \downarrow 0} D^{f,\eps}$. As in the construction of the measure $\mu^{\YMH;\eps}$, we express the partition functions with source terms through iterated Bou\'e--Dupuis variational formulas, conditioning on the harmonic component $\vartheta$. We then obtain matching upper and lower bounds as $\eps\downarrow0$, and therefore identify a limiting \emph{deterministic} variational problem. Applying these bounds both with and without the observable gives the limit of the normalized Laplace functional $D^{f,\eps}$.

For $\vartheta\in\bfi[0,2\pi]^2$, let
\[
\mathcal A_\vartheta
:=\Big\{A\in\Omega^1_\Coul H^1:
  |\T^2|^{-1}\int_{\T^2}A=\vartheta\Big\}
\]
denote the space of $H^1$ Coulomb-gauge fields with harmonic component $\vartheta$.

Recall the action~\eqref{eq:intro_action}:
\[
\cS_{\YMH}(A,\phi)
=\frac12\|\diff A\|_{L^2}^2
+\frac12\|\diff_A\phi\|_{L^2}^2
+\int_{\T^2}\mathcal V(|\phi|^2).
\]

For an enhanced gauge field $\bfA=(\bA,\bV)$, write $A$ for the first component of $\bA$ and set
\[
\phi_{\bA,N}^\eps
:=
\sqrt\eps\int_0^N
(L_{\bA,0}+m_{\bA,0}+\lambda)^{-1}\dif X_\lambda^\Phi,
\qquad
\phi_\bA^\eps:=\lim_{N\to\infty}\phi_{\bA,N}^\eps.
\]
Define the regularised potential by
\[
\cV_{\bA,N}^\eps(\phi)
:=
\int_{\T^2}
\Bigl(
\mathcal V(|\phi|^2)
-\sum_{l=0}^{n-1}\rmv_l(\eps\rmp_N)|\phi|^{2l}
\Bigr)
-\frac{m_{\bA,0}}{2}
\int_{\T^2}\bigl(|\phi|^2-\eps\rmp_N\bigr).
\]
For $\bA=\bA_T^\eps$, this agrees with $\cV_{A_T^\eps,N,T}^\eps$. Define the conditional Higgs free energy by
\[
\cW^{f;\eps}(\bA)
:=
\lim_{N\to\infty}
\left(
-\eps\log\E^\Phi\Big[
\exp\Big(
-\eps^{-1}\bigl(
f(A,\phi_{\bA,N}^\eps)
+\cV_{\bA,N}^\eps(\phi_{\bA,N}^\eps)
\bigr)
\Big)
\Big]
\right).
\]
Set $\cH^{f;\eps}(\bfA):=\eps\cQ_\infty(\bfA,0)+\cW^{f;\eps}(\bA)$ and define
\begin{equation}\label{eq:normalized_laplace_functional}
\cZ^\eps(f):=\E^\Theta\E^\rmA\Big[
  \exp\bigl(-\eps^{-1}\cH^{f;\eps}(\bfA^\eps)\bigr)
\Big],
\qquad
D^{f,\eps}=-\eps\log\frac{\cZ^\eps(f)}{\cZ^\eps(0)}.
\end{equation}
Here $\cZ^\eps(0)$ is the normalizing constant. For fixed $\vartheta$, with its occurrence in $\bfA^\eps$ kept implicit, let
\[
\mathcal F^{f;\eps}(\vartheta)
:=-\eps\log\E^\rmA\Big[
  \exp\bigl(-\eps^{-1}\cH^{f;\eps}(\bfA^\eps)\bigr)
\Big].
\]
Then $\cZ^\eps(f)=\E^\Theta\Big[ \exp\bigl(-\eps^{-1}\mathcal F^{f;\eps}(\vartheta)\bigr)\Big]$.

Write
\[
I^\bA(u):=\lim_{N\to\infty}I_N^{\bA,0}(u),
\qquad
J(v):=\diff^*(\diff\diff^*)^{-1}\int_0^1 v_t\,\dif t.
\]
For $u\in\bH_a^\Phi$, define the controlled Higgs functional
\[
\cG^{f;\eps}(\bA,u)
:=\E^\Phi\Big[
  f(A,\phi^\eps_\bA+I^\bA(u))
  +\cV^\eps_\bA(\phi^\eps_\bA+I^\bA(u))
  +\frac12\|u\|_{\bH^\Phi}^2
\Big],
\]
where $\cV^\eps_\bA$ denotes the renormalised limit of $\cV_{\bA,N}^\eps(\phi_{\bA,N}^\eps+I_N^{\bA,0}(u))$ as $N\to\infty$. Recall that $\bplus$ denotes the translation of the enhanced gauge field data. Applying the Bou\'e--Dupuis formula for both the Higgs and gauge fields gives
\begin{align*}
\cW^{f;\eps}(\bA)
&=\inf_{u\in\bH_a^\Phi}\cG^{f;\eps}(\bA,u),\\
\mathcal F^{f;\eps}(\vartheta)
&=\inf_{v\in\bH_a^\rmA}\E^\rmA\Big[
  \eps\cQ_\infty(\bfA^\eps\bplus J(v),0)
  +\inf_{u\in\bH_a^\Phi}\cG^{f;\eps}(\bA^\eps\bplus J(v),u)
  +\frac12\|v\|_{\bH^\rmA}^2
\Big].
\end{align*}
We now study this variational problem as $\eps\downarrow0$. Our aim is to show that the determinant term does not affect the limiting variational value, and to identify the limit through a variational problem over deterministic controls with $\vartheta$ fixed. In particular, we will prove convergence of $\mathcal F^{f;\eps}(\vartheta)$ to a continuous function $\mathcal F^{f;0}(\vartheta)$, uniformly in $\vartheta\in\bfi[0,2\pi]^2$. We control the limit $\eps\downarrow0$ of $D^{f,\eps}$ in~\eqref{eq:normalized_laplace_functional} by applying a Laplace principle to the numerator and denominator.

To begin the deterministic comparison, let $\bH^\rmA_0\subset\bH^\rmA$ and $\bH^\Phi_0\subset\bH^\Phi$ denote the deterministic gauge and Higgs controls. Set
\[
\cV_\bA^0(\phi)
:=\int_{\T^2}\mathcal V(|\phi|^2)
-\frac{m_{\bA,0}}{2}\int_{\T^2}|\phi|^2,
\]
and define
\[
\cG^{f;0}(\bA,u)
:=\E^\Phi\Big[
  f(A,I^\bA(u))
  +\cV_\bA^0(I^\bA(u))
  +\frac12\|u\|_{\bH^\Phi}^2
\Big].
\]
\begin{lemma}\label{lem:deterministic_variational_problem}
For every $\vartheta\in\bfi[0,2\pi]^2$, one has
\begin{align*}
\mathcal F^{f;0}(\vartheta)
&:=\inf_{\substack{v\in\bH^\rmA_0\\u\in\bH^\Phi_0}}
\Big\{
  \cG^{f;0}\bigl(\0\bplus(\vartheta+J(v)),u\bigr)
  +\frac12\|v\|_{\bH^\rmA}^2
\Big\}\\
&=\inf_{\substack{A\in\mathcal A_\vartheta\\\phi\in H^1}}
\Big\{
  f(A,\phi)+\cS_{\YMH}(A,\phi)
\Big\}.
\end{align*}
Moreover, if $f$ is bounded and Lipschitz, then $\vartheta\mapsto\mathcal F^{f;0}(\vartheta)$ is bounded and continuous on $\bfi[0,2\pi]^2$.
\end{lemma}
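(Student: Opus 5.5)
The plan is to identify the two infima by computing, for each of the two families of deterministic controls, the minimal control cost needed to produce a given configuration. For $v\in\bH^\rmA_0$ and $u\in\bH^\Phi_0$ nothing is random, so $\E^\Phi$ is trivial. Set $B:=\vartheta+J(v)$. Then $\diff^*B=0$, $B$ has mean $\vartheta$, and $B\in\Omega^1_\Coul H^1$, so $B\in\mathcal A_\vartheta$. By Lemma~\ref{lem:self_adjoint} the enhanced field $\0\bplus B$ gives $L_{\0\bplus B,0}=L_{\0,B}=L_B$, and by Lemma~\ref{lem:difference_masses} $m_{\0\bplus B,0}=m_{\0,B}=1$. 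Write $M:=L_B+1$. Since $M^{-1}=\int_0^\infty (M+t)^{-2}\dif t$, the operator $I^{\0\bplus B}(u)=\int_0^\infty (M+t)^{-1}u_t\dif t$ has adjoint $U\colon f\mapsto ((M+t)^{-1}f)_t$ with $U^*U=M^{-1}$, exactly as in the proof of Lemma~\ref{lem:drift_estimate}.

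\textbf{Higgs minimisation.} First I would show that
\[
\inf\Big\{\tfrac12\|u\|_{\bH^\Phi}^2: I^{\0\bplus B}(u)=\phi\Big\}=\tfrac12\langle\phi,M\phi\rangle=\tfrac12\|\diff_B\phi\|_{L^2}^2+\tfrac12\|\phi\|_{L^2}^2
\]
for $\phi\in H^1_B=H^1$ (Lemma~\ref{lem:Sobolev_comparison}), and that the constraint set is empty when $\phi\notin H^1$. The minimiser is $u_t=(M+t)^{-1}M\phi$. Indeed $I(u)=\int_0^\infty (M+t)^{-2}\dif t\,M\phi=\phi$, and $\|u\|_{\bH^\Phi}^2=\langle M\phi,M^{-1}M\phi\rangle=\langle\phi,M\phi\rangle$, which is finite iff $\phi\in\scD(M^{1/2})$. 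Optimality follows because every other $u$ with $I(u)=\phi$ differs from this one by an element of $\ker I=(\operatorname{ran}U)^\perp$, while the proposed $u=UM\phi$ lies in $\operatorname{ran}U$. The set of achievable $\phi$ is therefore exactly $\operatorname{ran}I=\scD(M^{1/2})=H^1$. Substituting into $\cV^0_{\0\bplus B}$, the term $-\frac{m}{2}\|\phi\|_{L^2}^2=-\frac12\|\phi\|^2_{L^2}$ cancels the $\frac12\|\phi\|_{L^2}^2$ coming from the control cost. The minimised Higgs functional is then $f(B,\phi)+\frac12\|\diff_B\phi\|^2+\int\mathcal V(|\phi|^2)$, with the infimum taken over $\phi\in H^1$.

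\textbf{Gauge minimisation.} Next I would show that $\inf\{\frac12\|v\|_{\bH^\rmA}^2: J(v)=K\}=\frac12\|\diff K\|_{L^2}^2$ for every coexact $K\in\Omega^1_\Coul H^1$. Write $w=\int_0^1 v_t\dif t$, a mean-zero $2$-form, so that $K=\diff^*(\diff\diff^*)^{-1}w$ and $\diff K=w$. Jensen's inequality gives $\|v\|^2\ge\|w\|^2_{L^2}=\|\diff K\|_{L^2}^2$, with equality for $v_t=\mathbf 1_{[0,1]}(t)\,\diff K$. Every coexact $H^1$ form is attained in this way, and $\vartheta+K$ then ranges over all of $\mathcal A_\vartheta$. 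Since $\diff\vartheta=0$ we have $\diff B=\diff K$, and adding the two minimisations gives $f+\cS_\YMH$ over $\mathcal A_\vartheta\times H^1$, which is the claimed identity.

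\textbf{Boundedness and continuity in $\vartheta$.} The function $\mathcal F^{f;0}$ is bounded because $\inf\cS_\YMH$ is finite and bounded below by $\inf_x\mathcal V(x)$ (using $c_n>0$ and taking $\phi=0$, $A=\vartheta$ for the upper bound), and $f$ is bounded. For continuity I would use constant shifts. Take $(A,\phi)$ that is $\eta$-optimal for $\vartheta$, so that $\cS_\YMH(A,\phi)\le C(\|f\|_\infty)$. Coercivity of $\mathcal V$ then bounds $\|\phi\|_{L^2}$ and $\|\diff_A\phi\|_{L^2}$ uniformly. Set $A'=A+(\vartheta'-\vartheta)\in\mathcal A_{\vartheta'}$, so that $\diff A'=\diff A$ and $\diff_{A'}\phi=\diff_A\phi+(\vartheta'-\vartheta)\phi$. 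Then $|\cS_\YMH(A',\phi)-\cS_\YMH(A,\phi)|\lesssim|\vartheta'-\vartheta|$ uniformly, and the Lipschitz property of $f$ handles the change $\|A'-A\|_{C^{-\kappa/2}}\lesssim|\vartheta'-\vartheta|$. Exchanging the roles of $\vartheta$ and $\vartheta'$ gives Lipschitz continuity. The main obstacle I expect is the exact characterisation of $\operatorname{ran}I^{\0\bplus B}$ and of its minimal-cost preimage (the adjoint/range argument above). The spectral calculus for $M$ should handle it, since $B\in H^1$ makes $L_B$ an honest self-adjoint operator with domain $H^2$.
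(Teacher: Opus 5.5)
Your proposal is correct and follows essentially the same route as the paper's proof. It uses the same minimal-cost computation with optimisers $v_t=\mathbf 1_{[0,1]}(t)\,\diff A$ and $u_t=(L_A+1)(L_A+1+t)^{-1}\phi$, the cancellation of $\frac12\|\phi\|_{L^2}^2$ against the mass term using $m_{\0\bplus A,0}=1$, the comparison with $(\vartheta,0)$ for boundedness, and the constant shift $A\mapsto A+(\vartheta'-\vartheta)$ for continuity; your orthogonality argument via $\ker I=(\operatorname{ran}U)^\perp$ and the identification $\operatorname{ran}I=H^1$ simply spell out what the paper compresses into ``Cauchy--Schwarz gives the lower bound'' and ``every pair is produced by these controls''.
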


\begin{proof}
We rewrite the infimum over controls as an infimum over the fields $A=\vartheta+J(v)$ and $\phi=I^{\0\bplus A}(u)$. For fixed $(A,\phi)\in\mathcal A_\vartheta\times H^1$, the observable and potential are fixed, so we only need to minimize the control energy. Set $\bA=\0\bplus A$. Then $L_{\bA,0}=L_A$ and $m_{\bA,0}=1$, and the minimum control energy is
\[
\inf_{\substack{J(v)=A-\vartheta\\I^\bA(u)=\phi}}
\frac12\bigl(\|v\|_{\bH^\rmA}^2+\|u\|_{\bH^\Phi}^2\bigr)
=\frac12\bigl(\|\diff A\|_{L^2}^2+\|\diff_A\phi\|_{L^2}^2+\|\phi\|_{L^2}^2\bigr).
\]
Cauchy--Schwarz gives the lower bound. The Hodge decomposition and spectral calculus show that it is attained by
\[
v_t^*=\mathbf1_{[0,1]}(t)\,\diff A,
\qquad
u_t^*=(L_A+1)(L_A+1+t)^{-1}\phi.
\]
The term $-\frac12\|\phi\|_{L^2}^2$ in $\cV_\bA^0(\phi)$ cancels the last term above, so the minimum cost at fixed $(A,\phi)$ is $f(A,\phi)+\cS_{\YMH}(A,\phi)$. Every pair in $\mathcal A_\vartheta\times H^1$ is produced by these controls, so taking the infimum over the pairs proves the identity.

Since $f$ is bounded and $\mathcal V$ is bounded below, comparison with $(A,\phi)=(\vartheta,0)$ gives
\[
-C\leq\mathcal F^{f;0}(\vartheta)
\leq f(\vartheta,0)+|\T^2|\mathcal V(0)
\leq C,
\]
with $C$ independent of $\vartheta$.

For continuity, fix $0<\eta\leq1$ and choose $(A,\phi)\in\mathcal A_\vartheta\times H^1$ such that
\[
f(A,\phi)+\cS_{\YMH}(A,\phi)
\leq\mathcal F^{f;0}(\vartheta)+\eta.
\]
Since $\mathcal V(r)\ge r-C$ for $r\ge0$, the preceding upper bound and boundedness of $f$ give $\|\diff_A\phi\|_{L^2}^2+\|\phi\|_{L^2}^2\leq C$, independently of $\vartheta$ and $\eta$. Set $h=\vartheta'-\vartheta$. The pair $(A+h,\phi)$ belongs to $\mathcal A_{\vartheta'}\times H^1$, and $\diff_{A+h}\phi=\diff_A\phi+h\phi$. Comparing the two costs and using that $f$ is Lipschitz gives
\[
\mathcal F^{f;0}(\vartheta')
\leq\mathcal F^{f;0}(\vartheta)+\eta+C(|h|+|h|^2).
\]
Letting $\eta\downarrow0$ and interchanging $\vartheta$ and $\vartheta'$ proves continuity.
\end{proof}
Restricting both variational problems to deterministic controls, set
\begin{align*}
\mathcal F_0^{f;\eps}(\vartheta)
&:=\inf_{v\in\bH^\rmA_0}\E^\rmA\Big[
  \eps\cQ_\infty(\bfA^\eps\bplus J(v),0)
  +\inf_{u\in\bH^\Phi_0}
  \cG^{f;\eps}(\bA^\eps\bplus J(v),u)
  +\frac12\|v\|_{\bH^\rmA}^2
\Big].
\end{align*}

\begin{theorem}\label{thm:easy_theorem}
Let $f$ be a bounded Lipschitz observable. Then, for every $\eps>0$ and $\vartheta\in\bfi[0,2\pi]^2$,
\[
\mathcal F^{f;\eps}(\vartheta)
\leq \mathcal F_0^{f;\eps}(\vartheta)
\qquad\text{and}\qquad
\limsup_{\eps\downarrow0}
\sup_{\vartheta\in\bfi[0,2\pi]^2}
\Big(
\mathcal F_0^{f;\eps}(\vartheta)
-\mathcal F^{f;0}(\vartheta)
\Big)
\leq 0.
\]
\end{theorem}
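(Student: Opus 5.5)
The first inequality is immediate: the Bou\'e--Dupuis representation of $\mathcal F^{f;\eps}(\vartheta)$ is an infimum over adapted controls $v\in\bH^\rmA_a$, and inside it over $u\in\bH^\Phi_a$. Deterministic controls are adapted, so $\bH^\rmA_0\subset\bH^\rmA_a$ and $\bH^\Phi_0\subset\bH^\Phi_a$. Restricting both infima can only raise the value, which gives $\mathcal F^{f;\eps}\le\mathcal F_0^{f;\eps}$.

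For the limsup bound we fix $\eta>0$ and use Lemma~\ref{lem:deterministic_variational_problem}. It provides, for each $\vartheta$, a pair $(A,\phi)\in\mathcal A_\vartheta\times H^1$ with $f(A,\phi)+\cS_\YMH(A,\phi)\le\mathcal F^{f;0}(\vartheta)+\eta$. We may take $A,\phi$ smooth, by density, continuity of $\cS_\YMH$ and $f$, and the bounds $\|\diff A\|_{L^2}+\|\phi\|_{H^1}\le C$ uniformly in $\vartheta$ from that proof. We then insert the explicit minimising controls $v^*=\mathbf 1_{[0,1]}\diff A$ and $u^*_t=(L_A+1)(L_A+1+t)^{-1}\phi$ into $\mathcal F_0^{f;\eps}(\vartheta)$. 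These have cost $\tfrac12(\|\diff A\|^2+\|\diff_A\phi\|^2+\|\phi\|^2)$ at $\eps=0$. For uniformity in $\vartheta$ we use the translation trick of Lemma~\ref{lem:deterministic_variational_problem}: a near-optimiser at $\vartheta$ serves at $\vartheta'$ with error $C|\vartheta-\vartheta'|$. Together with compactness, this reduces the problem to finitely many $\vartheta$, or equally to a uniform-in-$\vartheta$ estimate with fixed smooth data.

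It then suffices to show that, with these fixed deterministic controls, the $\eps$-functional converges to its $\eps=0$ counterpart. We split it into three pieces.

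\emph{Determinant piece.} The term $\eps\E^\rmA[\cQ_\infty(\bfA^\eps\bplus J(v^*),0)]$ tends to $0$. By Proposition~\ref{prop:translation_determinant}, Theorem~\ref{thm:convergence_cQ_N} and Lemma~\ref{lem:Rem_det}, $|\cQ_\infty|$ is bounded by a polynomial in $\bfnorm{\bfA^\eps}_{-\kappa}$ and $\|J(v^*)\|_{H^1}$. Here $\bfA^\eps$ is $\vartheta$ plus $\sqrt\eps$ times Gaussian data, and $\bV^\eps$ has moments bounded by Lemma~\ref{lem:bV_properties}. The expectation is therefore $O(1)$, and multiplying by $\eps$ gives a vanishing term.

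\emph{Higgs piece.} We bound $\inf_u\cG^{f;\eps}(\bA^\eps\bplus J(v^*),u)\le\cG^{f;\eps}(\bA^\eps\bplus J(v^*),\tilde u)$, where $\tilde u$ is $u^*$ built from the limiting operator $L_{A}$. The shift $I^{\bA^\eps\bplus J}(\tilde u)$ converges to $\phi$ in $H^{1-\kappa}$ in $L^p(\bP^\rmA)$. This follows from the resolvent continuity in Theorem~\ref{thm:resolvent_estimate_pushed} and Lemma~\ref{lem:difference_masses}, since $\triple{\bA^\eps\bplus J-\0\bplus A}_{-\kappa}\lesssim\sqrt\eps$ times Gaussian norms. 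The same estimates give $m\to1$. The field $\phi^\eps_{\bA}$ is $\sqrt\eps$ times the $\eps=1$ covariant GFF. Hence in the Wick expansion of Lemma~\ref{lem:Wick_prods}\ref{item:Wick_V}, each term of positive Gaussian degree carries a factor $\eps^{(i+j)/2}$ and has moments bounded by Lemma~\ref{lem:wick-powers} and Theorem~\ref{thm:cov_GFF_decomposition}, so $\E[\cV^\eps(\phi^\eps+I)]\to\cV^0(\phi)$. Since $f$ is Lipschitz and $\E\|\phi^\eps\|_{H^{-\kappa}}\lesssim\sqrt\eps$, we get $\E f\to f(A,\phi)$. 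The control cost is deterministic and continuous.

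\emph{Main obstacle.} The hardest step is controlling the $m$-dependent quadratic mass term and the Wick cross terms uniformly in $\vartheta$ with only polynomial moments. I would handle this by the H\"older/hypercontractivity bookkeeping of the Lipschitz lemma after Theorem~\ref{thm:higgs_convergence}, using that every constant there is polynomial in $\triple{\bA^\eps}_{-\kappa}$, whose moments are uniform in $\eps\le1$ and $\vartheta$. Letting $\eps\downarrow0$ and then $\eta\downarrow0$ concludes.
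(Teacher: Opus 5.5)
Your proposal is correct, but it organises the limsup bound differently from the paper.

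\textbf{The paper's route.} The paper never selects a near-minimiser. It observes that at $\eps=0$ the mass is $m_{\0\bplus(\vartheta+J(v)),0}=1$ and that $r\mapsto\mathcal V(r)-r/2$ is bounded below. Hence the deterministic cost is at least $\frac12(\|v\|_{\bH^\rmA}^2+\|u\|_{\bH^\Phi}^2)-C$, so $\mathcal F^{f;0}(\vartheta)$ is an infimum over a fixed ball $\{\|v\|_{\bH^\rmA}^2+\|u\|_{\bH^\Phi}^2\le R\}$. It then asserts that the $\eps$-costs converge to the $\eps=0$ costs uniformly over this ball and over $\vartheta$. This uses the polynomial dependence of the resolvent-difference, mass and Wick-power bounds on the control norms and Gaussian norms. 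Only after that does it take the infimum.

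\textbf{Your route.} You use the field-level characterisation in Lemma~\ref{lem:deterministic_variational_problem} to pick one smooth near-minimiser together with its explicit optimal controls $v^*,u^*$. You therefore only need convergence of the cost for fixed controls. Smoothness of $\phi$ also gives decay of $u^*_t$, which makes the resolvent-difference integral trivially convergent. You then recover uniformity in $\vartheta$ by translation plus compactness.

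\textbf{Trade-off.} Your argument gives a concrete pointwise verification, at the cost of a smoothing step and a covering step. The paper's argument gives uniformity in $\vartheta$ in one stroke, but needs the estimates to be uniform over a non-compact ball of controls.

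\textbf{One imprecision.} The translation trick acts only on the deterministic problem, so it does not literally reduce $\mathcal F_0^{f;\eps}$ to finitely many $\vartheta$. What you actually need is what your ``or equally'' alternative provides. Namely, for each $\vartheta_i$ in a finite cover, the $\eps$-cost must converge uniformly over $\vartheta'$ in a small ball around $\vartheta_i$. Here the cost is that of the data $(A_i+\vartheta'-\vartheta_i,\phi_i)$, with controls $v_i^*$ and $u^*$ built from $L_{A_i+\vartheta'-\vartheta_i}$. Combined with the deterministic translation bound and continuity of $\mathcal F^{f;0}$, this closes the argument. It holds because every error bound you use is polynomial in $|\vartheta'|$ and in the fixed norms of $(A_i,\phi_i)$, and the moments of $\bfnorm{\bfA^\eps}_{-\kappa}$ are uniform in $\eps\le1$ and $\vartheta$.
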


\begin{proof}
The first inequality follows by restricting both infima to deterministic controls.

For the second inequality, we first note that the pair $\bA=\0\bplus(\vartheta+J(v))$ has $m_{\bA,0}=1$. Since $f$ is bounded and $r\mapsto\mathcal V(r)-r/2$ is bounded below on $[0,\infty)$, the expression minimized in $\mathcal F^{f;0}(\vartheta)$ is at least $\frac12(\|v\|_{\bH^\rmA}^2+\|u\|_{\bH^\Phi}^2)-C$, uniformly in $\vartheta$. Testing with zero controls gives a uniform upper bound, so the infimum defining $\mathcal F^{f;0}(\vartheta)$ can be restricted to
\[
\|v\|_{\bH^\rmA}^2+\|u\|_{\bH^\Phi}^2\leq R
\]
for some $R$ independent of $\vartheta$.

On this set, the resolvent estimate \eqref{eq:pushed_resolvent_difference}, the GFF decomposition of Theorem~\ref{thm:cov_GFF_decomposition}, and Lemma~\ref{lem:wick-powers} give convergence of the expected controlled Higgs costs to their deterministic limits. Indeed, the shifts and masses converge, while each term of positive noise Wick degree carries a positive power of $\sqrt\eps$. The polynomial bounds in these estimates and the Gaussian moment bounds make the convergence uniform over the bounded controls and $\vartheta\in\bfi[0,2\pi]^2$. The determinant contribution tends to zero uniformly on the same set by Theorem~\ref{thm:convergence_cQ_N} and the enhanced gauge-field moment bounds.

Testing $\mathcal F_0^{f;\eps}$ with these bounded controls and taking their infimum therefore gives
\[
\mathcal F_0^{f;\eps}(\vartheta)
\leq\mathcal F^{f;0}(\vartheta)+o(1),
\]
uniformly in $\vartheta$, which proves the second inequality.
\end{proof}
We now prove the more difficult lower bound for the conditional free energy.
\begin{lemma}\label{lem:lower_bound}
Let $f$ be a bounded Lipschitz observable. Then
\[
\liminf_{\eps\downarrow0}
\inf_{\vartheta\in\bfi[0,2\pi]^2}
\Bigl(
\mathcal F^{f;\eps}(\vartheta)
-\mathcal F^{f;0}(\vartheta)
\Bigr)
\geq0.
\]
\end{lemma}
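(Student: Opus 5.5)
We fix $\vartheta$ and an arbitrary $\eta>0$, and pick adapted controls $v\in\bH^\rmA_a$, $u\in\bH^\Phi_a$ that are $\eta$-optimal for the iterated Bou\'e--Dupuis representation of $\mathcal F^{f;\eps}(\vartheta)$. The lower bound has three parts. First we obtain a priori bounds on these controls. Then we show that the noise and the determinant become negligible as $\eps\downarrow0$. Finally we compare pathwise with the deterministic problem in Lemma~\ref{lem:deterministic_variational_problem}.

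\textbf{Step 1: uniform coercivity.} We rerun the argument of Lemma~\ref{lem:BD_A_bounds} and Theorem~\ref{thm:higgs-W-bound} with the $\eps$-scaled fields. After the substitution $\phi=\sqrt\eps\psi$, every renormalised cross term between the noise and the drift carries a positive power of $\sqrt\eps$. For the determinant, Theorem~\ref{thm:lower_bound_det} gives $\eps\cQ_\infty(\bfA^\eps\bplus J(v),0)\ge-\eps C(1+\bfnorm{\bfA^\eps}_{-\kappa})^{\rmn(\kappa)}-\eps\delta\|v\|^{1+\tau(\kappa)}$. Since $f$ is bounded, the upper bound from Theorem~\ref{thm:easy_theorem} then yields, uniformly in $\eps\le1$ and $\vartheta$,
\[
\E\big[\|v\|_{\bH^\rmA}^2+\|u\|_{\bH^\Phi}^2+\|I^{\bA^\eps\bplus J(v)}(u)\|_{L^{2n}}^{2n}\big]\le C .
\]
This bound also shows that the determinant contribution is $O(\eps)$ on the near-optimal controls, so it drops out in the limit.

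\textbf{Step 2: removing the noise.} We write the Higgs cost as the deterministic cost evaluated at $(A,\phi)=(\vartheta+J(v),\,I^{\0\bplus(\vartheta+J(v))}(u))$, plus error terms. The errors come from three sources:
\begin{itemize}
\item the difference between the resolvents of $\bA^\eps\bplus J(v)$ and $\0\bplus(\vartheta+J(v))$ acting on $u$, controlled by \eqref{eq:pushed_resolvent_difference}, Lemma~\ref{lem:difference_masses} and Lemma~\ref{lem:drift_estimate}, with $\triple{\bA^\eps-\0\bplus\vartheta}\lesssim\sqrt\eps\,\triple{\bA^0}$;
\item Wick cross terms of positive noise degree, handled by Lemma~\ref{lem:coercive-bound-higgs} and Lemma~\ref{lem:wick-powers}, each carrying a factor $\sqrt\eps^{\,k}$;
\item the shift of $f$'s argument by $\phi^\eps_{\bA}$ and by $\sqrt\eps A^0$, controlled by the Lipschitz property of $f$ together with $\E\|\phi^\eps\|_{H^{-\kappa}}\lesssim\sqrt\eps$.
\end{itemize}
All of these are bounded by $\sqrt\eps$ times polynomial moments of $\bA^0$, times $(1+\|v\|)^{\tau(\kappa)}$, times terms absorbed by the coercive quantities of Step~1. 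The sub-quadratic dependence on $\|v\|$ is exactly what lets the quadratic control cost absorb them.

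\textbf{Step 3: pathwise comparison.} For each $\omega$, the random pair $(A(\omega),\phi(\omega))$ lies in $\mathcal A_\vartheta\times H^1$. The proof of Lemma~\ref{lem:deterministic_variational_problem} bounds the cost from below by $f+\cS_\YMH\ge\mathcal F^{f;0}(\vartheta)$; the Cauchy--Schwarz inequality there is pathwise and does not need deterministic controls. Taking expectations gives $\mathcal F^{f;\eps}(\vartheta)\ge\mathcal F^{f;0}(\vartheta)-\eta-o(1)$, uniformly in $\vartheta$, and then we let $\eta\downarrow0$.

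\textbf{Main obstacle.} We expect Step~2 to be the hardest. The issue is uniformity in the random, unbounded controls: the resolvent and Sobolev comparison constants grow like $(1+\|B\|_{H^1})^{\tau(\kappa)}$, and one must check that every error term is $o(1)$ times a quantity bounded by Step~1. We would organise the errors with H\"older's inequality, splitting them into a Gaussian moment factor and a factor of the form $(1+\|v\|^2+\|u\|^2)^{\theta}$ with $\theta<1$.
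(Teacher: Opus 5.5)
There is a genuine gap in Step~2. Your skeleton matches the paper's: near-optimal adapted controls, Theorem~\ref{thm:lower_bound_det} to discard $\eps\cQ_\infty$, and the pathwise inequality ``deterministic cost $\ge\mathcal F^{f;0}(\vartheta)$'' from the Cauchy--Schwarz argument of Lemma~\ref{lem:deterministic_variational_problem}. The global error absorption you propose in Step~2, however, does not close.

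Write $U=\|v\|_{\bH^\rmA}^2+\|u\|_{\bH^\Phi}^2$, $a=I^{\bA^\eps\bplus J(v)}(u)$ and $b=I^{\0\bplus(\vartheta+J(v))}(u)$.
\begin{itemize}
\item The resolvent-difference estimates give at best $\|a-b\|_{H^s_B}\lesssim\sqrt\eps\,X\,(1+\|v\|_{\bH^\rmA})^{\tau(\kappa)}\|u\|_{\bH^\Phi}$, with $X$ a random polynomial in the enhanced gauge noise.
\item This error then enters the degree-$2n$ potential through $\bigl|\int|a|^{2n}-\int|b|^{2n}\bigr|\lesssim\|a-b\|_{L^{2n}}(\|a\|_{L^{2n}}+\|b\|_{L^{2n}})^{2n-1}$. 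That term is of order $\sqrt\eps\,X(1+\|v\|_{\bH^\rmA})^{\tau(\kappa)}U^{n}$.
\item Even the $L^2$/mass terms give errors of order $\sqrt\eps\,X(1+\|v\|_{\bH^\rmA})^{\tau(\kappa)}U$.
\end{itemize}
None of these has the form (Gaussian factor)$\times(1+U)^\theta$ with $\theta<1$. The near-optimal controls are adapted and may concentrate exactly where $X$ is large. Your Step~1 bounds ($\E U\le C$, and an $L^{2n}$ bound on $a$ only, not on $b$) therefore give no control of $\E[XU]$ or $\E[U^n]$. So you cannot conclude that the expected difference between the stochastic cost $K_\eps(v,u)$ and the deterministic cost $K_0(v,u)$ at $(\vartheta+J(v),b)$ tends to zero.

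The missing idea is to localise in the control energy and to control the negative part of the cost uniformly.
\begin{itemize}
\item The paper only proves $\E\bigl[\mathbf 1_{\{U\le R\}}|K_\eps-K_0|\bigr]\le r_\eps(R)\to0$ for each fixed $R$. On that event all the errors above are harmless.
\item It treats $\{U>R\}$ with a \emph{pathwise} coercive bound $K_\eps\ge\frac14U-R_\eps(v)$, where $\E^\Phi[R_\eps(v)^p]\lesssim(1+\text{gauge noise})^M(1+\|v\|_{\bH^\rmA})^{\alpha}$ for some $p>1$ and $\alpha<2$.
\item Together with $\E\|v\|^2_{\bH^\rmA}\le C$, this gives $\E[(K_\eps^-)^p]\le C$. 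H\"older and Markov then yield $\E[K_\eps^-\mathbf 1_{\{U>R\}}]\le CR^{-(1-1/p)}$.
\item The inequality $K_0\ge\mathcal F^{f;0}(\vartheta)$ is used only on $\{U\le R\}$. One takes $R$ large first, then $\eps$ small.
\end{itemize}
Your Step~1 is a bound on expectations only, so it cannot control the cost on the large-control event. You need this pathwise lower bound with a $p>1$ moment, plus the truncation, for the argument to go through.
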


\begin{proof}
We compare the stochastic and deterministic costs on events where the control energy is bounded. Coercivity makes large control energies unlikely for controls whose expected cost is close to the infimum, while a moment bound on the negative part of the cost limits how much these rare events can lower the expectation.

Write $\E=\E^\rmA\E^\Phi$. For fixed $\vartheta$ and controls $v,u$, set
\[
B=\vartheta+J(v),
\qquad
\bA_v^\eps=\bA^\eps\bplus J(v),
\qquad
\bA_v^0=\0\bplus B,
\qquad
U=\|v\|_{\bH^\rmA}^2+\|u\|_{\bH^\Phi}^2.
\]
Let $K_\eps(v,u)$ denote the cost before taking expectations, with the determinant left out:
\[
K_\eps(v,u)
:=
f\bigl(A^\eps+J(v),\phi_{\bA_v^\eps}^\eps+I^{\bA_v^\eps}(u)\bigr)
+\cV_{\bA_v^\eps}^\eps\bigl(\phi_{\bA_v^\eps}^\eps+I^{\bA_v^\eps}(u)\bigr)
+\frac12 U.
\]
Define $K_0$ by the same formula at $\eps=0$, with enhanced pair $\bA_v^0$ and Higgs noise zero.

For sufficiently small $\kappa$, there are $p>1$, $\alpha<2$, constants $C,M<\infty$, and nonnegative random variables $R_\eps(v)$, independent of $u$, such that
\begin{equation}\label{eq:ld_coercive_estimates}
K_\eps(v,u)\geq\frac14 U-R_\eps(v),
\qquad
\E^\Phi[R_\eps(v)^p]
\leq C\bigl(1+\bfnorm{\bfA^\eps}_{-\kappa}\bigr)^M
\bigl(1+\|v\|_{\bH^\rmA}\bigr)^\alpha,
\end{equation}
uniformly in $0<\eps\leq1$ and $\vartheta$. To obtain this estimate, repeat the shifted Wick expansion in Lemma~\ref{lem:potential-bound-higgs} with noise $\sqrt\eps\,\phi_{0,B}$. The pure leading term is still $c_n\|Z\|_{L^{2n}}^{2n}$, where $Z=I^{\bA_v^\eps}(u)+\phi_{\bA_v^\eps}^\eps-\sqrt\eps\,\phi_{0,B}$. Young's inequality absorbs the lower powers and the mass correction into a fraction of this term. Lemma~\ref{lem:coercive-bound-higgs}, the drift estimate from~\eqref{eq:pushed_resolvent}, and Theorem~\ref{thm:cov_GFF_decomposition} absorb the remaining $\|Z\|_{H_B^s}^{3/2}$ term into the Higgs control energy, with $1-1/n<s<1-4\kappa$. Taking moments of the random remainder, using Lemma~\ref{lem:wick-powers} and the GFF decomposition, gives the conditional moment bound.

Next, choose adapted controls $v^\eps,u^\eps$, with $u^\eps$ chosen measurably in the gauge realization, so that their total expected cost in the iterated variational formula is within $2\eps$ of $\mathcal F^{f;\eps}(\vartheta)$. Theorem~\ref{thm:lower_bound_det}, with $1+\tau(\kappa)<2$, gives, writing $x^-:=\max\{-x,0\}$,
\[
\eps\E^\rmA\bigl[\cQ_\infty(\bfA^\eps\bplus J(v),0)^-\bigr]
\leq C\eps\bigl(1+\E^\rmA\|v\|_{\bH^\rmA}^2\bigr).
\]
The estimates in~\eqref{eq:ld_coercive_estimates}, Young's inequality, and the uniform upper bound from Theorem~\ref{thm:easy_theorem} therefore imply, for all sufficiently small $\eps$,
\[
\E U(v^\eps,u^\eps)\leq C,
\qquad
\E\bigl[(K_\eps(v^\eps,u^\eps)^-)^p\bigr]\leq C.
\]
For the second bound, use $K_\eps^-\leq R_\eps$ from the first estimate in~\eqref{eq:ld_coercive_estimates} and the conditional moment bound from the second. Young's inequality absorbs the subquadratic power of $\|v^\eps\|$ using its second-moment bound and the gauge-field moment bounds.

For each fixed $R\geq1$, the resolvent difference estimate~\eqref{eq:pushed_resolvent_difference}, Lemma~\ref{lem:difference_masses}, and the same Wick expansion give
\[
\E\Bigl[\mathbf1_{\{U\leq R\}}
\bigl|K_\eps(v,u)-K_0(v,u)\bigr|\Bigr]
\leq r_\eps(R),
\qquad
\lim_{\eps\downarrow0}r_\eps(R)=0.
\]
For fixed $R$, the estimates above show that $\|I^{\bA_v^\eps}(u)-I^{\bA_v^0}(u)\|_{H_B^s}$ and $|m_{\bA_v^\eps,0}-1|$ tend to zero in mean as $\eps\downarrow0$, after restriction to $\{U\leq R\}$. Here $m_{\bA_v^0,0}=1$. Each term of positive noise Wick degree carries a positive power of $\sqrt\eps$. The polynomial bounds, Gaussian moments, and Lipschitz continuity of $f$ make the convergence uniform in $\vartheta$ and in the controls.

Every realization of the controls satisfies $K_0(v,u)\geq\mathcal F^{f;0}(\vartheta)$ by Lemma~\ref{lem:deterministic_variational_problem}. Split the expectation according to $U(v^\eps,u^\eps)\leq R$. Markov and H\"older inequalities, the two uniform bounds above, and boundedness of $\mathcal F^{f;0}$ yield
\begin{align*}
\mathcal F^{f;\eps}(\vartheta)+2\eps
&\geq\E K_\eps(v^\eps,u^\eps)-C\eps\\
&\geq\mathcal F^{f;0}(\vartheta)
-r_\eps(R)-C R^{-(1-1/p)}-C\eps.
\end{align*}
By taking $R$ large, then $\eps$ small depending on $R$, we obtain the claimed uniform lower bound.
\end{proof}

\begin{proof}[Proof of Theorem~\ref{thm:large_deviations}]
Theorem~\ref{thm:easy_theorem} and Lemma~\ref{lem:lower_bound}, applied with $f$ and $0$, give
\[
\lim_{\eps \downarrow 0}
\sup_{\vartheta\in\bfi[0,2\pi]^2}
\bigl|\mathcal F^{g;\eps}(\vartheta)-\mathcal F^{g;0}(\vartheta)\bigr|
=0,
\qquad g\in\{f,0\}.
\]
By Lemma~\ref{lem:deterministic_variational_problem}, the limiting functions are continuous in $\vartheta$.

Laplace principle applied to the integration of $\vartheta$ on $\bfi[0,2\pi]^2$, along with the uniform convergence above therefore imply
\[
\lim_{\eps\downarrow0}\bigl(-\eps\log\cZ^\eps(g)\bigr)
=\inf_{\vartheta\in\bfi[0,2\pi]^2}\mathcal F^{g;0}(\vartheta),
\qquad g\in\{f,0\}.
\]
Taking the difference, as in~\eqref{eq:normalized_laplace_functional}, yields
\[
\lim_{\eps\downarrow0}D^{f,\eps}
=\inf_{\vartheta\in\bfi[0,2\pi]^2}\mathcal F^{f;0}(\vartheta)
-\inf_{\vartheta\in\bfi[0,2\pi]^2}\mathcal F^{0;0}(\vartheta).
\]
By Lemma~\ref{lem:deterministic_variational_problem}, these are the infima of $f+\cS_{\YMH}$ and $\cS_{\YMH}$ over Coulomb-gauge fields with harmonic component in $\bfi[0,2\pi]^2$. Proposition~\ref{prop:coulomb_representative} and gauge invariance identify them with the corresponding infima over $\Omega^1 H^1\times H^1$, proving the theorem.
\end{proof}

{\appendix

\section{Interpolation estimates for operators}

\begin{lemma}\label{lem:norm_comparison_pos_op}
    Let $X$ and $Y$ be positive self-adjoint (unbounded) operators on a Hilbert space $V$ satisfying $\|Xv\|_V\leq C\|Yv\|_V$. Then, for any $s\in[0,1]$,
    \[
    \|X^s v\|_V\leq C^s\|Y^sv\|_V.
    \]
\end{lemma}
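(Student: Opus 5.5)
This is the Löwner--Heinz inequality. The plan is to reduce it to operator monotonicity of $t\mapsto t^s$ via the quadratic-form reformulation.

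First I would reformulate the hypothesis. The bound $\|Xv\|\le C\|Yv\|$ for all $v\in\scD(Y)$ implies $\scD(Y)\subset\scD(X)$ and, in form sense,
\[
X^2\le C^2Y^2 .
\]
That is, $\scD(Y)\subset\scD(X)$ and $\av{X^2 v,v}=\|Xv\|^2\le C^2\|Yv\|^2$ on the form domain $\scD(Y)$ of $Y^2$.

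The target is $X^{2s}\le C^{2s}Y^{2s}$ in form sense, which is exactly $\|X^sv\|\le C^s\|Y^sv\|$. So it suffices to show that $t\mapsto t^{s}$ is operator monotone on positive operators for $s\in[0,1]$, applied to $A=X^2$, $B=C^2Y^2$.

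For the operator monotonicity I would use the integral representation
\[
t^{s}=\frac{\sin(\pi s)}{\pi}\int_0^\infty \mu^{s-1}\frac{t}{t+\mu}\,\dif\mu,\qquad s\in(0,1),
\]
where $\frac{t}{t+\mu}=1-\mu(t+\mu)^{-1}$. The key step is then to check that $A\le B$ in form sense implies $(B+\mu)^{-1}\le(A+\mu)^{-1}$ as bounded operators for every $\mu>0$. This is the standard resolvent inversion: $A+\mu\le B+\mu$ gives $\|(A+\mu)^{1/2}u\|\le\|(B+\mu)^{1/2}u\|$, hence $T=(A+\mu)^{1/2}(B+\mu)^{-1/2}$ is a contraction. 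Then $T^*T\le 1$ gives $TT^*\le1$, which unwinds to $(B+\mu)^{-1}\le(A+\mu)^{-1}$.

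Integrating this against $\mu^{s-1}\dif\mu$ yields $\av{A^sv,v}\le\av{B^sv,v}$. The final step is to rewrite this as $\|A^{s/2}v\|^2\le\|B^{s/2}v\|^2$ with $A^{s/2}=X^s$ and $B^{s/2}=C^sY^s$, by functional calculus. The endpoints $s=0$ and $s=1$ are trivial.

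The main obstacle is domain bookkeeping for unbounded operators. Concretely, one has to justify the monotone convergence in $\mu$ for $v$ in the form domain of $B^s$, and conclude that $\scD(Y^s)\subset\scD(X^s)$. I would handle this by using monotone convergence of the spectral integrals, which are nonnegative integrands: the integral for $\av{A^sv,v}$ is dominated by the one for $B^s$, so it is finite whenever the latter is.
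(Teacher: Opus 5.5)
Your proposal is correct and follows essentially the same route as the paper: you rewrite the hypothesis as the form inequality $X^2\le C^2Y^2$, apply operator monotonicity of $t\mapsto t^s$ for $s\in[0,1]$, and read off $\|X^sv\|\le C^s\|Y^sv\|$ by functional calculus. The paper simply cites the operator monotonicity; you additionally prove it via the integral representation and the resolvent comparison $(B+\mu)^{-1}\le(A+\mu)^{-1}$, and you obtain the domain inclusion $\scD(Y^s)\subset\scD(X^s)$ through monotone convergence of the spectral integrals.
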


\begin{proof}
The hypothesis gives $X^2\leq C^2Y^2$ in the sense of quadratic forms. Since $t\mapsto t^s$ is operator monotone for $s\in[0,1]$, we obtain
\[
X^{2s}=(X^2)^s\leq(C^2Y^2)^s=C^{2s}Y^{2s}.
\]
Taking quadratic forms yields $\|X^sv\|_V^2\leq C^{2s}\|Y^sv\|_V^2$, and taking square roots proves the claim.
\end{proof}

We also use the following interpolation result from \cite[Sec.~2.4]{Triebel}.
\begin{lemma}[Interpolation]\label{lem:interpolation_hilbert_scale}
Let $V$ be a Hilbert space and let $A$ be a self-adjoint operator on $V$ satisfying $A\ge cI$ for some $c>0$. For $s\in\R$, let $V^s$ be the completion of $\scD(A^{s/2})$ in the norm $\|v\|_{V^s}:=\|A^{s/2}v\|_V$.
If $T$ is linear and bounded $T:V^{s_j}\to V^{t_j}$ for $j=0,1$, then for every
$\theta\in[0,1]$,
\begin{gather*}
T:V^{(1-\theta)s_0+\theta s_1}\to V^{(1-\theta)t_0+\theta t_1}, \\
\|T\|_{V^{(1-\theta)s_0+\theta s_1}\to V^{(1-\theta)t_0+\theta t_1}}
\le
\|T\|_{V^{s_0}\to V^{t_0}}^{1-\theta}\,
\|T\|_{V^{s_1}\to V^{t_1}}^{\theta}.
\end{gather*}
\end{lemma}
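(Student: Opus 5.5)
This is the complex interpolation theorem for a Hilbert scale. The plan is to reduce it to Stein's (Hadamard three-lines) argument, using the functional calculus of $A$.

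\textbf{Step 1: set up.} Since $A\ge c>0$ is self-adjoint, the spectral theorem makes $A^{z}$ well defined for complex $z$. For real $\sigma$, $A^{\sigma/2}$ is an isometric isomorphism $V^{r}\to V^{r-\sigma}$, and $A^{i\tau}$ is unitary on every $V^r$ because it commutes with $A$. Write $s_\theta:=(1-\theta)s_0+\theta s_1$ and $t_\theta$ likewise. The claim is then equivalent to bounding the operator
\[
S_\theta:=A^{t_\theta/2}\,T\,A^{-s_\theta/2}
\]
on $V$ by $M_0^{1-\theta}M_1^\theta$, where $M_j:=\|T\|_{V^{s_j}\to V^{t_j}}$. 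By density, it suffices to test $S_\theta$ against $f,g$ lying in finite spectral subspaces $\mathbf 1_{[c,R]}(A)V$. On such vectors all complex powers are bounded and entire in the exponent.

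\textbf{Step 2: analytic family.} Let $P_R:=\mathbf 1_{[c,R]}(A)$ and define, on the strip $0\le\Re z\le1$,
\[
F(z):=e^{z^2-\theta^2}\bigl\langle A^{t(z)/2}TA^{-s(z)/2}P_Rf,\;P_Rg\bigr\rangle ,
\]
where $s(z)=(1-z)s_0+zs_1$ and $t(z)=(1-z)t_0+zt_1$. The power $A^{t(z)/2}$ is placed on $P_Rg$ through the adjoint, so $F$ is well defined. $F$ is holomorphic in the interior and continuous up to the boundary, because $z\mapsto A^{w(z)}P_R$ is entire in operator norm and $T$ is bounded from $V^{s_0}\cap V^{s_1}$, which contains $P_RV$. $F$ is also bounded on the strip, since $\|A^{w}P_R\|\le \max(c,R)^{|\Re w|}$ and the factor $e^{z^2}$ controls growth in $\Im z$.

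\textbf{Step 3: boundary values and conclusion.} On $\Re z=j$, the imaginary parts of the exponents contribute unitaries $A^{i\tau(\cdot)}$. Hence $|F(j+iy)|\le e^{j^2-y^2-\theta^2}M_j\|f\|\|g\|$, which is at most $e\,M_j\|f\|\|g\|$. The three-lines lemma then gives
\[
|F(\theta)|\le \bigl(\sup_{\Re z=0}|F|\bigr)^{1-\theta}\bigl(\sup_{\Re z=1}|F|\bigr)^{\theta}.
\]
The convergence factor $e^{z^2-\theta^2}$ equals $1$ at $z=\theta$, while on the boundary it is bounded by $e^{j^2-\theta^2}$. A slightly sharper choice removes the constant entirely: replace $e^{z^2-\theta^2}$ by $e^{\delta(z^2-\theta^2)}$ and let $\delta\downarrow0$. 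This yields $|\langle S_\theta P_Rf,P_Rg\rangle|\le M_0^{1-\theta}M_1^\theta\|f\|\|g\|$. Letting $R\to\infty$ and taking the supremum over $f,g$ proves both the boundedness and the norm estimate.

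\textbf{Main obstacle.} The main obstacle is Step 2: justifying that $F$ is analytic and bounded given that $T$ is only defined between the completions. I would handle this by showing that $P_RV$ is dense in every $V^r$ (spectral truncation), so the bound extends from $P_RV$ by continuity. The operators $T$ defined on different $V^{s_j}$ must agree on the intersection; this compatibility is implicit in the statement and I would take it as part of the hypothesis.
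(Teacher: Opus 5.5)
Your argument is correct and is essentially what the paper relies on: the paper gives no proof, only a citation to complex interpolation theory in Triebel (the isometric identification $[V^{s_0},V^{s_1}]_\theta=V^{s_\theta}$ plus the fact that the complex method is exact of exponent $\theta$), and your Stein-type three-lines argument on the spectrally truncated family $A^{t(z)/2}TA^{-s(z)/2}$ is exactly what that citation packages. Two cosmetic points, neither a gap. First, the bound $\|A^wP_R\|\le\max(c,R)^{|\Re w|}$ should read $\sup_{\lambda\in[c,R]}\lambda^{\Re w}$; as written it fails when $c<1$ and $\Re w<0$, though any uniform bound on the strip suffices. Second, the convergence factor $e^{\delta(z^2-\theta^2)}$ is unnecessary, because the imaginary powers $A^{i\tau}$ are unitary and so $F$ is already bounded on the strip.
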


\section{Galerkin approximation}\label{app:galerkin}
For sufficiently smooth $A_T$, we identify $e^{-\cQ_\infty(\bfA_T,0)}$ as a renormalised limit of finite-dimensional determinant ratios.
Set $H_{A,T}\coloneq L_{A_T}-\rmc_T+\lambda_{A_T,T}$, which is bounded
below by the identity by the definition of $\lambda_{A_T,T}$.
We use the finite-dimensional covariance $2P_MH_{A,T}^{-1}P_M$ on $P_ML^2$, where, for $M > 0$, $P_M$ is the Fourier projection onto modes $k\in2\pi\mathbb Z^2$ with $|k|\leq M$.

\begin{lemma} \label{lem:ratio_det_Galerkin}
  Let
  \[
    \bar p_M\coloneq2\Tr\int_0^\infty
      P_M(L_0+1+\lambda)^{-2}P_M\dif\lambda.
  \]
  With the deterministic counterterm
  \[
    \rmo_{T,M}\coloneq\frac{\rmc_T}{2}\bar p_M
      -\E^\Theta\E^\rmA\left[\int_0^\infty
        \bfPi^\lambda(A_T,A_T)\dif\lambda\right],
  \]
  one has
  \begin{align*}
    e^{-\cQ_\infty(\bfA_T,0)}
    =\lim_{M\rightarrow\infty}
      &\exp\left(\frac{\lambda_{A_T,T}-1}{2}\bar p_M-\rmo_{T,M}\right) \\
      &\times
      \frac{\det_M(P_MH_{A,T}^{-1}P_M)}
           {\det_M(P_M(L_0+1)^{-1}P_M)}\exp\left(\frac{1}{4\pi}\|A_{T}\|^2_{L^2}\right).
  \end{align*}
  Here $\det_M$ denotes the determinant on the complex space
  $\operatorname{range}P_M$.
\end{lemma}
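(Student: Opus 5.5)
The plan is to compute the finite-dimensional determinant ratio through a $\lambda$-integral of traces, exactly as in \eqref{eq:trace-det}, and then match the result term by term with \eqref{eq:def_cQ_N} at $B=0$, $\bfA=\bfA_T$. Fix $T$, so that $A_T$ is smooth, $H_{A,T}\ge 1$, and $H_{A,T}-(L_0+1)$ is a first-order differential operator with smooth coefficients. On $\operatorname{range}P_M$ both $P_M H_{A,T}^{-1}P_M$ and $P_M(L_0+1)^{-1}P_M$ are positive definite. For a positive matrix $X$ we have $\log\det X^{-1}=\int_0^\infty \Tr\big((X^{-1}+\lambda)^{-1}-(1+\lambda)^{-1}\big)\dif\lambda$. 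This is not yet convenient, because $(P_MH^{-1}P_M)^{-1}$ is a Schur complement rather than $P_MHP_M$. The first step is therefore to control this difference: write $P_MH^{-1}P_M=(P_MHP_M-P_MHP_M^\perp(P_M^\perp HP_M^\perp)^{-1}P_M^\perp HP_M)^{-1}$. The off-diagonal blocks come only from the first-order part $2A_T^*\diff+\diff^*A_T+A_T^*A_T$, whose Fourier coefficients decay rapidly since $A_T$ has finitely many modes. The log-determinant of the Schur correction then converges to a finite boundary contribution, and I expect a direct lattice computation to show it equals $\frac{1}{4\pi}\|A_T\|_{L^2}^2$ in the limit. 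This is the term that accounts for the extra Gaussian factor in the statement.

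Next, for $\log\det(P_MHP_M)-\log\det(P_M(L_0+1)P_M)$ I use the resolvent integral representation in $\lambda$ on the finite-dimensional space. This gives $\int_0^\infty\Tr\big(P_M[(P_M(L_0+1)P_M+\lambda)^{-1}-(P_MHP_M+\lambda)^{-1}]P_M\big)\dif\lambda$. I then expand exactly as in the decomposition \eqref{eq:decomp_scq}: the first-order term $\scq^1$ has trace zero by parity, as in \eqref{eq:tr_q1_vanishes}. The mass-type term, coming from $\lambda_{A_T,T}-1-\rmc_T+|A_T|^2$ against $P_M R_0^2P_M$, produces $\frac{\lambda_{A_T,T}-1}{2}\bar p_M$ and $\frac{\rmc_T}{2}\bar p_M$ (the factor $2$ from complex dimension is absorbed into the normalisation of $\bar p_M$). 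The second-order term produces $\int_0^\infty\bfPi^\lambda_M(A_T,A_T)\dif\lambda$, where $\bfPi_M$ is the projected vacuum polarisation, and the remainder converges to $\int\Tr\Rem_{\bA_T,0}$ by the same power counting as in Lemma~\ref{lem:Rem_det}. Since $A_T$ is smooth, that power counting holds uniformly in $M$.

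Finally, subtracting $\rmo_{T,M}$ turns the $\bfPi$ term into $\int\bV_{A_T}$, up to the difference $\bfPi^\lambda_M-\bfPi^\lambda$. Combining with Theorem~\ref{thm:convergence_cQ_N}, which gives $\cQ_\infty(\bfA_T,0)=\int\bV_{A_T}+\int\Tr\Rem_{\bA_T,0}$, and with dominated convergence in $\lambda$ (justified by Lemma~\ref{lem:vac_pol_bound}), yields the identity.

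The main obstacle is the noncommutation of $P_M$ with $H$: both the Schur complement and the projected polarisation $\bfPi_M-\bfPi$ involve lattice sums near the cutoff sphere $|k|\approx M$. These are not $o(1)$ but converge to the local term $\frac1{4\pi}\|A_T\|_{L^2}^2$, which is the Galerkin analogue of the gauge-noninvariant boundary anomaly. To verify that constant, I would first treat constant $A_T$ explicitly via the Fourier shift $k\mapsto k+A$, then extend by bilinearity and the rapid decay of $\hat A_T$.
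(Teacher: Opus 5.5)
There is a genuine gap in how you account for the factor $e^{\frac{1}{4\pi}\|A_T\|_{L^2}^2}$. You place it in the Schur-complement correction, where it cannot be. The step where it actually arises you dismiss by dominated convergence.

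\textbf{The Schur correction has the wrong sign and is in fact $o(1)$.} Write $H=H_{A,T}$, $P=P_M$, $Q=1-P_M$ and $G_{ab}=aHb$. Then $G_{QP}=G_{PQ}^*$ and $G_{QQ}\ge 1$, so $S:=(P_MH^{-1}P_M)^{-1}=G_{PP}-G_{PQ}G_{QQ}^{-1}G_{QP}$ satisfies $0<S\le G_{PP}$. Hence $\det_M(P_MH^{-1}P_M)\det_M(G_{PP})=\det_M(G_{PP})/\det_M(S)\ge 1$. Suppose, as you claim, the compressed-operator part with counterterms already converged to $e^{-\cQ_\infty(\bfA_T,0)}$. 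Then the statement would force this ratio to tend to $e^{-\frac{1}{4\pi}\|A_T\|_{L^2}^2}<1$, which is impossible.

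Quantitatively, $A_T$ has finitely many Fourier modes, so $G_{PQ}$ only couples shells of width $O(1)$ on either side of $|k|=M$. Its matrix elements are $O(M)$ against diagonal entries of order $M^2$, so $\|G_{QQ}^{-1/2}G_{QP}G_{PP}^{-1/2}\|_{\HS}^2=O(M^{-1})$ and the correction vanishes. This is the analogue of \eqref{eq:compressed-resolvents}, where the paper compares with $P_M(H+\lambda)^{-1}P_M$ rather than your $(G_{PP}+\lambda)^{-1}$. For constant $A$, $H$ commutes with $P_M$ and the correction is identically zero, yet the term $\frac{1}{4\pi}|\vartheta|^2$ is still present. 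So the constant-$A$ check you propose would refute your attribution rather than confirm it.

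\textbf{Where the term actually lives.} It sits in $\int_0^\infty(\boldsymbol{\Pi}^{\lambda,M}-\boldsymbol{\Pi}^{\lambda})(A_T,A_T)\,\dif\lambda$, which you send to zero by dominated convergence via Lemma~\ref{lem:vac_pol_bound}. That lemma's bound $|\Pi^\lambda(0)|\lesssim(1+\lambda)^{-2}$ rests on a cancellation over the whole lattice (Poisson summation with $\int h_\lambda=0$). After truncation, every summand of $c_M(\lambda)=\sum_{|q|\le M}\frac{1+\lambda-|q|^2}{(1+\lambda+|q|^2)^3}$ is positive once $\lambda\ge M^2$. For $\lambda\gg M^2$ one has $c_M(\lambda)\approx\frac{M^2}{4\pi}(1+\lambda)^{-2}$. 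So there is no majorant uniform in $M$, and the region $\lambda\gtrsim M^2$ contributes the missing $\frac{1}{4\pi}$ in the limit.

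The paper (Lemma~\ref{lemma:8pi}) handles this by splitting $\Pi^{\lambda,M}(p)=(\Pi^{\lambda,M}(p)-\Pi^{\lambda,M}(0))+c_M(\lambda)I$.
\begin{itemize}
\item The first piece is bounded by $|p|^2(1+\lambda)^{-2}$ uniformly in $M$, so dominated convergence is legitimate there.
\item The second piece contributes $\|A_T\|_{L^2}^2\int_0^\infty c_M\,\dif\lambda$. This is computed exactly via the antiderivative $-\partial_\lambda\frac{1+\lambda}{(1+\lambda+|q|^2)^2}$, giving $\int_0^\infty c_M\,\dif\lambda=\sum_{|q|\le M}(1+|q|^2)^{-2}$.
\item For the untruncated sum, the boundary term at $\lambda=L\to\infty$ survives. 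By a Riemann sum it equals $\frac{1}{4\pi^2}\int_{\R^2}(1+|x|^2)^{-2}\dif x=\frac{1}{4\pi}$.
\end{itemize}

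So the obstruction is not the non-commutation of $P_M$ with $H$. It is the non-commutation of $M\to\infty$ with the $\lambda$-integral in the $p$-independent part of the truncated polarisation, and it produces exactly one $\frac{1}{4\pi}\|A_T\|_{L^2}^2$. Your final paragraph, with both pieces converging to the local term, would double count. Once the anomaly is relocated this way, your compressed-operator route does go through. The extra restriction $|q+p|\le M$ in your fully truncated polarisation only adds shell terms of size $O(M^{-1})$ after the $\lambda$-integration.
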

\begin{remark}\label{rem:det-counterterm}
    After the mass subtraction, $\exp(-\cQ_\infty(\bfA_T,0))$ matches with the finite dimensional ratio of determinants only up to a constant counterterm, and more importantly a quadratic (but finite) counterterm in $A$. This quadratic counterterm, which is also present in \cite{BC24_YM} is a reflection of the breaking of gauge invariance due to the Galerkin approximation. The cutoff in the main body of the paper is Gauge invariant, as would be true for a lattice cutoff, and there such a counterterm is not present.
\end{remark}
\begin{proof}
  Set $B_M\coloneq(P_MH_{A,T}^{-1}P_M)^{-1}$ on $P_ML^2$. The  identity
  \[
    \log a-\log b=\int_0^\infty
      \left(\frac{1}{b+\lambda}-\frac{1}{a+\lambda}\right)\dif\lambda
  \]
  and the spectral theorem give
  \begin{align*}
    \log
      \frac{\det_M(P_MH_{A,T}^{-1}P_M)}
           {\det_M(P_M(L_0+1)^{-1}P_M)}
    =\int_0^\infty\Tr\bigl(& (B_M+\lambda)^{-1} \\
      &{}-P_M(L_0+1+\lambda)^{-1}P_M\bigr)\dif\lambda.
  \end{align*}
  We claim that
  \begin{equation}\label{eq:compressed-resolvents}
    \int_0^\infty\left|\Tr\left((B_M+\lambda)^{-1}
      -P_M(H_{A,T}+\lambda)^{-1}P_M\right)\right|\dif\lambda
      \xrightarrow{M\rightarrow\infty}0.
  \end{equation}

  Assuming the claim, define
  \begin{align*}
    \bfPi^{\lambda,M}(A_T,A_T)
    &\coloneq \sum_{p\in 2\pi \mathbb Z^2}\sum_{j,k=1}^2
      \widehat{A_{T,j}}(p)^*\Pi_{j,k}^{\lambda,M}(p)
      \widehat{A_{T,k}}(p), \\
    \Pi_{j,k}^{\lambda,M}(p)
    &\coloneq \sum_{\substack{q\in 2\pi\mathbb Z^2\\|q|\leqslant M}}
      \frac{1}{(1+\lambda+|q|^2)^2}
      \left(\delta_{j,k}
      -\frac{(2q+p)_j(2q+p)_k}{1+\lambda+|q+p|^2}\right).
  \end{align*}
  Applying \eqref{eq:decomp_scq} with $\bA=\bA_T$ and $B=0$, and taking the trace after projection by $P_M$, gives the following identity. The first-order term vanishes by symmetry, as in \eqref{eq:tr_q1_vanishes}.
  \begin{align}\label{eq:projected-decomposition}
    &\Tr P_M\bigl((L_0+1+\lambda)^{-1}
      -(H_{A,T}+\lambda)^{-1}\bigr)P_M \nonumber\\
    &\quad=(\lambda_{A_T,T}-1-\rmc_T)
      \Tr P_M(L_0+1+\lambda)^{-2}P_M
      +\bfPi^{\lambda,M}(A_T,A_T) \nonumber\\
    &\qquad+\Tr P_M\Rem_{\bA_T,0}(\lambda)P_M.
  \end{align}
  Now set,
  \begin{align*}
    \cQ_{T,M}^{\mathrm{Gal}}(A_T)
    \coloneq {}&-\frac{\lambda_{A_T,T}-1}{2}\bar p_M+\rmo_{T,M}\\
      &+\int_0^\infty\Tr P_M
      \bigl((L_0+1+\lambda)^{-1}
      -(H_{A,T}+\lambda)^{-1}\bigr)P_M\dif\lambda,
  \end{align*}
  then \eqref{eq:projected-decomposition} implies
  \begin{equation}\label{eq:Gal-Q-decomposition}
    \cQ_{T,M}^{\mathrm{Gal}}(A_T)
    =\int_0^\infty\bigl(\bfPi^{\lambda,M}(A_T,A_T)
      -\E^\Theta\E^\rmA[\bfPi^\lambda(A_T,A_T)]\bigr)\dif\lambda
      +\int_0^\infty
        \Tr P_M\Rem_{\bA_T,0}(\lambda)P_M\dif\lambda.
  \end{equation}
  Now for fixed $T$ it follows from Lemma \ref{lemma:8pi} below, that
  \[\int_{0}^{\infty} \bfPi^{\lambda,M}(A_T,A_T) \diff \lambda \to \int_0^{\infty}\bfPi^{\lambda} (A_T,A_T)\diff\lambda+\frac{1}{4\pi}\|A_{T}\|^2_{L^2}\]
  Furthermore  $
    \left|\Tr P_M\Rem_{\bA_T,0}(\lambda)P_M\right|
    \leqslant\Tr\left|\Rem_{\bA_T,0}(\lambda)\right|,$  and the right-hand side is integrable in $\lambda$. These two facts together with dominated convergence  therefore yield
  \begin{align*}
    \cQ_{T,M}^{\mathrm{Gal}}(A_T)-\frac{1}{4\pi}\|A_{T}\|^2_{L^2}
      \longrightarrow&\int_0^\infty \bV_{A_T}(\lambda) \dif\lambda
      +\int_0^\infty
        \Tr\Rem_{\bA_T,0}(\lambda)\dif\lambda\\
      =&\cQ_\infty(\bfA_T,0).
 \end{align*}

  Finally, \eqref{eq:compressed-resolvents} shows that the logarithm of the
  expression on the right-hand side of the lemma equals
  \begin{align*}
    &\frac{\lambda_{A_T,T}-1}{2}\bar p_M-\rmo_{T,M} \\
    &\quad+\int_0^\infty\Tr P_M\bigl((H_{A,T}+\lambda)^{-1}
      -(L_0+1+\lambda)^{-1}\bigr)P_M\dif\lambda+\frac{1}{4\pi}\|A_T\|^2_{L^2}+o_M(1),
  \end{align*}
  which is $-\cQ_{T,M}^{\mathrm{Gal}}(A_T)+\frac{1}{4\pi}\|A_T\|^2_{L^2}+o_M(1)$ and therefore converges to $-\cQ_\infty(\bfA_T,0)$.
\end{proof}

We now prove \eqref{eq:compressed-resolvents}. Fix $T$, abbreviate
$H=H_{A,T}$, and let $P=P_M$ be the projection onto the Fourier modes
$|n|\leqslant M$. Let $Q\coloneq1-P$ and, for $a,b\in\{P,Q\}$, write
$G_{ab}=aHb$. Define
$S_{2,M}(\lambda)\coloneq\bigl(P_M(H+\lambda)^{-1}P_M\bigr)^{-1}$.
The Schur complement formulas for $H$ and $H+\lambda$ give
\begin{align*}
  (P_MH^{-1}P_M)^{-1}
    &=G_{PP}-G_{PQ}G_{QQ}^{-1}G_{QP}, \\
  S_{1,M}(\lambda)
    &\coloneq G_{PP}-G_{PQ}G_{QQ}^{-1}G_{QP}+\lambda P_M, \\
  S_{2,M}(\lambda)
    &=G_{PP}+\lambda P_M
      -G_{PQ}(G_{QQ}+\lambda)^{-1}G_{QP}.
\end{align*}
The resolvent identity then yields
\begin{align*}
  S_{1,M}(\lambda)-S_{2,M}(\lambda)
    &=G_{PQ}\bigl((G_{QQ}+\lambda)^{-1}-G_{QQ}^{-1}\bigr)G_{QP} \\
    &=-\lambda G_{PQ}(G_{QQ}+\lambda)^{-1}G_{QQ}^{-1}G_{QP}, \\
  S_{1,M}(\lambda)^{-1}-S_{2,M}(\lambda)^{-1}
    &=\lambda S_{1,M}(\lambda)^{-1}G_{PQ}(G_{QQ}+\lambda)^{-1} \\
    &\quad\times G_{QQ}^{-1}G_{QP}S_{2,M}(\lambda)^{-1}.
\end{align*}
Observe that $\|\lambda S_{1,M}(\lambda)^{-1}\|_{L^2\to L^2}\leqslant1$. Therefore,
\begin{align}
  &\left|\Tr\bigl(S_{1,M}(\lambda)^{-1}
    -S_{2,M}(\lambda)^{-1}\bigr)\right| \nonumber\\
  &\quad\leqslant
    \|G_{PQ}(G_{QQ}+\lambda)^{-1}\|_{\HS}
    \|G_{QQ}^{-1}G_{QP}S_{2,M}(\lambda)^{-1}\|_{\HS}.
  \label{eq:trace-HS-bound}
\end{align}

We use the following elliptic estimates, whose proof we leave to the reader.

\begin{lemma}\label{lem:compressed-elliptic}
  For every $f\in QL^2$, uniformly in $M$ and $\lambda\geqslant0$,
  \begin{align*}
    \|(-\Delta+\lambda+1)(G_{QQ}+\lambda)^{-1}f\|_{L^2}
      &\leqslant C(A_T)\|f\|_{L^2}, \\
    \|(1-\Delta)G_{QQ}^{-1}f\|_{L^2}
      &\leqslant C(A_T)\|f\|_{L^2}.
  \end{align*}
\end{lemma}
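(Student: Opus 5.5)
We have to prove Lemma~\ref{lem:compressed-elliptic}: for fixed $T$, $A_T$ is smooth (a trigonometric polynomial plus the constant $\vartheta$), so $H=L_{A_T}-\rmc_T+\lambda_{A_T,T}=-\Delta+V_1\cdot\nabla+V_0$ with smooth bounded coefficients ($V_1=2A_T^*$, $V_0=\diff^*A_T+A_T^*A_T-\rmc_T+\lambda_{A_T,T}$), and $H\ge 1$. The plan is to treat $G_{QQ}=Q(-\Delta)Q+QV_1\cdot\nabla Q+QV_0Q$ on $QL^2$ as a perturbation of $Q(1-\Delta)Q$, which is diagonal in Fourier and commutes with $Q$. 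Two facts will be used throughout. First, $G_{QQ}\ge Q$ as a form on $QL^2$, since $\langle Hf,f\rangle\ge\|f\|^2$ for $f\in QL^2$; hence $\|(G_{QQ}+\lambda)^{-1}\|_{L^2\to L^2}\le(1+\lambda)^{-1}$. Second, $f\in QL^2$ has only frequencies $|k|>M$, so $\|\nabla f\|\ge M\|f\|$.

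The first step is an energy estimate. For $u\in QH^2$ with $(G_{QQ}+\lambda)u=f$, pair with $(1-\Delta)u$, which stays in $QL^2$. This gives
\[
\|(1-\Delta)u\|^2+\lambda\|\nabla u\|^2+\lambda\|u\|^2\le \|f\|\,\|(1-\Delta)u\|+\bigl|\langle V_1\cdot\nabla u+V_0u,(1-\Delta)u\rangle\bigr|.
\]
The perturbation term is bounded by $C(A_T)(\|\nabla u\|+\|u\|)\|(1-\Delta)u\|$. By interpolation, $\|\nabla u\|\le\|u\|^{1/2}\|\Delta u\|^{1/2}$, and Young's inequality then absorbs it into $\frac12\|(1-\Delta)u\|^2+C\|u\|^2$. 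Since $\|u\|\le(1+\lambda)^{-1}\|f\|$, we get $\|(1-\Delta)u\|\lesssim\|f\|$, uniformly in $M$ and $\lambda$. Setting $\lambda=0$ gives the second estimate directly.

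For the first estimate we additionally need $\lambda\|u\|\lesssim\|f\|$, which is exactly the $L^2$ resolvent bound above. Combining,
\[
\|(-\Delta+\lambda+1)u\|\le\|(1-\Delta)u\|+\lambda\|u\|\le C(A_T)\|f\|.
\]
Before running the energy argument, invertibility of $G_{QQ}+\lambda$ on $QL^2$ with domain $QH^2$ should be checked. This follows from Lax--Milgram, using coercivity of the form on $QH^1$, which is given by the lower bound $G_{QQ}\ge Q$ together with $\|\nabla f\|\le\|f\|_{H^1_{A_T}}$-type bounds for smooth $A_T$. Elliptic regularity on the torus then gives $u\in H^2$; one can also simply take $f$ in a dense set.

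The main obstacle is the non-self-adjoint structure after compression: the first-order term $QV_1\cdot\nabla Q$ couples the compressed problem to the complement. Lower-order smooth perturbations on the torus are harmless, though, because all constants depend only on $\|A_T\|_{C^1}$ and $\rmc_T$, $\lambda_{A_T,T}$, never on $M$. The single point needing care is the uniformity in $\lambda$, and it is handled by the coercivity bound $G_{QQ}\ge Q$ inherited from $H\ge1$, not by smallness. Consequently no condition on $M$ is required, although $M$ large would only help.
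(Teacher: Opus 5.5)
Your argument is correct, and it supplies the standard elliptic estimate that the paper leaves to the reader. Pairing $(G_{QQ}+\lambda)u=f$ with $(1-\Delta)u\in QL^2$, absorbing the first- and zeroth-order terms via $\|\nabla u\|^2\le\|u\|\,\|\Delta u\|$ and Young's inequality, and closing with $\|u\|\le(1+\lambda)^{-1}\|f\|$ from $G_{QQ}\ge Q$ gives both bounds uniformly in $M$ and $\lambda$.

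Two minor points. First, pairing $Q(-\Delta)Qu$ with $(1-\Delta)u$ gives $\|\nabla u\|^2+\|\Delta u\|^2$, not $\|(1-\Delta)u\|^2$. The difference $\langle u,(1-\Delta)u\rangle$ is absorbed exactly like the $V_0$ term, so nothing breaks. Second, $G_{QQ}$ is self-adjoint on $QL^2$ with domain $QH^2$, being the compression of a self-adjoint operator, via Kato--Rellich. So the remark about a ``non-self-adjoint structure'' is off, and invertibility follows directly from $G_{QQ}\ge Q$ without the Lax--Milgram step.
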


Because the constant-coefficient part of $H_{A_T,T}$ commutes with $P_M$, the
off-diagonal blocks $G_{PQ}$ and $G_{QP}$ contain only the smooth first- and
zeroth-order coefficients of $H_{A_T,T}$. Lemma~\ref{lem:compressed-elliptic} and a
direct Fourier summation therefore give, for every
$\varepsilon\in(0,\frac12)$,
\begin{align}
  \|G_{PQ}(G_{QQ}+\lambda)^{-1}\|_{\HS}^2
  &\leqslant C(A_T)(1+\lambda)^{-\varepsilon}
    M^{-1+2\varepsilon}, \label{eq:first-HS}\\
  \|G_{QQ}^{-1}G_{QP}\|_{\HS}^2
  &\leqslant C(A_T)M^{-1+2\varepsilon}.
  \label{eq:second-HS}
\end{align}
Indeed, the first-order contribution to \eqref{eq:first-HS} is
bounded by a constant times
\[
  \sum_{\substack{|j|>M\\|i|\leqslant M}}
  \frac{|j|^2|\widehat{A_T}(i-j)|^2}
       {(1+\lambda+|j|^2)^2};
\]
splitting into $|j|\leqslant2|i-j|$ and $|j|>2|i-j|$ proves
\eqref{eq:first-HS}, since $A_T$ is smooth. The zeroth-order contribution and
\eqref{eq:second-HS} are easier.

Finally, \mbox{$\|S_{2,M}(\lambda)^{-1}\|_{L^2\to L^2}\leqslant(1+\lambda)^{-1}$}
because $S_{2,M}(\lambda)^{-1}=P_M(H_{A_T,T}+\lambda)^{-1}P_M$
and $H\geqslant\id$.
Consequently, \eqref{eq:trace-HS-bound}--\eqref{eq:second-HS} imply
\[
  \left|\Tr\bigl(S_{1,M}(\lambda)^{-1}
      -S_{2,M}(\lambda)^{-1}\bigr)\right|
  \leqslant C(A_T)(1+\lambda)^{-1-\varepsilon/2}
    M^{-1+2\varepsilon}.
\]
This bound is integrable in $\lambda$ and tends to zero as $M\to\infty$.
Dominated convergence proves \eqref{eq:compressed-resolvents}.

\begin{lemma}\label{lemma:8pi}
  For fixed smooth $A_T$, we have
  \[ \lim_{M \rightarrow \infty} \int^{\infty}_0 \boldsymbol{\Pi}^{\lambda , M} (A_T,
     A_T) \diff \lambda = \int^{\infty}_0 \boldsymbol{\Pi}^{\lambda} (A_T, A_T) \diff
     \lambda + \frac{1}{4 \pi} \| A_T \|^2_{L^2} . \]
\end{lemma}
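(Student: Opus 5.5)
The plan is to establish the identity at the level of Fourier symbols and then sum against $\widehat{A_T}$. Set
\[
c^M_{j,k}(p):=\int_0^\infty\bigl(\Pi^{\lambda,M}_{j,k}(p)-\Pi^\lambda_{j,k}(p)\bigr)\dif\lambda .
\]
Since $A_T$ is smooth, $\widehat{A_T}$ decays faster than any polynomial. It therefore suffices to prove two things:
\[
\text{(a)}\quad \lim_{M\to\infty}c^M_{j,k}(p)=\frac{\delta_{j,k}}{4\pi}\ \text{ for each fixed } p,
\qquad
\text{(b)}\quad \sup_M|c^M_{j,k}(p)|\lesssim\langle p\rangle^{K}\ \text{ for some } K .
\]
Dominated convergence in $p$ then yields the lemma, with $\sum_p|\widehat{A_{T}}(p)|^2=\|A_T\|_{L^2}^2$ in the normalisation used for $\bfPi^\lambda$.

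The key observation is that the anomalous constant comes from the region $\lambda\sim M^2$, not from large $|q|$ at fixed $\lambda$. For fixed $p$, I would split the $\lambda$-integral at $\lambda=\eta M^2$ and at $\lambda=\eta^{-1}M^2$, with $\eta\in(0,1)$ small.

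On $[0,\eta M^2]$, the difference $\Pi^{\lambda,M}-\Pi^\lambda$ is minus the tail sum over $|q|>M\gg\sqrt{1+\lambda}$. There I expand $(1+\lambda+|q+p|^2)^{-1}$ around $1+\lambda+|q|^2$. The leading part
\[
(1+\lambda+|q|^2)^{-2}\Bigl(\delta_{j,k}-\frac{4q_jq_k}{1+\lambda+|q|^2}\Bigr)
\]
is summed over the lattice region $|q|>M$, which is invariant under $q\mapsto -q$ and under $90^\circ$ rotations. Using that symmetry, the off-diagonal entries vanish and $q_j^2$ is replaced by $|q|^2/2$. This leaves $(\lambda+|q|^2)^{-2}-2|q|^2(\lambda+|q|^2)^{-3}=O(\lambda|q|^{-6})$, plus lower-order terms in $p$ that are $O(\langle p\rangle^{2}|q|^{-5})$. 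Summing over $|q|>M$ and integrating over $\lambda\le\eta M^2$ gives $O(\eta^2)+O(\langle p\rangle^2M^{-1})$.

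On $[\eta^{-1}M^2,\infty)$, both $\Pi^{\lambda,M}$ and $\Pi^\lambda$ are $O(M^2\lambda^{-2})$ and $O(\langle p\rangle^2\lambda^{-2})$ respectively, by direct counting and Lemma~\ref{lem:vac_pol_bound}. This contributes $O(\eta)+O(\langle p\rangle^2M^{-2})$.

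In the middle window $\lambda\in[\eta M^2,\eta^{-1}M^2]$, the full $\Pi^\lambda$ contributes $O(\langle p\rangle^2\log(1/\eta)M^{-2})$ by Lemma~\ref{lem:vac_pol_bound}, which vanishes as $M\to\infty$. For $\Pi^{\lambda,M}$ I rescale $q=My$ and $\lambda=M^2\mu$. The shift by $p$ becomes negligible, and the lattice sum becomes a Riemann sum:
\[
M^2\,\Pi^{M^2\mu,M}_{j,k}(p)\;\longrightarrow\;\frac{1}{4\pi^2}\int_{|y|\le1}(\mu+|y|^2)^{-2}\Bigl(\delta_{j,k}-\frac{4y_jy_k}{\mu+|y|^2}\Bigr)\dif y ,
\]
uniformly for $\mu\in[\eta,\eta^{-1}]$. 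Angular averaging and $r=|y|^2$ give $\frac{\delta_{j,k}}{4\pi}\int_0^1\bigl[(\mu+r)^{-2}-2r(\mu+r)^{-3}\bigr]\dif r=\frac{\delta_{j,k}}{4\pi}\,(1+\mu)^{-2}$. Since $\dif\lambda=M^2\dif\mu$, the middle window contributes $\frac{\delta_{j,k}}{4\pi}\int_\eta^{1/\eta}(1+\mu)^{-2}\dif\mu+o_M(1)$. Letting $M\to\infty$ and then $\eta\to0$, and using $\int_0^\infty(1+\mu)^{-2}\dif\mu=1$, gives (a).

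The main obstacle is making the first region rigorous and uniform in $p$, which is also what (b) requires. That region involves a conditionally convergent interchange: after integrating in $\lambda$, the tail summand is only $O(|q|^{-2})$, so all cancellations must be extracted from the exact lattice symmetries before summing. I would verify the rotation and reflection symmetry of the region $\{|q|>M\}\cap2\pi\Z^2$ carefully, and bound the Taylor remainders in $p$ by $\langle p\rangle^{K}|q|^{-5}$, with crude polynomial dependence on $p$ allowed when $|p|\gtrsim M$. The Riemann-sum error in the middle window is routine, being $O(M^{-1}\eta^{-K})$ by the smoothness of the integrand away from $\mu=0$. Together these give (b) and complete the argument.
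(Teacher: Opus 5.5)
Your argument is correct, but it follows a genuinely different route from the paper's.

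The paper subtracts the zero mode. It shows $\Pi^{\lambda,M}(p)-\Pi^{\lambda,M}(0)\to\Pi^{\lambda}(p)-\Pi^{\lambda}(0)$ with the $M$-uniform bound $|p|^2(1+\lambda)^{-2}$. The estimates of Lemma~\ref{lem:vac_pol_bound} survive the truncation because $\{|q|\le M\}$ is symmetric, so nothing anomalous happens away from $p=0$. For the scalar zero mode it uses the exact antiderivative $\frac{1+\lambda-|q|^2}{(1+\lambda+|q|^2)^3}=-\partial_\lambda\frac{1+\lambda}{(1+\lambda+|q|^2)^2}$. This gives $\int_0^\infty c_M\,\dif\lambda=\sum_{|q|\le M}(1+|q|^2)^{-2}$ exactly. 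On the full lattice the same computation leaves the boundary term $\lim_{L\to\infty}\sum_q\frac{1+L}{(1+L+|q|^2)^2}=\frac1{4\pi}$, which is evaluated by a Riemann sum.

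So the paper sees the $\frac1{4\pi}$ as a failure of Fubini at $\lambda=\infty$ in the untruncated symbol. You see it as the density $\frac1{4\pi}(1+\mu)^{-2}$ of the truncated symbol in the scaling variable $\mu=\lambda/M^2$. Your version explains where the anomaly lives and needs no exact antiderivative, so it would adapt to other cutoffs. The price is the three-window bookkeeping and a separate proof of (b).

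Part (b) is much easier than you suggest if you borrow the paper's subtraction. The zero-mode-subtracted bound gives $|c^M(p)-c^M(0)|\lesssim|p|^2$ uniformly in $M$. Also $c^M(0)$ is a convergent, hence bounded, scalar sequence by your (a) at $p=0$. So (b) holds with $K=2$.

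There is one computational slip, which does not affect the conclusion. In your first window the symmetrised leading term is
$(1+\lambda+|q|^2)^{-2}-2|q|^2(1+\lambda+|q|^2)^{-3}=\frac{1+\lambda-|q|^2}{(1+\lambda+|q|^2)^3}\approx-|q|^{-4}$,
not $O(\lambda|q|^{-6})$. So that window contributes $O(\eta)$ (in fact about $\eta/4\pi$), not $O(\eta^2)$. Since you send $\eta\to0$ last, this is fine.

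It also shows the first window is not the obstacle you describe. For $|q|>M\ge 2|p|$ the summand is bounded in absolute value by $C|q|^{-4}$. Its contribution is therefore $O(\eta)$ by crude bounds, with no symmetry or Taylor expansion needed. Similarly, $\Pi^\lambda$ on the middle window contributes $O(\langle p\rangle^2\eta^{-1}M^{-2})$ rather than $O(\langle p\rangle^2\log(1/\eta)M^{-2})$, which is again harmless.
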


\begin{proof}
  Write $\Pi^{\lambda,M}(0)=c_M(\lambda)I_2$ and
  $\Pi^\lambda(0)=c(\lambda)I_2$.
  Since $A_T$ is smooth, it is sufficient to show the following things:
  \begin{enumerate}
    \item\label{item:Gal-symbol} $\Pi^{\lambda, M} (p) - \Pi^{\lambda, M} (0) \rightarrow
    \Pi^{\lambda} (p) - \Pi^{\lambda} (0)$ $\forall p \neq 0$ , and $\|
    \Pi^{\lambda, M} (p) - \Pi^{\lambda, M} (0) \| \leqslant | p |^2 (1 +
    \lambda)^{- 2}$.

    \item\label{item:Gal-zero-mode} $\int c_M(\lambda) \diff \lambda - \int c(\lambda)
    \diff \lambda \rightarrow \frac{1}{4 \pi}$.
  \end{enumerate}
  Then the statement will follow since, applying Fourier isometry, we get

  \begin{align}
     \int^{\infty}_0 \boldsymbol{\Pi}^{\lambda , M} (A_T, A_T) \diff \lambda
    = & \int^{\infty}_0 \sum_{p \in 2 \pi \mathbb{Z}^2} \hat{A}_T(p)^* \Pi^{\lambda, M} (p)
    \hat{A}_T(p) \diff \lambda \nonumber\\
    = & \int^{\infty}_0 \sum_{p \in 2 \pi \mathbb{Z}^2} \hat{A}_T(p)^* (\Pi^{\lambda, M} (p)
    - \Pi^{\lambda, M} (0)) \hat{A}_T(p) \diff\lambda \nonumber\\
    &\quad + \| A_T \|^2_{L^2} \int^{\infty}_0 c_M(\lambda) \diff \lambda . \nonumber
  \end{align}

  By~(\ref{item:Gal-symbol}), dominated convergence applies to the
  first term, and~(\ref{item:Gal-zero-mode}) then gives the result.
  For~(\ref{item:Gal-symbol}), repeat the estimates in the proof of
  Lemma~\ref{lem:vac_pol_bound} for each matrix entry. The terms
  odd in $q$ still cancel under the restriction $|q|\le M$.
  For~(\ref{item:Gal-zero-mode}), the definition of $c_M(\lambda)$ gives

  \begin{align}
    \int c_M(\lambda) \diff \lambda & = \int^{\infty}_0
    \sum_{q \in 2 \pi \mathbb{Z}^2, | q | \leqslant M} \frac{(1 + \lambda - |
    q |^2)}{(1 + \lambda + | q |^2)^3} \diff\lambda \nonumber\\
    & = - \sum_{q \in 2 \pi \mathbb{Z}^2, | q | \leqslant M}
    \int^{\infty}_0 \frac{\diff}{\diff\lambda} \frac{(1 + \lambda)}{(1 +
    \lambda + | q |^2)^2} \diff \lambda \nonumber\\
    & = \sum_{q \in 2 \pi \mathbb{Z}^2, | q | \leqslant M}
    \frac{1}{(1 + | q |^2)^2} . \nonumber
  \end{align}

  To compare this with $\int^{\infty}_0 c(\lambda) \diff \lambda$, we
  fist recall that that $c(\lambda)$ is integrable in $\lambda$, so
  $\int^{\infty}_0 c(\lambda) \diff \lambda = \lim_{L \rightarrow
  \infty} \int^L_0 c(\lambda) \diff \lambda$. Then recalling that the
  sum which defined $c(\lambda)$ converges absolutely (for fixed $\lambda
  !$), we obtain by Fubini
  \begin{align}
    \int^{\infty}_0 c(\lambda) \dif \lambda & = \lim_{L \rightarrow
    \infty} \int^L_0 \sum_{q \in 2 \pi \mathbb{Z}^2} \frac{(1 + \lambda - | q
    |^2)}{(1 + \lambda + | q |^2)^3} \dif \lambda \nonumber\\
    & = \lim_{L \rightarrow \infty} \sum_{q \in 2 \pi \mathbb{Z}^2} \int^L_0
    \frac{(1 + \lambda - | q |^2)}{(1 + \lambda + | q |^2)^3} \dif \lambda
    \nonumber\\
    & = \lim_{L \rightarrow \infty} - \sum_{q \in 2 \pi \mathbb{Z}^2}
    \int^L_0 \frac{\dif}{\dif \lambda} \frac{(1 + \lambda)}{(1 + \lambda +
    | q |^2)^2} \dif \lambda \nonumber\\
    & = \sum_{q \in 2 \pi \mathbb{Z}^2} \frac{1}{(1 + | q |^2)^2} - \lim_{L
    \rightarrow \infty} \sum_{q \in 2 \pi \mathbb{Z}^2} \frac{1 + L}{(1 + L +
    | q |^2)^2} . \nonumber
  \end{align}
  Dominated convergence easily shows that $\sum_{q \in 2 \pi \mathbb{Z}^2, | q
  | \leqslant M} \frac{1}{(1 + | q |^2)^2} - \sum_{q \in 2 \pi \mathbb{Z}^2,}
  \frac{1}{(1 + | q |^2)^2}$ tends to $0$ as $M \rightarrow \infty$. It thus
  remains to show that the last term equals $\frac{1}{4 \pi}$. Indeed

  \begin{align}
    & \lim_{L \rightarrow \infty} \sum_{q \in 2 \pi \mathbb{Z}^2} \frac{1 +
    L}{(1 + L + | q |^2)^2} \nonumber\\
    = & \lim_{L \rightarrow \infty} (1 + L)^{- 1} \sum_{q \in 2 \pi
    \mathbb{Z}^2} \frac{1}{\left( 1 + \frac{| q |^2}{1 + L} \right)^2} .
    \nonumber
  \end{align}
  Now relabelling $\sqrt{1 + L}^{- 1} q = \tilde{q}$, this becomes

  \begin{align}
    \lim_{L \rightarrow \infty} (1 + L)^{- 1} \sum_{\tilde{q} \in \frac{2
    \pi}{\sqrt{1 + L}} \mathbb{Z}^2} \frac{1}{(1 + | \tilde{q} |^2)^2} & =
    \frac{1}{4 \pi^2} \int \frac{1}{(1 + | x |^2)^2} \dif x = \frac{1}{4
    \pi}, \nonumber
  \end{align}
  where the first equality follows by Riemann sum approximation. This concludes the proof.
\end{proof}

\section{Gauge fixing on the torus}\label{app:gauge_fixing}
We continue to work on $\T^2=\R^2/\Z^2$, with volume one.
Gauge transformations act by
\[
g\Cdot(\bar A,\psi)=(\bar A-\diff g\,g^{-1},g\psi).
\]

\begin{proposition}\label{prop:coulomb_representative}
For every $(\bar A,\psi)\in\Omega^1H^1\times H^1(\T^2;\C)$, there exists $g\in H^2(\T^2;U(1))$ such that
\[
(A,\phi):=g\Cdot(\bar A,\psi)\in\Omega^1H^1\times H^1(\T^2;\C),
\qquad
\diff^*A=0,\qquad \int_{\T^2}A\,\dif x\in\bfi(0,2\pi]^2.
\]
The gauge field $A$ is unique within the gauge orbit.
The transformation $g$ is unique up to a constant in $U(1)$, which acts on $\phi$ by a constant phase rotation.
Moreover,
\[
\diff A=\diff\bar A,\qquad
\diff_A\phi=g\,\diff_{\bar A}\psi,\qquad
|\phi|=|\psi|,
\]
so the Yang--Mills--Higgs action is unchanged.
\end{proposition}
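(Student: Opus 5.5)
The approach is the Hodge decomposition on $\T^2$. Write $\bar A=\diff\alpha+\diff^*\beta+h$, with $\alpha\in H^2(\T^2;\bfi\R)$ of mean zero, $\beta$ a co-exact $2$-form potential, and $h\in\bfi\R^2$ the constant harmonic part. Since $\bar A\in\Omega^1H^1$, elliptic regularity for $\Delta\alpha=-\diff^*\bar A\in L^2$ gives $\alpha\in H^2$. Set $h=\bfi(h_1,h_2)$ and choose $k\in\Z^2$ so that $h_j-2\pi k_j\in(0,2\pi]$ for $j=1,2$. We look for $g$ of the form
\[
g(x)=e^{\alpha(x)}e^{2\pi\bfi k\cdot x},
\]
which is a well-defined $U(1)$-valued map on the torus because $k\in\Z^2$.

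The computation is then direct. We have $\diff g\,g^{-1}=\diff\alpha+2\pi\bfi k$, where $\diff\alpha\,$ is pure imaginary since $\alpha$ is. Hence
\[
A=\bar A-\diff g\,g^{-1}=\diff^*\beta+h-2\pi\bfi k.
\]
This gives $\diff^*A=0$ and $\int A=h-2\pi\bfi k\in\bfi(0,2\pi]^2$. For regularity: $H^2\subset C^0$ in two dimensions, and $H^2$ is an algebra, so $g\in H^2$. It follows that $A\in\Omega^1H^1$ and $\phi=g\psi\in H^1$; for the latter use $\diff(g\psi)=g\diff\psi+\psi\diff g$ with $\diff g\in H^1\subset L^p$ and $\psi\in L^q$.

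The identities follow quickly. $\diff A=\diff\bar A$ holds because $\diff(\diff g\,g^{-1})=\diff\diff\log g=0$ locally. $|\phi|=|\psi|$ holds because $|g|=1$. For the covariant derivative,
\[
\diff_A\phi=\diff(g\psi)+(\bar A-\diff g\,g^{-1})g\psi=g\,\diff_{\bar A}\psi .
\]
Consequently every term of $\cS_{\YMH}$ is unchanged.

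For uniqueness, suppose $g'$ is another admissible transformation and set $u=g'g^{-1}$. Then $u$ maps the Coulomb representative $A$ to another Coulomb representative $A'=A-\diff u\,u^{-1}$. Since $u\in H^2(\T^2;U(1))$ is continuous, it lifts as $u=e^{\omega+2\pi\bfi m\cdot x}$ with $\omega\in H^2(\T^2;\bfi\R)$ and $m\in\Z^2$. The Coulomb condition forces $\Delta\omega=0$, so $\omega$ is constant. Then $A'=A-2\pi\bfi m$, and the normalisation of the mean in $\bfi(0,2\pi]^2$ forces $m=0$. Thus $A'=A$ and $u$ is a constant phase, which acts on $\phi$ only by rotation.

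The main obstacle is the lifting step for $H^2$ maps into $U(1)$, which requires identifying the winding vector $m$ from the degrees along the two generating loops. Continuity of $H^2$ maps in two dimensions makes the standard covering-space argument apply, and one must then check that the lifted phase inherits $H^2$ regularity. The latter follows because $\diff\omega=\diff u\,u^{-1}-2\pi\bfi m$ lies in $H^1$.
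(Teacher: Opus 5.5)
Your proposal is correct and follows essentially the same route as the paper. Both arguments use the Hodge decomposition of $\bar A$ and the transformation $g=e^{\alpha}g_k$, with $k\in\mathbb{Z}^2$ chosen to move the harmonic part into $\bfi(0,2\pi]^2$, and both prove uniqueness by writing the relative transformation as $e^{\omega}g_m$ and then using the Coulomb condition and the normalisation of the mean. The one addition is that you justify the lifting of continuous $H^2$ maps into $U(1)$ (the winding vector, plus $H^2$ regularity of the phase), which the paper simply asserts.
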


\begin{proof}
The Hodge decomposition splits $\bar A$ into orthogonal exact, co-exact and harmonic parts:
\[
\bar A=\diff\alpha+\diff^*\beta+\gamma,
\]
where $\alpha\in H^2(\T^2;\bfi\R)$ has zero mean, $\beta$ is an $H^2$ $2$-form, and $\gamma=\int_{\T^2}\bar A\,\dif x$ is constant.
Choose $k\in\Z^2$ such that $\vartheta:=\gamma-2\pi\bfi k\in\bfi(0,2\pi]^2$, and set
\[
g_k(x):=e^{2\pi\bfi k\cdot x},\qquad
g:=e^\alpha g_k,\qquad
A=\diff^*\beta+\vartheta.
\]
Then $\diff g\,g^{-1}=\diff\alpha+2\pi\bfi k$, so $A=\bar A^g$ satisfies \eqref{eq:Coulomb}.
Sobolev product and composition estimates give $g\in H^2$ and $g\psi\in H^1$, and the product rule gives the stated identities and invariance of the action.

For uniqueness, write any $H^2$ transformation between two such representatives as $h=e^\eta g_\ell$, with $\eta\in H^2(\T^2;\bfi\R)$ and $\ell\in\Z^2$.
The Coulomb condition gives $\diff^*\diff\eta=0$, so $\eta$ is constant.
Since both harmonic components belong to $\bfi(0,2\pi]^2$, $\ell=0$ and $h$ is constant.
\end{proof}

For gauge-invariant $f$, Proposition~\ref{prop:coulomb_representative} lets us restrict the infima in Section~\ref{sec:large_deviations} to Coulomb representatives.

Set $a=\diff^*(\diff\diff^*)^{-1}\xi$ as in \eqref{eq:intro_pure_YM_field}, with the inverse taken on mean-zero $2$-forms.
Then $\diff^*a=0$, $\int_{\T^2}a\,\dif x=0$, and $\diff a=\xi$.
The Yang--Mills energy depends only on $a$.
After truncating to finitely many Fourier modes, the change of variables $a\mapsto\diff a$ has constant Jacobian.
Taking $\vartheta$ independently and uniformly on $\bfi(0,2\pi]^2$ gives, for bounded measurable $F$,
\[
\int F(A)\,\dif\mu^{\rmA}(A)
=
\int_{\bfi(0,2\pi]^2}
\E\big[F(\vartheta+\diff^*(\diff\diff^*)^{-1}\xi)\big]\,\bar\diff\vartheta.
\]
The finite-rank approximation of its Gaussian part is specified in Section~\ref{sec:gauge_variational_setup}.

For a gauge-invariant observable $F$, write $G(A,\phi):=F(A,\phi)e^{-\cS_{\YMH}(A,\phi)}$.
The following calculation is formal: the symbols $\dif\bar A$ and $\dif\psi$ do not denote infinite-dimensional Lebesgue measures.
Writing $\bar A=A+\diff\alpha+2\pi\bfi k$ gives, with $\propto$ suppressing factors independent of $F$,
\begin{align}\label{eq:formal_computation_gauge_change}
\int G(\bar A,\psi)\,\dif\bar A\,\dif\psi
&\propto
\sum_{k\in\Z^2}\int
G(A+\diff\alpha+2\pi\bfi k,\psi)\,\dif A\,\dif\alpha\,\dif\psi
\nonumber\\
&=
\sum_{k\in\Z^2}\int
G(A,e^\alpha g_k\psi)\,\dif A\,\dif\alpha\,\dif\psi
\propto\int G(A,\phi)\,\dif A\,\dif\phi.
\end{align}
This formally gives \eqref{eq:intro_measure} restricted to Coulomb gauge.
The rigorous construction follows from \eqref{eq:intro_regularised_measure} and Theorem~\ref{thm:main}.

\section{Symbolic index}

Frequently used symbols and their meanings are listed below, with page references to their definitions.
\begin{center}
{\small
\renewcommand{\arraystretch}{1.1}
\begin{longtable}{lll}
\toprule
Symbol(s) & Meaning & Page(s)\\
\midrule
\endfirsthead
\toprule
Symbol(s) & Meaning & Page(s)\\
\midrule
\endhead
\bottomrule
\endfoot
\bottomrule
\endlastfoot

$\vartheta$
& Harmonic component of the gauge field
& \pageref{eq:intro_pure_YM_field} \\

$A_T,\,A_T^0$
& Regularised gauge field/mean-zero Gaussian part
& \pageref{sec:def_regularised_measure}, \pageref{def:gauge_field_decomposition} \\

$\bA,\,\bfA$
& Enhanced gauge fields $(A,A^2)$ / $(A,A^2,\bV)$
& \pageref{sec:domain_renorm_laplacian}, \pageref{def:full_enhancement} \\

$\triple{\bA-\bar\bA}_{-\kappa},\,\bfnorm{\bfA-\bar\bfA}_{-\kappa}$
& Distances in $\cX^{-\kappa}$ / $\cX_{\rmV}^{-\kappa}$
& \pageref{def:enhanced_distance}, \pageref{def:full_enhancement_distance} \\

$\rmc_T$
& Covariant-Laplacian counterterm
& \pageref{sec:def_regularised_measure} \\

$\fC_\kappa(\bA,\bar\bA;B),\fC_\kappa(\bA;B)$ & Polynomial control factor in $\bA,\bar\bA$ and $B$
& \pageref{eq:fC_def} \\

$J_{\bA,B}$ & Rough perturbation $2A^*\diff_B+A^2$ & \pageref{def:rough_perturbation} \\

$m_{\bA,B}$
& Mass making $L_{\bA,B}+m_{\bA,B}$ positive
& \pageref{eq:m_def} \\

$\mu_T^\rmA,\,\mu_{A,N,T}^{\GFF}$
& Laws of $A_T$ / $\phi_{A,N,T}$
& \pageref{def:gauge_reference_law}, \pageref{eq:intro_regularised_measure} \\

$\mu_{N,T}^{\YMH},\,\cZ_{N,T}$
& Regularised YMH measure/partition function
& \pageref{eq:intro_regularised_measure} \\

$\Omega^1 X,\,\Omega^1_\Coul X$
& General/Coulomb $1$-forms with components in $X$
&  \pageref{def:one_forms}, \pageref{def:coulomb_forms} \\

$\rmp_N$
& Flat Wick constant
& \pageref{def:flat_Wick_constant} \\

$\Pi^\lambda$
& Vacuum-polarisation Fourier multiplier
& \pageref{def:vacuum_polarisation_multiplier} \\

$\bfPi^\lambda(A,B),\,\bfPi(A,B)$
& Vacuum-polarisation bilinear form/its $\lambda$-dependent family
& \pageref{def:vacuum_polarisation_bilinear}, \pageref{def:vacuum_polarisation_family} \\

$\phi_{A,N,T}$
& Regularised covariant Gaussian field
& \pageref{eq:intro_covariant_GFF} \\

$\phi_{\bA,B,N},\,\phi_{\0,B,N}$
& Truncated covariant GFFs for $L_{\bA,B}$ / $L_B$
& \pageref{def:covariant_GFF} \\

$Q_{N,T}(A)$
& Regularised determinant-ratio functional
& \pageref{eq:determinant_computation} \\

$\cQ_N(\bfA,B),\,\cQ_\infty(\bfA,B)$
& Rough determinant ratio/its $N\to\infty$ limit
& \pageref{eq:def_cQ_N}, \pageref{thm:convergence_cQ_N} \\

$R_B(z),\,R_{\bA,B}(z)$
& Resolvents $(L_B+z)^{-1}$ / $(L_{\bA,B}+z)^{-1}$
& \pageref{def:classical_resolvent}, \pageref{sec:resolvent} \\

$\Sigma_t^{\bA,B}$
& Resolvent $(L_{\bA,B}+m_{\bA,B}+t)^{-1}$
& \pageref{def:covariant_GFF_resolvent} \\

$\cV,\,\cV_{A,N,T}$
& Higgs interaction polynomial/regularised potential
& \pageref{eq:def_V}, \pageref{eq:intro_regularised_measure} \\

$\scV_N^m$
& Wick-renormalised Higgs potential with mass $m$
& \pageref{eq:def_V_N_m} \\

$\bV_A$
& Renormalised vacuum-polarisation field
& \pageref{lem:bV_properties} \\

$\cW_N^{h,q,m}(\bA,B)$
& Conditional Higgs free-energy functional
& \pageref{eq:def_cW} \\

$X^\Phi,\,X^\rmA$
& Higgs/gauge cylindrical Brownian motions
& \pageref{eq:intro_covariant_GFF}, \pageref{def:gauge_noise} \\

$\cX^{-\kappa},\,\cX_{\rmV}^{-\kappa}$
& Space of enhanced fields/its product with $L^1(\R_+;\R)$
& \pageref{sec:domain_renorm_laplacian}, \pageref{def:full_enhancement} \\

\end{longtable}
}
\end{center}

}

    \addcontentsline{toc}{section}{References}
\bibliographystyle{alpha}
\bibliography{references}

\end{document}